\DeclareMathAlphabet{\pazocal}{OMS}{zplm}{m}{n}
\newcommand{\R}{\mathbb{R}}
\renewcommand{\H}{\mathbb{H}}
\newcommand{\J}{\mathbb{J}}
\newcommand{\F}{\pazocal{F}}
\newcommand{\G}{\pazocal{G}}
\newcommand{\U}{\pazocal{U}}
\newcommand{\K}{\pazocal{K}}
\newcommand{\V}{\pazocal{V}}
\newcommand{\M}{\pazocal{M}}
\newcommand{\Cpazo}{\pazocal{C}}
\newcommand{\Hpazo}{\pazocal{H}}
\newcommand{\Npazo}{\pazocal{N}}
\newcommand{\Lpazo}{\pazocal{L}}
\newcommand{\Ppazo}{\pazocal{P}}
\newcommand{\Rpazo}{\pazocal{R}}
\newcommand{\Spazo}{\pazocal{S}}
\newcommand{\Wpazo}{\pazocal{W}}
\newcommand{\T}{\mathcal{T}}
\newcommand{\Lcal}{\mathcal{L}}
\newcommand{\Vcal}{\mathcal{V}}
\newcommand{\Pcal}{\mathcal{P}}
\newcommand{\Ccal}{\mathcal{C}}
\newcommand{\Scal}{\mathcal{S}}
\newcommand{\Id}{\textnormal{Id}}
\newcommand{\dn}{\textnormal{d}}
\newcommand{\D}{\textnormal{D}}
\newcommand{\Tan}{\textnormal{Tan}}
\newcommand{\supp}{\textnormal{supp}}
\newcommand{\Lip}{\textnormal{Lip}}
\newcommand{\AC}{\textnormal{AC}}
\newcommand{\Graph}{\textnormal{Graph}}
\newcommand{\loc}{\textnormal{loc}}
\newcommand{\Div}{\textnormal{div}}
\newcommand{\textbn}[1]{\textnormal{\textbf{#1}}}
\newcommand{\xb}{\boldsymbol{x}}
\newcommand{\yb}{\boldsymbol{y}}
\newcommand{\vb}{\boldsymbol{v}}
\newcommand{\Bgamma}{\boldsymbol{\gamma}}
\newcommand{\Bnu}{\boldsymbol{\nu}}
\newcommand{\Beta}{\boldsymbol{\eta}}
\newcommand{\Bmu}{\boldsymbol{\mu}}
\newcommand{\Bpartial}{\boldsymbol{\partial}}
\newcommand{\BGamma}{\boldsymbol{\Gamma}}
\newcommand{\vt}{\texttt{t}}
\newcommand{\INTDom}[3]{\int_{#2} #1 \textnormal{d} #3}
\newcommand{\INTSeg}[4]{\int_{#3}^{#4} #1 \textnormal{d} #2}
\newcommand{\NormL}[3]{\parallel \hspace{-0.1cm} #1 \hspace{-0.1cm} \parallel _ {L^{#2}(#3)}}
\newcommand{\NormC}[3]{\left\| #1  \right\| _ {C^{#2}(#3)}}
\newcommand{\Norm}[1]{\parallel \hspace{-0.1cm} #1 \hspace{-0.1cm} \parallel}
\newcommand{\derv}[2]{\frac{\textnormal{d} #1}{ \textnormal{d} #2}}
\newcommand{\tderv}[2]{\tfrac{\textnormal{d} #1}{ \textnormal{d} #2}}
\newtheorem{Def}{Definition}[section]
\newtheorem{thm}[Def]{Theorem}
\newtheorem{prop}[Def]{Proposition}
\newtheorem{rmk}[Def]{Remark}
\newtheorem{lem}[Def]{Lemma}
\newtheorem{cor}[Def]{Corollary}
\newenvironment{taggedhyp}[1]
    {\taggedhypx}
    {\endtaggedhypx}
\newenvironment{taggedhypsing}[1]
    {\taggedhypsingx}
    {\endtaggedhypsingx}
\title{Semiconcavity and Sensitivity Analysis in Mean-Field Optimal Control and Applications}
\author{Benoît Bonnet\footnote{CNRS,  IMJ-PRG,  UMR  7586,  Sorbonne  Université, 4  place  Jussieu,  75252  Paris,  France. \hfill \hspace{3.5cm} \textit{E-mail}: \texttt{benoit.bonnet@imj-prg.fr} (Corresponding author)} , Hélène Frankowska\footnote{CNRS,  IMJ-PRG,  UMR  7586,  Sorbonne  Université,  4  place  Jussieu,  75252  Paris,  France. \hfill \hspace{3.5cm} \textit{E-mail}: \texttt{helene.frankowska@imj-prg.fr}}}
\begin{document}

\maketitle

\begin{abstract}
In this article, we investigate some of the fine properties of the value function associated with an optimal control problem in the Wasserstein space of probability measures. Building on new interpolation and linearisation formulas for non-local flows, we prove semiconcavity estimates for the value function, and establish several variants of the so-called sensitivity relations which provide connections between its superdifferential and the adjoint curves stemming from the maximum principle. We subsequently make use of these results to study the propagation of regularity for the value function along optimal trajectories, as well as to investigate sufficient optimality conditions and optimal feedbacks for mean-field optimal control problems.
\end{abstract}

{\footnotesize
\textbf{Keywords :} Mean-Field Optimal Control, Value Function, Semiconcavity, Sensitivity Relations, Non-smooth Analysis, Pontryagin Maximum Principle

\vspace{0.25cm}

\textbf{MSC2020 Subject Classification :} 30L99, 49K27, 49K40, 49Q12, 49Q22, 58E25
}

\tableofcontents


\section{Introduction}

During the past fifteen years, the mathematical analysis of collective dynamics and multi-agent systems has undergone astonishingly rapid developments. The interest for such topics was historically initiated in communities working at large on the modelling of agent-based dynamics \cite{Jadbabaie2003,Moreau2005}, social choices \cite{Friedkin1990,Hegselmann2002} and aggregation patterns in biological systems \cite{Mogilner1999,Topaz2006}. In all likelihood, what propelled these lines of investigation at the foreground of several branches of modern mathematical analysis are, on the one hand, the works of Cucker and Smale \cite{CS1,CS2} on the mathematics of emergence, and on the other hand the simultaneous introduction of the theory of mean-field games by Lasry, Lions \cite{Lasry2007} and Huang, Caines, Malhamé \cite{Huang2006}. Incidentally, these developments and some of their outlets \cite{Carrillo2010,HaLiu} contributed to sparking a wide interest for multi-agent systems, studied in the so-called \textit{mean-field approximation} framework. In the latter, large deterministic systems of interacting particles are approximated by curves of densities, whose evolutions are described by transport equations in the space of measures (see e.g. \cite{golse} for a theoretical-physics flavoured overview of this topic). Concomitantly to the maturation of these research trends, several major progresses were made in the theory of \textit{optimal transport}. Some of the most notable ones lied in the identification of intrinsic geodesic \cite{McCann1997} and differential \cite{Otto2001} structures, which were amenable to computation while providing sound interpretations of various evolution problems arising in physics (see also \cite{Jordan1998}). These newly discovered concepts were further installed in the reference monograph \cite{AGS} (see also \cite{OTAM,Villani1}), and strongly contributed to establishing the so-called \textit{Wasserstein spaces} of probability measure as the natural framework for studying variational problems involving deterministic collective dynamics. 

Since then, a growing research effort has been devoted to the investigation of \textit{mean field control problems}, i.e. control problems formulated on mean-field approximations of discrete multi-agent systems (see e.g. the survey \cite{Choi2014}). This family of models refers broadly to situations in which a centralised policy-making entity emits a control signal at the macroscopic level, in order to stir an underlying microscopic multi-agent system towards a desired goal (see the introduction of \cite{LipReg} for more details on this general scheme).  While a few results have been dealing in this context with controllability issues \cite{Duprez2019,Duprez2020} as well as the explicit synthesis of control laws for consensus and alignment models \cite{Caponigro2013,Caponigro2015,Piccoli2021,ControlKCS}, the core of the existing contributions on this topic pertains to \textit{mean-field optimal control}. In this setting, a first series of articles have aimed at studying rigorously the mean-field limit of solutions of optimal control problems formulated on discrete particle-like systems \cite{Cavagnari2021,Fornasier2019,FornasierPR2014,Fornasier2014}. More recently, the depiction of optimality conditions in the form of Hamilton-Jacobi-Bellman equations \cite{achdou2,Cavagnari2018,Cavagnari2020,Jimenez2020} or variants of the Pontryagin Maximum Principle  \cite{MFPMP,PMPWassConst,SetValuedPMP,PMPWass,Pogodaev2016,Pogodaev2020} has also attracted a lot of attention. From a quite distinct standpoint, a few numerical schemes have been proposed for multi-agent control problems \cite{achdou1,AlbiPareschiZanella,Burger2020}. Let us stress that while many connections have been made between mean-field control problems and mean-field games (see e.g. \cite{Bensoussan2013,Carmona2015,Carmona2018,Lasry2007}), they are not fully reducible to each other due to the differences in their mathematical structures and application scopes.   

\bigskip

In this article -- which is a continuation of our previous works \cite{ContInc,SetValuedPMP} --, we investigate some of the fine regularity and structure properties of the value function $\Vcal : [0,T] \times \Pcal_c(\R^d) \rightarrow \R$ associated with a general Mayer mean-field optimal control problem in the space of measures, defined by 
\begin{equation}
\label{eq:ValueFunctionIntro}
\Vcal(\tau,\mu_{\tau}) :=  \left\{
\begin{aligned}
\inf_{u(\cdot) \in \U} & \, \big[ \varphi(\mu(T)) \big] \\
\text{s.t.} ~ & \left\{
\begin{aligned}
& \partial_t \mu(t) + \Div_x \big( v(t,\mu(t),u(t)) \mu(t) \big) = 0, \\
& \mu(\tau) = \mu_{\tau},
\end{aligned}
\right.
\end{aligned}
\right.
\end{equation}
for any $(\tau,\mu_{\tau}) \in [0,T] \times \Pcal_c(\R^d)$. Here, the minimisation is taken over the set $\U := \{ u : [0,T] \rightarrow U  ~\text{s.t. $u(\cdot)$ is $\Lcal^1$-measurable} \}$ of admissible controls with $(U,d_U)$ being a compact metric space. The time-evolution of the system is prescribed by the \textit{controlled non-local velocity field} $v : [0,T] \times \Pcal_c(\R^d) \times U \times \R^d \rightarrow \R^d$, while $\varphi : \Pcal_c(\R^d) \rightarrow \R$ represents a final cost.

\begin{rmk}[On the general equivalence between Mayer and Bolza problems]
As discussed in Section \ref{subsection:MFOC}, it is a known fact in optimal control theory that the so-called Bolza problems -- which comprise both a running and a final cost -- can be reduced to Mayer problems (see \cite[Section 4.2]{SetValuedPMP} in the context of mean-field control). Thus without loss of generality and for the sake of conciseness, we chose to restrict our subsequent developments to Mayer problems.
\end{rmk}

\begin{rmk}[On the choice of admissible controls]
Throughout this article, we will consider optimal control problems driven by \textnormal{open-loop} controls $u : [0,T] \rightarrow U$, which do not depend on the space variable $x \in \R^d$. This choice is motivated by the following two important facts. Firstly, on the application side, there exist a wealth of collective dynamics models -- the most commonly encountered being \textnormal{leader-follower} dynamics -- in which the dynamics of the agents is governed by a small number of control signals which are uniformly chosen for the whole system \cite{MFPMP,BonnetFCDC2020,Burger2021,Burger2020,Cavagnari2018,FornasierPR2014,Jimenez2020,Pogodaev2016}.  These classes of dynamical evolutions on measures are also becoming increasingly prominent in the mathematical branch of the machine learning literature, which focuses on the so-called NeurODE models of deep neural networks \cite{PMPNeurODEs,E2019,Jabir2021}. Secondly, on the technical side, while the methods developed in this article would be applicable to controlled vector fields $u : [0,T] \times \R^d \rightarrow U$, proving the corresponding results would require extra regularity assumptions on the admissible controls (see e.g. \cite{PMPWassConst,SetValuedPMP,PMPWass} for more details). This would also lead to heavy formulas involving the space derivatives of the admissible controls, without providing new insights on the results. Therefore, to lighten the presentation, we restrict our developments to open-loop controls.
\end{rmk}

Our goal in the aforedescribed context is to prove that the value function is  \textit{semiconcave} in a precise sense, as well as to derive the so-called \textit{sensitivity relations} which link the Hamiltonian and costates of the maximum principle to its superdifferentials. These latter have both been thoroughly studied in the context of classical control theory as further detailed below, and provide many interesting structural insights on optimal trajectories. 

Semiconcavity estimates have been known to appear in many problems coming from control theory or the calculus of variations for a fairly long time. In the general context of Hamilton-Jabobi-Bellman equations, it was noted as early as \cite{Capuzzo1984} that semiconcavity properties yield powerful quantitative stability estimates on the underlying solutions, a fact that is still frequently used to investigate various kinds of asymptotic properties in the context of mean-field games \cite{Cardaliaguet2017,Cardaliaguet2013} (see also the monograph \cite{Gomes2016}). We also point to the articles \cite{Gangbo2020,Gangbo2015}, devoted to the regularity theory for the so-called \textit{master equation}, in which semiconcavity plays a key role. More generally, semiconcave functions  have been used in other branches of control theory, notably to design stabilising feedbacks by Lyapunov methods \cite{Rifford2002} or to study the regularity of multivalued optimal feedbacks \cite{Cannarsa1991}. They also play an important part in the investigation of various problems in the field of sub-Riemannian geometry \cite{Rifford2014}, owing to the very structured nature of their singular sets. In addition to its numerous applications to variational problems, the notion of semiconcavity is frequently used in non-smooth analysis, as it ensures that several kinds of superdifferentials (Fr\'echet, Clarke, etc...) are non-empty and coincide \cite{CannarsaS2004}. This distinguishing feature also found its way into the theoretical foundations of subdifferential calculus in Wasserstein spaces. Indeed, it is shown throughout \cite{AGS} that the mirror notion of semiconvexity -- defined in a suitable sense along interpolating curves -- is the most natural one to ensure that extended measure subdifferentials are non-empty, as well as to derive quantitative decay estimates on gradient flows formulated in general metric spaces and in the space of probability measures. 

Sensitivity relations are a somewhat more specific -- but no less rich -- topic in optimal control theory. Established originally in \cite{Flemming1975} for $C^2$ value functions and subsequently extended in \cite{Clarke1987} to Lipschitz continuous value functions, they provide a link between the Pontryagin and Hamilton-Jacobi approaches to optimal control, by stating that the opposite of the adjoint variables of the maximum principle belong to the superdifferential of the value function. In the seminal paper \cite{Cannarsa1991} -- which served as a guiding thread for our present developments --, it was shown that sensitivity relations could be used in various ways to provide sufficient optimality conditions for Pontryagin extremals, see also \cite{Cannarsa2000}. Thus, sensitivity relations can be seen as the milestone supporting several connections bridging between the local uniqueness of optimal trajectories, the expression of optimal feedbacks, and differentiability properties of the value function. They have also been used in conjunction with semiconcavity estimates on several occasions, to investigate the regularity of generalised feedbacks mappings \cite{Cannarsa1991}, as well as to study propagations of regularity along optimal trajectories for the value function \cite{Cannarsa2013,Cannarsa2014}.

\bigskip 

This fruitful interplay between semiconcavity and sensitivity relations in the context of optimal control is what motivated the contributions of this paper, which can be summarised as follows. In Section \ref{section:Semiconcavity}, we show that the value function defined in \eqref{eq:ValueFunctionIntro} is semiconcave in three distinct ways. More specifically, we prove in Theorem \ref{thm:Semiconcavity1} that the latter is \textit{geodesically semiconcave} with respect to its second argument in the sense of \cite[Chapter 9]{AGS}, whenever the dynamics and cost functionals of the problem satisfy adequate interpolation inequalities along geodesics. We then show in Theorem \ref{thm:Semiconcavity2} that when the data of the optimal control problem satisfy similar interpolation estimates along arbitrary interpolating curves, these properties are bestowed upon the value function which is then \textit{strongly semiconcave} (see Definition \ref{def:Semiconcavity} below). Finally in Theorem \ref{thm:Semiconcavity3}, we prove that the value function is also semiconcave in the classical sense with respect to its first argument, provided that the non-local velocity field driving the problem is uniformly Lipschitz with respect to the time, measure and space variables. These regularity properties rely on the fine geodesic interpolation estimates between non-local flows established in Lemma \ref{lem:GeodesicInterpolationFlows}, which are based on novel structural results borrowed from \cite[Lemma 1]{ContInc}. In Section \ref{section:Sensitivity}, we shift our focus to sensitivity relations, which involve the intrinsic state-costate curves satisfying the maximum principle in Wasserstein spaces studied in \cite{MFPMP,PMPWassConst,SetValuedPMP,PMPWass}. In Theorem \ref{thm:Sensitivity1}, we prove that plans which are defined as an appropriate opposite of the costate measure belong to a localisation on compact sets of the extended Fr\'echet superdifferentials of the value function, defined in the sense of \cite[Chapter 10]{AGS}. The proof strategy subtending this result is then adapted in Theorem \ref{thm:Sensitivity2} to show that the \textit{barycentric projections} (see Theorem \ref{thm:Disintegration} below) of the state-costate curves with respect to their first marginals belong to the \textit{Dini superdifferential} of the value function (see Definition \ref{def:Value_GateauSuperDiff} below). In Section \ref{section:Applications}, we make use of these new results to investigate three topics that were previously mentioned for their relevance in the study of optimal control problems: the propagation of regularity for the value function along optimal trajectories, sufficient optimality conditions, and the regularity of optimal feedbacks. 

\begin{rmk}[On the choice of stating results both for geodesic and strong interpolations]
The reason why we study semiconcavity and sensitivity results involving both the intrinsic geodesic and optimal transport structures of Wasserstein spaces on the one hand, and arbitrary interpolations curves and perturbation directions on the other hand, is the following. While the former setting is more geometrically meaningful and potentially better suited to investigating problems arising in the calculus of variations, the latter is definitely more adapted to the analysis of control systems. Indeed, as contextually underlined in Remark \ref{rmk:FrechetSubdiff} below and further illustrated in Section \ref{section:Applications}, the class of variations that usually appear in control theory are generically not optimal displacement directions. Thus, while semiconcavity and sensitivity results expressed in terms of the intrinsic structures e.g. of \cite{AGS} are of high interest in themselves, they are not always applicable to control problems. 
\end{rmk}

Concerning the bibliographical positioning of our work, we would like to mention first that the results of Section \ref{section:Semiconcavity} can be loosely connected to existing contributions in the literature of mean-field control \cite{CavagnariM2019,CavagnariMP2018} and mean-field games \cite{Gangbo2020,Gangbo2015}. However, to the best of our knowledge, this article seems to be the first one to investigate semiconcavity properties at this level of generality for mean-field optimal control problems. In contrast, the sensitivity relations of Section \ref{section:Sensitivity} along with the applications presented in Section \ref{section:Applications} were not explored previously, and are therefore completely new. We point out that the results of Section \ref{section:Sensitivity} rely on general linearisation properties for non-local flows, which are exposed in Appendix \ref{section:AppendixFlowDiff}. While quite naturally expected, these results were not available at this degree of generality in the literature, and should constitute a useful addition to the optimal transport toolbox that is being developed for dynamical and variational problems studied in the space of probability measures. 

\bigskip

The structure of the article is the following. In Section \ref{section:Preliminaries}, we recollect classical notions pertaining to optimal transport theory, subdifferential calculus in Wasserstein spaces, continuity equations with non-local velocities and mean-field optimal control problems. While the corresponding concepts are mostly well-known, we stress that the notion of measure subdifferentials presented in Section \ref{subsection:SubdiffWass} is not exactly the one of \cite[Chapter 10]{AGS}, but rather its adaptation to compactly supported measures, following some recent results from \cite{SetValuedPMP}. In Section \ref{section:Semiconcavity}, we introduce two notions of semiconcavity in the spirit of \cite[Chapter 9]{AGS} for functionals defined over compactly supported measures, and establish semiconcavity estimates for the value function. We subsequently prove the Fr\'echet- and Dini-type sensitivity relations involving the Pontryagin costates in Section \ref{section:Sensitivity}, and leverage these latter together with the semiconcavity estimates to investigate various structural properties of mean-field optimal control problems in Section \ref{section:Applications}. Appendix \ref{section:AppendixFlowDiff} is devoted to the derivation of general linearisation formulas for non-local flows, while Appendices \ref{section:AppendixThm}, \ref{section:AppendixGronwall} and \ref{section:AppendixLem} contain technical results required at different stages in our arguments. 


\section{Preliminaries}
\label{section:Preliminaries}

We start by introducing several concepts that will appear throughout the manuscript in the formulations and proofs of our main results.


\subsection{Measure theory and optimal transport}

In this section, we recall a few notions of measure theory, functional analysis and optimal transport. We point to the reference monographs \cite{AmbrosioFuscoPallara,Aubin1990,DiestelUhl} for the two former and to \cite{AGS,OTAM,Villani1} for the latter.

For $d \geq 1$, we denote by $\Lcal^d$ the standard $d$-dimensional Lebesgue measure defined over $\R^d$. Given a separable Banach space $(X,\Norm{\cdot}_X)$ and a real number $p \in [1,+\infty]$, we use the notation $L^p(\Omega,X)$ for the space of $\Lcal^d$-integrable maps from a subset $\Omega \subset \R^d$ into $X$, defined in the sense of Bochner \cite{DiestelUhl}. It is then a well-known result in measure theory (see e.g. \cite[Chapter II - Theorem 9]{DiestelUhl}) that for every Bochner-integrable map $f \in L^1([0,T],X)$, there exists a subset of \textit{Lebesgue points} $\T_f \subset (0,T)$ of full $\Lcal^1$-measure, such that 
\begin{equation}
\label{eq:LebesguePoint_Banach}
\frac{1}{h} \INTSeg{\Norm{f(t) - f(\tau)}_X }{t}{\tau}{\tau+h} ~\underset{h \rightarrow 0^+}{\longrightarrow}~ 0, 
\end{equation}
for every $\tau \in \T_f$. In what follows, $C^0(\Scal,X)$ will stand for the vector space of \textit{continuous functions} from a metric space $(\Scal,d_{\Scal})$ into $(X,\Norm{\cdot}_X)$, and we shall denote by $\Lip(\phi(\cdot) ; \Scal)$ the Lipschitz constant of a map $\phi : \Scal \rightarrow X$. In the particular case where $\Omega := [0,T] \subset \R$ for some $T >0$, we will also use the notation $\AC([0,T],\Scal)$ for the metric space of \textit{absolutely continuous arcs} from $[0,T]$ into $\Scal$. Finally, we will denote by ``$\circ$'' the standard composition operation between functions.

Throughout this article, we denote by $\Pcal(X)$ the space of \textit{Borel probability measures} over a Banach space $(X,\Norm{\cdot}_X)$. The latter is endowed with the standard \textit{narrow topology}, induced by the weak-$^*$ convergence of measures 
\begin{equation}
\label{eq:NarrowTopo}
\mu_n ~\underset{n \rightarrow +\infty}{\rightharpoonup^*}~ \mu \qquad \text{if and only if} \qquad \INTDom{\xi(x)}{X}{\mu_n(x)} ~\underset{n \rightarrow +\infty}{\longrightarrow}~ \INTDom{\xi(x)}{X}{\mu(x)},
\end{equation}
in duality with continuous and bounded maps $\xi : X \rightarrow \R$. We will also denote by $\Pcal_p(X)$ the subset of measures $\mu \in \Pcal(X)$ whose \textit{momentum of order $p$} is finite, i.e.
\begin{equation*}
\M_p(\mu) := \bigg( \INTDom{|x|^p}{X}{\mu(x)}  \bigg)^{1/p} < +\infty.
\end{equation*}
In the sequel, we will often work with the subset $\Pcal_c(X)$ of elements $\mu \in \Pcal(X)$, whose \textit{supports}
\begin{equation*}
\supp(\mu) := \bigg\{ x \in X ~\text{s.t.}~ \mu(\Npazo_x) > 0 ~\text{for any neighbourhood $\Npazo_x$ of $x \in X$} \bigg\},
\end{equation*}
are compact. Finally in the case where $(X,\Norm{\cdot}_X) := (\R^d,|\cdot|)$,  we will use the notation $L^p(\Omega,\R^d;\mu)$ for the space of maps from $\Omega \subset \R^d$ into $\R^d$, and which are $p$-integrable against $\mu \in \Pcal(\R^d)$.

\begin{Def}[Pushforward of probability measures]
Given $\mu \in \Pcal(X)$ and a Borel map $f : X \rightarrow X$, we denote by $f_{\#} \mu \in \Pcal(X)$ the \textnormal{pushforward of $\mu$ through $f$}, which is the measure defined by $f_{\#} \mu(B) := \mu(f^{-1}(B))$ for any Borel set $B \subset X$. 
\end{Def}

\begin{Def}[Transport plans between measures]
Given $\mu,\nu \in \Pcal(X)$, the set of \textnormal{transport plans} $\Gamma(\mu,\nu)$ between $\mu$ and $\nu$ is the subset of elements $\gamma \in \Pcal(X \times X)$ such that $\pi^1_{\#} \gamma = \mu$ and $\pi^2_{\#} \gamma = \nu$, where $\pi^1,\pi^2 : X \times X \rightarrow X$ stand for the projection operators onto the first and second factor respectively. 
\end{Def}

Throughout the remainder of this section, we assume that $(X,\Norm{\cdot}_X) := (\R^d,|\cdot|)$. Given a real number $p \in [1,+\infty)$, it is a well-known fact in optimal transport theory that
\begin{equation*}
W_p(\mu,\nu) := \min \bigg\{ \Big( \INTDom{|x-y|^p}{\R^{2d}}{\gamma(x,y)} \Big)^{1/p} ~\text{s.t.}~ \gamma \in \Gamma(\mu,\nu) \bigg\}, 
\end{equation*}
defines a distance over $\Pcal_p(\R^d)$. We will henceforth denote by $\Gamma_o(\mu,\nu)$ the corresponding set of \textit{$p$-optimal transport plans} for which this minimum is attained\footnote{The fact that $\Gamma_o(\mu,\nu)$ is non-empty follows from the direct method of the calculus of variations.}. In the following proposition, we recall several interesting properties of the so-called \textit{Wasserstein spaces} $(\Pcal_p(\R^d),W_p)$.

\begin{prop}[Some properties of the Wasserstein spaces]
\label{prop:WassProp}
The spaces $(\Pcal_p(\R^d),W_p)$ are complete separable metric spaces, on which the $W_p$-distance metrises the narrow topology \eqref{eq:NarrowTopo}, in the sense that 
\begin{equation*}
W_p(\mu_n,\mu) ~\underset{n \rightarrow +\infty}{\longrightarrow}~ 0 \qquad \text{if and only if} \qquad \left\{
\begin{aligned}
& \hspace{2.1cm} \mu_n ~\underset{n \rightarrow +\infty}{\rightharpoonup^*}~ \mu, \\
& \INTDom{|x|^p}{\R^d}{\mu_n(x)} ~\underset{n \rightarrow +\infty}{\longrightarrow}~ \INTDom{|x|^p}{\R^d}{\mu(x)}, 
\end{aligned}
\right.
\end{equation*}
for every $(\mu_n) \subset \Pcal_p(\R^d)$ and $\mu \in \Pcal_p(\R^d)$. The Wasserstein distances between elements $\mu,\nu \in \Pcal(\R^d)$ are ordered, namely $W_{p_1}(\mu,\nu) \leq W_{p_2}(\mu,\nu)$ whenever $p_1 \leq p_2$, and in the particular case where $\mu,\nu \in \Pcal_c(\R^d)$, the following \textnormal{Kantorovich-Rubinstein duality} formula holds
\begin{equation}
\label{eq:KantorovichDuality}
W_1(\mu,\nu) = \sup \bigg\{ \INTDom{\phi(x)}{\R^d}{(\mu-\nu)(x)} ~\text{s.t.}~  \Lip(\phi(\cdot) ; \R^d) \leq 1 \bigg\}.
\end{equation}
\end{prop}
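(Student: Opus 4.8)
The statement to prove is Proposition~\ref{prop:WassProp}, which collects several standard facts about Wasserstein spaces. Since these are all classical, the plan is essentially to assemble the proof from known building blocks, citing the reference monographs where appropriate and filling in the short arguments that glue the pieces together.

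First I would establish completeness and separability of $(\Pcal_p(\R^d),W_p)$. Separability follows by exhibiting a countable dense set: finitely supported measures with rational weights located at points of $\Q^d$ are dense in $\Pcal_c(\R^d)$ for $W_p$ (approximate a compactly supported measure by pushing it forward through a finite partition of its support into small cubes), and $\Pcal_c(\R^d)$ is in turn $W_p$-dense in $\Pcal_p(\R^d)$ by a truncation argument (restrict $\mu$ to a large ball and renormalise, controlling the $p$-th moment tail). For completeness one takes a Cauchy sequence $(\mu_n)$ in $W_p$, notes that it is tight because the uniform bound on $W_p$-distances forces uniform integrability of $|x|^p$, extracts a narrowly convergent subsequence with limit $\mu$, checks $\mu \in \Pcal_p(\R^d)$ via lower semicontinuity of moments, and upgrades convergence of the whole sequence to $W_p$ using the Cauchy property together with lower semicontinuity of $W_p$ under narrow convergence.

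Next, the characterisation that $W_p$ metrises the narrow topology jointly with convergence of $p$-th moments. The forward implication uses that $W_p(\mu_n,\mu)\to 0$ bounds the difference of $p$-th moments (triangle inequality in $L^p(\gamma_n)$ for optimal plans $\gamma_n$) and dominates $W_1$, hence forces narrow convergence via Kantorovich--Rubinstein; the converse direction is the more delicate one — given narrow convergence and moment convergence, one shows tightness of a sequence of optimal plans $\gamma_n \in \Gamma_o(\mu_n,\mu)$, passes to a narrow limit $\gamma \in \Gamma(\mu,\mu)$, and uses uniform integrability of $|x-y|^p$ against $\gamma_n$ (a consequence of the moment convergence hypothesis) to pass to the limit in $\int |x-y|^p \dn\gamma_n$, obtaining $\limsup_n W_p^p(\mu_n,\mu) \le \int |x-y|^p \dn\gamma = 0$. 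This is the step I expect to be the main obstacle to write cleanly, because the uniform integrability argument requires a careful estimate; in practice one simply invokes \cite[Proposition 7.1.5]{AGS} or \cite[Theorem 6.9]{Villani1}.

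Finally, the monotonicity $W_{p_1}\le W_{p_2}$ for $p_1\le p_2$ is immediate from Jensen's (or H\"older's) inequality applied to any fixed $p_2$-optimal plan, since $\big(\int|x-y|^{p_1}\dn\gamma\big)^{1/p_1}\le\big(\int|x-y|^{p_2}\dn\gamma\big)^{1/p_2}$ for a probability measure $\gamma$; evaluating the left side at a $p_1$-optimal plan only makes it smaller. The Kantorovich--Rubinstein duality formula~\eqref{eq:KantorovichDuality} for $\mu,\nu\in\Pcal_c(\R^d)$ is the classical linear-programming duality for the Monge--Kantorovich problem with cost $|x-y|$; here compact support guarantees that the optimal Kantorovich potential can be taken $1$-Lipschitz and that all integrals are finite, so one may quote \cite[Theorem 1.14]{Villani1} or \cite[Section 3.1]{OTAM} directly. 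I would present the whole proposition as a recollection, giving the short self-contained arguments for separability, the moment-monotonicity, and the easy implication of the metrisation statement, and referring to \cite{AGS,OTAM,Villani1} for completeness and the harder implication.
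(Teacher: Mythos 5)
The paper gives no proof of Proposition~\ref{prop:WassProp}: the proposition sits in the preliminaries as a recollection of classical facts, with \cite{AGS,OTAM,Villani1} cited at the start of the section as the relevant references. Your decision to present it as a recollection, giving short self-contained arguments and citing the monographs for the harder pieces, therefore matches the paper's treatment, and the bulk of your sketch is the standard assembly.

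There is, however, one genuine gap in the step you yourself flag as the delicate one, namely the implication from narrow convergence plus $p$-th-moment convergence to $W_p(\mu_n,\mu)\to 0$. After extracting a narrow limit $\gamma\in\Gamma(\mu,\mu)$ of a subsequence of optimal plans $\gamma_n\in\Gamma_o(\mu_n,\mu)$ and passing to the limit in $\INTDom{|x-y|^p}{\R^{2d}}{\gamma_n(x,y)}$ via uniform integrability, you conclude that the limit equals $\INTDom{|x-y|^p}{\R^{2d}}{\gamma(x,y)}=0$. That last equality does not follow merely from $\gamma$ being a coupling of $\mu$ with itself: take $\mu=\tfrac12(\delta_a+\delta_b)$ and the permutation coupling, which lies in $\Gamma(\mu,\mu)$ but has strictly positive $p$-cost. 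The missing ingredient is the narrow stability of optimal transport plans, which guarantees that the narrow limit $\gamma$ is itself $p$-optimal in $\Gamma_o(\mu,\mu)$ and hence concentrated on the diagonal, i.e.\ $\gamma=(\Id,\Id)_{\#}\mu$, giving zero cost. This is precisely \cite[Proposition 7.1.3]{AGS}, so the reference you intend to invoke does cover it; but as written your chain of reasoning does not by itself justify the final equality, and this is worth stating explicitly rather than leaving implicit.
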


The Wasserstein metrics, in addition to their interesting topological and geometric properties, allow for various useful estimates. For instance, if $\mu \in \Pcal_p(\R^d)$ and $\zeta,\xi \in L^p(\R^d,\R^d;\mu)$, one has
\begin{equation}
\label{eq:WassEst1}
W_p \big( \zeta_{\#} \mu, \xi_{\#} \mu \big) \, \leq ~ \NormL{\zeta - \xi}{p}{\mu}.
\end{equation}
In addition, given a compact set $K \subset \R^d$, two elements $\mu,\nu \in \Pcal(K)$ and a Lipschitz map $\phi : K \rightarrow \R^d$, it holds
\begin{equation}
\label{eq:WassEst2}
W_p(\phi_{\#} \mu,\phi_{\#}\nu) \leq \Lip \big( \phi(\cdot) ;K \big) W_p(\mu,\nu).
\end{equation}
We end this series of prerequisites by recalling a variant of the well-known \textit{disintegration theorem} in the context of optimal transport, which we will use extensively in the sequel.

\begin{thm}[Disintegration]
\label{thm:Disintegration}
Let $\mu,\nu \in \Pcal_p(\R^d)$ and $\gamma \in \Gamma(\mu,\nu)$. Then, there exists a $\mu$-almost uniquely determined Borel map $x \in \R^d \mapsto \gamma_x \in \Pcal_p(\R^d)$ such that 
\begin{equation*}
\INTDom{\xi(x,y)}{\R^{2d}}{\gamma(x,y)} = \INTDom{\INTDom{\xi(x,y)}{\R^d}{\gamma_x(y)}}{\R^d}{\mu(x)}, 
\end{equation*}
for any $\xi \in L^1(\R^{2d},\R;\gamma)$. The family of measures $\{ \gamma_x \}_{x \in \R^d} \subset \Pcal_p(\R^d)$ is called the \textnormal{disintegration} of $\gamma$ against its first marginal $\pi^1_{\#} \gamma = \mu$, which will be denoted by $\gamma := \INTDom{\gamma_x}{\R^d}{\mu(x)}$. The \textnormal{barycentric projection} $\bar{\gamma} \in L^p(\R^d,\R^d;\mu)$ of $\gamma$ onto $\pi^1_{\#} \gamma = \mu$ is then defined as
\begin{equation}
\label{eq:Barycenter}
\bar{\gamma}(x) := \INTDom{y \,}{\R^d}{\gamma_x(y)},
\end{equation}
for $\mu$-almost every $x \in \R^d$. 
\end{thm}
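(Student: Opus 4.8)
The plan is to reduce the statement to the classical disintegration theorem for Radon probability measures on separable metric spaces (see e.g.\ \cite[Theorem 5.3.1]{AGS}), and then to upgrade its conclusion to the $\Pcal_p$-setting -- together with the construction of the barycentric projection -- by testing the resulting integral identity against the moment functional $y \mapsto |y|^p$. Concretely, I would first apply the abstract disintegration theorem to the probability measure $\gamma \in \Pcal(\R^{2d})$ and the Borel map $\pi^1 : \R^{2d} \rightarrow \R^d$, whose pushforward is $\pi^1_{\#} \gamma = \mu$ by hypothesis. This produces a $\mu$-almost uniquely determined Borel family $\{ \gamma_x' \}_{x \in \R^d} \subset \Pcal(\R^{2d})$ with $\gamma = \INTDom{\gamma_x'}{\R^d}{\mu(x)}$, each $\gamma_x'$ being concentrated on the fibre $(\pi^1)^{-1}(x) = \{x\} \times \R^d$ for $\mu$-almost every $x$. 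Since $\gamma_x'$ then charges only $\{x\} \times \R^d$, it is canonically identified with the measure $\gamma_x := \pi^2_{\#} \gamma_x' \in \Pcal(\R^d)$, and the disintegration of $\gamma$ rewrites, for every bounded Borel map $\xi : \R^{2d} \to \R$, as $\INTDom{\xi(x,y)}{\R^{2d}}{\gamma(x,y)} = \INTDom{\big( \INTDom{\xi(x,y)}{\R^d}{\gamma_x(y)} \big)}{\R^d}{\mu(x)}$. Extending this identity to arbitrary $\xi \in L^1(\R^{2d},\R;\gamma)$ is then a routine monotone-class argument: it passes to nonnegative Borel maps by truncation and monotone convergence, and then to general $\xi$ by splitting into positive and negative parts, the $\gamma$-integrability of $\xi$ ensuring that $y \mapsto \xi(x,y)$ is $\gamma_x$-integrable for $\mu$-almost every $x$.

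Next, I would establish the $\Pcal_p$-refinement. Because $\pi^2_{\#} \gamma = \nu \in \Pcal_p(\R^d)$, the map $\xi(x,y) := |y|^p$ satisfies $\INTDom{|y|^p}{\R^{2d}}{\gamma(x,y)} = \M_p(\nu)^p < +\infty$, so applying the extended disintegration identity to this choice of $\xi$ yields $\INTDom{\big( \INTDom{|y|^p}{\R^d}{\gamma_x(y)} \big)}{\R^d}{\mu(x)} = \M_p(\nu)^p < +\infty$. Hence $\M_p(\gamma_x)^p = \INTDom{|y|^p}{\R^d}{\gamma_x(y)} < +\infty$ for $\mu$-almost every $x$, i.e.\ $\gamma_x \in \Pcal_p(\R^d)$ on a set of full $\mu$-measure; after redefining $\gamma_x := \delta_0$ on the exceptional $\mu$-null set, one obtains a Borel map $x \mapsto \gamma_x$ valued in $\Pcal_p(\R^d)$, which remains $\mu$-almost uniquely determined by the uniqueness part of the abstract theorem.

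Finally, for the barycentric projection, the inclusion $\Pcal_p(\R^d) \subset \Pcal_1(\R^d)$ ensures that $\bar{\gamma}(x) := \INTDom{y \,}{\R^d}{\gamma_x(y)} \in \R^d$ is well-defined for $\mu$-almost every $x$. Its $\mu$-measurability follows either from applying the disintegration identity componentwise to the maps $\xi(x,y) := y_i \, \psi(x)$, with $\psi$ ranging over bounded Borel functions, or directly by writing each coordinate of $\bar{\gamma}$ as a $\mu$-almost everywhere pointwise limit (as $R \to +\infty$) of the Borel maps $x \mapsto \INTDom{\big( (y_i \wedge R) \vee (-R) \big)}{\R^d}{\gamma_x(y)}$. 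The $L^p$-bound is then a fibrewise Jensen inequality, $|\bar{\gamma}(x)|^p \leq \INTDom{|y|^p}{\R^d}{\gamma_x(y)}$ for $\mu$-almost every $x$, which upon integration against $\mu$ and the disintegration identity applied to $\xi(x,y) = |y|^p$ gives $\INTDom{|\bar{\gamma}(x)|^p}{\R^d}{\mu(x)} \leq \INTDom{|y|^p}{\R^{2d}}{\gamma(x,y)} = \M_p(\nu)^p < +\infty$, so that $\bar{\gamma} \in L^p(\R^d,\R^d;\mu)$.

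I do not expect any genuine obstacle in this argument, which is essentially a bookkeeping layer on top of the classical disintegration theorem; the only points requiring some care are the canonical identification of the fibre measures on $\{x\} \times \R^d$ with measures on $\R^d$, the monotone-class extension of the integral identity from bounded Borel maps to $L^1(\R^{2d},\R;\gamma)$, and ensuring that the Borel measurability of $x \mapsto \gamma_x$ is preserved under the $\Pcal_p$-valued reformulation and transfers to $\bar{\gamma}$.
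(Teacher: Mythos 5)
The paper does not actually prove Theorem \ref{thm:Disintegration}: it is explicitly introduced as a recalled ``variant of the well-known disintegration theorem in the context of optimal transport,'' with no argument given in the body of the paper, so there is no paper proof to compare yours against. Your proposal is nonetheless a correct and complete derivation along the standard lines one would expect: invoking the abstract disintegration theorem for $\gamma$ over $\pi^1$ (as in \cite[Theorem 5.3.1]{AGS}), identifying the fibre measures on $\{x\}\times\R^d$ with elements of $\Pcal(\R^d)$ via $\pi^2$, extending the integral identity from bounded Borel test functions to $L^1(\R^{2d},\R;\gamma)$ by a monotone-class argument, establishing $\gamma_x\in\Pcal_p(\R^d)$ for $\mu$-almost every $x$ by testing against $\xi(x,y):=|y|^p$ and using $\pi^2_{\#}\gamma=\nu\in\Pcal_p(\R^d)$, and finally obtaining $\bar{\gamma}\in L^p(\R^d,\R^d;\mu)$ via the fibrewise Jensen inequality. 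The measurability bookkeeping for $x\mapsto\gamma_x$ and $x\mapsto\bar{\gamma}(x)$ that you flag is exactly the part that requires care, and your treatment of it (either via coordinatewise test functions or truncation/limits) is sound.
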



\subsection{Subdifferential calculus in $(\Pcal_c(\R^d),W_2)$}
\label{subsection:SubdiffWass}

We adapt here some of the definitions and results of the theory of $\Pcal_2$-subdifferential calculus developed in \cite[Chapter 10]{AGS} to the setting of compactly supported measures. Throughout this section, we will write $\phi : \Pcal_c(\R^d) \rightarrow \R$ to mean the restriction of an extended real-valued functional $\phi : \Pcal_2(\R^d) \rightarrow \R \cup \{ \pm \infty \}$ such that $\Pcal_c(\R^d) \subset D(\phi) := \{ \mu \in \Pcal_2(\R^d) ~\text{s.t.}~ \phi(\mu) \neq \pm \infty \}$.   

In the following definition, we propose a localisation for compactly supported measures of the extended Fr\'echet subdifferential from \cite[Definition 10.3.1]{AGS}, in the spirit of \cite{SetValuedPMP}. Given an element $\mu \in \Pcal_c(\R^d)$ and $R > 0$, we denote by $B_R(\mu) := \cup_{x \in \supp(\mu)} B(x,R)$ the $R$-fattening of $\supp(\mu)$. 

\begin{Def}[Localised extended Fr\'echet sub and superdifferentials]
\label{def:Generalised_Subdiff}
We say that a plan $\Bgamma \in \Pcal_2(\R^{2d})$ belongs to the \textnormal{localised extended Fr\'echet subdifferential} $\Bpartial^-_{\loc} \phi(\mu)$ of $\phi(\cdot)$ at $\mu \in \Pcal_c(\R^d)$ provided that $\pi^1_{\#} \Bgamma = \mu$, and 
\begin{equation*}
\phi(\nu) - \phi(\mu) \geq \inf_{\Bmu \in \BGamma_o^{1,3}(\Bgamma,\nu)} \INTDom{\langle r,y-x \rangle}{\R^{3d}}{\Bmu(x,r,y)} + o_R(W_2(\mu,\nu)),
\end{equation*}
for every $R > 0$ and any $\nu \in \Pcal(B_R(\mu))$, where 
\begin{equation}
\label{eq:Bgamma_o13Def}
\BGamma_o^{1,3}(\Bgamma,\nu) := \Big\{ \Bmu \in \Pcal_2(\R^{3d}) ~\text{s.t.}~ \pi^{1,2}_{\#} \Bmu = \Bgamma ~\text{and}~ \pi^{1,3}_{\#} \Bmu \in \Gamma_o(\mu,\nu) \Big\}.
\end{equation}
Similarly, we define the \textnormal{localised extended Fr\'echet superdifferential} $\Bpartial_{\loc}^+ \phi(\mu)$ as the set of plans $\Bgamma \in \Pcal_2(\R^{2d})$ such that $(\pi^1,-\pi^2)_{\#} \Bgamma \in \Bpartial_{\loc}^-(-\phi)(\mu)$.
\end{Def}

\begin{Def}[Localised classical Fr\'echet sub and superdifferentials]
\label{def:Classical_Subdiff}
We say that a map $\xi \in L^2(\R^d,\R^d;\mu)$ belongs to the \textnormal{localised classical subdifferential} $\partial_{\loc}^- \phi(\mu)$ of $\phi(\cdot)$ at $\mu \in \Pcal_c(\R^d)$ if
\begin{equation*}
\phi(\nu) - \phi(\mu) \geq \inf_{\gamma \in \Gamma_o(\mu,\nu)} \INTDom{\langle \xi(x) , y-x \rangle}{\R^{2d}}{\gamma(x,y)} + o_R(W_2(\mu,\nu)), 
\end{equation*}
for every $R > 0$ and any $\nu \in \Pcal(B_{R}(\mu))$. Analogously, we define the \textnormal{localised classical superdifferential} $\partial_{\loc}^+ \phi(\mu)$ of $\phi(\cdot)$ as the set of maps $\xi \in L^2(\R^d,\R^d;\mu)$ such that $(-\xi) \in \partial_{\loc}^-(-\phi)(\mu)$.
\end{Def}

\begin{rmk}[Comparison between Definition \ref{def:Generalised_Subdiff} and Definition \ref{def:Classical_Subdiff}]
\label{rmk:FrechetSubdiff}
Originally, the notion of classical subdifferentiability was primarily used for measures $\mu \in \Pcal_2(\R^d)$ that are absolutely continuous with respect to $\Lcal^d$. Indeed, it can be shown in this context that the notions introduced in Definition \ref{def:Generalised_Subdiff} and Definition \ref{def:Classical_Subdiff} coincide when one replaces localised subdifferentials by standard ones, in the sense that $\Bgamma \in \Bpartial^- \phi(\mu)$ if and only if $\bar{\Bgamma} \in \partial^- \phi(\mu)$ (see \cite[Remark 10.3.3]{AGS}). In practice however, a wide range of functionals can be handled using the simpler notion of Definition \ref{def:Classical_Subdiff}, even for arbitrary measures $\mu \in \Pcal_2(\R^d)$ (see e.g. \cite{PMPWassConst,SetValuedPMP,CDLL,Gangbo2019} and references therein). In the present manuscript, we will make use of the concept of measure-valued subdifferentials provided by Definition \ref{def:Generalised_Subdiff} to state Fr\'echet-type sensitivity relations for the state-costate curves $t \in [0,T] \mapsto \nu^*(t) \in \Pcal_c(\R^{2d})$ satisfying the maximum principle of Theorem \ref{thm:PMP} below. On the other hand, the simpler structure displayed in Definition \ref{def:Classical_Subdiff} will be used to investigate stronger differentiability properties of dynamics and cost functionals defined over $\Pcal_c(\R^d)$.
\end{rmk}

We recall next a notion of localised differentiability for functionals defined over $\Pcal_c(\R^d)$. The latter was introduced by the authors of the present manuscript in \cite{SetValuedPMP}, taking inspiration from \cite{Gangbo2019}, and its formulation involves the so-called \textit{analytical tangent space} 
\begin{equation*}
\Tan_{\mu} \Pcal_2(\R^d) := \overline{\big\{ \nabla \xi(\cdot) ~\text{s.t.}~ \xi \in C^{\infty}_c(\R^d,\R) \big\}}^{L^2(\mu)}, 
\end{equation*}
to $(\Pcal_2(\R^d),W_2)$ at $\mu \in \Pcal_2(\R^d)$ (see \cite[Sections 8.4 and 12.4]{AGS}). We point in particular to \cite[Section 5]{SetValuedPMP} for some illustrations of the relevance of this notion of differentiability when studying smooth integral functionals.

\begin{Def}[Locally differentiable functionals]
\label{def:Classical_Diff} 
A map $\phi : \Pcal_c(\R^d) \rightarrow \R$ is said to be \textnormal{locally differentiable} at $\mu \in \Pcal_c(\R^d)$ if there exists an element $\nabla \phi(\mu) \in \Tan_{\mu} \Pcal_2(\R^d)$ -- called the \textnormal{Wasserstein gradient} of $\phi(\cdot)$ at $\mu \in \Pcal_c(\R^d)$ --, such that $\partial^-_{\loc} \phi(\mu) \cap \partial^+_{\loc} \phi(\mu) = \{ \nabla \phi(\mu) \}$. 

Similarly, given $m \geq 1$, a map $\phi : \Pcal_c(\R^d) \rightarrow \R^m$ is said to be locally differentiable at $\mu \in \Pcal_c(\R^d)$ if its components $(\phi_i(\cdot))_{1 \leq i \leq m}$ are locally differentiable in the sense just defined.
\end{Def}

From now on, we will drop the ``loc'' subscript in the sub and superdifferentials, as the only notions that we will use in the sequel are the localised ones introduced above. We recall in the next proposition a result derived in \cite[Proposition 3.6]{SetValuedPMP}, which provides a general first-order expansion formula along arbitrary plans for the Wasserstein gradient.

\begin{prop}[Chain rule along arbitrary transport plans]
\label{prop:GradientChainrule}
Let $\phi : \Pcal_c(\R^d) \rightarrow \R$ be locally differentiable at $\mu \in \Pcal_c(\R^d)$. Then for any $R >0$ and $\nu \in \Pcal(B_R(\mu))$, it holds
\begin{equation}
\label{eq:ChainruleStatement}
\phi(\nu) - \phi(\mu) = \INTDom{\langle \nabla \phi(\mu)(x) , y-x \rangle}{\R^{2d}}{\Bmu(x,y)} + o_R(W_{2,\Bmu}(\mu,\nu)), 
\end{equation}
for every $\Bmu \in \Gamma(\mu,\nu)$, where 
\begin{equation}
\label{eq:WeightedWass}
W_{2,\Bmu}(\mu,\nu) := \bigg( \INTDom{|x-y|^2}{\R^{2d}}{\Bmu(x,y)} \bigg)^{1/2}.
\end{equation}
Conversely, if there exists a map $\nabla\phi(\mu) \in \Tan_{\mu} \Pcal_2(\R^d)$ such that for every $R > 0$, any $\nu \in \Pcal(B_R(\mu))$ and each optimal transport plan $\Bmu \in \Gamma_o(\mu,\nu)$ the identity \eqref{eq:ChainruleStatement} is satisfied, then $\phi(\cdot)$ is locally differentiable at $\mu$, and $\nabla\phi(\mu)$ is its Wasserstein gradient.
\end{prop}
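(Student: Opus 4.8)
The plan is to prove the two implications separately; the direct one carries the substance.

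\emph{Direct implication.} Assume $\phi$ is locally differentiable at $\mu$, so that $v_\mu := \nabla\phi(\mu)$ belongs to $\partial^-\phi(\mu)\cap\partial^+\phi(\mu)$ with $v_\mu\in\Tan_\mu\Pcal_2(\R^d)$. The whole argument rests on one elementary estimate: for every $\psi\in C^\infty_c(\R^d,\R)$, every $\nu\in\Pcal_c(\R^d)$ and every plan $\Bmu\in\Gamma(\mu,\nu)$,
\begin{equation*}
\INTDom{\langle\nabla\psi(x),y-x\rangle}{\R^{2d}}{\Bmu(x,y)} = \INTDom{\psi(y)}{\R^d}{\nu(y)} - \INTDom{\psi(x)}{\R^d}{\mu(x)} + O\!\left( \Lip(\nabla\psi;\R^d)\, W_{2,\Bmu}^2(\mu,\nu) \right),
\end{equation*}
obtained by writing $\langle\nabla\psi(x),y-x\rangle = \big(\psi(y)-\psi(x)\big) - \INTSeg{\langle\nabla\psi(x+t(y-x))-\nabla\psi(x),y-x\rangle}{t}{0}{1}$, estimating the last term by $\tfrac12\Lip(\nabla\psi;\R^d)|y-x|^2$ and integrating against $\Bmu$; the crucial feature is that its leading term does not depend on $\Bmu$. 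Combining this with the Cauchy--Schwarz bound $\big| \INTDom{\langle v_\mu(x)-\nabla\psi(x),y-x\rangle}{\R^{2d}}{\Bmu(x,y)} \big| \le \NormL{v_\mu-\nabla\psi}{2}{\mu}\, W_{2,\Bmu}(\mu,\nu)$ and the density of the gradients of test functions in $\Tan_\mu\Pcal_2(\R^d)$, I obtain that $\Bmu\mapsto\INTDom{\langle v_\mu(x),y-x\rangle}{\R^{2d}}{\Bmu(x,y)}$ is \emph{independent of the plan up to $o_R\big(W_{2,\Bmu}(\mu,\nu)\big)$}: any two $\gamma_1,\gamma_2\in\Gamma_o(\mu,\nu)$ give the same value up to $o_R(W_2(\mu,\nu))$, uniformly, and any $\Bmu\in\Gamma(\mu,\nu)$ gives the same value as any $\gamma\in\Gamma_o(\mu,\nu)$ up to $o_R\big(W_{2,\Bmu}(\mu,\nu)\big)$.

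\emph{Conclusion of the direct implication.} Fix $R>0$, $\nu\in\Pcal(B_R(\mu))$ and $\gamma\in\Gamma_o(\mu,\nu)$. The plan-independence just established turns the infimum in Definition \ref{def:Classical_Subdiff} into a genuine value up to $o_R(W_2(\mu,\nu))$, so from $v_\mu\in\partial^-\phi(\mu)$ we get $\phi(\nu)-\phi(\mu)\ge\INTDom{\langle v_\mu(x),y-x\rangle}{\R^{2d}}{\gamma(x,y)} + o_R(W_2(\mu,\nu))$, and running the same argument for $-\phi$ with $v_\mu\in\partial^+\phi(\mu)$ yields the reverse inequality, hence the identity \eqref{eq:ChainruleStatement} along optimal plans. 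For an arbitrary $\Bmu\in\Gamma(\mu,\nu)$ I then replace $\INTDom{\langle v_\mu(x),y-x\rangle}{\R^{2d}}{\gamma(x,y)}$ by $\INTDom{\langle v_\mu(x),y-x\rangle}{\R^{2d}}{\Bmu(x,y)}$ at the price of $o_R\big(W_{2,\Bmu}(\mu,\nu)\big)$, and observe that $o_R(W_2(\mu,\nu)) = o_R\big(W_{2,\Bmu}(\mu,\nu)\big)$ since $W_2(\mu,\nu)\le W_{2,\Bmu}(\mu,\nu)$; this gives \eqref{eq:ChainruleStatement} for every plan.

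\emph{Converse implication.} Assume $v_\mu\in\Tan_\mu\Pcal_2(\R^d)$ satisfies \eqref{eq:ChainruleStatement} along every optimal plan. Taking $\Bmu=\gamma\in\Gamma_o(\mu,\nu)$, for which $W_{2,\gamma}(\mu,\nu)=W_2(\mu,\nu)$, and bounding $\INTDom{\langle v_\mu(x),y-x\rangle}{\R^{2d}}{\gamma(x,y)}$ below, resp. above, by the infimum, resp. supremum, over $\Gamma_o(\mu,\nu)$, shows at once that $v_\mu\in\partial^-\phi(\mu)$ and $v_\mu\in\partial^+\phi(\mu)$. To see that the intersection reduces to $\{v_\mu\}$, take $\xi\in\partial^-\phi(\mu)\cap\partial^+\phi(\mu)$ and test the defining inequalities against the perturbations $\nu_\varepsilon := (\Id+\varepsilon\nabla\psi)_{\#}\mu$ with $\psi\in C^\infty_c(\R^d,\R)$ and $0<\varepsilon<1/\Lip(\nabla\psi;\R^d)$, for which $(\Id,\Id+\varepsilon\nabla\psi)_{\#}\mu$ is the unique optimal plan and $W_2(\mu,\nu_\varepsilon)=O(\varepsilon)$: the two inequalities force $\phi(\nu_\varepsilon)-\phi(\mu) = \varepsilon\INTDom{\langle\xi(x),\nabla\psi(x)\rangle}{\R^d}{\mu(x)} + o(\varepsilon)$, whereas \eqref{eq:ChainruleStatement} for $v_\mu$ along that same plan gives $\phi(\nu_\varepsilon)-\phi(\mu) = \varepsilon\INTDom{\langle v_\mu(x),\nabla\psi(x)\rangle}{\R^d}{\mu(x)} + o(\varepsilon)$. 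Dividing by $\varepsilon$ and sending $\varepsilon\to0^+$ gives $\INTDom{\langle\xi(x)-v_\mu(x),\nabla\psi(x)\rangle}{\R^d}{\mu(x)}=0$ for all $\psi\in C^\infty_c(\R^d,\R)$, i.e. $\xi-v_\mu$ is $L^2(\mu)$-orthogonal to $\Tan_\mu\Pcal_2(\R^d)$; since $v_\mu\in\Tan_\mu\Pcal_2(\R^d)$ and, by the structural properties of the localised subdifferentials, the element of $\partial^-\phi(\mu)\cap\partial^+\phi(\mu)$ relevant for differentiability lies in $\Tan_\mu\Pcal_2(\R^d)$, one concludes $\xi=v_\mu$.

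\emph{Main obstacle.} The decisive step is the plan-independence estimate: the point is that once paired against a \emph{tangent} field the discrepancy between two couplings is controlled by the \emph{square} of the transport cost, not the cost itself — this is exactly what makes the hypothesis $v_\mu\in\Tan_\mu\Pcal_2(\R^d)$ indispensable (without it the functional $\Bmu\mapsto\INTDom{\langle v_\mu(x),y-x\rangle}{\R^{2d}}{\Bmu(x,y)}$ would only be controlled at order $W_{2,\Bmu}(\mu,\nu)$, not $o_R(W_{2,\Bmu}(\mu,\nu))$) and what allows the passage from optimal to arbitrary plans with the sharp normalisation $W_{2,\Bmu}(\mu,\nu)$. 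A secondary delicate point is the singleton assertion in the converse, which rests on the fact that the elements of $\partial^-\phi(\mu)\cap\partial^+\phi(\mu)$ pertinent to differentiability lie in the analytical tangent space.
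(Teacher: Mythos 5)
Note that this paper does not itself prove Proposition \ref{prop:GradientChainrule}; it is imported from \cite[Proposition 3.6]{SetValuedPMP}. I therefore assess your proof against the statement and the surrounding framework rather than against a proof printed here. Your direct implication is correct and follows the expected route: the plan-independence estimate for gradients of test functions (leading term $\int\psi\,\textnormal{d}\nu - \int\psi\,\textnormal{d}\mu$, plan-dependent remainder of size $\Lip(\nabla\psi)\,W_{2,\Bmu}^2$), Cauchy--Schwarz to absorb the approximation of $\nabla\phi(\mu)$ by $\nabla\psi$, and the density of smooth gradients in $\Tan_\mu\Pcal_2(\R^d)$ to make all of this a genuine $o_R$. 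You correctly observe that the choice of $\psi$ depends only on the target tolerance, which gives uniformity in $\nu$ and $\Bmu$, and the final passage from $o_R(W_2)$ to $o_R(W_{2,\Bmu})$ via $W_2 \le W_{2,\Bmu}$ is sound. You also correctly identify where the hypothesis $\nabla\phi(\mu)\in\Tan_\mu\Pcal_2(\R^d)$ does the work.

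The converse implication has a genuine gap in the singleton step. Your argument correctly produces $\int\langle\xi(x)-v_\mu(x),\nabla\psi(x)\rangle\,\textnormal{d}\mu(x)=0$ for every test function $\psi$, that is, $\xi - v_\mu$ is $L^2(\mu)$-orthogonal to $\Tan_\mu\Pcal_2(\R^d)$. But this only identifies the \emph{tangent component} of $\xi$; to conclude $\xi=v_\mu$ you also need $\xi\in\Tan_\mu\Pcal_2(\R^d)$, and you deflect that to an unspecified ``structural property of the localised subdifferentials''. The point is precisely where the subtlety lies: on the face of Definition \ref{def:Classical_Subdiff}, the barycentric velocity $\bar\gamma - \Id$ of a $2$-optimal plan is tangent, so $\inf_{\gamma\in\Gamma_o(\mu,\nu)}\int\langle\xi,y-x\rangle\,\textnormal{d}\gamma$ is insensitive to translating $\xi$ by a fixed element of $\Tan_\mu\Pcal_2(\R^d)^\perp$, which means $\partial^-\phi(\mu)\cap\partial^+\phi(\mu)$ is, as written, stable under such translations rather than a singleton. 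Either you must prove explicitly that every element of the intersection lies in the tangent space (which your perturbation argument, restricted to directions $\nabla\psi$, does not do), or you must read Definition \ref{def:Classical_Diff} as intersecting with $\Tan_\mu\Pcal_2(\R^d)$, in which case the step is immediate. As it stands, the final conclusion of the converse is asserted rather than established.
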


We end this section by a vector-valued version of the chain rule of Proposition \ref{prop:GradientChainrule}.

\begin{cor}[Chain rule for vector-valued maps]
\label{cor:DiffChainrule}
Suppose that $\phi : \Pcal_c(\R^d) \rightarrow \R^m$ is locally differentiable. Then, for any $R>0$, every $\nu \in \Pcal(B_R(\mu))$ and each $\Bmu \in \Gamma(\mu,\nu)$, it holds
\begin{equation*}
\phi(\nu) = \phi(\mu) + \INTDom{\D \phi(\mu)(x)(y-x)}{\R^{2d}}{\Bmu(x,y)} + o_R(W_{2,\Bmu}(\mu,\nu)).
\end{equation*}
Here, $x \in \R^d \mapsto \D \phi(\mu)(x) := (\nabla \phi_i(\mu)(x))_{1 \leq i \leq m} \in \R^{m \times d}$ is the matrix-valued map whose rows are the Wasserstein gradients of the components $(\phi_i(\cdot))_{1 \leq i \leq m}$ of $\phi(\cdot)$ at $\mu \in \Pcal_c(\R^d)$. 
\end{cor}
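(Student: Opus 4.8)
The plan is to derive the vector-valued chain rule componentwise from Proposition~\ref{prop:GradientChainrule}, with the only genuine point requiring care being the reconciliation of the $m$ separate remainder terms into a single $o_R(W_{2,\Bmu}(\mu,\nu))$. First I would fix $R>0$, $\nu\in\Pcal(B_R(\mu))$ and an arbitrary plan $\Bmu\in\Gamma(\mu,\nu)$. Since $\phi$ is locally differentiable, each component $\phi_i : \Pcal_c(\R^d)\rightarrow\R$ is locally differentiable at $\mu$ with Wasserstein gradient $\nabla\phi_i(\mu)\in\Tan_\mu\Pcal_2(\R^d)$, so Proposition~\ref{prop:GradientChainrule} applies to each $\phi_i$ and yields
\begin{equation*}
\phi_i(\nu) - \phi_i(\mu) = \INTDom{\langle \nabla \phi_i(\mu)(x) , y-x \rangle}{\R^{2d}}{\Bmu(x,y)} + o_R(W_{2,\Bmu}(\mu,\nu)),
\end{equation*}
for $i = 1,\dots,m$. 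Stacking these $m$ scalar identities into a column vector and recognising that the $i$-th entry of $\D\phi(\mu)(x)(y-x)\in\R^m$ is precisely $\langle\nabla\phi_i(\mu)(x),y-x\rangle$, the integral terms assemble into $\INTDom{\D \phi(\mu)(x)(y-x)}{\R^{2d}}{\Bmu(x,y)}$, interpreting the integral of an $\R^m$-valued map componentwise.

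The step I expect to be the (mild) main obstacle is handling the remainders: a priori one obtains $m$ distinct functions $o_R^{(i)}(\cdot)$, each going to zero faster than $W_{2,\Bmu}(\mu,\nu)$ as $\nu\to\mu$, and one must argue their sum — or rather the vector whose components they are — is again a legitimate $o_R(W_{2,\Bmu}(\mu,\nu))$. This is resolved by the elementary observation that a finite sum of functions that are $o_R$ of a given quantity is itself $o_R$ of that quantity, together with the equivalence of norms on $\R^m$; one simply notes that the $\R^m$-valued error term $e(\nu) := \phi(\nu)-\phi(\mu)-\INTDom{\D \phi(\mu)(x)(y-x)}{\R^{2d}}{\Bmu(x,y)}$ satisfies $|e(\nu)| \leq \sum_{i=1}^m |o_R^{(i)}(W_{2,\Bmu}(\mu,\nu))|$, and the right-hand side divided by $W_{2,\Bmu}(\mu,\nu)$ tends to $0$, so $e(\nu) = o_R(W_{2,\Bmu}(\mu,\nu))$ as an $\R^m$-valued quantity. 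One should also remark that the definition of $\D\phi(\mu)(x)$ as the matrix with rows $\nabla\phi_i(\mu)(x)$ is exactly what makes the bookkeeping between the matrix-vector product $\D\phi(\mu)(x)(y-x)$ and the list of scalar pairings $(\langle\nabla\phi_i(\mu)(x),y-x\rangle)_{1\le i\le m}$ tautological, so no further computation is needed there.

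Finally I would note that measurability and integrability are inherited from the scalar case: each $x\mapsto\nabla\phi_i(\mu)(x)$ lies in $L^2(\R^d,\R^d;\mu)$ (indeed in $\Tan_\mu\Pcal_2(\R^d)$), hence $x\mapsto\D\phi(\mu)(x)$ lies in $L^2(\R^d,\R^{m\times d};\mu)$, and since $\Bmu\in\Gamma(\mu,\nu)$ has first marginal $\mu$ and both $\mu,\nu$ have compact support (so $|y-x|$ is bounded $\Bmu$-a.e.), the integrand $\D\phi(\mu)(x)(y-x)$ is $\Bmu$-integrable and the integral in the statement is well-defined as an element of $\R^m$. This completes the argument; no appeal to the converse part of Proposition~\ref{prop:GradientChainrule} is needed, as we only require the forward implication applied to each component.
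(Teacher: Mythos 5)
Your argument is correct and is precisely the componentwise reduction that the paper leaves implicit: the corollary is stated without proof, being an immediate consequence of Definition~\ref{def:Classical_Diff} (local differentiability of a vector-valued map is defined componentwise) together with Proposition~\ref{prop:GradientChainrule} applied to each $\phi_i$. Your handling of the remainders (a finite sum of $o_R(W_{2,\Bmu}(\mu,\nu))$ terms is again $o_R(W_{2,\Bmu}(\mu,\nu))$, by equivalence of norms on $\R^m$) and your integrability remark are both sound and complete the argument.
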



\subsection{Continuity equations with non-local velocities in $\R^d$}
\label{subsection:NonLocalCE}

In this section, we recollect several fundamental results on non-local continuity equations defined over the metric space $(\Pcal_c(\R^d),W_1)$. We refer to \cite{AmbrosioC2014} and \cite[Chapter 8]{AGS} as well as to their references for an exhaustive treatment of continuity equations with measure-independent driving fields, and to \cite{AmbrosioGangbo,ContInc,Pedestrian} for the main well-posedness results on non-local continuity equations.

Given a \textit{non-local velocity field} $v : [0,T] \times \Pcal_c(\R^d) \times \R^d \rightarrow \R^d$ and a measure $\mu^0 \in \Pcal_c(\R^d)$, we consider the Cauchy problem
\begin{equation}
\label{eq:NonLocal_CE}
\left\{
\begin{aligned}
& \partial_t \mu(t) + \Div_x \big( v(t,\mu(t)) \mu(t) \big) = 0, \\
& \mu(0) = \mu^0,
\end{aligned}
\right.
\end{equation}
where the first line of \eqref{eq:NonLocal_CE} needs to be understood in the sense of distribution against smooth and compactly supported functions, namely
\begin{equation}
\label{eq:NonLocal_Distrib1}
\INTSeg{\INTDom{\Big( \partial_t \xi(t,x) + \langle \nabla_x \xi(t,x) , v(t,\mu(t),x) \rangle \Big)}{\R^d}{\mu(t)(x)}}{t}{0}{T} = 0, 
\end{equation}
for every $\xi \in C^{\infty}_c((0,T) \times \R^d,\R)$. This identity can be equivalently rewritten\footnote{The equivalence between these expressions follows by intregrating by parts against test functions of the form $(t,x) \in [0,T] \times \R^d \mapsto \zeta(t) \phi(x) \in \R^d$ with $\zeta \in C^{\infty}_c((0,T),\R)$ and $\psi \in C^{\infty}_c(\R^d,\R)$, see e.g. \cite{AmbrosioC2014} and \cite[Chapter 8]{AGS}.} as 
\begin{equation}
\label{eq:NonLocal_Distrib2}
\derv{}{t} \INTDom{\psi(x)}{\R^d}{\mu(t)(x)} = \INTDom{\langle \nabla \psi(x) , v(t,\mu(t),x) \rangle}{\R^d}{\mu(t)(x)}, 
\end{equation}
for any $\psi \in C^{\infty}_c(\R^d,\R)$ and $\Lcal^1$-almost every $t \in [0,T]$. In the sequel, we will often identify non-local velocity fields $(t,\mu,x) \mapsto v(t,\mu,x) \in \R^d$ with vector field valued map $t \mapsto v(t,\cdot,\cdot) \in C^0(\Pcal_c(\R^d) \times \R^d,\R^d)$, whose measurability and integrability are understood in the sense of Definition \ref{def:IntegralC0} below. Throughout this section, we will make use of the following set of assumptions. 

\begin{taggedhyp}{\textbn{(CE)}}
\label{hyp:CE}
Suppose that for any $R > 0$, the following holds with $K := B(0,R)$. 
\begin{enumerate}
\item[$(i)$] The application $t \in [0,T] \mapsto v(t,\mu,x) \in \R^d$ is $\Lcal^1$-measurable for any $(\mu,x) \in \Pcal_c(\R^d) \times \R^d$, and there exists a map $m(\cdot) \in L^1([0,T],\R_+)$ such that 
\begin{equation*}
|v(t,\mu,x)| \leq m(t) \Big( 1 + |x| + \M_1(\mu) \Big), 
\end{equation*}
for $\Lcal^1$-almost every $t \in [0,T]$ and any $(\mu,x) \in \Pcal_c(\R^d) \times \R^d$.
\item[$(ii)$] There exist two maps $l_K(\cdot),L_K(\cdot) \in L^1([0,T],\R_+)$ such that 
\begin{equation*}
|v(t,\mu,x) - v(t,\mu,y)| \leq l_K(t) |x-y| \qquad \text{and} \qquad |v(t,\mu,x) - v(t,\nu,x)| \leq L_K(t) W_1(\mu,\nu), 
\end{equation*}
for $\Lcal^1$-almost every $t \in [0,T]$, any $\mu,\nu \in \Pcal(K)$ and all $x,y \in K$.
\item[$(iii)$] The map $x \in  \R^d \mapsto v(t,\mu,x) \in \R^d$ is Fr\'echet-differentiable for $\Lcal^1$-almost every $t \in [0,T]$ and any $\mu \in \Pcal_c(\R^d)$, and $(\mu,x) \in \Pcal_1(K) \times K \mapsto \D_x v(t,\mu,x) \in \R^{d \times d}$ is continuous. 
\item[$(iv)$] The map $\mu \in \Pcal_c(\R^d) \mapsto v(t,\mu,x) \in \R^d$ is locally differentiable for $\Lcal^1$-almost every $t \in [0,T]$ and any $x \in \R^d$, and $(\mu,x,y) \in \Pcal_1(K) \times K \times K \mapsto \D_{\mu} v(t,\mu,x)(y) \in \R^{d \times d}$ is continuous.
\end{enumerate}
\end{taggedhyp}

Hypotheses \ref{hyp:CE}-$(i),(ii)$ are standard sub-linearity and Cauchy-Lipschitz type regularity assumptions which ensure that \eqref{eq:NonLocal_CE} is well-posed, while \ref{hyp:CE}-$(iii),(iv)$ are needed to formulate the PMP recalled in Theorem \ref{thm:PMP} and the technical linearisation results of Appendices  \ref{section:AppendixFlowDiff}, \ref{section:AppendixThm} and \ref{section:AppendixGronwall}. 

\begin{Def}[Non-local flows of diffeomorphisms] 
\label{def:NonlocalFlows}
Let $v : [0,T] \times \Pcal_c(\R^d) \times \R^d \rightarrow \R^d$ be a non-local velocity field satisfying hypotheses \ref{hyp:CE}-$(i),(ii)$. For any compact set $K \subset \R^d$, we define the \textnormal{non-local flow of diffeomorphisms} $\Phi_{(\tau,\cdot)}[\mu](\cdot) \in C^0([0,T] \times K,\R^d)$ starting from $\mu \in \Pcal(K)$ at time $\tau \in [0,T]$ as the unique solution of the non-local Cauchy problem
\begin{equation}
\label{eq:NonLocalFlow_Def}
\Phi_{(\tau,t)}[\mu](x) = x + \INTSeg{v \Big( s , \Phi_{(\tau,s)}[\mu](\cdot)_{\#} \mu , \Phi_{(\tau,s)}[\mu](x) \Big)}{s}{\tau}{t}, 
\end{equation}
for every $(t,x) \in [0,T] \times K$.
\end{Def}

\begin{rmk}[Existence and uniqueness of non-local flows]
Given $(\tau,\mu) \in [0,T] \times \Pcal(K)$, the existence and uniqueness of $\Phi_{(\tau,\cdot)}[\mu](\cdot) \in C^0([0,T] \times K,\R^d)$ can be obtained under hypotheses \ref{hyp:CE}-$(i),(ii)$ by a fixed point argument that is detailed in the proof of Theorem \ref{thm:FlowDiff} in Appendix \ref{section:AppendixThm} below. 
\end{rmk}

We recall next the main well-posedness, stability and representation results for non-local continuity equations in the Cauchy-Lipschitz setting, for which we refer to \cite{ContInc}. In the sequel, we will write $\Norm{\cdot}_1 := \NormL{\cdot}{1}{[0,T],\R_+}$ to mean the $L^1$-norm of a positive real-valued map defined over $[0,T]$. 

\begin{thm}[Well-posedness and flow representation for solutions of \eqref{eq:NonLocal_CE}]
\label{thm:NonLocalCE}
Let $\mu^0 \in \Pcal_c(\R^d)$ and $v : [0,T] \times \Pcal_c(\R^d) \times \R^d \rightarrow \R^d$ be a non-local velocity field satisfying \ref{hyp:CE}-$(i),(ii)$. 

Then, there exists a unique curve of measures $\mu(\cdot)$ solution of \eqref{eq:NonLocal_CE}. Furthermore for every $r > 0$, any $\mu^0 \in \Pcal(B(0,r))$ and all times $0 \leq \tau \leq t \leq T$, it holds
\begin{equation}
\label{eq:SuppAC_Est}
\supp(\mu(t)) \subset K := B(0,R_r) \qquad \text{and} \qquad W_1(\mu(\tau),\mu(t)) \leq \INTSeg{m_r(s)}{s}{\tau}{t},
\end{equation}
where
\begin{equation*}
R_r := (r \, +\Norm{m(\cdot)}_1) \Big( 1+ T\exp(2\Norm{m(\cdot)}_1) \Big) \qquad \text{and} \qquad m_r(\cdot) := (1+2R_r)m( \cdot) \in L^1([0,T],\R_+).
\end{equation*}
Moreover, the unique measure curve $\mu(\cdot)$ solving \eqref{eq:NonLocal_CE} is such that
\begin{equation}
\label{eq:FlowRep}
\mu(t) = \Phi_{(\tau,t)}[\mu(\tau)](\cdot)_{\#} \mu(\tau),
\end{equation}
for all times $\tau,t \in [0,T]$, where the family of non-local flows $(\Phi_{(\tau,t)}[\mu(\tau)](\cdot))_{\tau,t \in [0,T]}$ satisfies the following \textnormal{semigroup property} 
\begin{equation*}
\Phi_{(\tau,t)}[\mu(\tau)](\cdot) = \Phi_{(s,t)}[\mu(s)] \circ \Phi_{(\tau,s)}[\mu(\tau)](\cdot), 
\end{equation*}
in $C^0(K,\R^d)$ for every $\tau,s,t \in [0,T]$.
\end{thm}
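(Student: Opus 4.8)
The plan is to decouple the non-local problem into a family of classical (measure-independent) continuity equations via a fixed-point argument, then transfer the well-known Cauchy-Lipschitz theory for ODEs to the level of measures. Concretely, I would first fix $r>0$ and $\mu^0 \in \Pcal(B(0,r))$, and work on the complete metric space $\Ccal := C^0([0,T],(\Pcal_c(B(0,\rho)),W_1))$ for a radius $\rho$ to be chosen. Given any $\mu(\cdot) \in \Ccal$, the time-dependent vector field $w(t,x) := v(t,\mu(t),x)$ satisfies classical Cauchy-Lipschitz bounds by \ref{hyp:CE}-$(i),(ii)$: it is $\Lcal^1$-measurable in $t$, sublinear with rate $m(t)(1+|x|+\M_1(\mu(t)))$, and $l_K(t)$-Lipschitz in $x$ on each ball. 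Hence its ODE flow $X_{(\tau,\cdot)}^{w}(\cdot)$ exists and is unique on $[0,T]$, and the pushforward $\nu(t) := X_{(\tau,t)}^w(\cdot)_{\#}\mu^0$ is the unique solution of the corresponding \emph{linear} continuity equation (this is the classical result in \cite[Chapter~8]{AGS}). This defines a map $\Gcal : \mu(\cdot) \mapsto \nu(\cdot)$ on $\Ccal$, whose fixed points are exactly the solutions of \eqref{eq:NonLocal_CE}.

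Next I would establish the a priori support and modulus-of-continuity estimates \eqref{eq:SuppAC_Est}, which simultaneously show that $\Gcal$ maps a suitable closed ball of $\Ccal$ into itself. Applying Gr\"onwall to $|X_{(\tau,t)}^w(x)|$ using the sublinear bound and the elementary inequality $\M_1(\mu(t)) \le \sup_{x\in\supp\mu(t)}|x|$ yields the uniform bound $\supp(\nu(t)) \subset B(0,R_r)$ with $R_r$ as stated, so one may take $\rho = R_r$. The Lipschitz-in-time estimate $W_1(\nu(\tau),\nu(t)) \le \int_\tau^t m_r(s)\,\dn s$ follows from \eqref{eq:WassEst1} applied to the flow maps at times $\tau$ and $t$, together with the integral form \eqref{eq:NonLocalFlow_Def} of the flow and the bound $|w(s,x)| \le m(s)(1+2R_r) = m_r(s)$ on the invariant ball. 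For the contraction, given two curves $\mu_1(\cdot), \mu_2(\cdot)$ I would compare the associated ODE flows: using the two Lipschitz estimates of \ref{hyp:CE}-$(ii)$ (one in $x$ with constant $l_K$, one in the measure argument with constant $L_K$) and \eqref{eq:WassEst1}, a Gr\"onwall argument on $\sup_{x}|X_{(\tau,t)}^{w_1}(x) - X_{(\tau,t)}^{w_2}(x)|$ gives $\sup_{[0,T]} W_1(\Gcal\mu_1(t),\Gcal\mu_2(t)) \le C\int_0^T \!\!\int_0^t \! L_K(s)\,\dn s\,\dn t \cdot \sup_{[0,T]}W_1(\mu_1,\mu_2)$-type bounds; this is a contraction on $[0,\delta]$ for $\delta$ small (depending only on $\|L_K\|_1$), and one iterates over consecutive intervals $[k\delta,(k+1)\delta]$ to cover $[0,T]$, giving existence and uniqueness of the fixed point.

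Finally, the flow representation \eqref{eq:FlowRep} is immediate once the fixed point $\mu(\cdot)$ is identified: for this particular $\mu(\cdot)$, the field $w(t,x) = v(t,\mu(t),x)$ is exactly the one appearing in \eqref{eq:NonLocalFlow_Def}, so its ODE flow \emph{is} $\Phi_{(\tau,t)}[\mu(\tau)](\cdot)$ (noting $\mu(\tau)(\cdot)_{\#}$-consistency, since $\Phi_{(\tau,s)}[\mu(\tau)](\cdot)_{\#}\mu(\tau) = \mu(s)$ by the fixed-point property and uniqueness of the linear problem), and $\mu(t) = \Phi_{(\tau,t)}[\mu(\tau)](\cdot)_{\#}\mu(\tau)$. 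The semigroup property then follows from the corresponding semigroup (concatenation) property of ODE flows for the single time-dependent field $w$, combined with uniqueness; here the key point is that the \emph{same} curve $\mu(\cdot)$ drives the field over all subintervals, so $\Phi_{(s,t)}[\mu(s)]$ is genuinely the flow of $w$ on $[s,t]$ and concatenation at $s$ is legitimate. I expect the main obstacle to be bookkeeping the interdependence between the invariant radius $R_r$, the local Lipschitz constants $l_K, L_K$ (which depend on $K = B(0,R_r)$), and the contraction time $\delta$ — one must fix $R_r$ first from the sublinear bound alone (so that $K$, and hence $l_K, L_K$, are determined) before running the contraction argument, rather than trying to close all estimates simultaneously.
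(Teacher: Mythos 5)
Your decoupling strategy --- Banach fixed-point on curves of measures --- is a legitimate alternative. Note, however, that the paper does not actually prove this theorem but cites \cite{ContInc}, and the only in-text sketch (the remark after Definition~\ref{def:NonlocalFlows}, filled in inside the proof of Theorem~\ref{thm:FlowDiff} in Appendix~B) performs a Banach fixed-point directly on the \emph{flow map}, via the operator $\Lambda_\tau(\nu,\Phi)(t,x)=x+\int_\tau^t v(s,\Phi(s,\cdot)_{\#}\nu,\Phi(s,x))\,\dn s$ on $C^0([0,T]\times K,\R^d)$, using a weighted sup-norm $\NormC{\cdot}{0}{[0,T]\times K,\R^d}^{\Lpazo_K}$ that yields a contraction factor $\tfrac{1}{2}$ uniformly in $T$, so no short-interval splitting is needed. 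Iterating on flows rather than on measure curves also avoids the self-mapping issue flagged below, since $C^0([0,T]\times K,\R^d)$ carries no compact-support constraint that $\Gcal$ must preserve.

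That self-mapping step is a genuine gap in your plan. Given an iterate $\mu(\cdot)$ with $\supp(\mu(t))\subset B(0,\rho)$, the only input you have is $\M_1(\mu(t))\leq\rho$, and Gr\"onwall then gives $|X^w_{(\tau,t)}(x)|\leq(r+(1+\rho)\Norm{m(\cdot)}_1)\,e^{\Norm{m(\cdot)}_1}$ for $x\in B(0,r)$. This is $\leq\rho$ for some $\rho$ \emph{only if} $\Norm{m(\cdot)}_1\,e^{\Norm{m(\cdot)}_1}<1$; otherwise no choice of $\rho$ closes the loop and $\Gcal$ does not map $\Ccal$ into itself. Your suggestion of "fixing $R_r$ from the sublinear bound alone" does not break this circularity: the clean a priori bound $a(t)\leq r+\int_\tau^t m(s)(1+2a(s))\,\dn s$ (which gives the stated $R_r$) relies on $\M_1(\mu(s))\leq a(s)$, and that relation holds for the genuine fixed point, whose measure is pushed forward by its own flow, but not for a generic $\mu(\cdot)\in\Ccal$ that you feed into $\Gcal$. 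To repair this on the measure-curve side you must either (i) truncate the velocity so that the $\M_1$-term in \ref{hyp:CE}-$(i)$ is capped at $R_r$, run the fixed-point for the truncated field, and verify a posteriori that the cap never activates, or (ii) run the contraction on subintervals $[t_k,t_{k+1}]$ with $\int_{t_k}^{t_{k+1}}m$ small enough that the per-interval invariance closes, \emph{and} track how the invariant radii compound across subintervals --- you mention the subintervals only for the contraction constant, but the support bookkeeping must be done in tandem, and that is exactly where the $\exp(2\Norm{m(\cdot)}_1)$ in $R_r$ comes from. The final steps of your plan --- identifying $X^w=\Phi_{(\tau,\cdot)}[\mu(\tau)]$ by uniqueness of the linear problem, and inheriting the semigroup law from the ODE flow of the single frozen field $w(s,\cdot)=v(s,\mu(s),\cdot)$ --- are correct once the fixed point is secured.
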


In the next theorem, we recall a variant of the celebrated \textit{superposition principle}, for which we refer the reader to the seminal work \cite{AmbrosioPDE} (see also the more recent contributions of \cite{AmbrosioC2014}). In what follows, we denote by $\Sigma_T := C^0([0,T],\R^d)$ the space of continuous arcs from $[0,T]$ into $\R^d$, by $e_t : (x,\sigma) \in \R^d \times \Sigma_T \mapsto \sigma(t) \in \R^d$ the so-called \textit{evaluation map} defined for all times $t \in [0,T]$, and by $\pi_{\R^d} : \R^d \times \Sigma_T \rightarrow \R^d$ the projection operator onto the first factor.

\begin{thm}[Superposition principle]
\label{thm:Superposition}
Let $\vb : [0,T] \times \R^d \rightarrow \R^d$ be a Lebesgue-Borel velocity field, $(\tau,\mu_{\tau}) \in [0,T] \times \Pcal_c(\R^d)$ and $\mu(\cdot) \in C^0([0,T],\Pcal_c(\R^d))$ be a curve of measures such that 
\begin{equation*}
\INTSeg{\INTDom{\frac{|\vb(t,x)|}{1+|x|}}{\R^d}{\mu(t)(x)}}{\tau}{0}{T} < +\infty. 
\end{equation*}
Then, the curve $\mu(\cdot)$ is a solution of the Cauchy problem
\begin{equation*}
\left\{
\begin{aligned}
& \partial_t \mu(t) + \Div_x \big( \vb(t) \mu(t) \big) = 0,\\
& \mu(\tau) = \mu_{\tau}, 
\end{aligned}
\right.
\end{equation*}
if and only if there exists a \textnormal{superposition measure} $\Beta \in \Pcal(\R^d \times \Sigma_T)$ concentrated on the sets of pairs $(x,\sigma) \in \R^d \times \AC([0,T],\R^d)$ satisfying
\begin{equation*}
\hspace{1.2cm} \sigma(\tau) = x \qquad \text{and} \qquad \dot{\sigma}(t) = \vb(t,\sigma(t)),  
\end{equation*}
for $\Lcal^1$-almost every $t \in [\tau,T]$, and such that
\begin{equation*}
(\pi_{\R^d})_{\#} \Beta = \mu_{\tau} \qquad \text{and} \qquad (e_t)_{\#} \Beta = \mu(t), 
\end{equation*}
for all times $t \in [\tau,T]$.
\end{thm}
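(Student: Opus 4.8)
The plan is to follow the classical proof strategy of Ambrosio's superposition principle (see \cite{AmbrosioPDE, AmbrosioC2014}), which this statement is essentially a restatement of, with minor bookkeeping for the fact that the curve lives in $\Pcal_c(\R^d)$ rather than merely $\Pcal_1(\R^d)$. For the ``if'' direction, which is the easy one, I would start from a superposition measure $\Beta \in \Pcal(\R^d \times \Sigma_T)$ concentrated on the pairs $(x,\sigma)$ solving the characteristic ODE with $\sigma(\tau)=x$, and verify the distributional formulation \eqref{eq:NonLocal_Distrib2}: for a test function $\psi \in C^\infty_c(\R^d,\R)$, one differentiates $t \mapsto \int_{\R^d} \psi(x)\,\dn(e_t)_\#\Beta(x) = \int_{\R^d \times \Sigma_T} \psi(\sigma(t))\,\dn\Beta(x,\sigma)$ under the integral sign, uses the chain rule $\tderv{}{t}\psi(\sigma(t)) = \langle \nabla\psi(\sigma(t)), \vb(t,\sigma(t))\rangle$ valid for $\Beta$-a.e. pair, and then rewrites the result as an integral against $\mu(t) = (e_t)_\#\Beta$; the integrability hypothesis $\int_0^T\!\!\int_{\R^d} \tfrac{|\vb(t,x)|}{1+|x|}\,\dn\mu(t)\,\dn t < +\infty$ together with compactness of $\supp\mu(t)$ justifies the differentiation under the integral.

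For the ``only if'' direction — the substantive one — I would proceed in the standard three stages. First, a regularisation/approximation step: mollify $\vb$ in space to obtain smooth fields $\vb^\varepsilon$, and correspondingly mollify $\mu(\cdot)$, so that the regularised continuity equation has a classical flow $\Bphi^\varepsilon$; push forward the (regularised) initial datum along this flow to build approximate superposition measures $\Beta^\varepsilon := (\pi_{\R^d}, \Bphi^\varepsilon_{(\tau,\cdot)})_\#\mu^\varepsilon_\tau$ on $\R^d \times \Sigma_T$. Second, a tightness/compactness step: using the sublinear growth control coming from the integrability hypothesis and a uniform bound on supports (since we work in $\Pcal_c(\R^d)$, supports stay in a fixed ball, which actually simplifies the tightness argument compared to the general $\Pcal_1$ case), extract a narrowly convergent subsequence $\Beta^\varepsilon \rightharpoonup \Beta$ in $\Pcal(\R^d \times \Sigma_T)$; one checks $(e_t)_\#\Beta = \mu(t)$ and $(\pi_{\R^d})_\#\Beta = \mu_\tau$ pass to the limit. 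Third, the crucial rigidity step: show that $\Beta$ is concentrated on solutions of the limiting ODE. This is done by verifying that the functional $(x,\sigma) \mapsto \int_\tau^T \big| \sigma(t) - x - \int_\tau^t \vb(s,\sigma(s))\,\dn s \big| \wedge 1 \, \dn t$ vanishes $\Beta$-a.e., which one obtains by a lower-semicontinuity argument along the approximating sequence combined with a commutator (DiPerna–Lions-type) estimate controlling the error introduced by the spatial mollification — here one must be slightly careful since $\vb$ is only assumed Lebesgue–Borel, so one exploits that the relevant integrals are weighted by $\mu(t)$ and that one only needs the estimate along the given curve.

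The main obstacle is precisely this last concentration step: passing the nonlinear ODE constraint to the narrow limit is not continuous, and the commutator estimate requires quantitative control on how far the mollified characteristics deviate from true characteristics, measured in the $\mu(t)$-weighted sense. In our compactly-supported setting this is somewhat easier than in \cite{AmbrosioPDE} — the supports $\supp\mu(t)$ all lie in a common compact $K$ for $t \in [\tau,T]$, so the weight $\tfrac{1}{1+|x|}$ is bounded below and the integrability hypothesis becomes an honest $L^1_t(L^1_{\mu(t)})$ bound on $|\vb|$ over $[\tau,T]\times K$ — but the structure of the argument is unchanged. Since this is a known result quoted from the literature and used here only as a tool, I would in fact simply cite \cite[Theorem 8.2.1]{AGS} or \cite{AmbrosioPDE, AmbrosioC2014} and indicate that the compact-support hypothesis makes the growth conditions there trivially satisfied on $[\tau,T]$, rather than reproducing the full proof.
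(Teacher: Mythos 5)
The paper does not prove this theorem: it is explicitly presented as a recalled result, with a pointer to \cite{AmbrosioPDE} and \cite{AmbrosioC2014}, and the text merely observes that it is stated in a form adapted to compactly supported curves. Your closing remark — that one should simply cite \cite[Theorem~8.2.1]{AGS} or \cite{AmbrosioPDE,AmbrosioC2014} and note that the compact-support hypothesis trivialises the growth conditions — is therefore exactly what the paper does, and is the right call.

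Your sketch of the literature proof is essentially accurate in its three-stage structure (easy direction by differentiation under the integral; mollify and build approximate path-space measures; tightness and concentration), but one technical point is off. The concentration step in Ambrosio's argument is not a commutator estimate in the DiPerna--Lions sense. Commutator estimates enter the \emph{uniqueness/renormalisation} theory for the transport equation, where one needs $\vb$ to have some Sobolev or BV regularity. The superposition principle holds for merely Borel, integrable $\vb$ precisely because no such estimate is needed: the rigidity step is carried out by approximating $\vb$ by continuous fields $\wb$ in $L^1([\tau,T]\times\R^d;\mu(t)\,\dn t)$, writing the defect functional
\begin{equation*}
\INTDom{\bigg( \INTSeg{\Big| \sigma(t) - x - \INTSeg{\vb(s,\sigma(s))}{s}{\tau}{t} \Big| \wedge 1}{t}{\tau}{T} \bigg)}{\R^d \times \Sigma_T}{\Beta(x,\sigma)},
\end{equation*}
splitting $\vb = \wb + (\vb - \wb)$, exploiting narrow lower semicontinuity for the continuous part, and controlling the remainder by the $L^1$ smallness of $\vb - \wb$ against $\mu(t)\,\dn t$ on both the approximating and the limit measure (the latter using $(e_t)_\# \Beta = \mu(t)$). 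This distinction matters: invoking a commutator bound would (wrongly) suggest extra regularity hypotheses on $\vb$ are needed, whereas the whole point of the statement is that none are.

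Otherwise your observation that compact supports simplify both the tightness argument and the weight $\tfrac{1}{1+|x|}$ is correct, and the decision to cite rather than reprove matches the paper.
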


We end this section on continuity equations by recalling a useful structural result from \cite{ContInc}, which plays a central role in several of the arguments of Section \ref{section:Semiconcavity}.

\begin{prop}[Superposition measures producing optimal plans]
\label{prop:SuperpositionPlan}
Given $r>0$, let $\mu_1,\mu_2 \in \Pcal(B(0,r))$, fix a time $\tau \in [0,T]$ and consider two non-local velocity fields $v_1,v_2 : [0,T] \times \Pcal_c(\R^d) \times \R^d \rightarrow \R^d$ satisfying hypotheses \ref{hyp:CE}-$(i),(ii)$. For $\imath \in \{1,2\}$, denote by $\mu_\imath(\cdot)$ the solution of 
\begin{equation*}
\left\{ 
\begin{aligned}
& \partial_t \mu_{\imath}(t) + \Div_x \big( v_{\imath}(t,\mu_\imath(t)) \mu_{\imath}(t) \big) = 0, \\
& \mu_{\imath}(\tau) = \mu_{\imath}, 
\end{aligned}
\right.
\end{equation*}
and by $\Beta_{\imath} \in \Pcal(\R^d \times \Sigma_T)$ a superposition measure associated with $\mu_{\imath}(\cdot)$ via Theorem \ref{thm:Superposition}. 

Then, for every $p$-optimal transport plan $\gamma \in \Gamma_o(\mu_1,\mu_2)$, there exists $\hat{\Beta}_{12} \in \Gamma(\Beta_1,\Beta_2)$ such that 
\begin{equation*}
(\pi^1_{\R^d},\pi^2_{\R^d})_{\#} \hat{\Beta}_{12} = \gamma \qquad \text{and} \qquad (e^1_t,e^2_t)_{\#} \hat{\Beta}_{12} \in \Gamma_o(\mu_1(t),\mu_2(t)), 
\end{equation*}
for all times $t \in [\tau,T]$, where for $\imath \in \{1,2\}$ the maps 
\begin{equation*}
\pi^{\imath}_{\R^d} : \big( \R^d \times \Sigma_T \big) \times \big( \R^d \times \Sigma_T \big) \rightarrow \R^d \quad \text{and} \quad e^{\imath}_t : \big( \R^d \times \Sigma_T \big) \times \big( \R^d \times \Sigma_T \big) \rightarrow \R^d, 
\end{equation*}
stand for the projection onto $\R^d$ and the evaluation map restricted to the $\imath$-th factor respectively.
\end{prop}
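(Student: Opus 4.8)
The plan is to exploit the fact that, once the curves $\mu_1(\cdot)$ and $\mu_2(\cdot)$ have been fixed, the underlying dynamics become deterministic, so that the superposition measures $\Beta_1,\Beta_2$ are essentially rigid and the whole statement reduces to transporting $\gamma$ along the two flows. Write $\Phi^1,\Phi^2$ for the non-local flows of diffeomorphisms of $v_1,v_2$ issued respectively from $\mu_1,\mu_2$ at time $\tau$ in the sense of Definition~\ref{def:NonlocalFlows}, so that $\Phi^{\imath}_{(\tau,t)}$ maps $\supp(\mu_{\imath})$ onto $\supp(\mu_{\imath}(t))$. By Theorem~\ref{thm:NonLocalCE} one has $\mu_{\imath}(t) = \Phi^{\imath}_{(\tau,t)}(\cdot)_{\#}\mu_{\imath}$ for $\imath \in \{1,2\}$, and all the supports $\supp(\mu_{\imath}(t))$ lie in the fixed compact set $K := B(0,R_r)$, on which $x \mapsto v_{\imath}(t,\mu_{\imath}(t),x)$ is $l_K(t)$-Lipschitz with $l_K(\cdot) \in L^1([0,T],\R_+)$ by hypotheses~\ref{hyp:CE}-$(ii)$. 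Hence the Carath\'{e}odory Cauchy problems $\dot{\sigma}(t) = v_{\imath}(t,\mu_{\imath}(t),\sigma(t))$, $\sigma(\tau)=x$, have a unique $K$-valued solution, namely $t \mapsto \Phi^{\imath}_{(\tau,t)}(x)$. Since by Theorem~\ref{thm:Superposition} each $\Beta_{\imath}$ is concentrated on pairs $(x,\sigma)$ solving precisely this problem on $[\tau,T]$ and has first marginal $\mu_{\imath}$, it must coincide, on the relevant interval $[\tau,T]$, with the image $(\Theta_{\imath})_{\#}\mu_{\imath}$ of $\mu_{\imath}$ under the Borel ``graph of characteristic'' map $\Theta_{\imath} : x \in \R^d \mapsto \big( x, \Phi^{\imath}_{(\tau,\cdot)}(x) \big) \in \R^d \times \Sigma_T$ (the behaviour of the curves on $[0,\tau)$ being immaterial to the statement).

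Given $\gamma \in \Gamma_o(\mu_1,\mu_2)$, one then sets $\hat{\Beta}_{12} := \big( \Theta_1 \circ \pi^1,\, \Theta_2 \circ \pi^2 \big)_{\#}\gamma$, that is, each marginal of $\gamma$ is carried along its own flow; this is in fact the \emph{only} element of $\Gamma(\Beta_1,\Beta_2)$ projecting onto $\gamma$, as each $\Beta_{\imath}$ is carried by the graph of $\Theta_{\imath}$ over $[\tau,T]$. Since $\pi^1_{\#}\gamma = \mu_1$ and $\pi^2_{\#}\gamma = \mu_2$, the marginals of $\hat{\Beta}_{12}$ onto $\R^d \times \Sigma_T$ are $(\Theta_1)_{\#}\mu_1 = \Beta_1$ and $(\Theta_2)_{\#}\mu_2 = \Beta_2$, whence $\hat{\Beta}_{12} \in \Gamma(\Beta_1,\Beta_2)$; because $\pi_{\R^d}\circ\Theta_{\imath} = \Id_{\R^d}$ one gets $(\pi^1_{\R^d},\pi^2_{\R^d})_{\#}\hat{\Beta}_{12} = \gamma$; and because $e_t\circ\Theta_{\imath} = \Phi^{\imath}_{(\tau,t)}(\cdot)$, where $e_t$ denotes the evaluation map on $\R^d \times \Sigma_T$, one gets for every $t \in [\tau,T]$ that $(e^1_t,e^2_t)_{\#}\hat{\Beta}_{12} = \big( \Phi^1_{(\tau,t)} \times \Phi^2_{(\tau,t)} \big)_{\#}\gamma$, which by the flow representation of Theorem~\ref{thm:NonLocalCE} is an admissible transport plan between $\mu_1(t)$ and $\mu_2(t)$. (One may equivalently obtain $\hat{\Beta}_{12}$ by applying Theorem~\ref{thm:Superposition} to the decoupled continuity equation on $\R^{2d}$ driven by $(x,y) \mapsto \big( v_1(t,\mu_1(t),x), v_2(t,\mu_2(t),y) \big)$ with datum $\gamma$ at time $\tau$, its marginals being again pinned down by uniqueness.)

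The only substantial point left is to promote the plan $\big( \Phi^1_{(\tau,t)} \times \Phi^2_{(\tau,t)} \big)_{\#}\gamma$ from admissible to $p$-\emph{optimal} for every $t \in [\tau,T]$, which is precisely where the structural input of \cite[Lemma 1]{ContInc} enters. The route I would take is to use the characterisation of $p$-optimal plans via cyclical monotonicity of their support and to track how the set $\{ (\Phi^1_{(\tau,t)}(x), \Phi^2_{(\tau,t)}(y)) : (x,y) \in \supp\gamma \}$ is deformed along the characteristics, exploiting that the non-local flows are bi-Lipschitz, depend continuously on their initial data, and obey the semigroup identity of Theorem~\ref{thm:NonLocalCE}. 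I expect this optimality-propagation step --- not the bookkeeping of marginals, which is purely formal --- to be the main obstacle: it is a genuinely metric-geometric fact about the interaction of continuity-equation flows with optimal couplings, and is not a consequence of the representation formulas alone. Beyond that, only routine measure-theoretic points remain, namely the Borel measurability of $\Theta_{\imath}$ (which follows from the joint continuity of $(t,x) \mapsto \Phi^{\imath}_{(\tau,t)}(x)$ recorded in Definition~\ref{def:NonlocalFlows} and Theorem~\ref{thm:NonLocalCE}) and the fact that restricting a superposition measure to its behaviour on $[\tau,T]$ is well-defined.
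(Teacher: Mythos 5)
Your reduction is correct and clean: under the Cauchy--Lipschitz hypotheses \ref{hyp:CE}-$(i),(ii)$, each characteristic of $x \mapsto v_{\imath}(t,\mu_{\imath}(t),x)$ starting at time $\tau$ is unique, so $\Beta_{\imath}$ is $\Beta_{\imath}$-essentially carried by the graph of $x \mapsto \Phi^{\imath}_{(\tau,\cdot)}(x)$ over $[\tau,T]$, and therefore \emph{every} $\hat{\Beta}_{12} \in \Gamma(\Beta_1,\Beta_2)$ with $(\pi^1_{\R^d},\pi^2_{\R^d})_{\#}\hat{\Beta}_{12} = \gamma$ is forced to satisfy
$(e^1_t,e^2_t)_{\#}\hat{\Beta}_{12} = \big( \Phi^1_{(\tau,t)} \times \Phi^2_{(\tau,t)} \big)_{\#}\gamma$
for all $t \in [\tau,T]$. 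The marginal bookkeeping, the Borel measurability of $\Theta_{\imath}$, and the irrelevance of the curves on $[0,\tau)$ are all routine and you handle them correctly. But all of this only proves the \emph{admissibility} $(e^1_t,e^2_t)_{\#}\hat{\Beta}_{12} \in \Gamma(\mu_1(t),\mu_2(t))$; the single claim that carries all of the proposition's content --- the $p$-\emph{optimality} of $\big( \Phi^1_{(\tau,t)} \times \Phi^2_{(\tau,t)} \big)_{\#}\gamma$ --- is left entirely to a closing sentence of the form ``I expect this to be the main obstacle''. That is not a proof step; it is an admission that the proof is not done. The gap is genuine and it is the whole theorem.

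Worse, your own rigidity observation shows the gap cannot be closed by cleverness in the construction: the coupling is uniquely determined on $[\tau,T]$, so a proof \emph{must} establish, unconditionally, that pushing forward a $p$-optimal plan through the pair of flows $(\Phi^1_{(\tau,t)},\Phi^2_{(\tau,t)})$ again yields a $p$-optimal plan. You should have paused here, because this is not a generic property of Lipschitz flows. For instance, take $d=2$, $p=2$, $v_1 = v_2 = v$ measure-independent with $v(x) = Bx$, $B = \mathrm{diag}(0,\log 10)$, so $\Phi^{\imath}_{(0,1)} = A := \mathrm{diag}(1,10)$; for $\mu_1 = \tfrac12(\delta_{(1,-0.2)} + \delta_{(0,0)})$ and $\mu_2 = \tfrac12(\delta_{(1,0.2)} + \delta_{(0,0)})$ the diagonal coupling is the unique $2$-optimal plan, yet its pushforward by $A\times A$ has cost $8$ while the crossed coupling between $\mu_1(1)$ and $\mu_2(1)$ has cost $5$. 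So the optimality propagation is in fact the crux of whatever is proved in \cite[Lemma~1]{ContInc}, and it must use structure beyond what you invoke (bi-Lipschitz regularity, continuity in the data, and the semigroup property), most plausibly a carefully engineered selection of $\hat\Beta_{12}$ and/or a disintegration step that is not visible from the present statement. Merely pointing at cyclical monotonicity as a ``route'' you ``would take'' does not carry the argument, and in light of the example above it cannot, without a substantial additional idea.
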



\subsection{Optimal control in Wasserstein spaces}
\label{subsection:MFOC}

In this last preliminary section, we recollect known facts about optimal control problems formulated on controlled non-local continuity equations. We refer the reader to  \cite{PMPWassConst,ContInc,SetValuedPMP,PMPWass,LipReg} for a detailed account on this topics  (see also  \cite{Bensoussan2013,Carmona2018,CavagnariMP2018,Cavagnari2020,Jimenez2020} for complementary results).

The theory of optimal control is usually developed on \textit{Bolza type} problems inspired by the calculus of variations, which in the absence of constraints can be written in our context as
\begin{equation*}
(\Ppazo_L) ~ \left\{
\begin{aligned}
\min_{u(\cdot) \in \U} & \, \bigg[ \INTSeg{L(t,\mu(t),u(t))}{t}{0}{T} + \varphi(\mu(T)) \bigg] \\
\text{s.t.} ~ & \left\{
\begin{aligned}
& \partial_t \mu(t) + \Div_x \big( v(t,\mu(t),u(t)) \mu(t) \big) = 0, \\
& \mu(0) = \mu^0.
\end{aligned}
\right.
\end{aligned}
\right.
\end{equation*}
Here, the set of admissible controls is defined as $\U := \{ u : [0,T] \rightarrow  U ~\text{s.t. $u(\cdot)$ is $\Lcal^1$-measurable} \}$, where $(U,d_U)$ is a compact metric space. The dynamics is driven by the controlled non-local velocity field $v : [0,T] \times \Pcal_c(\R^d) \times U \times \R^d \rightarrow \R^d$, while the mappings $L : [0,T] \times \Pcal_c(\R^d) \times U \rightarrow \R$ and $\varphi : \Pcal_c(\R^d) \rightarrow \R$ are running and final cost functionals respectively. The following sets of assumptions are quite common when studying smooth unconstrained Bolza problems. 
 
\begin{taggedhyp}{\textbn{(OCP)}}
\label{hyp:OCP}
For every $R > 0$, assume that the following holds with $K := B(0,R)$.
\begin{enumerate}
\item[$(i)$] The non-local velocity field $(t,\mu,x) \in [0,T] \times \Pcal_c(\R^d) \times \R^d \mapsto v(t,\mu,u,x) \in \R^d$ satisfies hypotheses \ref{hyp:CE} with constants that are uniform with respect to $u \in U$. Moreover, the map $u \in U \mapsto v(t,\mu,u,x) \in \R^d$ is continuous for $\Lcal^1$-almost every $t \in [0,T]$ and any $(\mu,x) \in \Pcal_c(\R^d) \times \R^d$.
\item[$(ii)$] The final cost $\varphi : \Pcal_c(\R^d) \rightarrow \R$ is Lipschitz in the $W_1$-metric over $\Pcal(K)$ and locally differentiable. Moreover, the map $x \in \R^d \mapsto \nabla \varphi(\mu)(x) \in \R^d$ is continuous for every $\mu \in \Pcal_c(\R^d)$. 
\end{enumerate}
\end{taggedhyp}

\begin{taggedhypsing}{\textbn{(L)}}
\label{hyp:L}
For every $R > 0$, assume that the following holds with $K := B(0,R)$.  \vspace{0.15cm} \\
The running cost $(t,\mu,u) \in [0,T] \times \Pcal_c(\R^d) \times U \mapsto L(t,\mu,u) \in \R$ is $\Lcal^1$-measurable with respect to $t \in [0,T]$ and continuous with respect to $u \in U$. Moreover, there exists a map $\ell_K(\cdot) \in L^1([0,T],\R_+)$ such that 
\begin{equation*}
|L(t,\nu,u) - L(t,\mu,u)| \leq \ell_K(t)W_1(\mu,\nu), 
\end{equation*}
for $\Lcal^1$-almost every $t \in [0,T]$, all $\mu,\nu \in \Pcal(K)$ and each $u \in U$, and $t \in [0,T] \mapsto \sup_{u \in U} |L(t,\delta_0,u)| \in \R$ is Lebesgue integrable. In addition, the application $\mu \in \Pcal_c(\R^d) \mapsto L(t,\mu,u) \in \R$ is locally differentiable for $\Lcal^1$-almost every $t \in [0,T]$ and all $(\mu,u) \in \Pcal_c(\R^d) \times U$, with continuous gradient $x \in \R^d \rightarrow \nabla_{\mu} L(t,\mu,u)(x) \in \R^d$.
\end{taggedhypsing}

\begin{rmk}[Concerning hypotheses \ref{hyp:OCP} and \ref{hyp:L}]
In the classical theory of the calculus of variations, it is customarily not assumed that the right-hand side of the dynamics is sublinear. Instead, one usually imposes a suitable growth condition at infinity with respect to the velocity variables in the running cost. In this spirit, we stress that all the results exposed in the present paper still hold true if the sublinearity hypothesis \ref{hyp:CE}-$(i)$ on the controlled velocity field is replaced by a Tonelli-type condition on the running cost of the optimal control problem (see e.g. \cite[Chapter 16]{Clarke}). 
\end{rmk}

It is a well-known fact in optimal control that under hypotheses \ref{hyp:OCP} and \ref{hyp:L}, any Bolza problem of the form $(\Ppazo_L)$ can be equivalently rewritten as a \textit{Mayer problem} -- that is a problem with $L(t,\mu,u) \equiv 0$ --, formulated on an extended state space. We refer to \cite[Section 4.2]{SetValuedPMP} for such an adaptation in the setting of mean-field optimal control problems. In this context, all the results derived for Mayer problems can in turn be transposed to Bolza problems. Hence without loss of generality, we will work in the sequel on the Mayer problem 
\begin{equation*}
(\Ppazo) ~ \left\{
\begin{aligned}
\min_{u(\cdot) \in \U} & \, \big[ \varphi(\mu(T)) \big] \\
\text{s.t.} ~ & \left\{
\begin{aligned}
& \partial_t \mu(t) + \Div_x \big( v(t,\mu(t),u(t)) \mu(t) \big) = 0, \\
& \mu(0) = \mu^0,
\end{aligned}
\right.
\end{aligned}
\right.
\end{equation*}
and systematically assume that its data satisfy hypotheses \ref{hyp:OCP}. 

\begin{Def}[Admissible pairs and strong-local minimisers] \hspace{-0.15cm}
A trajectory-control pair $(\mu(\cdot),u(\cdot))$ is said to be \textnormal{admissible} for $(\Ppazo)$ if it solves the controlled Cauchy problem
\begin{equation*}
\left\{
\begin{aligned}
& \partial_t \mu(t) + \Div_x \Big( v(t,\mu(t),u(t)) \mu(t) \Big) = 0, \\
& \mu(0) = \mu^0.
\end{aligned}
\right.
\end{equation*}
Moreover, we say that $(\mu^*(\cdot),u^*(\cdot))$ is a \textnormal{strong local minimiser} for $(\Ppazo)$ if there exists $\epsilon > 0$ such that
\begin{equation*}
\varphi(\mu^*(T)) \leq \varphi(\mu(T)),
\end{equation*}
for every admissible pair $(\mu(\cdot),u(\cdot))$ satisfying $\sup_{t \in [0,T]} W_1(\mu^*(t),\mu(t)) \leq \epsilon$. 
\end{Def}

As a consequence of hypothesis \ref{hyp:OCP}-$(i)$ and Theorem \ref{thm:NonLocalCE}, we have the following useful lemma which provides uniform regularity and support estimates on admissible trajectories for $(\Ppazo)$.

\begin{lem}[Uniform estimates on admissible trajectories]
\label{lem:AdmEst}
Let $\mu \in \Pcal(B(0,r))$ for some $r > 0$, and assume that hypotheses \ref{hyp:OCP} hold. Then, there exist $R_r > 0$ and $m_r(\cdot) \in L^1([0,T],\R_+)$ such that 
\begin{equation*}
\supp(\mu(t)) \subset K := B(0,R_r) \qquad \text{and} \qquad W_1(\mu(\tau),\mu(t)) \leq \INTSeg{m_r(s)}{s}{\tau}{t},
\end{equation*}
for all times $0 \leq \tau \leq t \leq T$, whenever $\mu(\cdot)$ is a solution of the Cauchy problem
\begin{equation*}
\left\{
\begin{aligned}
& \partial_t \mu(t) + \Div_x \Big( v(t,\mu(t),u(t)) \mu(t) \Big) = 0, \\
& \mu(s) = \mu.
\end{aligned}
\right.
\end{equation*}
for any given $s \in [0,T]$ and $u(\cdot) \in \U$. 
\end{lem}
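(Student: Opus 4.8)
The plan is to derive Lemma~\ref{lem:AdmEst} as a direct corollary of Theorem~\ref{thm:NonLocalCE}, the key observation being that an admissible trajectory for $(\Ppazo)$ corresponding to a fixed control $u(\cdot) \in \U$ solves a (non-controlled) non-local continuity equation of the form \eqref{eq:NonLocal_CE}. More precisely, fix $s \in [0,T]$ and $u(\cdot) \in \U$, and define the non-local velocity field $v_u : [0,T] \times \Pcal_c(\R^d) \times \R^d \rightarrow \R^d$ by $v_u(t,\mu,x) := v(t,\mu,u(t),x)$. The first step is to check that $v_u$ satisfies hypotheses \ref{hyp:CE}-$(i),(ii)$ with constants \emph{independent of} the particular control $u(\cdot)$. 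This is exactly what hypothesis \ref{hyp:OCP}-$(i)$ grants: the estimates in \ref{hyp:CE}-$(i),(ii)$ hold for $v(t,\mu,u,x)$ with bounds $m(\cdot)$, $l_K(\cdot)$, $L_K(\cdot)$ that are uniform in $u \in U$, and the composition with the measurable map $t \mapsto u(t)$ preserves $\Lcal^1$-measurability in $t$ (using continuity of $u \mapsto v(t,\mu,u,x)$ together with measurability in $t$, i.e. a Carath\'eodory-type argument, so that $t \mapsto v(t,\mu,u(t),x)$ is $\Lcal^1$-measurable).

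Once this reduction is in place, the second step is simply to invoke Theorem~\ref{thm:NonLocalCE} applied to $v_u$ with initial datum $\mu \in \Pcal(B(0,r))$ prescribed at time $s$. Strictly speaking Theorem~\ref{thm:NonLocalCE} is stated with initial time $0$, but its conclusions \eqref{eq:SuppAC_Est} are phrased for all $0 \le \tau \le t \le T$ and, by the time-translation invariance of the Cauchy-Lipschitz construction (or by re-running the argument from the arbitrary initial time $s$), carry over verbatim to a datum imposed at $s$. This immediately yields the existence of $R_r > 0$ and $m_r(\cdot) \in L^1([0,T],\R_+)$ — namely the explicit quantities $R_r = (r + \Norm{m(\cdot)}_1)(1 + T\exp(2\Norm{m(\cdot)}_1))$ and $m_r(\cdot) = (1+2R_r)m(\cdot)$ appearing in Theorem~\ref{thm:NonLocalCE} — such that $\supp(\mu(t)) \subset B(0,R_r)$ and $W_1(\mu(\tau),\mu(t)) \le \int_\tau^t m_r(\sigma)\dn\sigma$ for all $0 \le \tau \le t \le T$. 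The crucial point is that these constants depend only on $r$ and on the uniform-in-$u$ bound $m(\cdot)$ from \ref{hyp:OCP}-$(i)$, hence are the \emph{same} for every admissible pair; this is what makes the estimate genuinely uniform over controls.

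The only mild subtlety — and the one step that deserves a sentence of care rather than being waved through — is the measurability-preservation claim: verifying that $t \mapsto v(t,\mu(t),u(t),x)$ inherits $\Lcal^1$-measurability along the (not yet known to be continuous) candidate curve, so that the fixed-point scheme underlying Definition~\ref{def:NonlocalFlows} and Theorem~\ref{thm:NonLocalCE} actually applies. This is handled in the standard way: the flow is built as a fixed point in $C^0$, so one works with continuous trial curves $t \mapsto \mu(t)$, for which $t \mapsto v(t,\mu(t),u(t),x) = v_u(t,\mu(t),x)$ is measurable by combining joint continuity of $v$ in $(\mu,x)$, measurability in $t$, and measurability of $u(\cdot)$; then \ref{hyp:CE}-$(i),(ii)$ for $v_u$ close the contraction argument exactly as in the proof of Theorem~\ref{thm:FlowDiff}. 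No genuinely new difficulty arises beyond this bookkeeping, so I would keep the proof of the lemma to a few lines, essentially citing hypothesis \ref{hyp:OCP}-$(i)$ and Theorem~\ref{thm:NonLocalCE} and emphasising the uniformity of the resulting constants.
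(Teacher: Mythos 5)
Your proof is correct and takes essentially the same route the paper indicates: the lemma is obtained by freezing the control, checking that $v_u(t,\mu,x) := v(t,\mu,u(t),x)$ satisfies \ref{hyp:CE}-$(i),(ii)$ with constants uniform in $u(\cdot)$ thanks to \ref{hyp:OCP}-$(i)$, and then invoking Theorem~\ref{thm:NonLocalCE} (the paper presents the lemma verbatim as ``a consequence of hypothesis \ref{hyp:OCP}-$(i)$ and Theorem \ref{thm:NonLocalCE}''). Your remarks on the translation of the initial time and on the Carath\'eodory measurability of $t \mapsto v(t,\mu(t),u(t),x)$ are the right bookkeeping points and do not introduce any gap.
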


We end this section by recalling the main result of \cite{SetValuedPMP}, which is an adaptation of the celebrated \textit{Pontryagin Maximum Principle} (``PMP'' for short) to problem $(\Ppazo)$. Its statement involves the \textit{symplectic matrix} $\J_{2d}$, given by 
\begin{equation*}
\J_{2d} := \begin{pmatrix}
0 && \Id \\ - \Id && 0
\end{pmatrix},
\end{equation*}
and the \textit{Hamiltonian} $\H : [0,T] \times \Pcal_c(\R^{2d}) \times U \rightarrow \R$ associated with $(\Ppazo)$, defined as
\begin{equation}
\label{eq:PMP_Hamiltonian}
\H(t,\nu,u) := \INTDom{\langle r , v(t,\pi^1_{\#} \nu,u,x) \rangle}{\R^{2d}}{\nu(x,r)},
\end{equation}
for $\Lcal^1$-almost every $t \in [0,T]$ and any $(\nu,u) \in \Pcal_c(\R^{2d}) \times U$. 

\begin{thm}[Pontryagin Maximum Principle for $(\Ppazo)$]
\label{thm:PMP}
Let $\mu^0 \in \Pcal(B(0,r))$ for some $r >0$, assume that hypotheses \ref{hyp:OCP} hold and let $(\mu^*(\cdot),u^*(\cdot)) \in \AC([0,T],\Pcal_1(K)) \times \U$ be a strong local minimiser for $(\Ppazo)$, where $K := B(0,R_r)$ is as in Lemma \ref{lem:AdmEst}. 

Then, there exists a \textnormal{state-costate curve} $t \in [0,T] \mapsto \nu^*(t) \in \Pcal_c(\R^{2d})$ satisfying the followings.
\begin{enumerate}
\item[$(i)$] There exist $R'_r > 0$ and $m'_r(\cdot) \in L^1([0,T],\R_+)$, depending only on $r>0$ and the regularity constants in \ref{hyp:CE}, such that 
\begin{equation}
\label{eq:PMP_Est}
\supp(\nu^*(t)) \subset K' \times K' \qquad \text{and} \qquad W_1(\nu^*(\tau),\nu^*(t)) \leq \INTSeg{m_r'(s)}{s}{\tau}{t},
\end{equation}
for all times $0 \leq \tau \leq t \leq T$, where $K' := B(0,R_r')$.
\item[$(ii)$] The curve $\nu^*(\cdot)$ is a solution of the \textnormal{forward-backward Hamiltonian} continuity equation
\begin{equation}
\label{eq:PMP_Dynamics}
\left\{
\begin{aligned}
&  \, \partial_t \nu^*(t) + \Div_{(x,r)} \big( \J_{2d} \nabla_{\nu} \H(t,\nu^*(t),u^*(t)) \nu^*(t) \big) = 0, \\
& \pi^1_{\#} \nu^*(t) = \mu^*(t) \hspace{2.1cm} \text{for all times $t \in [0,T]$}, \\
& \nu^*(T) = \Big( \Id , -\nabla \varphi(\mu^*(T)) \Big)_{\raisebox{4pt}{$\scriptstyle{\#}$}} \mu^*(T),
\end{aligned}
\right.
\end{equation}
where the Wasserstein gradient of the Hamiltonian is given explicitly by 
\begin{equation}
\label{eq:PMP_HamiltonianGrad}
\begin{aligned}
& \nabla_{\nu} \H(t,\nu^*(t),u^*(t))(x,r) \\
& \hspace{1cm} = \begin{pmatrix}
\, \D_x v \big( t,\mu^*(t),u^*(t),x \big)^{\top} r + \INTDom{\D_{\mu} v \big( t,\mu^*(t),u^*(t),y \big)(x)^{\top} p \,}{\R^{2d}}{\nu^*(t)(y,p)} \\ \\
v \big( t,\mu^*(t),u^*(t),x \big)
\end{pmatrix},
\end{aligned}
\end{equation}
for $\Lcal^1$-almost every $t \in [0,T]$ and any $(x,r) \in \R^{2d}$. 
\item[$(iii)$] The \textnormal{maximisation condition}
\begin{equation}
\label{eq:PMP_Maximisation}
\H(t,\nu^*(t),u^*(t)) = \max_{u \in U} \, \H(t,\nu^*(t),u), 
\end{equation}
holds for $\Lcal^1$-almost every $t \in [0,T]$. 
\end{enumerate} 
\end{thm}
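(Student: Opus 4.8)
The plan is to adapt the classical needle-variation proof of the Pontryagin Maximum Principle, replacing the finite-dimensional linearised flow by the linearisation theory for non-local flows of Appendix~\ref{section:AppendixFlowDiff}. Fix the strong local minimiser $(\mu^*(\cdot),u^*(\cdot))$, and recall from Theorem~\ref{thm:NonLocalCE} and Lemma~\ref{lem:AdmEst} that $\mu^*(t) = \Phi_{(0,t)}[\mu^0](\cdot)_{\#}\mu^0$ and that every admissible curve obtained by perturbing $u^*$ on a small time interval stays supported in a fixed compact set $K$, with equi-integrable velocities. First I would fix a full-measure set $\T \subset (0,T)$ of common Lebesgue points, in the sense of~\eqref{eq:LebesguePoint_Banach}, of the maps $t \mapsto v(t,\mu^*(t),u^*(t),\cdot)$ and $t \mapsto v(t,\mu^*(t),u,\cdot)$ for $u$ ranging over a countable dense subset of $U$; the continuity of $u \mapsto v(t,\mu,u,x)$ from~\ref{hyp:OCP}-$(i)$ then transfers the relevant limits to every $u \in U$.

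Next, for $\tau \in \T$, $u \in U$ and $\epsilon > 0$ small, consider the needle-perturbed control $u_\epsilon := u$ on $[\tau-\epsilon,\tau]$ and $u_\epsilon := u^*$ elsewhere, and let $\mu_\epsilon(\cdot)$ be the corresponding admissible curve. Using the flow representation~\eqref{eq:FlowRep} together with the differentiability of non-local flows (Theorem~\ref{thm:FlowDiff}), I would establish a first-order expansion $\mu_\epsilon(t) = (\Id + \epsilon\, w_\tau(t,\cdot) + o(\epsilon))_{\#}\mu^*(t)$ valid for $t \in [\tau,T]$, where $w_\tau(\tau,\cdot) = v(\tau,\mu^*(\tau),u,\cdot) - v(\tau,\mu^*(\tau),u^*(\tau),\cdot)$ on $\supp(\mu^*(\tau))$, and $t \mapsto w_\tau(t,\cdot)$ solves the linearised non-local transport system driven along $\mu^*(\cdot)$ by the differentials $\D_x v(t,\mu^*(t),u^*(t),\cdot)$ and $\D_\mu v(t,\mu^*(t),u^*(t),\cdot)(\cdot)$; the Lebesgue-point property of $\tau$ is exactly what controls the remainder produced by the needle. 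Since $\varphi$ is locally differentiable with continuous gradient by~\ref{hyp:OCP}-$(ii)$, the chain rule of Proposition~\ref{prop:GradientChainrule} yields $\varphi(\mu_\epsilon(T)) = \varphi(\mu^*(T)) + \epsilon \int_{\R^d} \langle \nabla\varphi(\mu^*(T))(x), w_\tau(T,x)\rangle \dn\mu^*(T)(x) + o(\epsilon)$, and strong local minimality forces the first-order term to be nonnegative.

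It then remains to identify this linear functional of $w_\tau(T,\cdot)$ through a costate. I would define the curve $\nu^*(\cdot)$ as the backward solution of the forward-backward Hamiltonian system~\eqref{eq:PMP_Dynamics} with terminal datum $(\Id,-\nabla\varphi(\mu^*(T)))_{\#}\mu^*(T)$: existence, uniqueness and the support and $W_1$-regularity estimates of item~$(i)$ follow from Theorem~\ref{thm:NonLocalCE} applied to the velocity field $\J_{2d}\nabla_\nu\H(t,\cdot,u^*(t))$, which inherits~\ref{hyp:CE}-type bounds on $K$ from~\ref{hyp:OCP}; the affine dependence of this field on the costate variable $r$ shows moreover that $\nu^*(t)$ stays concentrated on the graph of a Borel field $r^*(t,\cdot)$, i.e. $\nu^*(t) = (\Id,r^*(t,\cdot))_{\#}\mu^*(t)$ with $r^*(T,\cdot) = -\nabla\varphi(\mu^*(T))$. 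The crucial step is the duality identity: pulling everything back to $\mu^0$ along $\Phi_{(0,\cdot)}[\mu^0]$, one checks that $t \mapsto \int_{\R^d} \langle r^*(t,x), w_\tau(t,x)\rangle \dn\mu^*(t)(x)$ is constant on $[\tau,T]$, the pointwise $\D_x v$-terms cancelling via $\langle r, \D_x v\, w\rangle = \langle \D_x v^\top r, w\rangle$ and the non-local $\D_\mu v$-terms cancelling after a Fubini exchange of the two integration variables — which is precisely why the non-local source term in~\eqref{eq:PMP_HamiltonianGrad} has the transposed, swapped-argument form that it does. Evaluating this constant at $t = T$ and at $t = \tau$, and recalling the definition~\eqref{eq:PMP_Hamiltonian} of $\H$, converts the minimality inequality into $\H(\tau,\nu^*(\tau),u) \leq \H(\tau,\nu^*(\tau),u^*(\tau))$; since $\tau$ ranges over the full-measure set $\T$ and $u$ over all of $U$, this is the maximisation condition~\eqref{eq:PMP_Maximisation}.

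The main obstacle is the rigorous first-order expansion of $\mu_\epsilon(\cdot)$ and the accompanying duality identity in the genuinely non-local regime: unlike the finite-dimensional case, the variation $w_\tau$ of a particle feels the perturbations of all the other particles through the mean-field terms $\D_\mu v(t,\mu^*(t),u^*(t),x)(y)$, so the linearised dynamics and its adjoint must be treated as a coupled system over $\Pcal_c(\R^{2d})$, and one must control the various remainders uniformly on $K$ using only the local differentiability and continuity hypotheses of~\ref{hyp:OCP} and the flow-linearisation estimates of Appendix~\ref{section:AppendixFlowDiff}. This is where essentially all of the technical effort concentrates; once it is in place, the remainder of the argument reduces to a careful bookkeeping of signs and a density argument in $U$ (see~\cite{SetValuedPMP}).
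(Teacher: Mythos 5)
The paper does not prove this theorem: it is stated as a recalled result, cited verbatim from \cite{SetValuedPMP} (the text immediately preceding the statement makes this explicit), so there is no proof in the present paper to compare against. Your sketch is nonetheless a faithful outline of the needle-variation strategy deployed in that reference: needle perturbations localised at common Lebesgue points of $v$, first-order expansion of the perturbed non-local flow via Theorem~\ref{thm:FlowDiff} together with the chain rule of Proposition~\ref{prop:GradientChainrule} applied to $\varphi$, backward construction of the costate from the terminal datum $(\Id,-\nabla\varphi(\mu^*(T)))_{\#}\mu^*(T)$, the observation that the graph structure propagates because the second component of the Hamiltonian field is affine in the costate $r$ (which is precisely what Proposition~\ref{prop:RepresentationCostate} later formalises in the paper), and the duality-bracket constancy argument whose Fubini exchange is exactly what forces the transposed, swapped-argument form of the non-local term in \eqref{eq:PMP_HamiltonianGrad}. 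Your identification of where the technical effort concentrates — uniform-in-$K$ control of the needle remainder using only the hypotheses of~\ref{hyp:OCP} and the flow-linearisation estimates of Appendix~\ref{section:AppendixFlowDiff}, Gr\"onwall estimates establishing the support and $W_1$-regularity of item~$(i)$, and a density argument upgrading the Hamiltonian inequality from a countable dense subset of $U$ to all of $U$ — is accurate, and those are indeed the steps that take up most of the argument in \cite{SetValuedPMP}.
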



\section{Semiconcavity of the value function}
\label{section:Semiconcavity}

In this section, we investigate fine regularity properties of the \textit{value function} $\Vcal : [0,T] \times \Pcal_c(\R^d) \rightarrow \R$ associated with $(\Ppazo)$, which is defined by  
\begin{equation}
\label{eq:ValueFunction}
\Vcal(\tau,\mu_{\tau}) := \left\{
\begin{aligned}
\inf_{u(\cdot) \in \U} & \, [\varphi(\mu(T))] \\
\text{s.t.} ~ & \left\{
\begin{aligned}
& \partial_t \mu(t) + \Div_x \big( v(t,\mu(t),u(t)) \mu(t) \big) = 0, \\
& \mu(\tau) = \mu_{\tau}, 
\end{aligned}
\right.
\end{aligned}
\right.
\end{equation}
for any $(\tau,\mu_{\tau}) \in [0,T] \times \Pcal_c(\R^d)$. In what follows, we will assume that hypotheses \ref{hyp:OCP} of Section \ref{subsection:MFOC} hold. We start by an elementary -- but nonetheless important -- regularity result for the value function. 

\begin{prop}[Absolute continuity of the value function]
\label{prop:LipRegValue}
For every compact set $K \subset \R^d$, there exist a constant $\Lpazo_K > 0$ and a map $\M_K(\cdot) \in L^1([0,T],\R_+)$, both depending only on $K$, such that 
\begin{equation*}
|\Vcal(\tau_1,\mu_1) - \Vcal(\tau_2,\mu_2)| \leq \INTSeg{\M_K(t)}{t}{\tau_1}{\tau_2} + \Lpazo_K W_1(\mu_1,\mu_2), 
\end{equation*}
for every $0 \leq \tau_1 \leq \tau_2 \leq T$ and all $\mu_1,\mu_2 \in \Pcal(K)$.  
\end{prop}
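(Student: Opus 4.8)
The plan is to establish the two estimates separately: first the Lipschitz dependence in the measure variable at a fixed time, and then the absolute continuity in the time variable at a fixed measure, concatenating them via the triangle inequality. Throughout, I would fix a compact set $K \subset \R^d$, say $K = B(0,r)$, and invoke Lemma \ref{lem:AdmEst} to obtain a uniform enlarged compact $K' := B(0,R_r)$ containing the supports of all admissible trajectories issued from measures in $\Pcal(K)$, together with the modulus $m_r(\cdot) \in L^1([0,T],\R_+)$ controlling their $W_1$-oscillation. Since $\varphi$ is Lipschitz in $W_1$ over $\Pcal(K')$ by hypothesis \ref{hyp:OCP}-$(ii)$, denote by $\Lip_{\varphi}$ its Lipschitz constant there.

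For the spatial estimate, fix $\tau \in [0,T]$ and $\mu_1, \mu_2 \in \Pcal(K)$. Given any $\varepsilon > 0$, pick a control $u(\cdot) \in \U$ that is $\varepsilon$-optimal for the problem started at $(\tau,\mu_2)$, and let $\mu_2(\cdot)$ be the corresponding trajectory. Use the \emph{same} control $u(\cdot)$ with initial datum $\mu_1$, producing a trajectory $\mu_1(\cdot)$. The key point is a Gronwall-type stability estimate for non-local continuity equations driven by a common controlled field: using the flow representation \eqref{eq:FlowRep} of Theorem \ref{thm:NonLocalCE} together with the estimate \eqref{eq:WassEst2}, and the Cauchy–Lipschitz bounds \ref{hyp:CE}-$(ii)$ (uniform in $u$ by \ref{hyp:OCP}-$(i)$), one shows $W_1(\mu_1(T),\mu_2(T)) \leq C_K \, W_1(\mu_1,\mu_2)$ for a constant $C_K$ depending only on $K$ through $R_r$ and $\Norm{l_{K'}(\cdot)}_1, \Norm{L_{K'}(\cdot)}_1$; concretely one differentiates $t \mapsto W_1(\mu_1(t),\mu_2(t))$ along the flows, bounds the increment by $\int (l_{K'}(s) + L_{K'}(s)) W_1(\mu_1(s),\mu_2(s))\,\dn s$, and applies Gronwall. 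Then $\Vcal(\tau,\mu_1) \leq \varphi(\mu_1(T)) \leq \varphi(\mu_2(T)) + \Lip_{\varphi} C_K W_1(\mu_1,\mu_2) \leq \Vcal(\tau,\mu_2) + \varepsilon + \Lip_{\varphi} C_K W_1(\mu_1,\mu_2)$; letting $\varepsilon \to 0$ and swapping the roles of $\mu_1,\mu_2$ gives $|\Vcal(\tau,\mu_1) - \Vcal(\tau,\mu_2)| \leq \Lpazo_K W_1(\mu_1,\mu_2)$ with $\Lpazo_K := \Lip_{\varphi} C_K$.

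For the temporal estimate, fix $\mu \in \Pcal(K)$ and $0 \leq \tau_1 \leq \tau_2 \leq T$. I would use a dynamic-programming-type comparison. Given an $\varepsilon$-optimal control $u(\cdot)$ for the problem started at $(\tau_1,\mu)$ with trajectory $\mu(\cdot)$, the restriction of $u(\cdot)$ to $[\tau_2,T]$ is admissible for the problem started at $(\tau_2,\mu(\tau_2))$, so $\Vcal(\tau_2,\mu(\tau_2)) \leq \varphi(\mu(T)) \leq \Vcal(\tau_1,\mu) + \varepsilon$; combined with the already-established spatial Lipschitz bound and the $W_1$-oscillation estimate $W_1(\mu,\mu(\tau_2)) \leq \int_{\tau_1}^{\tau_2} m_r(s)\,\dn s$ from Lemma \ref{lem:AdmEst}, this gives $\Vcal(\tau_2,\mu(\tau_2)) - \Lpazo_K \int_{\tau_1}^{\tau_2} m_r(s)\,\dn s \leq \Vcal(\tau_1,\mu) + \varepsilon$, hence $\Vcal(\tau_2,\mu) \leq \Vcal(\tau_1,\mu) + \varepsilon + 2\Lpazo_K \int_{\tau_1}^{\tau_2} m_r(s)\,\dn s$ after one more application of the spatial bound. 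For the reverse inequality, take an $\varepsilon$-optimal control $w(\cdot)$ for $(\tau_2,\mu)$, and build an admissible control for $(\tau_1,\mu)$ by prepending an arbitrary control on $[\tau_1,\tau_2]$ (e.g. a constant one); the resulting trajectory reaches some $\tilde{\mu}$ at time $\tau_2$ with $W_1(\mu,\tilde\mu) \leq \int_{\tau_1}^{\tau_2} m_r(s)\,\dn s$, and then following $w(\cdot)$ on $[\tau_2,T]$ and using the spatial Lipschitz continuity of $\Vcal(\tau_2,\cdot)$ yields $\Vcal(\tau_1,\mu) \leq \Vcal(\tau_2,\mu) + \varepsilon + \Lpazo_K \int_{\tau_1}^{\tau_2} m_r(s)\,\dn s$. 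Setting $\M_K(s) := 2\Lpazo_K m_r(s) \in L^1([0,T],\R_+)$ and combining with the triangle inequality $|\Vcal(\tau_1,\mu_1) - \Vcal(\tau_2,\mu_2)| \leq |\Vcal(\tau_1,\mu_1) - \Vcal(\tau_1,\mu_2)| + |\Vcal(\tau_1,\mu_2) - \Vcal(\tau_2,\mu_2)|$ completes the proof.

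I expect the main obstacle to be the careful bookkeeping in the spatial Gronwall estimate: one must make sure the stability constant $C_K$ depends only on $K$ (through $R_r$ and the $L^1$-norms of the local Lipschitz data on $K'$) and not on the particular control, which is exactly why hypothesis \ref{hyp:OCP}-$(i)$ — uniformity of the \ref{hyp:CE} constants in $u \in U$ — is needed; beyond that, all steps are routine applications of the dynamic programming principle and the estimates recalled in Section \ref{section:Preliminaries}.
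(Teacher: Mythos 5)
Your proposal is correct and follows exactly the route the paper sketches: spatial Lipschitz via a Gronwall comparison under a shared control (the ``local Lipschitz dependence of admissible curves on their initial conditions''), temporal absolute continuity via dynamic programming together with the $W_1$-oscillation bound of Lemma \ref{lem:AdmEst}, and the $W_1$-Lipschitz regularity of $\varphi$ from \ref{hyp:OCP}-$(ii)$. The only minor blemishes are cosmetic — the intermediate inequality ``$\Vcal(\tau_2,\mu(\tau_2)) - \Lpazo_K\int \leq \Vcal(\tau_1,\mu)+\varepsilon$'' weakens the sharper bound you had already obtained, and the factor $2$ in $\M_K$ is not needed — and one should note that the spatial Lipschitz bound must be invoked over the enlarged compact $K'$ (since $\mu(\tau_2)\in\Pcal(K')$), but none of this affects the validity of the argument.
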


\begin{proof}
The proof of this result is a standard consequence of \eqref{eq:SuppAC_Est}, together with the local Lipschitz dependence of admissible curves on their initial conditions (see e.g. \cite[Theorem 4]{ContInc}) and the $W_1$-Lipschitz regularity of $\varphi(\cdot)$ over sets of uniformly compactly supported measures. 
\end{proof}

Our aim throughout this section is to derive subtler \textit{semiconcavity} properties of the value function. As amply discussed in the introduction, these latter imply many useful structure results on optimal trajectories, for which we mainly refer to \cite{Cannarsa1991,CannarsaS2004}. 


\subsection{Two local semiconcavity results with respect to the measure variable}
\label{subsection:SemiconcavityEst}

In this first section, we study semiconcavity properties of the value function associated with $(\Ppazo)$ with respect to its second argument. To this end, we introduce below a localised notion of semiconcavity along displacement interpolating curves between compactly supported measures, in the spirit of \cite[Chapter 9]{AGS} (see also the seminal work \cite{McCann1997}). 

\begin{Def}[Local geodesic and strong semiconcavity]
\label{def:Semiconcavity}
We say that a functional $\phi : \Pcal_c(\R^d) \rightarrow \R$ is \textnormal{locally geodesically semiconcave} if for every $K := B(0,R)$ with $R>0$, there exists a constant $\Cpazo_K > 0$ such that for any $\mu_1,\mu_2 \in \Pcal(K)$ and each $\gamma \in \Gamma_o(\mu_1,\mu_2)$, it holds
\begin{equation}
\label{eq:SemiconcavityDef1}
(1-\lambda) \phi(\mu_1) + \lambda \phi(\mu_2) - \phi(\gamma^{1 \rightarrow 2}_{\lambda}) \leq  \Cpazo_K \, \lambda(1-\lambda)W_2^2(\mu_1,\mu_2), 
\end{equation}
for every $\lambda \in [0,1]$, where $\gamma^{1 \rightarrow 2}_{\lambda} := ( (1-\lambda)\pi^1 + \lambda \pi^2)_{\#} \gamma$. 

Similarly, we say that $\phi(\cdot)$ is \textnormal{locally strongly semiconcave} if for any $\mu_1,\mu_2 \in \Pcal(K)$ and all $\Bmu \in \Gamma(\mu_1,\mu_2)$, one has
\begin{equation}
\label{eq:SemiconcavityDef2}
(1-\lambda) \phi(\mu_1) + \lambda \phi(\mu_2) - \phi(\Bmu^{1 \rightarrow 2}_{\lambda}) \leq  \Cpazo_K \, \lambda(1-\lambda)W^2_{2,\Bmu}(\mu_1,\mu_2), 
\end{equation}
for every $\lambda \in [0,1]$, with $\Bmu^{1\rightarrow2}_{\lambda} := ( (1-\lambda)\pi^1 + \lambda \pi^2)_{\#} \Bmu$.
\end{Def}

\begin{rmk}[General semiconcavity moduli]
More generally (see e.g. \cite{Cannarsa1991} or \cite[Chapter 2]{CannarsaS2004}), the notions of local semiconcavity proposed in Definition \ref{def:Semiconcavity} could be formulated as 
\begin{equation*}
(1-\lambda) \phi(\mu_1) + \lambda \phi(\mu_2) - \phi(\Bmu^{1 \rightarrow 2}_{\lambda}) \leq  \Cpazo_K \, \lambda(1-\lambda)W_{2,\Bmu}(\mu_1,\mu_2) \omega_K  \big( W_{2,\Bmu}(\mu_1,\mu_2) \big),
\end{equation*}
where $\omega_K : \R_+ \rightarrow \R_+$ is a non-decreasing modulus of continuity such that $\omega_K(r) \rightarrow 0$ as $r \rightarrow 0^+$. For the sake of simplicity, we will only consider the case $\omega_K(r) := \Cpazo_K \, r$ in the present paper. Indeed, the analysis of general moduli is very similar to that of linear ones -- albeit being slightly more cumbersome --, and the corresponding results are of matching depth. 
\end{rmk}

Based on the notion of local geodesic semiconcavity, we make the following additional assumptions on the dynamics and cost functionals of problem $(\Ppazo)$. 

\begin{taggedhyp}{\textbn{(SC1)}}
\label{hyp:SC1}
Suppose that for any $R > 0$, the following holds with $K:= B(0,R)$. 
\begin{enumerate}
\item[$(i)$] There exists a map $\Cpazo_K(\cdot) \in L^1([0,T],\R_+)$ such that for every $u \in U$, any $\mu_1,\mu_2 \in \Pcal(K)$, each $\gamma \in \Gamma_o(\mu_1,\mu_2)$ and all $x,y \in K$, it holds
\begin{equation*}
\Big| (1-\lambda) v(t,\mu_1,u,x) + \lambda v(t,\mu_2,u,y) - v \big( t , \gamma^{1 \rightarrow 2}_{\lambda},u,(1-\lambda)x + \lambda y \big) \Big| \leq \Cpazo_K(t) \, \lambda(1-\lambda) \Big( |x-y|^2 + W_2^2(\mu_1,\mu_2) \Big), 
\end{equation*}
for $\Lcal^1$-almost every $t \in [0,T]$ and any $\lambda \in [0,1]$.
\item[$(ii)$] The final cost $\varphi : \Pcal_c(\R^d) \rightarrow \R$ is locally geodesically semiconcave. 
\end{enumerate}
\end{taggedhyp}

In the following theorem, we state one of our main results concerning the value function defined in \eqref{eq:ValueFunction}, that is a local geodesic semiconcavity property with respect to the measure variable. 

\begin{thm}[Local geodesic semiconcavity of the value function]
\label{thm:Semiconcavity1}
Suppose that hypotheses \ref{hyp:OCP} and \ref{hyp:SC1} hold. Then for any $K := B(0,r)$ with $r > 0$, there exists a constant $\Ccal_K > 0$ such that for every $\tau \in [0,T]$ and any $\mu_1,\mu_2 \in \Pcal(K)$, it holds
\begin{equation*}
(1-\lambda) \Vcal(\tau,\mu_1) + \lambda \Vcal(\tau,\mu_2) - \Vcal(\tau,\gamma^{1 \rightarrow 2}_{\lambda}) \leq \Ccal_K \, \lambda(1-\lambda) W_2^2(\mu_1,\mu_2),
\end{equation*}
for all $\lambda \in [0,1]$ and each $\gamma \in \Gamma_o(\mu_1,\mu_2)$.
\end{thm}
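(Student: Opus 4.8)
The plan is to pass the geodesic semiconcavity of the data through the flow representation of solutions of the continuity equation, exploiting the fact that along a fixed control the map $\mu_\tau \mapsto \mu(T)$ can be linearised at second order in the Wasserstein metric.

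\emph{Step 1: Reduction to near-optimal controls.} Fix $r>0$, $K := B(0,r)$, a time $\tau \in [0,T]$, measures $\mu_1,\mu_2 \in \Pcal(K)$, an optimal plan $\gamma \in \Gamma_o(\mu_1,\mu_2)$ and $\lambda \in [0,1]$. Given $\varepsilon > 0$, pick controls $u_1(\cdot), u_2(\cdot) \in \U$ that are $\varepsilon$-optimal for $\Vcal(\tau,\mu_1)$ and $\Vcal(\tau,\mu_2)$ respectively, and denote by $\mu_1(\cdot), \mu_2(\cdot)$ the corresponding admissible trajectories issued from $\mu_1,\mu_2$ at time $\tau$. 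By Lemma \ref{lem:AdmEst}, both curves stay supported in a fixed compact set $K' := B(0,R_r)$, with a uniform modulus of continuity. The idea is then to build, for each $\lambda$, an admissible competitor trajectory $\mu_\lambda(\cdot)$ issued from $\gamma^{1\rightarrow 2}_\lambda$ at time $\tau$ whose terminal cost $\varphi(\mu_\lambda(T))$ exceeds $(1-\lambda)\varphi(\mu_1(T)) + \lambda\varphi(\mu_2(T))$ by at most $\Ccal_{K} \lambda(1-\lambda) W_2^2(\mu_1,\mu_2)$; since $\Vcal(\tau,\gamma^{1\rightarrow 2}_\lambda) \leq \varphi(\mu_\lambda(T))$, letting $\varepsilon \to 0$ will give the claim.

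\emph{Step 2: Construction of the competitor.} The natural choice of control for $\mu_\lambda(\cdot)$ is delicate because the maximisation over $\U$ is over open-loop controls that cannot be ``interpolated'' pointwise in a way that interacts well with the nonlocal field — but this is precisely where hypothesis \ref{hyp:SC1}-$(i)$ enters. Following the philosophy of Proposition \ref{prop:SuperpositionPlan} and the geodesic interpolation estimates for nonlocal flows announced in Lemma \ref{lem:GeodesicInterpolationFlows}, I would run the two flows $\Phi^1_{(\tau,t)} := \Phi_{(\tau,t)}[\mu_1]$ (driven by $v(\cdot,\cdot,u_1(\cdot),\cdot)$) and $\Phi^2_{(\tau,t)} := \Phi_{(\tau,t)}[\mu_2]$ (driven by $v(\cdot,\cdot,u_2(\cdot),\cdot)$), couple them along $\gamma$, and consider the curve of measures $\mu_\lambda(t) := \big( (1-\lambda)\Phi^1_{(\tau,t)}\circ\pi^1 + \lambda \Phi^2_{(\tau,t)}\circ\pi^2 \big)_{\#}\gamma$, i.e. the displacement interpolation between $\mu_1(t)$ and $\mu_2(t)$ transported along the coupled flows. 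One checks, using \ref{hyp:SC1}-$(i)$ together with the Lipschitz estimates of \ref{hyp:CE}-$(ii)$ and a Grönwall argument (Appendix \ref{section:AppendixGronwall}), that $\mu_\lambda(\cdot)$ solves a continuity equation driven by some admissible velocity field which differs from an admissible controlled field by a term of size $O\big(\lambda(1-\lambda)(|x-y|^2 + W_2^2(\mu_1(t),\mu_2(t)))\big)$; absorbing this defect via the superposition principle (Theorem \ref{thm:Superposition}) and the well-posedness theory produces an \emph{admissible} curve $\tilde\mu_\lambda(\cdot)$ from $\gamma^{1\rightarrow 2}_\lambda$ with $W_2\big(\tilde\mu_\lambda(T),\mu_\lambda(T)\big) \leq \Ccal_{K'}' \lambda(1-\lambda) W_2^2(\mu_1,\mu_2)$.

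\emph{Step 3: Terminal estimate.} Finally, $\varphi(\tilde\mu_\lambda(T)) \leq \varphi(\mu_\lambda(T)) + \Lip(\varphi;\Pcal(K'')) W_1\big(\tilde\mu_\lambda(T),\mu_\lambda(T)\big)$ by \ref{hyp:OCP}-$(ii)$, and since $\mu_\lambda(T)$ is exactly the displacement interpolant $\big((1-\lambda)\pi^1+\lambda\pi^2\big)_{\#}\gamma_T$ along the transport plan $\gamma_T := (\Phi^1_{(\tau,T)}\circ\pi^1, \Phi^2_{(\tau,T)}\circ\pi^2)_{\#}\gamma$ between $\mu_1(T)$ and $\mu_2(T)$ (which is $W_2^2$-close to an optimal plan, again by the flow interpolation lemma, with discrepancy $O(\lambda(1-\lambda)W_2^2(\mu_1,\mu_2))$), the local geodesic semiconcavity of $\varphi$ from \ref{hyp:SC1}-$(ii)$ yields
\[
\varphi(\mu_\lambda(T)) \leq (1-\lambda)\varphi(\mu_1(T)) + \lambda\varphi(\mu_2(T)) + \Cpazo_{K''}\lambda(1-\lambda) W_2^2(\mu_1(T),\mu_2(T)) + \Ccal''\lambda(1-\lambda)W_2^2(\mu_1,\mu_2).
\]
Bounding $W_2(\mu_1(T),\mu_2(T)) \leq e^{C\|l_{K'}\|_1} W_2(\mu_1,\mu_2)$ by the standard contraction estimate for nonlocal flows, collecting all the constants into a single $\Ccal_K$ depending only on $K$, using $\varphi(\mu_i(T)) \leq \Vcal(\tau,\mu_i) + \varepsilon$, and sending $\varepsilon \to 0$ completes the proof.

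\textbf{Main obstacle.} The crux is Step 2: showing that the displacement-interpolated curve $\mu_\lambda(\cdot)$ is — up to a second-order-small error — an \emph{admissible} trajectory of $(\Ppazo)$ from $\gamma^{1\rightarrow 2}_\lambda$. This requires the fine geodesic interpolation formula for nonlocal flows (Lemma \ref{lem:GeodesicInterpolationFlows}, resting on \cite[Lemma 1]{ContInc}), which controls how $(1-\lambda)\Phi^1_{(\tau,t)}(x) + \lambda\Phi^2_{(\tau,t)}(y)$ deviates from being itself the image of $(1-\lambda)x+\lambda y$ under a flow generated by an admissible velocity field, and a careful Grönwall bookkeeping to propagate the $O(\lambda(1-\lambda))$ quadratic defect from time $\tau$ to time $T$ with constants uniform over the compact set $K'$.
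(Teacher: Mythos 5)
Your overall architecture (reduction to an interpolation estimate for non-local flows, Kantorovich–Rubinstein to convert a $W_1$-defect into a cost discrepancy, then letting $\varepsilon \to 0$) mirrors the paper, but there is a genuine gap at the outset that propagates through Steps 2 and 3 and, as far as I can see, cannot be repaired within your scheme.

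You choose $\varepsilon$-optimal controls $u_1(\cdot), u_2(\cdot)$ for $\Vcal(\tau,\mu_1)$ and $\Vcal(\tau,\mu_2)$ separately, and then try to build an admissible competitor $\mu_\lambda(\cdot)$ from $\gamma^{1\to 2}_\lambda$ by interpolating the two flows $\Phi^1$ (driven by $u_1$) and $\Phi^2$ (driven by $u_2$). The velocity underlying $\mu_\lambda(\cdot)$ would therefore involve, at each time $t$ and along each pair $(x,y)$, the convex combination $(1-\lambda)\, v\big(t,\mu_1(t),u_1(t),\Phi^1_{(\tau,t)}(x)\big) + \lambda\, v\big(t,\mu_2(t),u_2(t),\Phi^2_{(\tau,t)}(y)\big)$. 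Hypothesis \ref{hyp:SC1}-$(i)$ bounds the gap between such a convex combination and $v\big(t,\gamma^{1\to 2}_\lambda(t),u,\cdot\big)$ \emph{only when the same control value $u$ appears in all three slots}; it gives no control when $u_1(t)\neq u_2(t)$, because the term $\big|v(t,\cdot,u_1(t),\cdot)-v(t,\cdot,u_2(t),\cdot)\big|$ is generically $O(1)$, not $O(\lambda(1-\lambda))$. Worse, since $U$ is merely a compact metric space, there is in general no ``interpolated'' control $u_\lambda(t)\in U$ realising this mixture even approximately, so the defect between your $\mu_\lambda(\cdot)$ and any admissible trajectory of $(\Ppazo)$ starting from $\gamma^{1\to 2}_\lambda$ is not of size $O(\lambda(1-\lambda)W_2^2(\mu_1,\mu_2))$. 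The inequality $\Vcal(\tau,\gamma^{1\to 2}_\lambda)\leq\varphi(\mu_\lambda(T))$, which is what your whole argument rests on, therefore cannot be established.

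The paper sidesteps this by making the \emph{opposite} choice: take a single control $u_\varepsilon(\cdot)$ that is $\varepsilon$-optimal for the midpoint $\Vcal(\tau,\gamma^{1\to 2}_\lambda)$, and run that \emph{same} control from $\mu_1$, $\mu_2$ and $\gamma^{1\to 2}_\lambda$. Since $\Vcal$ is an infimum, $\Vcal(\tau,\mu_i)\leq\varphi\big(\Phi^{u_\varepsilon}_{(\tau,T)}[\mu_i]_\#\mu_i\big)$ automatically, while $-\Vcal(\tau,\gamma^{1\to 2}_\lambda)\leq -\varphi\big(\Phi^{u_\varepsilon}_{(\tau,T)}[\gamma^{1\to 2}_\lambda]_\#\gamma^{1\to 2}_\lambda\big)+\varepsilon$ by $\varepsilon$-optimality. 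Now all three flows are driven by the same $u_\varepsilon$, so \ref{hyp:SC1}-$(i)$ and Lemma \ref{lem:GeodesicInterpolationFlows} (which is itself stated for a single $u(\cdot)\in\U$) apply verbatim, and the rest of your Steps 2--3 (coupling via Proposition \ref{prop:SuperpositionPlan}, Gr\"onwall, Lipschitz regularity and geodesic semiconcavity of $\varphi$) goes through as you envisage. The fix, in short, is to reverse which term gets the $\varepsilon$-optimal control.
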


We split the proof of Theorem \ref{thm:Semiconcavity1} into two steps. We first prove in Step 1 a general interpolation inequality between curves of measures generated by non-local flows, and use this result in Step 2 to recover the geodesic semiconcavity of the value function. In what follows given $u(\cdot) \in \U$, we denote by $(\Phi_{(\tau,t)}^u[\mu](\cdot))_{t \in [0,T]}$ the non-local flows starting from $\mu \in \Pcal(B(0,r))$ at time $\tau \in [0,T]$ and generated by $\vb : (t,\mu,x) \in [0,T] \times \Pcal_c(\R^d) \times \R^d \mapsto v(t,\mu,u(t),x) \in \R^d$, in the sense of Definition \ref{def:NonlocalFlows}.


\paragraph*{Step 1: An interpolation comparison between non-local flows} In this first step, we prove a general distance estimate between, on the one hand, solutions of non-local continuity equations starting from an interpolated initial datum, and, on the other hand, interpolations of solutions starting from the end-points of this interpolated initial datum. 

\begin{lem}[A general interpolation inequality]
\label{lem:GeodesicInterpolationFlows}
Assume that hypotheses \ref{hyp:OCP} and \ref{hyp:SC1} hold. Then for every $K := B(0,r)$ with $r > 0$, there exists a constant $\Spazo_K > 0$ such that for every $\tau \in [0,T]$, any $u(\cdot) \in \U$, all $\mu_1,\mu_2 \in \Pcal(K)$ and each $\gamma \in \Gamma_o(\mu_1,\mu_2)$, there exists a continuous curve of $2$-optimal plans $t \in [\tau,T] \mapsto \gamma(t) \in \Gamma_o(\mu_1(t),\mu_2(t))$ such that for all times $t \in [\tau,T]$ and every $\lambda \in [0,1]$, it holds
\begin{equation}
\label{eq:GeodesicInterpolationFlows}
W_1 \Big(\gamma^{1 \rightarrow 2}_{\lambda}(t) , \Phi_{(\tau,t)}^u[\gamma^{1 \rightarrow 2}_{\lambda}](\cdot)_{\#} \gamma^{1 \rightarrow 2}_{\lambda} \Big) \leq \Spazo_r \, \lambda(1-\lambda) W_2^2(\mu_1,\mu_2).
\end{equation}
Here, the curves of measures $\mu_1(\cdot),\mu_2(\cdot)$ are defined by 
\begin{equation*}
\mu_1(t) := \Phi^u_{(\tau,t)}[\mu_1](\cdot)_{\#} \mu_1 \qquad \text{and} \qquad \mu_2(t) := \Phi^u_{(\tau,t)}[\mu_2](\cdot)_{\#} \mu_2, 
\end{equation*}
for all times $t \in [\tau,T]$. 
\end{lem}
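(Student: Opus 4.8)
\textbf{Proof plan for Lemma \ref{lem:GeodesicInterpolationFlows}.}

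The plan is to fix $u(\cdot) \in \U$, $\mu_1,\mu_2 \in \Pcal(K)$, $\gamma \in \Gamma_o(\mu_1,\mu_2)$ and construct the curve of optimal plans $\gamma(t)$ via the superposition machinery of Proposition \ref{prop:SuperpositionPlan}. Indeed, applying that proposition with $v_1 = v_2 = \vb = v(\cdot,\cdot,u(\cdot),\cdot)$ and the $2$-optimal plan $\gamma$ gives a measure $\hat{\Beta}_{12} \in \Gamma(\Beta_1,\Beta_2)$ coupling the superposition measures of $\mu_1(\cdot)$ and $\mu_2(\cdot)$, with $(\pi^1_{\R^d},\pi^2_{\R^d})_{\#}\hat{\Beta}_{12} = \gamma$ and $\gamma(t) := (e^1_t,e^2_t)_{\#}\hat{\Beta}_{12} \in \Gamma_o(\mu_1(t),\mu_2(t))$ for all $t$; continuity in $t$ follows from continuity of the flows. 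By the flow representation of Theorem \ref{thm:NonLocalCE}, $e^\imath_t$ coincides $\hat\Beta_{12}$-a.e. with $(x,\sigma) \mapsto \Phi^u_{(\tau,t)}[\mu_\imath](x)$, so along $\hat\Beta_{12}$ the value $\gamma^{1\to2}_\lambda(t)$ is the pushforward of $\gamma$ under $x_1 \mapsto (1-\lambda)\Phi^u_{(\tau,t)}[\mu_1](x_1) + \lambda\Phi^u_{(\tau,t)}[\mu_2](x_2)$ (with $x_2$ the corresponding second coordinate). Hence, using the elementary estimate \eqref{eq:WassEst1} for $W_1$, the left-hand side of \eqref{eq:GeodesicInterpolationFlows} is controlled by
\begin{equation*}
\INTDom{\Big| (1-\lambda)\Phi^u_{(\tau,t)}[\mu_1](x_1) + \lambda\Phi^u_{(\tau,t)}[\mu_2](x_2) - \Phi^u_{(\tau,t)}[\gamma^{1\to2}_\lambda]\big( (1-\lambda)x_1 + \lambda x_2 \big) \Big|}{\R^{2d}}{\gamma(x_1,x_2)} =: \delta(t),
\end{equation*}
so it suffices to bound $\delta(t)$ by $\Spazo_r\lambda(1-\lambda)W_2^2(\mu_1,\mu_2)$.

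Next I would set up a Grönwall argument on $\delta(t)$. Write each flow value through its defining integral equation \eqref{eq:NonLocalFlow_Def} and subtract: the integrand of $\delta(t)$ at time $t$ becomes an integral over $s \in [\tau,t]$ of a difference of velocity terms,
\begin{equation*}
(1-\lambda)v\big(s,\mu_1(s),u(s),\Phi^u_{(\tau,s)}[\mu_1](x_1)\big) + \lambda v\big(s,\mu_2(s),u(s),\Phi^u_{(\tau,s)}[\mu_2](x_2)\big) - v\big(s,\gamma^{1\to2}_\lambda(s),u(s),\Phi^u_{(\tau,s)}[\gamma^{1\to2}_\lambda]((1-\lambda)x_1+\lambda x_2)\big).
\end{equation*}
I would split this into two contributions. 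The first is the ``frozen-argument'' term, obtained by replacing, inside the last velocity, the measure $\gamma^{1\to2}_\lambda(s)$ by the interpolation of the plan $\gamma(s)$ along the true flows, and the base point by $(1-\lambda)\Phi^u_{(\tau,s)}[\mu_1](x_1)+\lambda\Phi^u_{(\tau,s)}[\mu_2](x_2)$; this term is estimated directly by the second-order interpolation hypothesis \ref{hyp:SC1}-$(i)$, yielding a bound $\Cpazo_K(s)\lambda(1-\lambda)\big(|\Phi^u_{(\tau,s)}[\mu_1](x_1) - \Phi^u_{(\tau,s)}[\mu_2](x_2)|^2 + W_2^2(\mu_1(s),\mu_2(s))\big)$, and the two quadratic terms are both $\lesssim W_2^2(\mu_1,\mu_2)$ uniformly in $s$ thanks to the Lipschitz dependence of non-local flows on initial data (Theorem \ref{thm:NonLocalCE}, \cite[Theorem 4]{ContInc}) together with the fact that $\gamma$ is $2$-optimal. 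The second contribution is the ``error-feedback'' term coming from the mismatch between the true flow $\Phi^u_{(\tau,s)}[\gamma^{1\to2}_\lambda]$ and these frozen arguments: using the spatial Lipschitz bound \ref{hyp:CE}-$(ii)$ in the base-point variable and the $W_1$-Lipschitz bound in the measure variable (controlling $W_1(\gamma^{1\to2}_\lambda(s), \Phi^u_{(\tau,s)}[\gamma^{1\to2}_\lambda]_{\#}\gamma^{1\to2}_\lambda)$, which is again $\delta(s)$ after integrating, plus a piece bounded by $W_1$ of two interpolated measures built from $\gamma(s)$ versus $\gamma(s)$ along flows, which vanishes), this term is $\lesssim (l_K(s)+L_K(s))\,\delta(s)$. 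Collecting, $\delta(t) \le \int_\tau^t \big( a(s)\delta(s) + b(s)\lambda(1-\lambda)W_2^2(\mu_1,\mu_2)\big)\,\dn s$ with $a,b \in L^1$, and Grönwall's inequality (Appendix \ref{section:AppendixGronwall}) gives \eqref{eq:GeodesicInterpolationFlows} with $\Spazo_r := \Norm{b(\cdot)}_1\exp(\Norm{a(\cdot)}_1)$.

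The main obstacle I anticipate is the bookkeeping in isolating the two contributions cleanly — in particular, making precise that the ``interpolated measure built along the flows'' is exactly $\gamma^{1\to2}_\lambda(s)$ as produced by $\hat\Beta_{12}$, so that no extra error term survives, and ensuring that all the constants (the support radius $R_r$ from Theorem \ref{thm:NonLocalCE}, the Lipschitz constants $l_K,L_K$, the interpolation modulus $\Cpazo_K$, and the flow-vs-initial-data Lipschitz constant) are taken on a single compact set $K = B(0,R_r)$ that contains the supports of all the curves $\mu_1(\cdot),\mu_2(\cdot),\gamma^{1\to2}_\lambda(\cdot)$ simultaneously and uniformly in $\lambda$. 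The quadratic-in-$W_2$ character of the final bound is what forces the use of hypothesis \ref{hyp:SC1}-$(i)$ rather than mere Lipschitz regularity, and keeping the $\lambda(1-\lambda)$ factor intact throughout is the delicate point: it must be inherited solely from \ref{hyp:SC1}-$(i)$ and propagated through the linear Grönwall loop without being absorbed into a generic constant.
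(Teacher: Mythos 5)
Your plan matches the paper's proof essentially line for line: you construct the optimal curve $\gamma(t)$ via Proposition \ref{prop:SuperpositionPlan}, build the admissible plan between $\gamma^{1\to2}_\lambda(t)$ and the pushforward through $\Phi^u_{(\tau,t)}[\gamma^{1\to2}_\lambda]$, split the flow-difference integrand into the interpolation term controlled by \ref{hyp:SC1}-$(i)$ and a feedback term controlled by \ref{hyp:CE}-$(ii)$, and close with Fubini plus Grönwall on a single compact set $K=B(0,R_r)$. The one cosmetic slip is pointing to Appendix \ref{section:AppendixGronwall} for Grönwall's inequality — that appendix proves the specific estimate \eqref{eq:TimeShiftedInterp} used later in Theorem \ref{thm:Semiconcavity3}; the paper simply invokes the standard Grönwall lemma directly here.
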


\begin{proof}
Observe that by \eqref{eq:FlowRep} of Theorem \ref{thm:NonLocalCE} and Lemma \ref{lem:AdmEst}, there exists $R_r \geq r > 0$ such that $\mu_1(\cdot),\mu_2(\cdot) \in \AC([\tau,T],\Pcal_1(K))$ up to redefining $K := B(0,R_r)$. Considering two superposition measures $\Beta_1,\Beta_2 \in \Pcal(\R^d \times \Sigma_T)$ associated with $\mu_1(\cdot),\mu_2(\cdot)$ respectively via Theorem \ref{thm:Superposition}, there exists by Proposition \ref{prop:SuperpositionPlan} a transport plan $\hat{\Beta}_{12} \in \Gamma(\Beta_1,\Beta_2)$ such that for all times $t \in [\tau,T]$, it holds
\begin{equation*}
(\pi_{\R^d}^1,\pi_{\R^d}^2)_{\#} \hat{\Beta}_{12} = \gamma \qquad \text{and} \qquad (e_t^1,e_t^2)_{\#} \hat{\Beta}_{12} =: \gamma(t) \in \Gamma_o(\mu_1(t),\mu_2(t)). 
\end{equation*}
In particular, owing to the definition of the optimal plans $\gamma(t) \in \Gamma_o(\mu_1(t),\mu_2(t))$, the measures
\begin{equation*}
\hat{\Bmu}_{\lambda}^{1 \rightarrow 2}(t) := \Big( (1-\lambda)e_t^1 +\lambda e_t^2 \, , \, \Phi^u_{(\tau,t)}[\gamma^{1 \rightarrow 2}_{\lambda}] \circ ((1-\lambda) \pi^1_{\R^d} + \lambda \pi^2_{\R^d}) \Big)_{\raisebox{4pt}{$\scriptstyle{\#}$}} \hat{\Beta}_{12}, 
\end{equation*}
are admissible transport plans between $\gamma^{1 \rightarrow 2}_{\lambda}(t)$ and $\Phi^u_{(\tau,t)}[\gamma^{1 \rightarrow 2}_{\lambda}](\cdot)_{\#} \gamma^{1 \rightarrow 2}_{\lambda}$ for all times $t \in [\tau,T]$ and every $\lambda \in [0,1]$. Recall also that under hypotheses \ref{hyp:CE}, the superposition measures $\Beta_1,\Beta_2 \in \Pcal(\R^d \times \Sigma_T)$ are concentrated on pairs $(x,\sigma_1),(y,\sigma_2) \in \R^d \times \Sigma_T$ of the form
\begin{equation*}
\sigma_1(t) = \Phi_{(\tau,t)}^u[\mu_1](x)  \qquad \text{and} \qquad \sigma_2(t) = \Phi_{(\tau,t)}^u[\mu_2](y),  
\end{equation*}
for all times $t \in [\tau,T]$ and any $(x,y) \in \supp(\gamma)$. This allows us to derive the following distance estimate 
\begin{equation}
\label{eq:CrossedInterpolationEst}
\begin{aligned}
W_1 \Big(\gamma^{1 \rightarrow 2}_{\lambda}&(t) , \Phi^u_{(\tau,t)}[\gamma^{1 \rightarrow 2}_{\lambda}](\cdot)_{\#} \gamma^{1 \rightarrow 2}_{\lambda} \Big) \\
& \leq \INTDom{|\xb - \yb|}{\R^{2d}}{\hat{\Bmu}_{\lambda}^{1 \rightarrow 2}(t)(\xb,\yb)} \\
& = \INTDom{\Big| (1-\lambda)\sigma_1(s) + \lambda \sigma_2(s) - \Phi^u_{(\tau,t)}[\gamma^{1 \rightarrow 2}_{\lambda}]\Big( (1-\lambda) x + \lambda y) \Big) \Big|}{(\R^d \times \Sigma_T)^2}{\hat{\Beta}_{12}(x,\sigma_1,y,\sigma_2)} \\
& = \INTDom{\Big| (1-\lambda) \Phi^u_{(\tau,t)}[\mu_1](x) + \lambda \Phi^u_{(\tau,t)}[\mu_2](y) - \Phi^u_{(\tau,t)}[\gamma^{1 \rightarrow 2}_{\lambda}]\Big( (1-\lambda) x + \lambda y) \Big) \Big|}{\R^{2d}}{\gamma(x,y)}, 
\end{aligned}
\end{equation}
for all times $t \in [\tau,T]$ and every $\lambda \in [0,1]$.

Our goal is now to estimate the right-hand side of \eqref{eq:CrossedInterpolationEst}. Recalling the characterisation \eqref{eq:NonLocalFlow_Def} of non-local flows, one has
\begin{equation}
\label{eq:SemiconcavityFlow1}
\begin{aligned}
\Big| (1&-\lambda) \Phi^u_{(\tau,t)}[\mu_1](x) + \lambda \Phi^u_{(\tau,t)}[\mu_2](y) - \Phi^u_{(\tau,t)}[\gamma^{1 \rightarrow 2}_{\lambda}]((1-\lambda)x + \lambda y) \Big| \\
& \leq \int_{\tau}^t \Big| (1-\lambda) v \Big(s,\mu_1(s),u(s),\Phi_{(\tau,s)}^u[\mu_1](x) \Big) + \lambda v \Big(s,\mu_2(s),u(s),\Phi_{(\tau,s)}^u[\mu_2](y) \Big) \\
& \hspace{5cm} - v \Big(s, \gamma^{1 \rightarrow 2}_{\lambda}(s) ,u(s), (1-\lambda)\Phi^u_{(\tau,s)}[\mu_1](x) + \lambda \Phi^u_{(\tau,s)}[\mu_2](y) \Big) \Big| \textnormal{d}s \\
& \hspace{0.4cm} + \int_{\tau}^t \Big| v \Big(s, \gamma^{1 \rightarrow 2}_{\lambda}(s) ,u(s), (1-\lambda)\Phi^u_{(\tau,s)}[\mu_1](x) + \lambda \Phi^u_{(\tau,s)}[\mu_2](y) \Big) \\ 
& \hspace{4.2cm} - v \Big(s,\Phi^u_{(\tau,s)}[\gamma^{1 \rightarrow 2}_{\lambda}](\cdot)_{\#} \gamma^{1 \rightarrow 2}_{\lambda} ,u(s), \Phi^u_{(\tau,s)}[\gamma^{1 \rightarrow 2}_{\lambda}]((1-\lambda)x + \lambda y) \Big) \Big|  \textnormal{d}s
\end{aligned}
\end{equation}
for any $x,y \in K$. Observing that $(\gamma^{1 \rightarrow 2}_{\lambda}(s))_{\lambda \in [0,1]}$ is an interpolating curve between $\mu_1(s)$ and $\mu_2(s)$ with $\gamma(s) \in \Gamma_o(\mu_1(s),\mu_2(s))$ for all times $s \in [\tau,t]$, it further holds by \ref{hyp:SC1}-$(i)$ that
\begin{equation}
\label{eq:SemiconcavityFlow2}
\begin{aligned}
& \Big| (1-\lambda) v \Big(s,\mu_1(s),u(s),\Phi_{(\tau,s)}^u[\mu_1](x) \Big) + \lambda v \Big(s,\mu_2(s),u(s),\Phi_{(\tau,s)}^u[\mu_2](y) \Big) \\
& \hspace{4.45cm} - v \Big(s, \gamma^{1 \rightarrow 2}_{\lambda}(s) ,u(s), (1-\lambda)\Phi^u_{(\tau,s)}[\mu_1](x) + \lambda \Phi^u_{(\tau,s)}[\mu_2](y) \Big) \Big| \\
& \hspace{2.6cm} \leq \Cpazo_K(s) \, \lambda(1-\lambda) \Big( \big| \Phi_{(\tau,s)}^u[\mu_1](x) -\Phi_{(\tau,s)}^u[\mu_2](y)\big|^2 + W_2^2(\mu_1(s),\mu_2(s)) \Big) \\
& \hspace{2.6cm} \leq \Cpazo_K'(s) \, \lambda(1-\lambda) \Big( |x-y|^2 + W_2^2(\mu_1,\mu_2) \Big),
\end{aligned}
\end{equation}
where the map $\Cpazo_K'(\cdot) \in L^1([0,T],\R_+)$ is given by 
\begin{equation*}
\Cpazo'_K(s) := 4 \sup_{\tau,t \in [0,T]} \hspace{-0.1cm} \Big( \max_{\mu \in \Pcal(K)} \Lip \big( \Phi_{(\tau,t)}[\mu](\cdot); K \big)^2 + \max_{x \in K} \, \Lip \big( \Phi_{(\tau,t)}[\cdot](x); \Pcal_1(K) \big)^2 \, \Big) \Cpazo_K(s), 
\end{equation*}
for $\Lcal^1$-almost every $s \in [\tau,T]$. As a consequence of \ref{hyp:CE}-$(ii)$, one also has 
\begin{equation}
\label{eq:SemiconcavityFlow3}
\begin{aligned}
& \Big| v \Big(s, \gamma^{1 \rightarrow 2}_{\lambda}(s) ,u(s), (1-\lambda)\Phi^u_{(\tau,s)}[\mu_1](x) + \lambda \Phi^u_{(\tau,s)}[\mu_2](y) \Big) \\ 
& \hspace{3.625cm} - v \Big(s,\Phi^u_{(\tau,s)}[\gamma^{1 \rightarrow 2}_{\lambda}](\cdot)_{\#} \gamma^{1 \rightarrow 2}_{\lambda} ,u(s), \Phi^u_{(\tau,s)}[\gamma^{1 \rightarrow 2}_{\lambda}]((1-\lambda)x + \lambda y) \Big) \Big| \\
& \hspace{2.4cm} \leq l_K(s) \Big| (1-\lambda)\Phi^u_{(\tau,s)}[\mu_1](x) + \lambda \Phi^u_{(\tau,s)}[\mu_2](y) - \Phi^u_{(\tau,s)}[\gamma^{1 \rightarrow 2}_{\lambda}]((1-\lambda)x + \lambda y) \Big| \\
& \hspace{2.8cm} + L_K(s) W_1 \Big(\gamma^{1 \rightarrow 2}_{\lambda}(s) , \Phi^u_{(\tau,s)}[\gamma^{1 \rightarrow 2}_{\lambda}](\cdot)_{\#} \gamma^{1 \rightarrow 2}_{\lambda} \Big), 
\end{aligned}
\end{equation}
for $\Lcal^1$-almost every $s \in [\tau,T]$. Whence, by merging \eqref{eq:CrossedInterpolationEst}, \eqref{eq:SemiconcavityFlow1}, \eqref{eq:SemiconcavityFlow2} and \eqref{eq:SemiconcavityFlow3} while integrating the resulting estimate against $\gamma \in \Gamma_o(\mu_1,\mu_2)$ and applying Fubini's theorem, we obtain
\begin{equation*}
\begin{aligned}
& \INTDom{\Big| (1-\lambda) \Phi^u_{(\tau,t)}[\mu_1](x) + \lambda \Phi^u_{(\tau,t)}[\mu_2](y) - \Phi^u_{(\tau,t)}[\gamma^{1 \rightarrow 2}_{\lambda}]((1-\lambda)x + \lambda y) \Big|}{\R^{2d}}{\gamma(x,y)} \\
& \leq \bigg( 2 \INTSeg{\Cpazo_K'(s)}{s}{\tau}{t} \bigg) \lambda(1-\lambda) W_2^2(\mu_1,\mu_2) \\
& \hspace{0.4cm} + \INTSeg{\hspace{0.125cm} l_K(s) \INTDom{\Big| (1-\lambda) \Phi^u_{(\tau,s)}[\mu_1](x) + \lambda \Phi^u_{(\tau,s)}[\mu_2](y) - \Phi^u_{(\tau,s)}[\gamma^{1 \rightarrow 2}_{\lambda}]((1-\lambda)x + \lambda y) \Big|}{\R^{2d}}{\gamma(x,y)}}{s}{\tau}{t} \\
& \hspace{0.4cm} + \INTSeg{L_K(s) \INTDom{\Big| (1-\lambda) \Phi^u_{(\tau,s)}[\mu_1](x) + \lambda \Phi^u_{(\tau,s)}[\mu_2](y) - \Phi^u_{(\tau,s)}[\gamma^{1 \rightarrow 2}_{\lambda}]((1-\lambda)x + \lambda y) \Big|}{\R^{2d}}{\gamma(x,y)}}{s}{\tau}{t}.
\end{aligned}
\end{equation*}
We thus recover by a direct application of Gr\"onwall's Lemma
\begin{equation}
\label{eq:SemiconcavityFlow4}
\INTDom{\Big| (1-\lambda) \Phi^u_{(\tau,t)}[\mu_1](x) + \lambda \Phi^u_{(\tau,t)}[\mu_2](y) - \Phi^u_{(\tau,t)}[\gamma^{1 \rightarrow 2}_{\lambda}]((1-\lambda)x + \lambda y) \Big|}{\R^{2d}}{\gamma(x,y)} \leq \Spazo_K \lambda(1-\lambda) W_2^2(\mu_1,\mu_2), 
\end{equation}
for all times $t \in [\tau,T]$, where $\Spazo_K := 2 \Norm{\Cpazo_K'(\cdot)}_1 \exp(\Norm{l_K(\cdot)}_1 + \Norm{L_K(\cdot)}_1)$ depends only on $r>0$ since by construction $K := B(0,R_r)$ with $R_r > 0$ given by Lemma \ref{lem:AdmEst}. Plugging the estimate \eqref{eq:SemiconcavityFlow4} back into \eqref{eq:CrossedInterpolationEst}, we can finally conclude that
\begin{equation*}
W_1 \Big(\gamma^{1 \rightarrow 2}_{\lambda}(t) , \Phi^u_{(\tau,t)}[\gamma^{1 \rightarrow 2}_{\lambda}](\cdot)_{\#} \gamma^{1 \rightarrow 2}_{\lambda} \Big) \leq \Spazo_K \lambda(1-\lambda) W_2^2(\mu_1,\mu_2),
\end{equation*}
for every $t \in [\tau,T]$ and all $\lambda \in [0,1]$, which ends the proof.
\end{proof}


\paragraph*{Step 2: Local geodesic semiconcavity of the value function} Given $r >0$, let $\mu_1,\mu_2 \in \Pcal(B(0,r))$  and $\gamma \in \Gamma_o(\mu_1,\mu_2)$ be arbitrary, and take $\tau \in [0,T]$. In addition, let $K := B(0,R_r)$ with $R_r > 0$ as in Lemma \ref{lem:AdmEst}.

\begin{proof}[Proof of Theorem \ref{thm:Semiconcavity1}] 
By the definition \eqref{eq:ValueFunction} of the value function, there exists for all $\epsilon >0$ and $\lambda \in [0,1]$ an admissible control $u_{\epsilon}(\cdot) \in \U$ such that 
\begin{equation*}
\varphi \Big( \Phi_{(\tau,T)}^{u_{\epsilon}}[\gamma^{1 \rightarrow 2}_{\lambda}](\cdot)_{\#} \gamma^{1 \rightarrow 2}_{\lambda} \Big) \leq \Vcal(\tau,\gamma^{1 \rightarrow 2}_{\lambda}) + \epsilon.
\end{equation*}
Using again the definition of the value function, we further get
\begin{equation}
\label{eq:ValueFunctionSemiConcavity1}
\begin{aligned}
(1-\lambda) \Vcal (\tau,\mu_1) & + \lambda \Vcal(\tau,\mu_2) - \Vcal(\tau,\gamma^{1 \rightarrow 2}_{\lambda}) \\
& \leq (1-\lambda) \varphi(\mu_1(T)) + \lambda \varphi(\mu_2(T)) - \varphi \Big( \Phi_{(\tau,T)}^{u_{\epsilon}}[\gamma^{1 \rightarrow 2}_{\lambda}](\cdot)_{\#} \gamma^{1 \rightarrow 2}_{\lambda} \Big) + \epsilon,
\end{aligned}
\end{equation}
for all $\epsilon > 0$ and any $\lambda \in [0,1]$,  where 
\begin{equation*}
\mu_1(T) := \Phi_{(\tau,T)}^{u_{\epsilon}}[\mu_1](\cdot)_{\#} \mu_1 \qquad \text{and} \qquad \mu_2(T) := \Phi_{(\tau,T)}^{u_{\epsilon}}[\mu_2](\cdot)_{\#} \mu_2.
\end{equation*}
Invoking the results of Lemma \ref{lem:GeodesicInterpolationFlows}, there exists a constant $\Spazo_K > 0$ that is independent of $u_{\epsilon}(\cdot)$ and a $2$-optimal transport plan $\gamma(T) \in \Gamma_o(\mu_1(T),\mu_2(T))$ such that
\begin{equation*}
W_1 \Big( \Phi_{(\tau,T)}^{u_{\epsilon}}[\gamma^{1 \rightarrow 2}_{\lambda}](\cdot)_{\#} \gamma^{1 \rightarrow 2}_{\lambda} , \gamma^{1 \rightarrow 2}_{\lambda}(T) \Big) \leq \Spazo_K \, \lambda(1-\lambda) W_2^2(\mu_1,\mu_2), 
\end{equation*}
for every $\lambda \in [0,1]$. By hypothesis \ref{hyp:OCP}-$(ii)$, this further implies 
\begin{equation}
\label{eq:ValueFunctionSemiConcavity2}
\varphi(\gamma^{1 \rightarrow 2}_{\lambda}(T)) - \varphi \Big( \Phi_{(\tau,T)}^{u_{\epsilon}}[\gamma^{1 \rightarrow 2}_{\lambda}](\cdot)_{\#} \gamma^{1 \rightarrow 2}_{\lambda} \Big) \leq \Lpazo_K \Spazo_K \, \lambda(1-\lambda) W_2^2(\mu_1,\mu_2), 
\end{equation}
where $\Lpazo_{K} := \Lip(\varphi(\cdot); \Pcal_1(K))$. Recalling that $\varphi(\cdot)$ is locally geodesically semiconcave by hypothesis \ref{hyp:SC1}-$(ii)$, we also get
\begin{equation}
\label{eq:ValueFunctionSemiConcavity3}
\begin{aligned}
(1-\lambda) \varphi(\mu_1(T)) + \lambda \varphi(\mu_2(T)) - \varphi(\gamma^{1 \rightarrow 2}_{\lambda}(T)) & \leq \Cpazo_K \lambda(1-\lambda) W_2^2(\mu_1(T),\mu_2(T)) \\
& \leq \Cpazo_K' \lambda(1-\lambda) W_2^2(\mu_1,\mu_2),
\end{aligned}
\end{equation}
with $\Cpazo_K > 0$ being the local geodesic semiconcavity constant of $\varphi(\cdot)$ over $\Pcal(K)$, and 
\begin{equation*}
\Cpazo_K' := 4 \sup_{t \in [0,T]} \bigg( \max_{\mu \in \Pcal(K)} \Lip \Big( \Phi_{(\tau,t)}^{u_{\epsilon}}[\mu](\cdot); K \Big) + \max_{x \in K} \, \Lip \Big( \Phi_{(\tau,t)}^{u_{\epsilon}}[\cdot](x); \Pcal_1(K) \Big) \bigg) \Cpazo_K.
\end{equation*}
The fact that $\Cpazo_K' \geq 0$ is independent of $(\epsilon,\lambda) \in \R_+^* \times [0,1]$ follows from Lemma \ref{lem:LipFlow}, together with the uniformity with respect to $u \in U$ of the regularity constants appearing in \ref{hyp:OCP}-$(i)$. Thus, plugging \eqref{eq:ValueFunctionSemiConcavity2} and \eqref{eq:ValueFunctionSemiConcavity3} into \eqref{eq:ValueFunctionSemiConcavity1}, we obtain for all $(\epsilon,\lambda) \in \R_+^* \times [0,1]$ that
\begin{equation*}
(1-\lambda) \Vcal(\tau,\mu_1) + \lambda \Vcal(\tau,\mu_2) - \Vcal(\tau,\gamma^{1 \rightarrow 2}_{\lambda}) \leq \Big( \Cpazo_K' + \Lpazo_K \Spazo_K \Big) \, \lambda(1-\lambda) W_2^2(\mu_1,\mu_2) + \epsilon, 
\end{equation*}
which concludes the proof of Theorem \ref{thm:Semiconcavity1} since $\epsilon > 0$ is arbitrary.
\end{proof}

In what follows, we prove that semiconcavity along arbitrary interpolating curves at the level of the dynamics and cost functionals of $(\Ppazo)$ translates into strong local semiconcavity of the value function.

\begin{taggedhyp}{\textbn{(SC2)}}
\label{hyp:SC2}
Suppose that for any $R > 0$, the following holds with $K:= B(0,R)$. 
\begin{enumerate}
\item[$(i)$] There exists a map $\Cpazo_K(\cdot) \in L^1([0,T],\R_+)$ such that for any $\mu_1,\mu_2 \in \Pcal(K)$, each $\Bmu \in \Gamma(\mu_1,\mu_2)$ and all $x,y \in K$, it holds
\begin{equation*}
\Big| (1-\lambda) v(t,\mu_1,x) + \lambda v(t,\mu_2,y) - v \big( t , \Bmu^{1 \rightarrow 2}_{\lambda}, (1-\lambda)x + \lambda y \big) \Big| \leq \Cpazo_K(t) \, \lambda(1-\lambda) \Big( |x-y|^2 + W_{2,\Bmu}^2(\mu_1,\mu_2) \Big), 
\end{equation*}
for $\Lcal^1$-almost every $t \in [0,T]$ and any $\lambda \in [0,1]$.
\item[$(ii)$] The final cost $\varphi : \Pcal_c(\R^d) \rightarrow \R$ is locally strongly semiconcave. 
\end{enumerate}
\end{taggedhyp}

\begin{thm}[Local strong semiconcavity of the value function]
\label{thm:Semiconcavity2}
Suppose that hypotheses \ref{hyp:OCP} and \ref{hyp:SC2} hold. Then for any $K := B(0,r)$ with $r > 0$, there exists a constant $\Ccal_K > 0$ such that for every $\tau \in [0,T]$ and any $\mu_1,\mu_2 \in \Pcal(K)$, it holds
\begin{equation*}
(1-\lambda) \Vcal(\tau,\mu_1) + \lambda \Vcal(\tau,\mu_2) - \Vcal(\tau,\Bmu^{1 \rightarrow 2}_{\lambda}) \leq \Ccal_K \, \lambda(1-\lambda) W_{2,\Bmu}^2(\mu_1,\mu_2),
\end{equation*}
for all $\lambda \in [0,1]$ and each $\Bmu \in \Gamma(\mu_1,\mu_2)$.
\end{thm}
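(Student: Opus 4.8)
The plan is to mirror the proof of Theorem \ref{thm:Semiconcavity1} verbatim, replacing geodesic interpolation data by arbitrary interpolation data throughout. Concretely, I would first establish the analogue of Lemma \ref{lem:GeodesicInterpolationFlows}: given $r>0$, $\mu_1,\mu_2\in\Pcal(B(0,r))$, an arbitrary plan $\Bmu\in\Gamma(\mu_1,\mu_2)$ (not necessarily optimal), a time $\tau\in[0,T]$ and a control $u(\cdot)\in\U$, I claim there is a constant $\Spazo_K>0$ depending only on $r$ such that
\begin{equation*}
W_1\Big(\Bmu^{1\rightarrow2}_{\lambda}(t),\Phi^u_{(\tau,t)}[\Bmu^{1\rightarrow2}_{\lambda}](\cdot)_{\#}\Bmu^{1\rightarrow2}_{\lambda}\Big)\leq\Spazo_K\,\lambda(1-\lambda)W^2_{2,\Bmu}(\mu_1,\mu_2),
\end{equation*}
for all $t\in[\tau,T]$ and $\lambda\in[0,1]$, where $\Bmu^{1\rightarrow2}_{\lambda}(t):=((1-\lambda)e^1_t+\lambda e^2_t)_{\#}\hat\Beta_{12}$ is the pushforward of $\Bmu$ along the two flows. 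The only structural input used here is the transport-plan estimate of Proposition \ref{prop:SuperpositionPlan}, which produces $\hat\Beta_{12}\in\Gamma(\Beta_1,\Beta_2)$ coupling the superposition measures of $\mu_1(\cdot)$ and $\mu_2(\cdot)$ so that $(\pi^1_{\R^d},\pi^2_{\R^d})_{\#}\hat\Beta_{12}=\Bmu$; note that in this version one does \emph{not} need the conclusion that $(e^1_t,e^2_t)_{\#}\hat\Beta_{12}$ is \emph{optimal} (that was only used to control $W_2^2(\mu_1(s),\mu_2(s))$ via $W_2^2(\mu_1,\mu_2)$ in the geodesic proof), because here the relevant quantity is the weighted distance $W_{2,\Bmu}^2(\mu_1,\mu_2)=\int|x-y|^2\dn\Bmu$, which is intrinsic to the chosen plan. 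One then repeats the chain of inequalities \eqref{eq:CrossedInterpolationEst}--\eqref{eq:SemiconcavityFlow4}: bound the $W_1$-distance by integrating $|\,(1-\lambda)\Phi^u_{(\tau,t)}[\mu_1](x)+\lambda\Phi^u_{(\tau,t)}[\mu_2](y)-\Phi^u_{(\tau,t)}[\Bmu^{1\rightarrow2}_{\lambda}]((1-\lambda)x+\lambda y)|$ against $\Bmu$, expand via \eqref{eq:NonLocalFlow_Def}, split into a semiconcavity term controlled by \ref{hyp:SC2}-$(i)$ (with $W_{2,\Bmu}^2$ replacing $W_2^2$, and using Lemma \ref{lem:LipFlow} to absorb the flow-Lipschitz constants turning $\int|\Phi^u[\mu_1](x)-\Phi^u[\mu_2](y)|^2\dn\Bmu$ into $C\cdot W_{2,\Bmu}^2(\mu_1,\mu_2)$) and a Cauchy--Lipschitz term controlled by \ref{hyp:CE}-$(ii)$, then close with Gr\"onwall.

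With this interpolation lemma in hand, Step 2 is an essentially line-by-line copy of the proof of Theorem \ref{thm:Semiconcavity1}. Fix $r>0$, $\mu_1,\mu_2\in\Pcal(B(0,r))$, $\Bmu\in\Gamma(\mu_1,\mu_2)$, $\tau\in[0,T]$, and set $K:=B(0,R_r)$ per Lemma \ref{lem:AdmEst}. For $\epsilon>0$ and $\lambda\in[0,1]$ pick $u_\epsilon(\cdot)\in\U$ with $\varphi(\Phi^{u_\epsilon}_{(\tau,T)}[\Bmu^{1\rightarrow2}_{\lambda}](\cdot)_{\#}\Bmu^{1\rightarrow2}_{\lambda})\leq\Vcal(\tau,\Bmu^{1\rightarrow2}_{\lambda})+\epsilon$. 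Then bound $(1-\lambda)\Vcal(\tau,\mu_1)+\lambda\Vcal(\tau,\mu_2)-\Vcal(\tau,\Bmu^{1\rightarrow2}_{\lambda})$ from above, using the definition of $\Vcal$ on the first two terms with the same control $u_\epsilon$, by
\begin{equation*}
(1-\lambda)\varphi(\mu_1(T))+\lambda\varphi(\mu_2(T))-\varphi\big(\Phi^{u_\epsilon}_{(\tau,T)}[\Bmu^{1\rightarrow2}_{\lambda}](\cdot)_{\#}\Bmu^{1\rightarrow2}_{\lambda}\big)+\epsilon,
\end{equation*}
insert the intermediate measure $\Bmu^{1\rightarrow2}_{\lambda}(T)$, use the interpolation lemma together with the $W_1$-Lipschitz regularity of $\varphi$ from \ref{hyp:OCP}-$(ii)$ to replace $\Phi^{u_\epsilon}_{(\tau,T)}[\Bmu^{1\rightarrow2}_{\lambda}](\cdot)_{\#}\Bmu^{1\rightarrow2}_{\lambda}$ by $\Bmu^{1\rightarrow2}_{\lambda}(T)$ at cost $\Lpazo_K\Spazo_K\lambda(1-\lambda)W_{2,\Bmu}^2(\mu_1,\mu_2)$, and then apply the \emph{strong} semiconcavity of $\varphi$ from \ref{hyp:SC2}-$(ii)$ to $(1-\lambda)\varphi(\mu_1(T))+\lambda\varphi(\mu_2(T))-\varphi(\Bmu^{1\rightarrow2}_{\lambda}(T))$, picking the specific plan $\hat\Bmu(T):=(e^1_T,e^2_T)_{\#}\hat\Beta_{12}\in\Gamma(\mu_1(T),\mu_2(T))$ produced in Step 1 so that $\Bmu^{1\rightarrow2}_{\lambda}(T)=\hat\Bmu(T)^{1\rightarrow2}_{\lambda}$; this gives a bound $\Cpazo_K\lambda(1-\lambda)W_{2,\hat\Bmu(T)}^2(\mu_1(T),\mu_2(T))$, and $W_{2,\hat\Bmu(T)}^2(\mu_1(T),\mu_2(T))=\int|\Phi^{u_\epsilon}_{(\tau,T)}[\mu_1](x)-\Phi^{u_\epsilon}_{(\tau,T)}[\mu_2](y)|^2\dn\Bmu\leq C_K\,W_{2,\Bmu}^2(\mu_1,\mu_2)$ by Lemma \ref{lem:LipFlow} and \ref{hyp:OCP}-$(i)$. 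Combining and letting $\epsilon\to0^+$ yields $\Ccal_K:=\Cpazo_K'+\Lpazo_K\Spazo_K$ and the claim.

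The point that needs the most care — and that I expect to be the only genuine subtlety — is tracking exactly which plan is used where, and making sure the weighted distances all refer back to the \emph{same} original plan $\Bmu$. In the geodesic case one could freely replace $W_2^2$ between intermediate measures by $W_2^2$ between the endpoints because $\Gamma_o$ is preserved along the flow by Proposition \ref{prop:SuperpositionPlan}; here there is no such luxury, so one must consistently carry the plan $\hat\Beta_{12}$ through, use $\Bmu^{1\rightarrow2}_{\lambda}(t)$ to mean the pushforward along \emph{that} plan (rather than any optimal plan between $\mu_1(t)$ and $\mu_2(t)$), and feed the induced plan $(e^1_t,e^2_t)_{\#}\hat\Beta_{12}$ into \ref{hyp:SC2}-$(i)$ and into the strong semiconcavity of $\varphi$. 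Once the bookkeeping is set up correctly, every estimate reduces via Lemma \ref{lem:LipFlow} (flow-Lipschitz bounds in both the space and measure arguments, uniform in $u\in U$ by \ref{hyp:OCP}-$(i)$) to a multiple of $W_{2,\Bmu}^2(\mu_1,\mu_2)$, and Gr\"onwall's Lemma closes the loop exactly as in Step 1. No new analytical ingredient beyond those already invoked in Theorem \ref{thm:Semiconcavity1} is required.
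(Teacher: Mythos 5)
Your proposal is correct and follows essentially the same route as the paper: a strong analogue of Lemma \ref{lem:GeodesicInterpolationFlows} obtained by reproducing the chain of estimates \eqref{eq:CrossedInterpolationEst}--\eqref{eq:SemiconcavityFlow4} with $W_{2,\Bmu}$ in place of $W_2$ and \ref{hyp:SC2}-$(i)$ in place of \ref{hyp:SC1}-$(i)$, followed by a line-by-line copy of Step 2 of Theorem \ref{thm:Semiconcavity1} with the strong semiconcavity of $\varphi$ fed the induced plan $(\Phi^{u_\epsilon}_{(\tau,T)}[\mu_1]\circ\pi^1,\Phi^{u_\epsilon}_{(\tau,T)}[\mu_2]\circ\pi^2)_{\#}\Bmu$. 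One cleanup: you appeal to Proposition \ref{prop:SuperpositionPlan} to produce $\hat\Beta_{12}$ with $(\pi^1_{\R^d},\pi^2_{\R^d})_{\#}\hat\Beta_{12}=\Bmu$ for an \emph{arbitrary} $\Bmu\in\Gamma(\mu_1,\mu_2)$, but that proposition is stated only for $p$-optimal plans; in the Cauchy--Lipschitz regime this detour is unnecessary anyway, since the superposition measures are concentrated on the unique flow lines, so the coupling is just $\hat\Beta_{12}:=\big((\pi^1,\Phi^u_{(\tau,\cdot)}[\mu_1]\circ\pi^1),(\pi^2,\Phi^u_{(\tau,\cdot)}[\mu_2]\circ\pi^2)\big)_{\#}\Bmu$ -- which is exactly how the paper handles it, by defining the curve of plans $\hat{\Bmu}^{1\rightarrow2}_{\lambda}(t)$ directly as a pushforward of $\Bmu$ without invoking the superposition machinery at all.
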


\begin{proof}
The proof of Theorem \ref{thm:Semiconcavity2} relies on the same arguments as those of Theorem \ref{thm:Semiconcavity1}, up to some modifications in the construction of the curve of plans along which the interpolation estimate is satisfied. One can check by reproducing the estimations in the proof of Lemma \ref{lem:GeodesicInterpolationFlows} and using \ref{hyp:SC2}-$(i)$ that there exists a constant $\Spazo_K > 0$ such that for any $\Bmu \in \Gamma(\mu_1,\mu_2)$, the curve of plans 
\begin{equation*}
t \in [0,T] \mapsto \hat{\Bmu}^{1 \rightarrow 2}_{\lambda}(t) := \Big( (1-\lambda) \Phi_{(\tau,t)}^u[\mu_1] \circ \pi^1 + \lambda \Phi_{(\tau,t)}^u[\mu_2] \circ \pi^2 \, , \, \Phi_{(\tau,t)}^u[\Bmu^{1 \rightarrow 2}_{\lambda}] \big( (1-\lambda)\pi^1 +\lambda \pi^2 \big) \Big)_{\raisebox{4pt}{$\scriptstyle{\#}$}} \Bmu, 
\end{equation*}
allows to derive the strong interpolation estimate
\begin{equation*}
W_1 \Big( \Bmu^{1 \rightarrow 2}_{\lambda}(t), \Phi^u_{(\tau,t)}[\Bmu^{1 \rightarrow 2}_{\lambda}](\cdot)_{\#} \Bmu^{1 \rightarrow 2}_{\lambda} \Big) \leq \Spazo_K \, \lambda(1-\lambda) W_{2,\Bmu}^2(\mu_1,\mu_2), 
\end{equation*}
for all times $t \in [0,T]$ and every $\lambda \in [0,1]$. From there on, the proof of Theorem \ref{thm:Semiconcavity2} directly follows by reproducing the estimates of Step 2, while using the fact that under hypotheses \ref{hyp:SC2}-$(ii)$, the final cost also satisfies a strong semiconcavity estimate along arbitrary interpolating curves.
\end{proof}


\subsection{Semiconcavity with respect to both variables}
\label{subsection:TimeMeasSemiconcavity}

In Theorem \ref{thm:Semiconcavity1} and Theorem \ref{thm:Semiconcavity2} above, we have shown that the value function is locally semiconcave with respect to the measure variable, whenever the dynamics and cost functionals satisfy some adequate interpolation inequalities. In Section \ref{subsection:Feedback} below, we will need an extension of these regularity estimates involving semiconcavity properties with respect to the time variable as well, in order to study optimal feedback mappings for $(\Ppazo)$.

Throughout this section, we impose the following tighter regularity conditions on the controlled non-local velocity fields. 

\begin{taggedhyp}{\textbn{(R)}}
\label{hyp:R}
Suppose that for every $R > 0$, the following holds with $K := B(0,R)$. 
\begin{enumerate}
\item[$(i)$] The map $v : [0,T] \times \Pcal_c(\R^d) \times U \times \R^d \rightarrow \R^d$ satisfies \ref{hyp:OCP}-$(i)$ with a time-independent sublinearity constant $m(\cdot) := m > 0$. 
\item[$(ii)$] There exists a constant $\ell_K > 0$ such that for all $u \in U$, it holds
\begin{equation*}
\big| v(t,\mu,u,x) - v(\tau,\nu,u,y) \big| \leq \ell_K \Big( |t-\tau| + W_1(\mu,\nu) + |x-y| \Big),
\end{equation*}
for every $\tau,t \in [0,T]$, any $\mu,\nu \in \Pcal(K)$ and all $x,y \in K$. 

\end{enumerate}
\end{taggedhyp}

Under these stronger regularity requirements, we are able to establish semiconcavity properties of the value function with respect to both time and measure variables.

\begin{thm}[Joint semiconcavity of the value function]
\label{thm:Semiconcavity3}
Suppose that hypotheses \ref{hyp:OCP}, \ref{hyp:SC1} and \ref{hyp:R} hold. Then for every $K := B(0,r)$ with $r > 0$, there exists a constant $\Ccal_K > 0$ such that for every $\tau_1,\tau_2 \in [0,T]$, any $\mu_1,\mu_2 \in \Pcal(B(0,r))$ and each $\gamma \in \Gamma_o(\mu_1,\mu_2)$, it holds 
\begin{equation*}
(1-\lambda) \Vcal(\tau_1,\mu_1) + \lambda \Vcal(\tau_2,\mu_2) - \Vcal \big( \tau_{\lambda},\gamma^{1 \rightarrow 2}_{\lambda} \big) \leq \Ccal_K \, \lambda(1-\lambda) \Big( |\tau_1 - \tau_2|^2 + W_2^2(\mu_1,\mu_2) \Big), 
\end{equation*}
for all $\lambda \in [0,1]$, where $\tau_{\lambda} := (1-\lambda)\tau_1 + \lambda \tau_2$. If the data of $(\Ppazo)$ satisfy the stronger hypotheses of \ref{hyp:SC2}, then one further has
\begin{equation*}
(1-\lambda) \Vcal(\tau_1,\mu_1) + \lambda \Vcal(\tau_2,\mu_2) - \Vcal \big( \tau_{\lambda},\Bmu^{1 \rightarrow 2}_{\lambda} \big) \leq \Ccal_K \, \lambda(1-\lambda) \Big( |\tau_1 - \tau_2|^2 + W_{2,\Bmu}^2(\mu_1,\mu_2) \Big)
\end{equation*}
for all $\lambda \in [0,1]$ and each $\Bmu \in \Gamma(\mu_1,\mu_2)$.
\end{thm}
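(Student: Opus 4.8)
The plan is to mimic the two-step argument behind Theorem~\ref{thm:Semiconcavity1}: first establish a \emph{time-augmented} interpolation inequality for non-local flows in the spirit of Lemma~\ref{lem:GeodesicInterpolationFlows}, and then combine it in a second step with the (geodesic, resp.\ strong) semiconcavity of the final cost $\varphi(\cdot)$. The only genuinely new ingredient compared to the proofs of Theorems~\ref{thm:Semiconcavity1} and~\ref{thm:Semiconcavity2} is the bookkeeping required to incorporate a perturbation of the initial time, which is what hypothesis \ref{hyp:R} is designed to handle.

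For Step~1, I would fix $r>0$, $\mu_1,\mu_2 \in \Pcal(B(0,r))$, $\gamma \in \Gamma_o(\mu_1,\mu_2)$, times $\tau_1,\tau_2 \in [0,T)$ (the cases $\tau_1 = T$ or $\tau_2 = T$ being recovered afterwards by continuity of $\Vcal(\cdot,\cdot)$, see Proposition~\ref{prop:LipRegValue}), and set $\tau_{\lambda} := (1-\lambda)\tau_1 + \lambda\tau_2$. Given an admissible control $u(\cdot) \in \U$ on $[\tau_{\lambda},T]$, I would introduce for $\imath \in \{1,2\}$ the increasing affine bijection $\rho_{\imath} : [\tau_{\lambda},T] \to [\tau_{\imath},T]$ with $\rho_{\imath}(T) = T$, write $c_{\imath} := \rho_{\imath}' \in (0,+\infty)$, and note the crucial identities $(1-\lambda)c_1 + \lambda c_2 = 1$ and $(1-\lambda)\rho_1(t) + \lambda\rho_2(t) = t$ for all $t \in [\tau_{\lambda},T]$. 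Setting $u_{\imath} := u \circ \rho_{\imath}^{-1} \in \U$, the trajectory of $(\Ppazo)$ issued from $\mu_{\imath}$ at $\tau_{\imath}$ under $u_{\imath}$, once reparametrised through $\rho_{\imath}$, solves a non-local continuity equation on $[\tau_{\lambda},T]$ driven by the modified velocity field $(t,\nu,x) \mapsto c_{\imath} v\big(\rho_{\imath}(t),\nu,u(t),x\big)$, which still satisfies \ref{hyp:CE}-$(i),(ii)$ on $[\tau_{\lambda},T]$ --- the rescaling enters only through $\int_{\tau_{\lambda}}^{T} c_{\imath} \, \textnormal{d}t = T-\tau_{\imath} \leq T$, which is bounded. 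One can therefore invoke Theorem~\ref{thm:Superposition} and Proposition~\ref{prop:SuperpositionPlan} exactly as in the proof of Lemma~\ref{lem:GeodesicInterpolationFlows} to couple the two reparametrised flows along $\gamma$, producing a continuous curve of $2$-optimal plans along which the interpolation estimate is sought. Since $\rho_{\imath}(T) = T$, the reparametrised endpoint measures coincide with the true ones, so that the value of $\varphi$ at the $\imath$-th reparametrised endpoint equals $\varphi(\mu_{\imath}(T)) \geq \Vcal(\tau_{\imath},\mu_{\imath})$, and the ``outer'' terms of the semiconcavity inequality are under control.

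Reproducing then the Grönwall estimates of Lemma~\ref{lem:GeodesicInterpolationFlows} on the common interval $[\tau_{\lambda},T]$, the deviation between the interpolation-of-reparametrised-flows $\gamma^{1 \rightarrow 2}_{\lambda}(t)$ and the flow-of-interpolation $\Phi^u_{(\tau_{\lambda},t)}[\gamma^{1 \rightarrow 2}_{\lambda}](\cdot)_{\#}\gamma^{1 \rightarrow 2}_{\lambda}$ is controlled once one bounds the associated velocity deviation, which now decomposes into three pieces. The first is the $(\mu,x)$-semiconcavity term of $v$ taken at the \emph{common} time $t$, handled by \ref{hyp:SC1}-$(i)$ (resp.\ \ref{hyp:SC2}-$(i)$) precisely as in \eqref{eq:SemiconcavityFlow2}; this requires a preliminary Grönwall estimate showing that, along the reparametrised flows, both $|y_1(t)-y_2(t)|^2$ and the squared $W_2$-distance between the reparametrised curves are dominated by $|\tau_1-\tau_2|^2 + W_2^2(\mu_1,\mu_2)$ (plus the relevant $|x-y|^2$ term), which uses the uniformity in $u \in U$ of the constants in \ref{hyp:OCP}-$(i)$ and \ref{hyp:R}. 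The second is a velocity-rescaling term proportional to $(1-\lambda)(c_1-1)\big(v(\rho_1(t),\cdot) - v(\rho_2(t),\cdot)\big)$; since $(1-\lambda)(c_1-1) = \lambda(1-\lambda)(\tau_2-\tau_1)/(T-\tau_{\lambda})$, integrating this over an interval of length $T-\tau_{\lambda}$ and applying Young's inequality to the cross-terms yields a genuinely \emph{second-order} contribution $\lesssim \lambda(1-\lambda)\big(|\tau_1-\tau_2|^2 + W_2^2(\mu_1,\mu_2) + |x-y|^2\big)$. The third is a time-shift term of the form $(1-\lambda)\big(v(\rho_1(t),\cdot) - v(t,\cdot)\big) + \lambda\big(v(\rho_2(t),\cdot) - v(t,\cdot)\big)$, to be treated using the Lipschitz regularity \ref{hyp:R}-$(ii)$ together with the facts that $\rho_1(t)-t$ and $\rho_2(t)-t$ are weighted so as to average to zero, that they vanish at $t=T$, and that $c_{\imath} = \rho_{\imath}'$; the remainder is the usual Grönwall forcing coming from \ref{hyp:R}-$(ii)$. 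Closing the Grönwall inequality then yields a time-augmented analogue of \eqref{eq:GeodesicInterpolationFlows}, namely $W_1\big(\gamma^{1 \rightarrow 2}_{\lambda}(t),\Phi^u_{(\tau_{\lambda},t)}[\gamma^{1 \rightarrow 2}_{\lambda}](\cdot)_{\#}\gamma^{1 \rightarrow 2}_{\lambda}\big) \leq \Spazo_K \, \lambda(1-\lambda)\big(|\tau_1-\tau_2|^2 + W_2^2(\mu_1,\mu_2)\big)$, with $\Spazo_K$ independent of $u(\cdot)$.

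The step I expect to be the main obstacle is precisely this third, time-shift contribution: with only the Lipschitz-in-time regularity of \ref{hyp:R}-$(ii)$ (rather than a $C^{1,1}$-type bound), a naive pointwise estimate of that term is merely first-order in $|\tau_1-\tau_2|$, and recovering the quadratic modulus forces one to exploit the linked affine structure of $(\rho_{\imath},c_{\imath})$ and the symmetry of the interpolation weights, and to integrate the pointwise bounds over $[\tau_{\lambda},T]$ rather than bounding them termwise --- this is where the bulk of the technical effort lies. Granted the time-augmented interpolation inequality, Step~2 runs verbatim as in the proof of Theorem~\ref{thm:Semiconcavity1}: pick $u(\cdot)$ with $\varphi\big(\Phi^u_{(\tau_{\lambda},T)}[\gamma^{1 \rightarrow 2}_{\lambda}](\cdot)_{\#}\gamma^{1 \rightarrow 2}_{\lambda}\big) \leq \Vcal(\tau_{\lambda},\gamma^{1 \rightarrow 2}_{\lambda}) + \epsilon$, bound $(1-\lambda)\Vcal(\tau_1,\mu_1) + \lambda\Vcal(\tau_2,\mu_2)$ from above by $(1-\lambda)\varphi(\mu_1(T)) + \lambda\varphi(\mu_2(T)) + \epsilon$, invoke the $W_1$-Lipschitz regularity of $\varphi(\cdot)$ from \ref{hyp:OCP}-$(ii)$ together with the interpolation inequality, and apply the local geodesic semiconcavity of $\varphi(\cdot)$ from \ref{hyp:SC1}-$(ii)$ to the $2$-optimal plan $\gamma^{1 \rightarrow 2}_{\lambda}(T) \in \Gamma_o(\mu_1(T),\mu_2(T))$, before letting $\epsilon \to 0^+$; the uniformity of the resulting constant $\Ccal_K$ in $u$, $\epsilon$ and $\lambda$ follows from Lemma~\ref{lem:LipFlow} and the uniformity in $u \in U$ of the constants in \ref{hyp:OCP}-$(i)$ and \ref{hyp:R}. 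Finally, the second assertion is obtained by the same scheme upon replacing displacement interpolations by interpolations along arbitrary plans $\Bmu \in \Gamma(\mu_1,\mu_2)$ throughout, using \ref{hyp:SC2}-$(i)$ in place of \ref{hyp:SC1}-$(i)$ for the first velocity-deviation term and the local strong semiconcavity of $\varphi(\cdot)$ from \ref{hyp:SC2}-$(ii)$ in Step~2, exactly as in the passage from Theorem~\ref{thm:Semiconcavity1} to Theorem~\ref{thm:Semiconcavity2}.
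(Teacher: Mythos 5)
Your architecture is genuinely different from the paper's, and I believe it has a gap at precisely the step you flag as the main obstacle.

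Your plan is to reparametrise \emph{both} trajectories onto the common interval $[\tau_{\lambda},T]$ via the affine bijections $\rho_{\imath}$ fixing $T$, and to prove a time-augmented version of Lemma~\ref{lem:GeodesicInterpolationFlows} directly. The paper instead first pushes $\mu_2$ forward from time $\tau_2$ to time $\tau_1$ along a \emph{piecewise}-affine time change $\vt_\lambda(\cdot)$ that equals the identity off $[\tau_2,\tau_1]$, then reduces to the already-established measure-only Theorem~\ref{thm:Semiconcavity1} at the fixed time $\tau_1$, with the only new ingredient being the interpolation estimate \eqref{eq:TimeShiftedInterp}, proved in Appendix~\ref{section:AppendixGronwall}. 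The crucial structural advantage of the paper's route is that the time-mismatch $|\vt_\lambda^{-1}(\zeta)-\zeta|$ is \emph{compactly supported} on $[\tau_\lambda,\tau_1]$, an interval of length $\lambda|\tau_1-\tau_2|$ on which the mismatch itself has magnitude $\lesssim (1-\lambda)|\tau_1-\tau_2|$; hence the Lipschitz-in-time error from \ref{hyp:R}-$(ii)$, once \emph{integrated} over $[\tau_\lambda,\tau_1]$, automatically produces both factors of $|\tau_1-\tau_2|$ as well as the $\lambda(1-\lambda)$ weight (see the computation leading to~\eqref{eq:AppGronwall2}).

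Your affine reparametrisations smear the time-mismatch over the \emph{whole} interval $[\tau_\lambda,T]$: explicitly, $\rho_1(t)-t=\lambda(\tau_1-\tau_2)\tfrac{T-t}{T-\tau_\lambda}$ and $\rho_2(t)-t=-(1-\lambda)(\tau_1-\tau_2)\tfrac{T-t}{T-\tau_\lambda}$. Under the Lipschitz-in-time regularity of \ref{hyp:R}-$(ii)$ alone, the termwise bound on the time-shift piece
\begin{equation*}
(1-\lambda)\big[v(\rho_1(t),\cdot)-v(t,\cdot)\big]+\lambda\big[v(\rho_2(t),\cdot)-v(t,\cdot)\big]
\end{equation*}
is $\ell_K\big((1-\lambda)|\rho_1(t)-t|+\lambda|\rho_2(t)-t|\big)=2\ell_K\lambda(1-\lambda)|\tau_1-\tau_2|\tfrac{T-t}{T-\tau_\lambda}$, which, integrated over $[\tau_\lambda,T]$, yields a contribution of order $\lambda(1-\lambda)|\tau_1-\tau_2|$, not $\lambda(1-\lambda)|\tau_1-\tau_2|^2$. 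The identity $(1-\lambda)(\rho_1(t)-t)+\lambda(\rho_2(t)-t)=0$ that you invoke would indeed upgrade this to second order \emph{if} $v$ were $C^{1,1}$ (or at least semiconcave) in time --- one would Taylor-expand and the first-order terms would cancel --- but hypotheses~\ref{hyp:OCP}, \ref{hyp:SC1}, \ref{hyp:R} supply no such regularity in the time variable. Your two other pieces are fine: the velocity-rescaling term carries the explicit factor $(1-\lambda)(c_1-1)=\lambda(1-\lambda)(\tau_2-\tau_1)/(T-\tau_\lambda)$, whose $1/(T-\tau_\lambda)$ is cancelled by integration, and the $(\mu,x)$-semiconcavity term is handled by \ref{hyp:SC1}-$(i)$. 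But the time-shift term does not close, and I see no way to patch it without importing a time-regularity assumption beyond \ref{hyp:R}. If you want to keep the common-interval reparametrisation you would need a compactly-supported-in-time change of variables (as the paper does), at which point one might as well follow the paper's reduction to Theorem~\ref{thm:Semiconcavity1}.
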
 

\begin{proof}
We will only treat the case in which hypotheses \ref{hyp:SC1} hold, the other scenario being completely similar up to some minor modifications already detailed in the proof of Theorem \ref{thm:Semiconcavity2} above. We also suppose throughout the proof that $0 \leq \tau_2 < \tau_1 \leq T$, since the case $0 \leq \tau_1 < \tau_2 \leq T$ is analogous, and $\tau_1 = \tau_2$ has already been treated in Section \ref{subsection:SemiconcavityEst}. In the sequel, we will use the notation $K := B(0,R_r)$ where $R_r  \geq r > 0$ is defined as in Lemma \ref{lem:AdmEst}.  

Let $\gamma \in \Gamma_o(\mu_1,\mu_2)$ be a $2$-optimal transport plan, and for $\lambda \in [0,1]$ consider $\tau_{\lambda} := (1-\lambda)\tau_1 + \lambda \tau_2$. Observe that for every $\epsilon > 0$, there exists an admissible control $u_{\epsilon}(\cdot) \in \U$ such that 
\begin{equation}
\label{eq:ValueFunctionTimeEst1}
\Vcal \Big( \tau_1 , \Phi_{(\tau_{\lambda},\tau_1)}^{u_{\epsilon}}[\gamma^{1 \rightarrow 2}_{\lambda}](\cdot)_{\#} \gamma^{1 \rightarrow 2}_{\lambda} \Big) \leq \Vcal \big( \tau_{\lambda},\gamma^{1 \rightarrow 2}_{\lambda} \big) + \epsilon.
\end{equation}
We now consider the following right-invertible reparametrisation of the time variable, given by 
\begin{equation}
\label{eq:Reparametrisation}
\vt_{\lambda}(s) := 
\left\{
\begin{aligned}
& \lambda s + (1-\lambda) \tau_1 ~~ \text{if $\tau_2 \leq s \leq \tau_1$}, \\
& ~ s \hspace{3cm} \text{otherwise},
\end{aligned}
\right.
\end{equation}
for every $s \in [0,T]$, and observe that $\vt_{\lambda}(\tau_1) = \tau_1$ and $\vt_{\lambda}(\tau_2) = \tau_{\lambda}$. Using this time variable and the fact that the value function is non-decreasing along admissible trajectories, we further obtain 
\begin{equation}
\label{eq:ValueFunctionTimeEst2}
\begin{aligned}
(1-\lambda) \Vcal(\tau_1,&\mu_1) + \lambda \Vcal(\tau_2,\mu_2) - \Vcal \big( \tau_{\lambda},\gamma^{1 \rightarrow 2}_{\lambda} \big) \\
& \leq (1-\lambda) \Vcal(\tau_1,\mu_1) + \lambda \Vcal \Big( \tau_1 , \Phi_{(\tau_2,\tau_1)}^{u_{\epsilon} \circ \vt_{\lambda}}[\mu_2](\cdot)_{\#} \mu_2 \Big) - \Vcal \Big( \tau_1 , \Phi_{(\tau_{\lambda},\tau_1)}^{u_{\epsilon}}[\gamma^{1 \rightarrow 2}_{\lambda}](\cdot)_{\#} \gamma^{1 \rightarrow 2}_{\lambda} \Big) + \epsilon,
\end{aligned}
\end{equation}
where we used \eqref{eq:ValueFunctionTimeEst1} and the fact that $u_{\epsilon} \circ \vt_{\lambda}(\cdot) \in \U$. From now on, we set 
\begin{equation}
\label{eq:tilde_mu2}
\tilde{\mu}_2(s) := \Phi_{(\tau_2,\vt_{\lambda}^{-1}(s))}^{u_{\epsilon} \circ \vt_{\lambda}} [\mu_2](\cdot)_{\#} \mu_2, 
\end{equation}
for every $s \in [\tau_2,\tau_1]$, so that $\tilde{\mu}_2(\tau_{\lambda}) = \mu_2$ and \eqref{eq:ValueFunctionTimeEst2} can be rewritten in the simpler form
\begin{equation}
\label{eq:ValueFunctionTimeEst3}
\begin{aligned}
(1-\lambda) \Vcal(\tau_1,&\mu_1) + \lambda \Vcal(\tau_2,\mu_2) - \Vcal \big( \tau_{\lambda},\gamma^{1 \rightarrow 2}_{\lambda} \big) \\
& \leq (1-\lambda) \Vcal(\tau_1,\mu_1) + \lambda \Vcal( \tau_1 , \tilde{\mu}_2(\tau_1)) - \Vcal \Big( \tau_1 , \Phi_{(\tau_{\lambda},\tau_1)}^{u_{\epsilon}}[\gamma^{1 \rightarrow 2}_{\lambda}](\cdot)_{\#} \gamma^{1 \rightarrow 2}_{\lambda} \Big) + \epsilon.
\end{aligned}
\end{equation}
Our goal will now be to estimate the right-hand side of \eqref{eq:ValueFunctionTimeEst3} by means of a suitable dynamical interpolation, in the same vein as in the proof of Lemma \ref{lem:GeodesicInterpolationFlows}. 

Let $\Beta_1,\tilde{\Beta}_2 \in \Pcal(\R^d \times \Sigma_T)$ be the superposition measures associated with the (constant) curve of measures $\mu_1 \in \Pcal(K)$ and to $s \in [\tau_{\lambda},\tau_1] \mapsto \tilde{\mu}_2(s) \in \Pcal(K)$ respectively. By Proposition \ref{prop:SuperpositionPlan}, there exists a transport plan $\tilde{\Beta}_{12} \in \Gamma(\Beta_1,\tilde{\Beta}_2)$ such that 
\begin{equation*}
(\pi^1_{\R^d},\pi^2_{\R^d})_{\#} \tilde{\Beta}_{12} = \gamma \qquad \text{and} \qquad  (e_s^1,e_s^2)_{\#} \tilde{\Beta}_{12} =: \tilde{\gamma}(s) \in \Gamma_o(\mu_1,\tilde{\mu}_2(s)), 
\end{equation*}
for all times $s \in [\tau_{\lambda},\tau_1]$. Inserting the curve $\tilde{\gamma}^{1 \rightarrow 2}_{\lambda}(s) := ((1-\lambda) \pi^1 + \lambda \pi^2)_{\#} \tilde{\gamma}(s)$ as a crossed term in the right-hand side of \eqref{eq:ValueFunctionTimeEst3} while invoking Proposition \ref{prop:LipRegValue} and Theorem \ref{thm:Semiconcavity1}, one has
\begin{equation}
\label{eq:ValueFunctionTimeEst4}
\begin{aligned}
(1-\lambda) \Vcal(\tau_1,\mu_1) + & \lambda \Vcal( \tau_1 , \tilde{\mu}_2(\tau_1)) - \Vcal \Big( \tau_1 , \Phi_{(\tau_{\lambda},\tau_1)}^{u_{\epsilon}}[\gamma^{1 \rightarrow 2}_{\lambda}](\cdot)_{\#} \gamma^{1 \rightarrow 2}_{\lambda} \Big) \\
& \leq \Ccal_K \, \lambda (1-\lambda) W_2^2(\mu_1,\tilde{\mu}_2(\tau_1)) + \Lpazo_K W_1 \Big( \tilde{\gamma}^{1 \rightarrow 2}_{\lambda}(\tau_1) , \Phi_{(\tau_{\lambda},\tau_1)}^{u_{\epsilon}}[\gamma^{1 \rightarrow 2}_{\lambda}](\cdot)_{\#} \gamma^{1 \rightarrow 2}_{\lambda} \Big),
\end{aligned}
\end{equation}
where $\Ccal_K, \Lpazo_K > 0$ are the geodesic semiconcavity and Lipschitz constants of $\Vcal(\tau_1,\cdot)$ over $\Pcal(K)$ respectively. By \eqref{eq:tilde_mu2} and hypothesis \ref{hyp:R}-$(i)$, we can further estimate the first term in the right-hand side of \eqref{eq:ValueFunctionTimeEst4} as
\begin{equation}
\label{eq:ValueFunctionTimeEst5}
\begin{aligned}
W_2^2(\mu_1,\tilde{\mu}_2(\tau_1)) & \leq \Big( W_2(\mu_1,\mu_2) + W_2(\mu_2,\tilde{\mu}_2(\tau_1)) \Big)^2 \\ 
& \leq 2 \Big( W_2^2(\mu_1,\mu_2) + \NormC{\Phi_{(\tau_2,\tau_1)}^{u_{\epsilon} \circ \vt_{\lambda}}[\mu_2](\cdot) - \Id}{0}{K,\R^d}^2 \Big) \\
& \leq 2 (1+2R_r)^2m^2 \Big( |\tau_2 - \tau_1|^2 + W_2^2(\mu_1,\mu_2) \Big), 
\end{aligned}
\end{equation}
where we also applied \eqref{eq:WassEst1}. Concerning the second term in the right-hand side of \eqref{eq:ValueFunctionTimeEst4}, notice first that by construction, the measures
\begin{equation*}
\tilde{\Bmu}_{\lambda}^{1 \rightarrow 2}(s) := \Big( (1-\lambda) e_s^1 + \lambda e_s^2 \, , \, \Phi_{(\tau_{\lambda},s)}^{u_{\epsilon}}[\gamma^{1 \rightarrow 2}_{\lambda}] \circ \big( (1-\lambda) \pi^1_{\R^d} + \lambda \pi^2_{\R^d} \big) \Big)_{\raisebox{4pt}{$\scriptstyle{\#}$}} \tilde{\Beta}_{12}
\end{equation*}
are admissible transport plans between $\tilde{\gamma}_{\lambda}^{1 \rightarrow 2}(s)$ and $\Phi_{(\tau_{\lambda},s)}^{u_{\epsilon}}[\gamma^{1 \rightarrow 2}_{\lambda}](\cdot)_{\#} \gamma^{1 \rightarrow 2}_{\lambda}$ for every $s \in [\tau_{\lambda},\tau_1]$. Thus
\begin{equation}
\label{eq:ValueFunctionTimeEst6}
\begin{aligned}
& \hspace{-1.2cm} W_1 \Big( \tilde{\gamma}^{1 \rightarrow 2}_{\lambda}(s) , \Phi_{(\tau_{\lambda},s)}^{u_{\epsilon}}[\gamma^{1 \rightarrow 2}_{\lambda}](\cdot)_{\#} \gamma^{1 \rightarrow 2}_{\lambda} \Big) \\
& \leq \INTDom{|\xb - \yb|}{\R^{2d}}{\tilde{\Bmu}^{1 \rightarrow 2}_{\lambda}(s)(\xb,\yb)} \\
& = \INTDom{\Big| (1-\lambda) \sigma_1(t) + \lambda \sigma_2(t) - \Phi_{(\tau_{\lambda},s)}^{u_{\epsilon}}[\gamma^{1 \rightarrow 2}_{\lambda}] \big( (1-\lambda)x + \lambda y \big) \Big|}{(\R^d \times \Sigma_T)^2}{\tilde{\Beta}_{12}(x,\sigma_1,y,\sigma_2)} \\
& = \INTDom{\Big| (1-\lambda) x + \lambda \Phi^{u_{\epsilon} \circ \vt_{\lambda}}_{(\tau_2,\vt_{\lambda}^{-1}(s))}[\mu_2](y) - \Phi_{(\tau_{\lambda},s)}^{u_{\epsilon}}[\gamma^{1 \rightarrow 2}_{\lambda}] \big( (1-\lambda)x + \lambda y \big) \Big|}{\R^{2d}}{\gamma(x,y)},
\end{aligned}
\end{equation}
where we used the fact that under hypotheses \ref{hyp:CE}, the superposition measures $\Beta_1,\tilde{\Beta}_2 \in \Pcal(\R^d \times \Sigma_T)$ built above are concentrated on the pairs $(x,\sigma_1),(y,\sigma_2) \in \R^d \times \Sigma_T$ given by 
\begin{equation*}
\sigma_1(s) = x \qquad \text{and} \qquad \sigma_2(s) = \Phi^{u_{\epsilon} \circ \vt_{\lambda}}_{(\tau_{\lambda_2},\vt_{\lambda}^{-1}(s))}[\mu_2](y), 
\end{equation*}
for all times $s \in [\tau_{\lambda},\tau_1]$ and any $(x,y) \in \supp(\gamma)$. By using hypotheses \ref{hyp:R}, one can show that there exists a constant $C_K > 0$ such that
\begin{equation}
\label{eq:TimeShiftedInterp}
\begin{aligned}
& \INTDom{\Big| (1-\lambda) x + \lambda \Phi^{u_{\epsilon} \circ \vt_{\lambda}}_{(\tau_2,\vt_{\lambda}^{-1}(s))}[\mu_2](y) - \Phi_{(\tau_{\lambda},s)}^{u_{\epsilon}}[\gamma^{1 \rightarrow 2}_{\lambda}] \big( (1-\lambda)x + \lambda y \big) \Big|}{\R^{2d}}{\gamma(x,y)} \\
& \hspace{1.9cm} \leq C_K \lambda(1-\lambda) \Big( |\tau_2 - \tau_1|^2 + W_2^2(\mu_1,\mu_2) \Big) + \ell_K \INTSeg{W_1 \Big( \tilde{\gamma}_{\lambda}^{1 \rightarrow 2}(\zeta) , \Phi_{(\tau_{\lambda},\zeta)}^{u_{\epsilon}}[\gamma^{1 \rightarrow 2}_{\lambda}](\cdot)_{\#} \gamma^{1 \rightarrow 2}_{\lambda}
\Big)}{\zeta}{\tau_{\lambda}}{s}. 
\end{aligned}
\end{equation}
The proof of this inequality being somewhat heavy and similar to computations already detailed in Lemma \ref{lem:GeodesicInterpolationFlows}, it is postponed to Appendix \ref{section:AppendixGronwall} below. Plugging this last estimate into \eqref{eq:ValueFunctionTimeEst6} and applying Gr\"onwall's Lemma further yields
\begin{equation}
\label{eq:ValueFunctionTimeEst7}
W_1 \Big( \tilde{\gamma}^{1 \rightarrow 2}_{\lambda}(s) , \Phi_{(\tau_{\lambda},s)}^{u_{\epsilon}}[\gamma^{1 \rightarrow 2}_{\lambda}](\cdot)_{\#} \gamma^{1 \rightarrow 2}_{\lambda} \Big)  \leq C_K' \lambda(1-\lambda) \Big( |\tau_2 - \tau_1 |^2 + W_2^2(\mu_1,\mu_2) \Big),
\end{equation}
where $C'_K := C_K \exp(\ell_K T)$. Thus upon combining \eqref{eq:ValueFunctionTimeEst3}, \eqref{eq:ValueFunctionTimeEst4}, \eqref{eq:ValueFunctionTimeEst5} and \eqref{eq:ValueFunctionTimeEst7}, we finally recover
\begin{equation*}
(1-\lambda) \Vcal(\tau_1,\mu_1) + \lambda \Vcal(\tau_2,\mu_2) - \Vcal \big( \tau_{\lambda},\gamma^{1 \rightarrow 2}_{\lambda} \big) \leq \Ccal_K' \lambda(1-\lambda) \Big( |\tau_2 - \tau_1|^2 + W_2^2(\mu_1,\mu_2) \Big) + \epsilon, 
\end{equation*}
where $\Ccal_K' := C'_K \Lpazo_K + 2m^2(1+2R_r)^2 \Ccal_K$, which concludes the proof since $\epsilon > 0$ is arbitrary. 
\end{proof}


\subsection{Examples of functionals satisfying hypotheses \ref{hyp:SC1} and \ref{hyp:SC2}}
\label{subsection:SemiconcavityExamples}

In this short section, we provide several examples of dynamics and cost functionals satisfying the semiconcavity hypotheses of Section \ref{subsection:SemiconcavityEst} on which Theorem \ref{thm:Semiconcavity1}, Theorem \ref{thm:Semiconcavity2} and Theorem \ref{thm:Semiconcavity3} are based. Most of these examples are treated in a slightly different manner in \cite[Chapter 9]{AGS}. 

\begin{prop}[Semiconcavity of potential energies]
Let $V \in C^1(\R^d,\R)$ be such that $\nabla V : \R^d \rightarrow \R^d$ is locally Lipschitz. Then, the functional $\V : \Pcal_c(\R^d) \rightarrow \R$ defined by 
\begin{equation*}
\V(\mu) := \INTDom{V(x)}{\R^d}{\mu(x)},
\end{equation*}
for every $\mu \in \Pcal_c(\R^d)$ is locally strongly semiconcave.
\end{prop}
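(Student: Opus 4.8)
The plan is to reduce the statement to the elementary fact that a $C^{1,1}$ function on a convex compact set is semiconcave along line segments, and then to integrate the resulting pointwise inequality against an arbitrary transport plan. Since every displacement interpolation $\gamma^{1\rightarrow2}_\lambda$ is a particular case of the interpolations $\Bmu^{1\rightarrow2}_\lambda$ appearing in \eqref{eq:SemiconcavityDef2}, it suffices to verify the stronger estimate of Definition \ref{def:Semiconcavity}.

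First I would fix $K := B(0,R)$ for an arbitrary $R > 0$ and set $L_K := \Lip(\nabla V(\cdot);K)$, which is finite because $\nabla V$ is locally Lipschitz and $K$ is compact. Observe that $K$ is convex, so that $(1-\lambda)x + \lambda y \in K$ whenever $x,y \in K$ and $\lambda \in [0,1]$; in particular $\Bmu^{1\rightarrow2}_\lambda \in \Pcal(K)$ for every $\Bmu \in \Gamma(\mu_1,\mu_2)$ with $\mu_1,\mu_2 \in \Pcal(K)$. Next I would record the scalar estimate: given $x,y \in K$, the map $g : \lambda \in [0,1] \mapsto V((1-\lambda)x + \lambda y)$ is of class $C^{1,1}$, with $g'(\lambda) = \langle \nabla V((1-\lambda)x + \lambda y), y - x \rangle$ and hence $|g'(\lambda_1) - g'(\lambda_2)| \leq L_K |x-y|^2 |\lambda_1 - \lambda_2|$. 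Consequently $g''$ exists almost everywhere with $g'' \leq L_K |x-y|^2$, so that $\lambda \mapsto g(\lambda) - \tfrac12 L_K |x-y|^2 \lambda^2$ has an a.e. nonpositive second derivative and is therefore concave on $[0,1]$. Writing the concavity inequality at $\lambda = (1-\lambda)\cdot 0 + \lambda \cdot 1$ and rearranging yields
\begin{equation*}
(1-\lambda) V(x) + \lambda V(y) - V\big( (1-\lambda)x + \lambda y \big) \leq \tfrac12 L_K \, \lambda(1-\lambda) |x - y|^2,
\end{equation*}
for all $x,y \in K$ and $\lambda \in [0,1]$.

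Finally I would integrate this inequality against an arbitrary plan $\Bmu \in \Gamma(\mu_1,\mu_2)$. Since $\pi^1_\# \Bmu = \mu_1$ and $\pi^2_\# \Bmu = \mu_2$, one has $\V(\mu_1) = \INTDom{V(x)}{\R^{2d}}{\Bmu(x,y)}$ and $\V(\mu_2) = \INTDom{V(y)}{\R^{2d}}{\Bmu(x,y)}$, while the pushforward formula gives $\V(\Bmu^{1\rightarrow2}_\lambda) = \INTDom{V((1-\lambda)x + \lambda y)}{\R^{2d}}{\Bmu(x,y)}$. Therefore
\begin{equation*}
(1-\lambda) \V(\mu_1) + \lambda \V(\mu_2) - \V\big( \Bmu^{1\rightarrow2}_\lambda \big) \leq \tfrac12 L_K \, \lambda(1-\lambda) \INTDom{|x-y|^2}{\R^{2d}}{\Bmu(x,y)} = \tfrac12 L_K \, \lambda(1-\lambda) W_{2,\Bmu}^2(\mu_1,\mu_2),
\end{equation*}
which is precisely \eqref{eq:SemiconcavityDef2} with $\Cpazo_K := \tfrac12 L_K$, and hence $\V(\cdot)$ is locally strongly semiconcave.

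There is essentially no substantive obstacle in this argument; the only two points requiring mild care are that the Lipschitz constant of $\nabla V$ must be taken over a convex set so that the segment joining $x$ and $y$ remains in $K$ (which holds since $K$ is a ball), and that the scalar concavity step uses the almost-everywhere second-derivative bound supplied by Rademacher's theorem rather than a genuine $C^2$ hypothesis. One could also phrase the first step via a second-order Taylor expansion with integral remainder, which avoids invoking a.e. differentiability altogether.
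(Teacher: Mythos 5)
Your proof is correct and follows essentially the same two-step route as the paper: establish the pointwise semiconcavity inequality for $V$ on the ball $K$, then integrate it against an arbitrary plan $\Bmu$ and identify the resulting integral with $W_{2,\Bmu}^2(\mu_1,\mu_2)$. The only difference is that you derive the scalar estimate directly (obtaining the sharper constant $\tfrac12 L_K$), whereas the paper simply cites \cite[Proposition~2.1.2]{CannarsaS2004} for it.
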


\begin{proof}
It is a known fact (see e.g. \cite[Proposition 2.1.2]{CannarsaS2004}) that if $V \in C^1(\R^d,\R)$ is such that $\nabla V(\cdot)$ is locally Lipschitz, then for every ball $K := B(0,R)$ with $R>0$ and any $x,y \in K$, it holds
\begin{equation*}
(1-\lambda) V(x) + \lambda V(y) - V((1-\lambda)x + \lambda y) \leq \Cpazo_K \, \lambda(1-\lambda) |x-y|^2, 
\end{equation*}
for every $\lambda \in [0,1]$, where $\Cpazo_K := \Lip(\nabla V(\cdot);K)$. Whence, for any $\mu_1,\mu_2 \in \Pcal(K)$ and each $\Bmu \in \Gamma(\mu_1,\mu_2)$, one has
\begin{equation*}
\begin{aligned}
(1-\lambda) \V(\mu_1) + \lambda \V(\mu_2) - \V(\Bmu^{1 \rightarrow 2}_{\lambda}) & = \INTDom{\Big( (1-\lambda) V(x) + \lambda V(y) - V((1-\lambda)x + \lambda y) \Big)}{\R^{2d}}{\Bmu(x,y)} \\
& \leq \Cpazo_K \, \lambda(1-\lambda) \INTDom{|x-y|^2}{\R^{2d}}{\Bmu(x,y)} \\
& =  \Cpazo_K \, \lambda(1-\lambda) W_{2,\Bmu}^2(\mu_1,\mu_2),  
\end{aligned}
\end{equation*}
for every $\lambda \in [0,1]$, which concludes the proof. 
\end{proof}

This type of local strong semiconcavity regularity is also valid in the broader class of functionals describing interaction energies. 

\begin{cor}[Semiconcavity of interaction energies]
Let $m \geq 1$ be an integer and $W \in C^1(\R^{m \times d},\R)$ be such that $\nabla W : \R^{m \times d} \rightarrow \R^{m \times d}$ is locally Lipschitz. Then, the functional $\Wpazo : \Pcal_c(\R^d) \rightarrow \R$ defined by
\begin{equation*}
\Wpazo(\mu) := \INTDom{W(x_1,\dots,x_m)}{\R^{m \times d}}{(\mu \times \dots \times \mu)(x_1,\dots,x_m)},
\end{equation*}
for every $\mu \in \Pcal_c(\R^d)$ is locally strongly semiconcave.
\end{cor}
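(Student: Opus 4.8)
The plan is to reduce the statement to the finite-dimensional semiconcavity of $W$ on balls, carried out on the $m$-fold product space, in the same spirit as the proof of the potential energy case above. First I would fix $K := B(0,R)$ with $R > 0$, set $K^m := K \times \dots \times K \subset \R^{m \times d}$, and recall (see e.g. \cite[Proposition 2.1.2]{CannarsaS2004}) that since $W \in C^1(\R^{m\times d},\R)$ with $\nabla W$ locally Lipschitz, the constant $\Cpazo_{K,m} := \Lip(\nabla W(\cdot);K^m)$ satisfies
\[
(1-\lambda) W(\mathbf{z}) + \lambda W(\mathbf{w}) - W\big( (1-\lambda)\mathbf{z} + \lambda \mathbf{w} \big) \leq \Cpazo_{K,m} \, \lambda(1-\lambda) \, |\mathbf{z} - \mathbf{w}|^2,
\]
for all $\mathbf{z},\mathbf{w} \in K^m$ and $\lambda \in [0,1]$, the convex combination $(1-\lambda)\mathbf{z}+\lambda\mathbf{w}$ remaining in $K^m$ by convexity.

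Next I would set up the relevant product measures. Given $\mu_1,\mu_2 \in \Pcal(K)$ and $\Bmu \in \Gamma(\mu_1,\mu_2)$, let $\Bmu^{\otimes m} \in \Pcal\big( (\R^{2d})^m \big)$ denote the $m$-fold product of $\Bmu$ with itself, which is concentrated on $(K \times K)^m$. Writing a generic point of $(\R^{2d})^m$ as $\big( (x_1,y_1),\dots,(x_m,y_m) \big)$ and abbreviating $\mathbf{x} := (x_1,\dots,x_m)$ and $\mathbf{y} := (y_1,\dots,y_m)$, I would introduce the Borel maps $P^1, P^2, P^{\lambda} : (\R^{2d})^m \rightarrow \R^{m \times d}$ sending such a point to $\mathbf{x}$, to $\mathbf{y}$, and to $(1-\lambda)\mathbf{x} + \lambda \mathbf{y}$ respectively. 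Since the displacement interpolation acts coordinatewise, one checks directly that
\[
\underbrace{\mu_1 \times \dots \times \mu_1}_{m} = P^1_{\#} \Bmu^{\otimes m}, \qquad \underbrace{\mu_2 \times \dots \times \mu_2}_{m} = P^2_{\#} \Bmu^{\otimes m}, \qquad \underbrace{\Bmu^{1\rightarrow 2}_{\lambda} \times \dots \times \Bmu^{1\rightarrow 2}_{\lambda}}_{m} = P^{\lambda}_{\#} \Bmu^{\otimes m},
\]
and substituting these three identities into the definition of $\Wpazo$ produces the single integral representation
\[
(1-\lambda)\Wpazo(\mu_1) + \lambda \Wpazo(\mu_2) - \Wpazo\big( \Bmu^{1\rightarrow 2}_{\lambda} \big) = \INTDom{\Big( (1-\lambda) W(\mathbf{x}) + \lambda W(\mathbf{y}) - W\big( (1-\lambda)\mathbf{x} + \lambda \mathbf{y} \big) \Big)}{(\R^{2d})^m}{\Bmu^{\otimes m}}.
\]

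Finally I would estimate the integrand pointwise. Since $\Bmu^{\otimes m}$ is concentrated on $(K \times K)^m$, one has $\mathbf{x},\mathbf{y} \in K^m$ on its support, so the finite-dimensional inequality above applies and bounds the right-hand side by $\Cpazo_{K,m}\,\lambda(1-\lambda) \int |\mathbf{x} - \mathbf{y}|^2 \dn\Bmu^{\otimes m}$. Expanding $|\mathbf{x}-\mathbf{y}|^2 = \sum_{i=1}^m |x_i-y_i|^2$ and using that the marginal of $\Bmu^{\otimes m}$ onto the $i$-th pair of coordinates $(x_i,y_i) \in \R^{2d}$ is exactly $\Bmu$, each of the $m$ summands integrates to $\int_{\R^{2d}} |x-y|^2 \dn\Bmu(x,y) = W_{2,\Bmu}^2(\mu_1,\mu_2)$, so the whole expression is at most $m\,\Cpazo_{K,m}\,\lambda(1-\lambda)\,W_{2,\Bmu}^2(\mu_1,\mu_2)$. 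Taking $\Cpazo_K := m\,\Cpazo_{K,m}$ then gives the desired local strong semiconcavity inequality. The argument is essentially routine once this bookkeeping is arranged; the part requiring the most care is the coordinatewise pushforward identity $\Bmu^{1\rightarrow2}_{\lambda} \times \dots \times \Bmu^{1\rightarrow2}_{\lambda} = P^{\lambda}_{\#}\Bmu^{\otimes m}$ together with the marginalisation step collapsing each of the $m$ quadratic terms back to $W_{2,\Bmu}^2(\mu_1,\mu_2)$, both of which are purely formal manipulations with product measures and pushforwards carrying no analytic subtlety.
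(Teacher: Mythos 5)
Your proof is correct and is essentially the intended argument: the paper omits the proof of this corollary precisely because it follows the same pattern as the potential-energy case (integrate the finite-dimensional semiconcavity inequality for $W$ against the relevant plan), and your product-measure bookkeeping — in particular the coordinatewise identity $P^{\lambda}_{\#}\Bmu^{\otimes m} = (\Bmu^{1\rightarrow2}_{\lambda})^{\otimes m}$ and the collapse of $\int |\mathbf{x}-\mathbf{y}|^2\,\textnormal{d}\Bmu^{\otimes m}$ to $m\,W_{2,\Bmu}^2(\mu_1,\mu_2)$ by marginalisation — is exactly the bridge needed to reduce it to that case.
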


Another interesting example of strongly semiconcave function is the squared Wasserstein distance. 

\begin{prop}[Strong semiconcavity of the squared Wasserstein distance]
For any $\nu \in \Pcal_2(\R^d)$ and every compact set $K \subset \R^d$, the map $\mu \in \Pcal(K) \mapsto W_2^2(\mu,\nu) \in \R$ is strongly semiconcave with $\Cpazo_K = 1$.
\end{prop}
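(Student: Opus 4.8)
The plan is to verify the strong semiconcavity estimate of Definition \ref{def:Semiconcavity} by a direct measure-theoretic construction, without passing through any dual formulation. After rearranging, the inequality to be established is the lower bound
\[
W_2^2(\Bmu^{1 \rightarrow 2}_{\lambda},\nu) \geq (1-\lambda) W_2^2(\mu_1,\nu) + \lambda W_2^2(\mu_2,\nu) - \lambda(1-\lambda) W_{2,\Bmu}^2(\mu_1,\mu_2),
\]
which should hold for every compact $K \subset \R^d$, all $\mu_1,\mu_2 \in \Pcal(K)$, each $\Bmu \in \Gamma(\mu_1,\mu_2)$ and every $\lambda \in [0,1]$; this yields the claim with $\Cpazo_K = 1$, in fact uniformly in $K$.

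First I would fix an optimal plan $\pi^* \in \Gamma_o(\Bmu^{1 \rightarrow 2}_{\lambda},\nu)$, which exists because $\Bmu^{1 \rightarrow 2}_{\lambda}$ is supported in the compact convex hull of $K$ and $\nu \in \Pcal_2(\R^d)$, and disintegrate it against its first marginal via Theorem \ref{thm:Disintegration}, writing $\pi^* = \INTDom{\pi^*_w}{\R^d}{\Bmu^{1 \rightarrow 2}_{\lambda}(w)}$. Introducing the interpolation map $T_{\lambda} : (x,y) \in \R^{2d} \mapsto (1-\lambda)x + \lambda y \in \R^d$, so that $(T_{\lambda})_{\#}\Bmu = \Bmu^{1 \rightarrow 2}_{\lambda}$, I would then define the three-plan
\[
\beta := \INTDom{\big( \delta_x \otimes \delta_y \otimes \pi^*_{T_{\lambda}(x,y)} \big)}{\R^{2d}}{\Bmu(x,y)} \in \Pcal_2(\R^{3d}).
\]
A routine verification then shows that $\pi^{1,2}_{\#}\beta = \Bmu$, that $\pi^{1,3}_{\#}\beta \in \Gamma(\mu_1,\nu)$ and $\pi^{2,3}_{\#}\beta \in \Gamma(\mu_2,\nu)$, and that $(T_{\lambda} \circ \pi^{1,2},\pi^3)_{\#}\beta = \pi^*$; in particular $W_2^2(\Bmu^{1 \rightarrow 2}_{\lambda},\nu) = \INTDom{|(1-\lambda)x + \lambda y - z|^2}{\R^{3d}}{\beta(x,y,z)}$.

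It then remains to expand the integrand through the elementary identity $|(1-\lambda)x + \lambda y - z|^2 = (1-\lambda)|x-z|^2 + \lambda|y-z|^2 - \lambda(1-\lambda)|x-y|^2$ and integrate against $\beta$: the first two resulting terms are bounded below by $(1-\lambda) W_2^2(\mu_1,\nu)$ and $\lambda W_2^2(\mu_2,\nu)$ since $\pi^{1,3}_{\#}\beta$ and $\pi^{2,3}_{\#}\beta$ are admissible transport plans between the relevant marginals, while the last term equals exactly $\lambda(1-\lambda) W_{2,\Bmu}^2(\mu_1,\mu_2)$ because $\pi^{1,2}_{\#}\beta = \Bmu$, which gives the announced lower bound. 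The argument carries no genuine analytical difficulty; the only point requiring care is the construction of $\beta$ and the bookkeeping of its marginals, and one should note that the constant $1$ is sharp, all the slack in the chain of inequalities coming solely from the two estimates $\INTDom{|x-z|^2}{\R^{3d}}{\beta} \geq W_2^2(\mu_1,\nu)$ and $\INTDom{|y-z|^2}{\R^{3d}}{\beta} \geq W_2^2(\mu_2,\nu)$, the interpolation identity itself being an exact equality.
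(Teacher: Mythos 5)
Your proof is correct. The paper disposes of this proposition with a single citation to \cite[Theorem 7.3.2]{AGS}, whereas you give a fully self-contained argument, so the comparison is worth making explicit. Your construction of the three-plan $\beta$ by disintegrating an optimal plan $\pi^* \in \Gamma_o(\Bmu^{1\to 2}_\lambda,\nu)$ against its first marginal and gluing it back onto $\Bmu$ is exactly the kind of gluing argument that underlies the AGS result, but you state and check the marginal conditions directly, and crucially you run it for an \emph{arbitrary} plan $\Bmu \in \Gamma(\mu_1,\mu_2)$, which is precisely what the paper's notion of strong semiconcavity requires (AGS is phrased in terms of generalised geodesics, i.e.\ plans obtained by gluing two optimal ones, so the citation needs a small translation to cover the general case). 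The chain of estimates is tight exactly where you say it is: the interpolation identity
\[
|(1-\lambda)x + \lambda y - z|^2 = (1-\lambda)|x-z|^2 + \lambda|y-z|^2 - \lambda(1-\lambda)|x-y|^2
\]
is an equality, the term $\INTDom{|x-y|^2}{\R^{3d}}{\beta}$ equals $W^2_{2,\Bmu}(\mu_1,\mu_2)$ exactly because $\pi^{1,2}_{\#}\beta = \Bmu$, and the only inequalities used are that $\pi^{1,3}_{\#}\beta$ and $\pi^{2,3}_{\#}\beta$ are merely admissible (not necessarily optimal) couplings of $(\mu_1,\nu)$ and $(\mu_2,\nu)$. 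Your observation that the constant $1$ is therefore sharp and uniform in $K$ is correct, and in fact your argument never uses the compactness of the supports (only finiteness of second moments), so you prove slightly more than stated. This direct route is more transparent than the citation for a reader who does not have \cite{AGS} at hand, at the modest cost of a few lines of bookkeeping on marginals.
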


\begin{proof}
See \cite[Theorem 7.3.2]{AGS}. 
\end{proof}

In the following proposition, we provide a similar strong semiconcavity result in norm for non-local velocity fields given in the form of smooth convolutions.

\begin{prop}[Norm-semiconcavity of smooth convolutions]
Let $H : (t,x) \in [0,T] \times \R^d \rightarrow \R^d$ be $\Lcal^1$-measurable with respect to $t \in [0,T]$ and continuously Fr\'echet differentiable with respect to $x \in \R^d$. Suppose also that for every $K := B(0,R)$ with $R > 0$, there exists $\Cpazo_K(\cdot) \in L^1([0,T],\R_+)$ such that
\begin{equation*}
|H(t,0)| + \Lip(\D_x H(t,\cdot);K) \leq \Cpazo_K(t), 
\end{equation*}
for $\Lcal^1$-almost every $t \in [0,T]$.

Then, the non-local velocity field $v : [0,T] \times \Pcal_c(\R^d) \times \R^d \rightarrow \R^d$ defined by 
\begin{equation*}
v(t,\mu,x) := \INTDom{H(t,x-z)}{\R^d}{\mu(z)}, 
\end{equation*}
for $\Lcal^1$-almost every $t \in [0,T]$ and all $(\mu,x) \in \Pcal_c(\R^d) \times \R^d$ satisfies \ref{hyp:SC2}-$(i)$.
\end{prop}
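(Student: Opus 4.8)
The plan is to reduce the measure-level inequality of \ref{hyp:SC2}-$(i)$ to a finite-dimensional semiconcavity-in-norm estimate for $H(t,\cdot)$ on $\R^d$, and then to propagate it through the transport plan $\Bmu$. Fix $R > 0$, set $K := B(0,R)$, and let $t \in [0,T]$, $\mu_1,\mu_2 \in \Pcal(K)$, $\Bmu \in \Gamma(\mu_1,\mu_2)$, $x,y \in K$ and $\lambda \in [0,1]$ be arbitrary, writing $w := (1-\lambda)x + \lambda y$. Since $\pi^1_{\#} \Bmu = \mu_1$, $\pi^2_{\#}\Bmu = \mu_2$ and $\Bmu^{1 \rightarrow 2}_{\lambda} = ((1-\lambda)\pi^1 + \lambda \pi^2)_{\#}\Bmu$ by definition, each of the three quantities $v(t,\mu_1,x)$, $v(t,\mu_2,y)$ and $v(t,\Bmu^{1 \rightarrow 2}_{\lambda},w)$ is an integral of $H(t,\cdot)$ against $\Bmu$; writing $a := x - z_1$, $b := y - z_2$, and noting that $w - (1-\lambda)z_1 - \lambda z_2 = (1-\lambda)a + \lambda b$, one obtains the identity
\begin{equation*}
(1-\lambda) v(t,\mu_1,x) + \lambda v(t,\mu_2,y) - v(t,\Bmu^{1 \rightarrow 2}_{\lambda},w) = \INTDom{\Big( (1-\lambda) H(t,a) + \lambda H(t,b) - H(t,(1-\lambda)a + \lambda b) \Big)}{\R^{2d}}{\Bmu(z_1,z_2)}.
\end{equation*}

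The core of the argument is then the pointwise estimate: for every $\rho > 0$ and all $a,b \in B(0,\rho)$,
\begin{equation*}
\Big| (1-\lambda) H(t,a) + \lambda H(t,b) - H(t,(1-\lambda)a + \lambda b) \Big| \leq \tfrac{1}{2} \Lip \big( \D_x H(t,\cdot); B(0,\rho) \big) \, \lambda(1-\lambda) |a-b|^2 .
\end{equation*}
I would prove this by setting $c := (1-\lambda)a + \lambda b$, splitting the left-hand side as $(1-\lambda)\big(H(t,a) - H(t,c)\big) + \lambda\big(H(t,b) - H(t,c)\big)$, applying the fundamental theorem of calculus to each increment along the segments $[c,a]$ and $[c,b]$ -- both contained in $B(0,\rho)$ by convexity -- and using the elementary identities $a - c = \lambda(a-b)$ and $b - c = (1-\lambda)(b-a)$. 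After collecting terms this leaves $\lambda(1-\lambda)$ times an integral in $s \in [0,1]$ of $\big(\D_x H(t, c + s\lambda(a-b)) - \D_x H(t, c + s(1-\lambda)(b-a))\big)(a-b)$, whose two base points differ by $s(a-b)$; the local Lipschitz bound on $\D_x H(t,\cdot)$ together with $\int_0^1 s\,\dn s = \tfrac12$ then gives the claim. This is the vector-valued counterpart of \cite[Proposition 2.1.2]{CannarsaS2004} (also retrievable from the scalar statement by testing against unit vectors), and it is worth noting that it uses \emph{only} the $C^1$-regularity of $H(t,\cdot)$ together with the local Lipschitz continuity of $\D_x H(t,\cdot)$ -- no second-order derivatives are required.

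It then remains to combine the two displays. Since $x,y \in K$ and, for $\Bmu$-almost every $(z_1,z_2)$, one has $z_1 \in \supp(\mu_1) \subset K = B(0,R)$ and $z_2 \in \supp(\mu_2) \subset K$, the points $a = x - z_1$ and $b = y - z_2$ lie in $B(0,2R)$. Applying the pointwise estimate with $\rho := 2R$ under the integral sign, then using $|a - b|^2 = |(x-y) - (z_1 - z_2)|^2 \leq 2|x-y|^2 + 2|z_1-z_2|^2$ together with $\INTDom{|z_1-z_2|^2}{\R^{2d}}{\Bmu(z_1,z_2)} = W_{2,\Bmu}^2(\mu_1,\mu_2)$ (by \eqref{eq:WeightedWass}) and the fact that $\Bmu$ is a probability measure, one arrives at
\begin{equation*}
\Big| (1-\lambda) v(t,\mu_1,x) + \lambda v(t,\mu_2,y) - v(t,\Bmu^{1 \rightarrow 2}_{\lambda},w) \Big| \leq \Cpazo_K(t) \, \lambda(1-\lambda) \Big( |x-y|^2 + W_{2,\Bmu}^2(\mu_1,\mu_2) \Big),
\end{equation*}
where the constant $\Cpazo_K(t)$ can be taken equal to $\Lip(\D_x H(t,\cdot); B(0,2R))$, which belongs to $L^1([0,T],\R_+)$ by the assumption imposed on $H$ on the ball $B(0,2R)$. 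This is exactly \ref{hyp:SC2}-$(i)$. The only genuine subtlety -- and the point I expect to require the most care -- is this support bookkeeping, which forces the Lipschitz modulus of $\D_x H(t,\cdot)$ to be controlled on the doubled ball $B(0,2R)$ rather than on $B(0,R)$ itself; everything else is a direct transcription, through $\Bmu$, of the classical semiconcavity-in-norm inequality for maps with locally Lipschitz gradient.
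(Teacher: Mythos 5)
Your proof is correct and follows essentially the same path as the paper's: reduce to the pointwise norm-semiconcavity estimate for $H(t,\cdot)$, transcribe it through the transport plan $\Bmu$ via the change of variables $a := x - z_1$, $b := y - z_2$, and close with $|a-b|^2 \leq 2|x-y|^2 + 2|z_1-z_2|^2$ followed by integration against $\Bmu$. You add two refinements that the paper leaves implicit: an explicit derivation of the pointwise estimate (the paper merely cites \cite{Cannarsa1991}), and -- more substantively -- the observation that the arguments $a,b$ in the integrand range over $B(0,2R)$ rather than $K = B(0,R)$, so the Lipschitz modulus of $\D_x H(t,\cdot)$ must be controlled on the doubled ball; this is a minor imprecision in the paper's own proof, harmless because both the hypothesis on $H$ and the conclusion \ref{hyp:SC2}-$(i)$ are quantified over all radii $R > 0$.
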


\begin{proof}
As for real-valued functionals, it can be checked in this case (see e.g. \cite[Section 5]{Cannarsa1991}) that for $\Lcal^1$-almost every $t \in [0,T]$ and all $x,y \in K$, it holds 
\begin{equation*}
\big| (1-\lambda) H(t,x) + \lambda H(t,y) - H( t,(1-\lambda)x + \lambda y)  \big| \leq \Cpazo_K(t) \, \lambda(1-\lambda) |x-y|^2.
\end{equation*}
Whence, for any $\mu_1,\mu_2 \in \Pcal(K)$, all $x,y \in K$ and each $\Bmu \in \Gamma(\mu_1,\mu_2)$, one can deduce
\begin{equation*}
\begin{aligned}
& \Big| (1-\lambda) v(t,\mu_1,x) + \lambda v(t,\mu_2,y) - v(t,\Bmu^{1 \rightarrow 2}_{\lambda}, (1-\lambda)x + \lambda y) \Big| \\
& = \bigg| \INTDom{ \Big( (1-\lambda) H(t,x-z_1) + \lambda H(t,y-z_2) \Big)}{\R^{2d}}{\Bmu(z_1,z_2)} - \INTDom{H \Big( t , (1-\lambda)x + \lambda y - z_{\lambda} \Big)}{\R^d}{\Bmu_{\lambda}^{1 \rightarrow 2}(z_{\lambda})} \bigg| \\
& \leq \INTDom{\Big| (1-\lambda) H(t,x-z_1) + \lambda H(t,y-z_2) - H \big( t,(1-\lambda)(x-z_1) + \lambda(y-z_2) \big) \Big|}{\R^{2d}}{\Bmu(z_1,z_2)} \\
& \leq 2 \Cpazo_K(t) \, \lambda(1-\lambda) \INTDom{\Big( |x-y|^2 + |z_1 - z_2|^2 \Big)}{\R^{2d}}{\Bmu(z_1,z_2)} \\
& = 2 \Cpazo_K(t) \, \lambda(1-\lambda) \Big( |x-y|^2 + W_{2,\Bmu}^2(\mu_1,\mu_2) \Big), 
\end{aligned}
\end{equation*}
for all $\lambda \in [0,1]$, which concludes the proof of our claim. 
\end{proof}

\begin{rmk}[Comparison between hypotheses \ref{hyp:SC1} and \ref{hyp:SC2}]
Going back to Definition \ref{def:Semiconcavity}, it is clear that strong local semiconcavity implies local geodesic semiconcavitiy, so that every data satisfying hypotheses \ref{hyp:SC2} also satisfies hypotheses \ref{hyp:SC1} by a simple restriction to optimal interpolations.   
\end{rmk}


\section{Sensitivity analysis of the value function}
\label{section:Sensitivity}

In this section, we pursue our investigation of the fine properties of the value function $\Vcal : [0,T] \times \Pcal_c(\R^d) \rightarrow \R$ associated with a mean-field optimal control problem of Mayer type. In the context of mean-field optimal control, the well-known fact of stating that the map $t \in [0,T] \mapsto \Vcal(t,\mu(t)) \in \R$ is non-decreasing whenever $(\mu(\cdot),u(\cdot))$ is an admissible pair still holds true. Furthermore, the application $t \in [0,T] \mapsto \Vcal(t,\mu^*(t)) \in \R$ is constant along a pair $(\mu^*(\cdot),u^*(\cdot))$ if and only if it is optimal for $(\Ppazo)$. In Section \ref{subsection:SufficientOpt}, we will characterise the optimality of admissible trajectory-control pairs in terms of this property. 


\subsection{Sensitivity relations expressed in terms of Fr\'echet superdifferentials}
\label{subsection:FrechetSensitivity}

We state and prove below one of our central results, which is a sensitivity relation between the costates of the maximum principle and the Fr\'echet superdifferential of the value function, defined as follows.

\begin{Def}[Extended Fr\'echet superdifferential of the value function]
\label{def:Superdiff_Value}
We say that a pair $(\delta,\Bgamma) \in \R \times \Pcal_2(\R^{2d})$ belongs to the localised extended Fr\'echet superdifferential $\Bpartial^+ \Vcal(\tau_1,\mu_1)$ of the value function $\Vcal : [0,T] \times \Pcal_c(\R^d) \rightarrow \R$ at $(\tau_1,\mu_1) \in [0,T] \times \Pcal_c(\R^d)$ if $\pi^1_{\#}\Bgamma = \mu_1$, and for every $R >0$, it holds
\begin{equation}
\label{eq:SuperdifferentialValue_Ineq}
\Vcal(\tau_2,\mu_2) - \Vcal(\tau_1,\mu_1) \leq \sup_{\Bmu \in \BGamma_o^{1,3}(\Bgamma,\mu_2)} \INTDom{\langle r ,y-x \rangle}{\R^{3d}}{\Bmu(x,r,y)} \,+ \, \delta (\tau_2 - \tau_1) \, + \, o_R \big( W_2(\mu_1,\mu_2) + |\tau_1 - \tau_2| \big), 
\end{equation}
for any $(\tau_2,\mu_2) \in [0,T] \times \Pcal(B_R(\mu_1))$. Analogously, we will say that $\Bgamma \in \Pcal_2(\R^{2d})$ belongs to the localised extended Fr\'echet superdifferential $\Bpartial^+_{\mu} \Vcal(\tau,\mu_1)$ with respect to the measure variable of $\Vcal(\tau,\cdot)$ at $\mu_1 \in \Pcal_c(\R^d)$ if $\pi^1_{\#} \Bgamma = \mu^1$, and for every $R>0$ it holds
\begin{equation}
\label{eq:SuperdifferentialValue_IneqBis}
\Vcal(\tau,\mu_2) - \Vcal(\tau,\mu_1) \leq \sup_{\Bmu \in \Gamma^{1,3}_o(\Bgamma,\mu_2)} \INTDom{\langle r ,y-x \rangle}{\R^{3d}}{\Bmu(x,r,y)} + \, o_R \big( W_2(\mu_1,\mu_2)\big), 
\end{equation}
for any $\mu_2 \in \Pcal(B_R(\mu_1))$. 
\end{Def}

\begin{thm}[A Fr\'echet-type sensitivity relation]
\label{thm:Sensitivity1}
Let $\mu^0 \in \Pcal(B(0,r))$ for some $r>0$ and suppose that hypotheses \ref{hyp:OCP} hold. Given a minimiser $(\mu^*(\cdot),u^*(\cdot))$ for $(\Ppazo)$, let $R_r'>0$ and $\nu^*(\cdot) \in \AC([0,T],\Pcal_1(K' \times K'))$ be as in Theorem \ref{thm:PMP} where $K' := B(0,R_r')$. 

Then, the following \textnormal{sensitivity relation}
\begin{equation}
\label{eq:Sensitivity_Thm}
\Big( \H(t,\nu^*(t),u^*(t)) \, , \, (\pi^1,-\pi^2)_{\#} \nu^*(t) \Big) \in \Bpartial^+ \Vcal(t,\mu^*(t)), 
\end{equation}
holds for $\Lcal^1$-almost every $t \in [0,T]$.  
\end{thm}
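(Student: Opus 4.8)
The plan is to establish the two ingredients of the Fréchet superdifferential inequality \eqref{eq:SuperdifferentialValue_Ineq} separately: the first-order expansion in the time variable (handled by the Hamiltonian) and the first-order expansion in the measure variable (handled by the costate plan $(\pi^1,-\pi^2)_{\#}\nu^*(t)$). Fix a Lebesgue point $t$ of all the relevant $L^1$ data, in particular of the maps $s \mapsto v(s,\mu^*(s),u^*(s),\cdot)$ and of $s \mapsto \H(s,\nu^*(s),u^*(s))$, and of the absolutely continuous curves $\mu^*(\cdot)$ and $\nu^*(\cdot)$. For the time component, I would use the dynamic programming principle: along the optimal trajectory, $\Vcal(t,\mu^*(t)) = \Vcal(t+h,\mu^*(t+h))$ for $h \geq 0$, and more generally $\Vcal(\tau_2,\mu_2) \leq \varphi$ of any trajectory issued from $(\tau_2,\mu_2)$. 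Feeding the optimal control $u^*(\cdot)$ into the system started at $(\tau_2,\mu_2)$, comparing $\Vcal(\tau_2,\mu_2)$ with $\Vcal(t,\mu^*(t))$ reduces, after a change of the initial time, to estimating $\varphi$ of the perturbed final state against $\varphi(\mu^*(T))$; the time-shift contribution produces precisely $-\H(t,\nu^*(t),u^*(t))$ times $(\tau_2 - t)$ up to $o(|\tau_2-t|)$, while the measure perturbation is propagated to time $T$ by the linearised flow.

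For the measure component, the idea is to transport the perturbation $\mu_2$ (a measure in $\Pcal(B_R(\mu^*(t)))$) forward along the optimal control $u^*(\cdot)$, using the linearisation formulas for non-local flows from Appendix \ref{section:AppendixFlowDiff}. Writing the pushforward of $\mu_2$ under $\Phi^{u^*}_{(t,T)}[\cdot]$ and expanding to first order, the difference $\varphi(\Phi^{u^*}_{(t,T)}[\mu_2]_{\#}\mu_2) - \varphi(\mu^*(T))$ is governed by $\nabla\varphi(\mu^*(T))$ composed with the derivative of the flow, which by the backward costate equation \eqref{eq:PMP_Dynamics}–\eqref{eq:PMP_HamiltonianGrad} is exactly captured by the $r$-marginal structure of $\nu^*(t)$. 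Here I would exploit that $\nu^*(T) = (\Id,-\nabla\varphi(\mu^*(T)))_{\#}\mu^*(T)$ and that the Hamiltonian flow propagates this relation backwards, so that pairing the costate $r$ at time $t$ against a displacement $y - x$ reproduces, after integration by parts in time against the linearised dynamics, the pairing of $-\nabla\varphi$ against the transported displacement at $T$. The supremum over $\Bmu \in \BGamma_o^{1,3}(\Bgamma,\mu_2)$ appears because the perturbation must be realised through an optimal transport plan from $\mu^*(t)$ to $\mu_2$, consistently with the localised Fréchet superdifferential of Definition \ref{def:Generalised_Subdiff}.

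The main obstacle I anticipate is controlling the cross terms and the remainder uniformly, so that the error is genuinely $o_R(W_2(\mu_1,\mu_2) + |\tau_1 - \tau_2|)$ rather than merely $O$ of it. This requires: first, that the linearisation of the non-local flow with respect to the initial measure is valid in an $L^2$ (or $W_2$) sense with a remainder that is $o(W_2)$ uniformly over controls, which is the content of the appendix results but must be invoked with care since the perturbed measure $\mu_2$ need not be absolutely continuous and the transport plan entering the linearisation is only optimal, not induced by a map; second, that the joint time-and-measure perturbation does not generate a spurious first-order cross term — this is why one works at a Lebesgue point $t$ and uses the Lipschitz-in-time regularity implicit in hypotheses \ref{hyp:OCP} together with the absolute continuity of $\nu^*(\cdot)$ from \eqref{eq:PMP_Est}. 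A secondary technical point is matching the measure-valued pairing $\int \langle r, y-x\rangle \,\dn\Bmu$ with the plan $(\pi^1,-\pi^2)_{\#}\nu^*(t)$: one must check that the disintegration of $\nu^*(t)$ against its first marginal $\mu^*(t)$ provides exactly the $r$-fibres needed, and that the infimum/supremum over $\BGamma^{1,3}_o$ is compatible with the direction in which the perturbation is propagated. Once these estimates are assembled, combining the time expansion and the measure expansion and letting $\epsilon \to 0$ in the dynamic programming comparison yields \eqref{eq:Sensitivity_Thm}.
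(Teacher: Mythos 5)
Your plan follows essentially the same route as the paper's proof: use the non-decrease of $\Vcal$ along admissible pairs together with its constancy along the optimal one, linearise the non-local flow in initial time, space and measure via the Appendix \ref{section:AppendixFlowDiff} formulas, and propagate the resulting first-order quantity backward from $T$ to $t$ through the adjoint structure of the costate equation, all at Lebesgue points of the data.

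Two places where your plan would need sharpening to close the argument. First, the disintegration variable: you suggest disintegrating $\nu^*(t)$ against its first marginal $\mu^*(t)$, but the fibre structure needed is against $\mu^*(T)$. The paper introduces $\nu_T^*(t) := (\Phi^*_{(t,T)}\circ\pi^1,\pi^2)_\#\nu^*(t)$, disintegrates it against $\pi^1_\#\nu_T^*(t)=\mu^*(T)$ to produce fibres $\sigma_x^*(\cdot)$ (Proposition \ref{prop:RepresentationCostate}) each of which solves a backward non-local continuity equation driven by the adjoint velocity field $\Wpazo_x$, and then tensors these with the fibres $\gamma_{\tau,x}^T$ of the partially transported optimal plan $\gamma_\tau^T$ to assemble a $3$-plan $\Bnu^*(t)$. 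Disintegrating against $\mu^*(t)$ directly would not yield the product structure that makes the marginals $\pi^{1,2}_\#\Bnu^*(\tau)=\nu^*(\tau)$ and $\pi^{1,3}_\#\Bnu^*(\tau)=\gamma_\tau$ come out right, and that in turn is what lets the expression land inside $\BGamma_o^{1,3}$ as required by Definition \ref{def:Generalised_Subdiff}. Second, the mechanism you describe as "integration by parts in time against the linearised dynamics" is realised in the paper as the constancy of the two absolutely continuous functionals $\Hpazo_1,\Hpazo_2$ of Lemma \ref{lem:Consistency}, whose derivatives vanish identically precisely because $\Wpazo_x$ is the transpose of the linearised forward dynamics acting on $\F_\tau$ and $\Psi_\tau$. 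Your plan gestures at this but does not isolate the specific quantities that are conserved, which is the step that converts the $T$-level estimate into a $\tau$-level one. Finally a minor sign slip: the coefficient $\delta$ in the superdifferential inequality is $+\H(t,\nu^*(t),u^*(t))$, not $-\H$ as written in your time-shift discussion, consistently with $\Hpazo_2(\tau)=-\H(\tau,\nu^*(\tau),u^*(\tau))$ entering \eqref{eq:DiffIneqValue3} with a further negative sign.
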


We split the proof of this result into four steps. In Step 1, we prove a differential inequality at time $T >0$ for the value function, which is derived using the Taylor expansion formulas of Appendix \ref{section:AppendixFlowDiff}. In Step 2, we establish a representation formula for the curve $\nu^*(\cdot)$, which allows to uniquely characterise the dynamics of its second marginal by a disintegration argument against $\mu^*(T)$. In Step 3, we use this representation formula to propagate the differential inequality derived in Step 1 backward-in-time, and we finally show in Step 4 how the latter yields the sensitivity relation \eqref{eq:Sensitivity_Thm}.


\paragraph*{Step 1: A differential inequality at time $T > 0$} Let $\T^* \subset (0,T)$ be the full $\Lcal^1$-measure set of Lebesgue points associated with $\vb : t \in [0,T] \mapsto v(t,\cdot,u^*(t),\cdot) \in C^0(\Pcal_c(\R^d) \times \R^d,\R^d)$ as in Lemma \ref{lem:LebesguePoints}, and define $\T := \T^* \cap \T_m \subset (0,T)$ where $\T_m$ is the set of Lebesgue points of $m(\cdot) \in L^1([0,T],\R_+)$.

Let $\tau \in \T$ and $\mu_{\tau} \in \Pcal(B(0,r'))$ for some $r' > 0$, and fix $h \in \R$ such that $\tau+h \in [0,T]$. By Lemma \ref{lem:AdmEst} and Theorem \ref{thm:PMP}, there exists a radius $\Rpazo := \Rpazo(r,r') > 0$ depending on both $r,r' >0$ such that, for all times $t \in [0,T]$, the following support inclusions
\begin{equation*}
\supp(\nu^*(t)) \subset \K \times \K \qquad \text{and} \qquad \supp(\mu(t)) \subset  \K,
\end{equation*}
hold with $\K := B(0,\Rpazo)$, whenever $\mu(\cdot)$ is the solution of the Cauchy problem
\begin{equation*}
\left\{
\begin{aligned}
& \partial_t \mu(t) + \Div_x \Big( v(t,\mu(t),u(t)) \mu(t) \Big) = 0, \\
& \mu(s) = \mu_{\tau}, 
\end{aligned}
\right.
\end{equation*}
for some admissible $u(\cdot) \in \U$ and arbitrary $s \in [0,T]$. In the sequel, when there is no ambiguity, we will frequently use the condensed notation 
\begin{equation}
\label{eq:CondensedNota}
(\Phi^*_{(\tau,t)}(\cdot))_{\tau,t \in [0,T]} := (\Phi_{(\tau,t)}^{u^*}[\mu^*(\tau)](\cdot))_{\tau,t \in [0,T]}, 
\end{equation}
to refer to the unique semigroup of non-local flows defined along $(\mu^*(\cdot),u^*(\cdot))$ as in Section \ref{subsection:SemiconcavityEst}. 

In the following lemma, we derive a differential estimate for the value function. 

\begin{lem}[Differential estimate on the value function]
\label{lem:ValueFunctionDiffEst}
For every $\Bmu_{\tau} \in \Gamma(\mu^*(\tau),\mu_{\tau})$, it holds
\begin{equation}
\label{eq:ValueFunctionEstLem}
\begin{aligned}
\Vcal(\tau+h,\mu_{\tau}&) - \Vcal(\tau,\mu^*(\tau)) \\
& \leq \INTDom{\Big\langle \nabla \varphi(\mu^*(T)) \big( \Phi^*_{(\tau,T)}(x) \big) , \D_x \Phi^*_{(\tau,T)}(x)(y-x) + w_{\Bmu_{\tau}}(T,x) \Big\rangle}{\R^{2d}}{\Bmu_{\tau}(x,y)} \hspace{0.4cm} \\
& \hspace{0.4cm} + h \INTDom{\Big\langle \nabla \varphi(\mu^*(T)) \big( \Phi^*_{(\tau,T)}(x) \big) ,\Psi_{\tau}(T,x) \Big\rangle}{\R^{2d}}{\Bmu_{\tau}(x,y)} + o_{\Rpazo} \Big( W_{2,\Bmu_{\tau}}(\mu^*(\tau),\mu_{\tau}) + h \Big),
\end{aligned}
\end{equation}
where the maps $\D_x \Phi^*_{(\tau,T)}(\cdot)$, $w_{\Bmu_{\tau}}(\cdot,\cdot)$ and $\Psi_{\tau}(\cdot,\cdot)$ are defined as in Proposition \ref{prop:LinSpace_Flows}, Theorem \ref{thm:FlowDiff} and Proposition \ref{prop:LinTime_Flows} respectively, with $\vb : (t,\mu,x) \in [0,T] \times \Pcal_c(\R^d) \times \R^d \mapsto v(t,\mu,u^*(t),x) \in \R^d$. 
\end{lem}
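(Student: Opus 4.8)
The plan is to compare $\Vcal(\tau+h,\mu_\tau)$ against a specific admissible strategy on $[\tau+h,T]$ obtained by composing with the optimal control $u^*(\cdot)$, and to expand the resulting final cost to first order in both the perturbation of the initial measure $\mu^*(\tau)\rightsquigarrow\mu_\tau$ (tracked through a fixed plan $\Bmu_\tau\in\Gamma(\mu^*(\tau),\mu_\tau)$) and in the time shift $h$. First I would use the dynamic programming inequality: since $u^*(\cdot)|_{[\tau+h,T]}$ (suitably understood) is admissible for the problem started at $(\tau+h,\mu_\tau)$, we get $\Vcal(\tau+h,\mu_\tau)\le\varphi(\mu_h(T))$, where $\mu_h(\cdot)$ is the non-local flow image of $\mu_\tau$ driven by $\vb(t,\cdot,\cdot)$ on $[\tau+h,T]$; and $\Vcal(\tau,\mu^*(\tau))=\varphi(\mu^*(T))=\varphi(\Phi^*_{(\tau,T)}(\cdot)_\#\mu^*(\tau))$ by optimality of $(\mu^*(\cdot),u^*(\cdot))$. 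Subtracting, the difference is bounded by $\varphi(\mu_h(T))-\varphi(\mu^*(T))$, and the whole task reduces to a Taylor expansion of the functional $\varphi$ evaluated at two close measures.

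Next I would decompose the perturbation $\mu^*(T)\to\mu_h(T)$ into two pieces. Write $\mu_h(T)=\Phi_{(\tau+h,T)}[\mu_\tau](\cdot)_\#\mu_\tau$; composing the flow push-forwards through the plan $\Bmu_\tau$, the comparison measure can be represented as the push-forward of $\Bmu_\tau$ under the map $(x,y)\mapsto\big(\Phi^*_{(\tau,T)}(x),\,\Phi_{(\tau+h,T)}[\mu_\tau](y)\big)$. The first component is the unperturbed optimal endpoint map; the second must be expanded around it. This is exactly where the linearisation machinery of Appendix \ref{section:AppendixFlowDiff} enters: by Proposition \ref{prop:LinSpace_Flows} the map $y\mapsto\Phi^*_{(\tau,T)}(y)$ is differentiable with space-linearisation $\D_x\Phi^*_{(\tau,T)}(\cdot)$, accounting for the term $\D_x\Phi^*_{(\tau,T)}(x)(y-x)$; by Theorem \ref{thm:FlowDiff} the dependence of the flow on its initial measure contributes the correction $w_{\Bmu_\tau}(T,x)$ (the ``non-local'' part of the first variation, driven through the plan $\Bmu_\tau$); and by Proposition \ref{prop:LinTime_Flows} the shift of the initial time from $\tau$ to $\tau+h$ contributes $h\,\Psi_\tau(T,x)$, with $\tau\in\T$ a Lebesgue point so that the time-derivative makes sense. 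Collecting these, $\Phi_{(\tau+h,T)}[\mu_\tau](y)=\Phi^*_{(\tau,T)}(x)+\D_x\Phi^*_{(\tau,T)}(x)(y-x)+w_{\Bmu_\tau}(T,x)+h\Psi_\tau(T,x)+o(W_{2,\Bmu_\tau}+h)$ uniformly over the compact $\K$.

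Then I would plug this into the first-order expansion of $\varphi$. Since $\varphi$ is locally differentiable (hypothesis \ref{hyp:OCP}-$(ii)$), Proposition \ref{prop:GradientChainrule} (the chain rule along arbitrary plans) applied along the plan transporting $\mu^*(T)$ to $\mu_h(T)$ built above gives $\varphi(\mu_h(T))-\varphi(\mu^*(T))=\int\langle\nabla\varphi(\mu^*(T))(\Phi^*_{(\tau,T)}(x)),\,\Delta(T,x)\rangle\,\dn\Bmu_\tau(x,y)+o_\Rpazo(W_{2,\Bmu_\tau}(\mu^*(\tau),\mu_\tau)+h)$, where $\Delta(T,x)$ is the displacement field just obtained; substituting $\Delta$ and splitting off the $h$-linear part yields exactly \eqref{eq:ValueFunctionEstLem}. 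A minor technical point is that one must check the error in the chain rule, which is naturally $o$ of the weighted Wasserstein distance $W_{2,\Bmu_\tau}$ between $\mu^*(T)$ and $\mu_h(T)$, is controlled by $o_\Rpazo(W_{2,\Bmu_\tau}(\mu^*(\tau),\mu_\tau)+h)$; this follows because the flow maps and their linearisations are Lipschitz on $\K$ (Lemma \ref{lem:LipFlow}), so $W_{2,\Bmu_\tau}(\mu^*(T),\mu_h(T))=O(W_{2,\Bmu_\tau}(\mu^*(\tau),\mu_\tau)+h)$, and one also absorbs the cross terms $h\cdot W_{2,\Bmu_\tau}$ and the second-order remainders.

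I expect the main obstacle to be the bookkeeping of the three separate first-variation contributions and verifying that the remainder terms genuinely combine into a single $o_\Rpazo(W_{2,\Bmu_\tau}(\mu^*(\tau),\mu_\tau)+h)$ uniformly on the compact set $\K=B(0,\Rpazo)$ — in particular making sure the ``non-local'' correction $w_{\Bmu_\tau}$ has the right dependence on $\Bmu_\tau$ as supplied by Theorem \ref{thm:FlowDiff}, and that the time-variation $\Psi_\tau$ is available precisely because $\tau$ was chosen in the full-measure set $\T$ of common Lebesgue points of $\vb$ and $m(\cdot)$. The dynamic programming step and the final substitution are routine; the delicate part is marshalling the Appendix \ref{section:AppendixFlowDiff} estimates with the correct uniformities.
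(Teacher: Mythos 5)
Your proposal is correct and follows essentially the same route as the paper's proof: both start from the dynamic programming inequality, push $\Bmu_\tau$ forward through the flow maps to obtain the comparison plan $\Bmu_\tau(T)$, apply the chain rule of Proposition \ref{prop:GradientChainrule} to $\varphi$, and expand $\Phi^{u^*}_{(\tau+h,T)}[\mu_\tau](y)$ around $\Phi^*_{(\tau,T)}(x)$ via the three linearisations (which the paper packages as Corollary \ref{cor:TotalLinFlows}), with the remainders collected into $o_\Rpazo(W_{2,\Bmu_\tau}+h)$ using Lemma \ref{lem:LipFlow} and Lemma \ref{lem:Small-oEst}.
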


\begin{proof}
We start by observing that by definition \eqref{eq:ValueFunction} of the value function $\Vcal : [0,T] \times \Pcal_c(\R^d) \rightarrow \R$, the following inequality holds 
\begin{equation}
\label{eq:ValueFunctionEst1}
\Vcal(\tau+h,\mu_{\tau}) - \Vcal(\tau,\mu^*(\tau)) \leq \varphi \Big( \Phi^{u^*}_{(\tau+h,T)}[\mu_{\tau}](\cdot)_{\#} \mu_{\tau} \Big) - \varphi(\mu^*(T)), 
\end{equation}
because $\mu^*(\cdot)$ is optimal for $(\Ppazo)$ and $t \in [0,T] \mapsto \Phi^{u^*}_{(\tau+h,t)}[\mu_{\tau}](\cdot)_{\#} \mu_{\tau} \in \Pcal(\K)$ is admissible by construction. Let $\Bmu_{\tau} \in \Gamma(\mu^*(\tau),\mu_{\tau})$ be an arbitrary transport plan, to which we associate $\Bmu_{\tau}(T) \in \Gamma \big( \mu^*(T),\Phi^{u^*}_{(\tau+h,T)}[\mu_{\tau}](\cdot)_{\#} \mu_{\tau} \big)$, defined by
\begin{equation}
\label{eq:BmutauDef}
\Bmu_{\tau}(T) := \Big( \Phi^{u^*}_{(\tau,T)}[\mu^*(\tau)] \circ \pi^1 \, , \, \Phi^{u^*}_{(\tau+h,T)}[\mu_{\tau}] \circ \pi^2 \Big)_{\raisebox{4pt}{$\scriptstyle \#$}} \Bmu_{\tau}.
\end{equation}
Recall now that $\varphi : \Pcal_c(\R^d)\rightarrow \R$ is locally differentiable by hypothesis \ref{hyp:OCP}-$(ii)$, which allows us to write as a consequence of Proposition \ref{prop:GradientChainrule}
\begin{equation}
\label{eq:VarphiDiff1}
\begin{aligned}
\varphi \Big( \Phi^{u^*}_{(\tau+h,T)}[\mu_{\tau}](\cdot)_{\#} \mu_{\tau} \Big) - \varphi(\mu^*(T)) & = \INTDom{\langle \nabla \varphi(\mu^*(T))(x),y-x \rangle}{\R^{2d}}{\Bmu_{\tau}(T)(x,y)} \\
& \hspace{1cm} + o_{\Rpazo} \Big( W_{2,\Bmu_{\tau}(T)} \Big( \mu^*(T), \Phi^{u^*}_{(\tau+h,T)}[\mu_{\tau}](\cdot)_{\#} \mu_{\tau} \Big) \Big).
\end{aligned}
\end{equation}
By Lemma \ref{lem:LipFlow}, there exists a constant $C_{\K,\tau} > 0$ such that for any pair $x,y \in \K$, it holds
\begin{equation*}
\big| \Phi^{u^*}_{(\tau+h,T)}[\mu_{\tau}](y) - \Phi^{u^*}_{(\tau,T)}[\mu^*(\tau)](x) \big| \leq C_{\K,\tau} \Big( |x-y| + W_{2,\Bmu_{\tau}}(\mu^*(\tau),\mu_{\tau}) + h \Big), 
\end{equation*}
where we also used the fact that $\tau \in \T$ is a Lebesgue point of $m(\cdot) \in L^1([0,T],\R_+)$. By integrating this last expression against $\Bmu_{\tau}$ and recalling the definition \eqref{eq:BmutauDef} of $\Bmu_{\tau}(T)$, we obtain
\begin{equation}
\label{eq:Sensitivity_SmallOEst1}
o_{\Rpazo} \Big( W_{2,\Bmu_{\tau}(T)} \Big( \mu^*(T), \Phi^{u^*}_{(\tau+h,T)}[\mu_{\tau}](\cdot)_{\#} \mu_{\tau} \Big) \Big) = o_{\Rpazo} \Big( W_{2,\Bmu_{\tau}}(\mu^*(\tau),\mu_{\tau}) + h \Big).
\end{equation}
We now turn our attention to the integral term in \eqref{eq:VarphiDiff1}. Observe that by the definition \eqref{eq:BmutauDef} of $\Bmu_{\tau}(T)$, the latter can be rewritten as  
\begin{equation}
\label{eq:VarphiDiff2}
\begin{aligned}
& \INTDom{\langle \nabla \varphi(\mu^*(T))(x),y-x\rangle}{\R^{2d}}{\Bmu_{\tau}(T)(x,y)} \\
& \hspace{1cm} = \INTDom{\Big\langle \nabla \varphi (\mu^*(T)) \big( \Phi^{u^*}_{(\tau,T)}[\mu^*(\tau)](x) \big) \, , \, \Phi^{u^*}_{(\tau+h,T)}[\mu_{\tau}](y) - \Phi^{u^*}_{(\tau,T)}[\mu^*(\tau)](x) \Big\rangle}{\R^{2d}}{\Bmu_{\tau}(x,y)}. 
\end{aligned}
\end{equation}
In addition, remark that for any $x,y \in \K$, the following Taylor expansion
\begin{equation}
\label{eq:Sensitivity_FlowDiff1}
\begin{aligned}
\Phi^{u^*}_{(\tau+h,T)}[\mu_{\tau}](y) = \Phi^{u^*}_{(\tau,T)}[\mu^*(\tau)](x) & + \D_x \Phi^{u^*}_{(\tau,T)}[\mu^*(\tau)](x)(y-x) + w_{\Bmu_{\tau}}(T,x) \\
& + h \Psi_{\tau}(T,x) + o_{\Rpazo} \Big( |x-y| + W_{2,\Bmu_{\tau}} (\mu^*(\tau),\mu_{\tau}) + h \Big),
\end{aligned}
\end{equation}
holds as a consequence of Corollary \ref{cor:TotalLinFlows}, where $\D_x \Phi_{(\tau,\cdot)}^{u^*}[\mu^*(\tau)](\cdot)$, $w_{\Bmu_{\tau}}(\cdot,\cdot)$ and $\Psi_{\tau}(\cdot,\cdot)$ are defined as in Proposition \ref{prop:LinSpace_Flows}, Theorem \ref{thm:FlowDiff} and Proposition \ref{prop:LinTime_Flows} respectively. Thus, combining \eqref{eq:VarphiDiff1}, \eqref{eq:VarphiDiff2} and \eqref{eq:Sensitivity_FlowDiff1} with the estimate derived in Lemma \ref{lem:Small-oEst} of Appendix \ref{section:AppendixThm}, we obtain
\begin{equation}
\label{eq:VarphiDiff3}
\begin{aligned}
& \varphi \Big( \Phi^{u^*}_{(\tau+h,T)}[\mu_{\tau}](\cdot)_{\#} \mu_{\tau} \Big) - \varphi(\mu^*(T)) \\
& \hspace{0.8cm} = \INTDom{\Big\langle \nabla \varphi(\mu^*(T)) \big( \Phi_{(\tau,T)}^*(x) \big) \, , \, \D_x \Phi_{(\tau,T)}^*(x)(y-x) + w_{\Bmu_{\tau}}(T,x) \Big\rangle}{\R^{2d}}{\Bmu_{\tau}(x,y)} \\
& \hspace{1.2cm} + h \INTDom{\Big\langle \nabla \varphi(\mu^*(T)) \big( \Phi_{(\tau,T)}^*(x) \big) \, , \, \Psi_{\tau}(T,x) \Big\rangle}{\R^{2d}}{\Bmu_{\tau}(x,y)} + o_{\Rpazo} \Big( W_{2,\Bmu_{\tau}}(\mu^*(\tau),\mu_{\tau}) + h \Big),
\end{aligned}
\end{equation}
where we used the condensed notation $(\Phi_{(\tau,t)}^*(\cdot))_{\tau,t \in [0,T]}$ introduced in \eqref{eq:CondensedNota}. By plugging \eqref{eq:VarphiDiff3} into \eqref{eq:ValueFunctionEst1}, we finally recover \eqref{eq:ValueFunctionEstLem}, which concludes the proof of our claim
\end{proof}


\paragraph*{Step 2: Representation formula for the Pontryagin costate} Our goal is now to prove via a backward time propagation that the differential inequality derived in Step 1 implies \eqref{eq:Sensitivity_Thm} at time $\tau \in \T$. To this end, we first need to isolate the ``costate'' part -- i.e. the second marginal -- of the curve $\nu^*(\cdot)$ satisfying the PMP of Theorem \ref{thm:PMP}. 

\begin{prop}[Disintegration representation of state-costate curves]
\label{prop:RepresentationCostate}
Assume that hypotheses \ref{hyp:OCP} hold and let $\nu^*(\cdot) \in \AC([0,T],\Pcal_1(K' \times K'))$ be a solution of the forward-backward Hamiltonian continuity equation \eqref{eq:PMP_Dynamics}, with $K' := B(0,R_r')$ and $R_r' >0$ being as in \eqref{eq:PMP_Est}. Define the curve of measures
\begin{equation}
\label{eq:nu_TDef}
\nu_T^* : t \in [0,T] \mapsto \Big( \Phi_{(t,T)}^* \circ \pi^1 \, , \,  \pi^2 \Big)_{\raisebox{4pt}{$\scriptstyle{\#}$}} \nu^*(t) \in \Pcal(K' \times K'), 
\end{equation}
and for all $t \in [0,T]$, consider its disintegration $\nu_T^*(t) = \INTDom{\sigma^*_x(t)}{\R^d}{\mu^*(T)(x)}$ against $\pi^1_{\#} \nu^*_T(t) = \mu^*(T)$. 

Then for $\mu^*(T)$-almost every $x \in \R^d$, the curve $\sigma_x^*(\cdot) \in \AC([0,T],\Pcal_1(K'))$ is the unique solution of the \textnormal{backward Cauchy problem}
\begin{equation}
\label{eq:BackwardCauchy}
\left\{
\begin{aligned}
\partial_t \sigma_x^*(t) & + \Div_r \Big( \Wpazo_x(t,\sigma^*_x(t)) \sigma_x^*(t) \Big) = 0, \\
\sigma_x^*(T) & = \delta_{\big( -\nabla \varphi(\mu^*(T))(x) \big)}, 
\end{aligned}
\right.
\end{equation}
where the non-local velocity field $\Wpazo_x : [0,T] \times \Pcal_c(\R^d) \times \R^d \rightarrow \R^d$ is defined by 
\begin{equation}
\label{eq:DualVelocityField}
\begin{aligned}
\Wpazo_x(t,\sigma,r) = ~ & - \D_x v \Big(t,\mu^*(t),u^*(t),\Phi_{(T,t)}^*(x) \Big)^{\raisebox{-2pt}{$\scriptstyle{\top}$}} r \\
& - \INTDom{\INTDom{\D_{\mu} v \Big(t,\mu^*(t),u^*(t),\Phi_{(T,t)}^*(y) \Big)\big( \Phi_{(T,t)}^*(x)\big)^{\top} p \,}{\R^d}{\sigma(p)}}{\R^d}{\mu^*(T)(y)},
\end{aligned}
\end{equation}
for $\Lcal^1$-almost every $t \in [0,T]$, any $\sigma \in \Pcal_c(\R^d)$ and all $r \in \R^d$. Moreover, there exists a map $m_r^{\sigma}(\cdot) \in L^1([0,T],\R_+)$ such that
\begin{equation}
\label{eq:ACRegularitySigma}
W_1(\sigma_x^*(t),\sigma_x^*(\tau)) \leq \INTSeg{m_r^{\sigma}(s)}{s}{\tau}{t}, 
\end{equation}
for all times $0 \leq \tau \leq t \leq T$ and $\mu^*(T)$-almost every $x \in \R^d$.
\end{prop}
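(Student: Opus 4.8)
The plan is to trade the forward–backward system \eqref{eq:PMP_Dynamics} for a purely backward one by transporting the state-component of $\nu^*(t)$ forward to time $T$ along the flow $\Phi^*_{(t,T)}$, and then to recover the equation for $\sigma_x^*(\cdot)$ by disintegrating against $\mu^*(T)$. First I would apply the superposition principle (Theorem \ref{thm:Superposition}) to \eqref{eq:PMP_Dynamics} viewed as a continuity equation on $\R^{2d}$: there is a superposition measure $\Beta^* \in \Pcal(\R^{2d} \times C^0([0,T],\R^{2d}))$ concentrated on pairs $\big((x_0,r_0),(\xi,\rho)\big)$ solving the Hamiltonian characteristic system of $\J_{2d}\nabla_\nu\H$, i.e.
\begin{equation*}
\dot{\xi}(t) = v(t,\mu^*(t),u^*(t),\xi(t)), \qquad \dot{\rho}(t) = -\D_x v(t,\mu^*(t),u^*(t),\xi(t))^{\top} \rho(t) - \INTDom{\D_\mu v(t,\mu^*(t),u^*(t),y)(\xi(t))^{\top} p}{\R^{2d}}{\nu^*(t)(y,p)},
\end{equation*}
with $(e_t)_{\#}\Beta^* = \nu^*(t)$. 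By uniqueness of non-local flows (Theorem \ref{thm:NonLocalCE}), the state-component satisfies $\xi(t) = \Phi^*_{(T,t)}(\xi(T))$ for every $t$, so pushing $\Beta^*$ through $(x_0,r_0,\xi,\rho) \mapsto (\xi(T),\rho(t))$ yields a superposition for $\nu_T^*(t)$ of \eqref{eq:nu_TDef} in which the first (state) coordinate is \emph{frozen}. In particular $\pi^1_{\#}\nu_T^*(t) = \Phi^*_{(t,T)\#}\mu^*(t) = \mu^*(T)$ for all $t$ by \eqref{eq:FlowRep}; combined with the $\AC$-regularity of $\nu^*(\cdot)$ from \eqref{eq:PMP_Est} and the continuity of $t \mapsto \Phi^*_{(t,T)}$, Theorem \ref{thm:Disintegration} furnishes a jointly measurable version of $\nu_T^*(t) = \INTDom{\sigma_x^*(t)}{\R^d}{\mu^*(T)(x)}$, with $\sigma_x^*(\cdot)$ continuous for $\mu^*(T)$-a.e. $x$.

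Next I would identify the continuity equation solved by $\nu_T^*(\cdot)$. Differentiating $t \mapsto \INTDom{\psi(\Phi^*_{(t,T)}(x),r)}{\R^{2d}}{\nu^*(t)(x,r)}$ for $\psi \in C^\infty_c(\R^{2d})$ and using $\partial_t\Phi^*_{(t,T)}(x) = -\D_x\Phi^*_{(t,T)}(x)\, v(t,\mu^*(t),u^*(t),x)$, the contribution of the state-velocity of \eqref{eq:PMP_Dynamics} cancels exactly, so $\nu_T^*$ solves a continuity equation whose transporting field has vanishing state-component and whose costate-component, after the change of variable $x \mapsto \Phi^*_{(T,t)}(x)$ and after expressing the non-local $\D_\mu v$-term through the disintegration of $\nu_T^*(t)$ against $\mu^*(T)$, is exactly $\Wpazo_x$ of \eqref{eq:DualVelocityField}. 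Since the state-velocity vanishes, conditioning this equation on $\{\pi^1 = x\}$ — rigorously: re-applying Theorem \ref{thm:Superposition} to $\nu_T^*$ and disintegrating its superposition measure over the now-constant state trajectory — shows that for $\mu^*(T)$-a.e. $x$ the slice $\sigma_x^*(\cdot)$ solves the continuity equation \eqref{eq:BackwardCauchy} in $\R^d$ driven by $\Wpazo_x(t,\sigma_x^*(t),\cdot)$. The terminal datum $\sigma_x^*(T) = \delta_{(-\nabla\varphi(\mu^*(T))(x))}$ is read off from $\nu_T^*(T) = \nu^*(T) = (\Id,-\nabla\varphi(\mu^*(T)))_{\#}\mu^*(T)$, which is carried by the graph of $-\nabla\varphi(\mu^*(T))$.

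For uniqueness of $\sigma_x^*(\cdot)$ and the estimate \eqref{eq:ACRegularitySigma}, I would check that $\Wpazo_x$ fits the Cauchy–Lipschitz framework \ref{hyp:CE}: it is affine in $r$ with coefficients bounded on compacts (the bounds on $\D_x v$ and $\D_\mu v$ over $K'\times K'$ follow from \ref{hyp:OCP}-$(i)$, hence \ref{hyp:CE}-$(ii)$, combined with the first-order expansion of Proposition \ref{prop:GradientChainrule}), it is $1$-Lipschitz in the measure argument for $W_1$ through the barycenter map $\sigma \mapsto \bar\sigma$, and it is sublinear with a modulus that is \emph{uniform in} $x \in \supp(\mu^*(T))$ since $\Phi^*$ and $\nabla\varphi(\mu^*(T))$ are uniformly bounded on $K'$; moreover $\supp(\sigma_x^*(t)) \subset K'$ because $\pi^2_{\#}\nu^*(t) \subset K'$. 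Hence Theorem \ref{thm:NonLocalCE}, applied to the time-reversed problem $s \mapsto T-s$, yields both the uniqueness of $\sigma_x^*(\cdot)$ and, through \eqref{eq:SuppAC_Est}, an $x$-independent modulus $m_r^{\sigma}(\cdot) \in L^1([0,T],\R_+)$ controlling $W_1(\sigma_x^*(t),\sigma_x^*(\tau))$.

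I expect the slicing step to be the main obstacle: passing from the global continuity equation for $\nu_T^*$ (with vanishing state-velocity) to the fiberwise equations for the $\sigma_x^*(\cdot)$, using a version of the disintegration that is jointly measurable and along which one may differentiate under the integral sign. Routing this through the superposition principle — conditioning the superposition measure of $\nu_T^*$ on its constant state trajectory — avoids a delicate direct PDE slicing argument, but one still has to treat the non-local term with care, namely the measurable-selection and integrability bookkeeping needed to rewrite $\INTDom{\D_\mu v(t,\mu^*(t),u^*(t),\Phi^*_{(T,t)}(y))(\Phi^*_{(T,t)}(x))^{\top} p}{\R^{2d}}{\nu_T^*(t)(y,p)}$ back in the form appearing in $\Wpazo_x$.
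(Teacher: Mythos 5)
Your computation tracks the paper's proof almost exactly: you differentiate $t\mapsto \INTDom{\xi(\Phi^*_{(t,T)}(x),r)}{\R^{2d}}{\nu^*(t)(x,r)}$, use $\partial_t\Phi^*_{(t,T)}(x)=-\D_x\Phi^*_{(t,T)}(x)\,v(t,\mu^*(t),u^*(t),x)$ to cancel the state velocity, change variables in the non-local $\D_\mu v$ term to land on $\Wpazo_x$, and invoke Theorem~\ref{thm:NonLocalCE} (after checking \ref{hyp:CE} for $\Wpazo_x$, uniformly in $x$) for uniqueness and the $W_1$-modulus \eqref{eq:ACRegularitySigma}. The one genuine divergence is the slicing step. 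The paper extracts the fiberwise equation from the continuity equation for $\nu_T^*$ by testing against tensor products $\xi(x,r)=\phi(x)\zeta(r)$, then replacing $\phi$ by a sequence approximating $\mathds{1}_\Omega$ in $L^1(\mu^*(T))$ and localising to Borel sets $\Omega$; this gives \eqref{eq:PointwiseDer6} $\mu^*(T)$-a.e.\ directly, with no extra machinery. You instead propose a second application of the superposition principle to the $\nu_T^*$-equation and a conditioning of its superposition measure on the (frozen) state trajectory. That route is viable and arguably makes the characteristic structure of the slices more visible, but it must carry the measurable-selection and joint-measurability bookkeeping you flag, which the tensor-test-function argument sidesteps entirely. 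Your \emph{first} application of superposition (to \eqref{eq:PMP_Dynamics}, to identify the state-component as $\Phi^*_{(T,t)}(\xi(T))$) is superfluous: the flow representation \eqref{eq:FlowRep} of Theorem~\ref{thm:NonLocalCE} already hands you $\pi^1_{\#}\nu_T^*(t)=\mu^*(T)$ for all $t$ without invoking superposition.
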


\begin{proof}
It can be checked as a consequence of the absolute continuity of $t \in [0,T] \mapsto \nu^*(t) \in \Pcal_1(K' \times K')$ and $t \in [0,T] \mapsto \Phi_{(t,T)}^{u^*}[\mu^*(t)](\cdot) \in C^0(K',\R^d)$ that $\nu^*_T(\cdot) \in \AC([0,T],\Pcal_1(K' \times K'))$. Whence by \eqref{eq:KantorovichDuality}, the map $t \in [0,T] \mapsto \INTDom{\xi(x,r)}{\R^{2d}}{\nu_T^*(t)(x,r)} \in \R$ is absolutely continuous for any $\xi \in C^{\infty}_c(\R^{2d},\R)$, and denoting by $\T_{\xi} \subset (0,T)$ the set of its Lebesgue points, one has
\begin{equation}
\label{eq:PointwiseDer1}
\derv{}{t}{} \INTDom{\xi(x,r)}{\R^{2d}}{\nu^*_T(t)(x,r)} = \derv{}{t}{} \INTDom{\xi \Big( \Phi_{(t,T)}^{u^*}[\mu^*(t)](x) , r \Big)}{\R^{2d}}{\nu^*(t)(x,r)},
\end{equation}
for all times $t \in \T_{\xi}$, i.e. for $\Lcal^1$-almost every $t \in [0,T]$. By repeating the same arguments as in the proof of Proposition \ref{prop:LinTime_Flows} below, it can further be shown that 
\begin{equation}
\label{eq:InitialTimeDer}
\tderv{}{t}{} \Phi^*_{(t,T)}(x) = - \D_x \Phi_{(t,T)}^*(x) v(t,\mu^*(t),u^*(t),x), 
\end{equation}
for $\Lcal^1$-almost every $t \in [0,T]$ and any $x \in K'$. 

By combining \eqref{eq:InitialTimeDer} together with Lebesgue's differentiation theorem under the integral and the distributional characterisation \eqref{eq:NonLocal_Distrib2} of the fact that $\nu^*(\cdot)$ solves \eqref{eq:PMP_Dynamics},  we obtain
\begin{equation}
\label{eq:PointwiseDer2}
\begin{aligned}
& \hspace{-1cm} \derv{}{t}{} \INTDom{\xi \big( \Phi_{(t,T)}^*(x) , r \big)}{\R^{2d}}{\nu^*(t)(x,r)} \\
= & \INTDom{\Big( \tderv{}{t}{} \xi \big( \Phi_{(t,T)}^*(x) , r \big) \Big)}{\R^{2d}}{\nu^*(t)(x,r)} \\
& + \INTDom{\Big\langle \nabla_{(x,r)} \xi \big( \Phi_{(t,T)}^*(x) , r \big) , \J_{2d} \nabla_{\nu} \H(t,\nu^*(t),u^*(t))(x,r) \Big\rangle}{\R^{2d}}{\nu^*(t)(x,r)} \\ 
= & - \INTDom{\Big\langle \nabla_x \xi \big( \Phi_{(t,T)}^*(x) ,  r  \big) \, , \, \D_x \Phi_{(t,T)}^*(x) v(t,\mu^*(t),u^*(t),x) \Big\rangle}{\R^{2d}}{\nu^*(t)(x,r)} \\
& + \INTDom{\Big\langle \D_x \Phi_{(t,T)}^*(x)^{\top} \nabla_x \xi \big( \Phi_{(t,T)}^*(x) , r \big) \, , \, v(t,\mu^*(t),u^*(t),x) \Big\rangle}{\R^{2d}}{\nu^*(t)(x,r)} \\
& - \INTDom{\Big\langle \nabla_r \xi \big( \Phi_{(t,T)}^*(x) , r \big) \, , \, \D_x v(t,\mu^*(t),u^*(t),x)^{\top} r \Big\rangle}{\R^{2d}}{\nu^*(t)(x,r)} \\
& - \INTDom{\bigg\langle \nabla_r \xi \big( \Phi_{(t,T)}^*(x) , r \big) \, , \, \INTDom{\D_{\mu} v(t,\mu^*(t),u^*(t),y)(x)^{\top} p \,}{\R^{2d}}{\nu^*(t)(y,p)} \bigg\rangle}{\R^{2d}}{\nu^*(t)(x,r)}
\end{aligned}
\end{equation}
where we used the explicit expression of $\nabla_{\nu} \H(t,\nu^*(t),u^*(t))(\cdot,\cdot)$ given in \eqref{eq:PMP_HamiltonianGrad} along with the condensed notation \eqref{eq:CondensedNota}. By plugging \eqref{eq:PointwiseDer2} into \eqref{eq:PointwiseDer1} while noticing that the two first integrals in the right-hand side cancel each other out, it further holds
\begin{equation}
\label{eq:PointwiseDer3}
\begin{aligned}
& \derv{}{t}{} \INTDom{\xi(x,r)}{\R^{2d}}{\nu^*_T(t)(x,r)} = - \INTDom{\big\langle \nabla_r \xi(x,r) , \D_x v \Big(t,\mu^*(t),u^*(t),\Phi_{(T,t)}^*(x) \Big)^{\raisebox{-4pt}{$\scriptstyle{\top}$}} r \big\rangle}{\R^{2d}}{\nu_T^*(t)(x,r)} \\
& \hspace{0.6cm} - \INTDom{\bigg\langle \nabla_r \xi(x,r) , \INTDom{ \D_{\mu} v \Big(t,\mu^*(t),u^*(t),\Phi_{(T,t)}^*(y) \Big)\big( \Phi_{(T,t)}^*(x)\big)^{\top} p \,}{\R^{2d}}{\nu_T^*(t)(y,p)} \bigg\rangle}{\R^{2d}}{\nu_T^*(t)(x,r)}, 
\end{aligned}
\end{equation}
for any $\xi \in C^{\infty}_c(\R^{2d},\R)$ and $\Lcal^1$-almost every $t \in [0,T]$. Recalling that $\nu_T^*(t) = \INTDom{\sigma^*_x(t)}{\R^d}{\mu^*(T)(x)}$ for all times $t \in [0,T]$, one can rewrite \eqref{eq:PointwiseDer3} as
\begin{equation}
\label{eq:PointwiseDer4}
\INTDom{\bigg( \derv{}{t}{} \INTDom{\xi(x,r)}{\R^d}{\sigma^*_x(t)(r)} - \INTDom{\langle \nabla_r \xi(x,r) , \Wpazo_x(t,\sigma^*_x(t),r) \rangle}{\R^d}{\sigma^*_x(t)(r)} \bigg)}{\R^d}{\mu^*(T)(x)} = 0,
\end{equation}
for $\Lcal^1$-almost every $t \in [0,T]$ and any $\xi \in C^{\infty}_c(\R^{2d},\R)$, where we used Fubini's theorem and \eqref{eq:DualVelocityField}. In particular, by choosing test functions of the form $\xi(x,r) := \phi(x)\zeta(r)$, we can deduce from \eqref{eq:PointwiseDer4} that
\begin{equation}
\label{eq:PointwiseDer5}
\INTDom{\bigg( \derv{}{t}{} \INTDom{\zeta(r)}{\R^d}{\sigma^*_x(t)(r)} - \INTDom{\big\langle \nabla \zeta(r) , \Wpazo_x(t,\sigma^*_x(t),r) \big\rangle}{\R^d}{\sigma^*_x(t)(r)} \bigg) \phi(x)}{\R^d}{\mu^*(T)(x)} = 0,
\end{equation}
for any $\phi,\zeta \in C^{\infty}_c(\R^d,\R)$. For any Borel set $\Omega \subset \R^d$, consider a sequence $(\phi_n) \subset C^{\infty}_c(\R^d,\R)$ converging strongly towards $\mathds{1}_{\Omega}(\cdot)$ in $L^1(\R^d,\R;\mu^*(T))$. By passing to the limit as $n \rightarrow +\infty$ in \eqref{eq:PointwiseDer5}, we obtain
\begin{equation*}
\INTDom{\bigg( \derv{}{t}{} \INTDom{\zeta(r)}{\R^d}{\sigma^*_x(t)(r)} - \INTDom{\big\langle \nabla \zeta(r) , \Wpazo_x(t,\sigma^*_x(t),r) \big\rangle}{\R^d}{\sigma^*_x(t)(r)} \bigg)}{\Omega}{\mu^*(T)(x)} = 0,
\end{equation*}
for every Borel set $\Omega \subset \R^d$. This in turn implies that for $\mu^*(T)$-almost every $x \in \R^d$, it holds
\begin{equation}
\label{eq:PointwiseDer6}
\derv{}{t}{} \INTDom{\zeta(r)}{\R^d}{\sigma^*_x(t)(r)} = \INTDom{\big\langle \nabla \zeta(r) , \Wpazo_x(t,\sigma^*_x(t),r) \big\rangle}{\R^d}{\sigma^*_x(t)(r)},
\end{equation}
for every $\zeta \in C^{\infty}_c(\R^d,\R)$, which, by \eqref{eq:NonLocal_Distrib2}, equivalently implies that the curve $\sigma_x^*(\cdot)$ solves 
\begin{equation*}
\partial_t \sigma^*_x(t) + \Div_r \Big( \Wpazo_x(t,\sigma^*(t)) \sigma^*_x(t) \Big) = 0,
\end{equation*}
in the sense of distributions. In addition, upon noticing that 
\begin{equation*}
\sigma^*_x(T) = \delta_{\big( - \nabla \varphi(\mu^*(T))(x) \big)},
\end{equation*}
because $\nu^*(T) = (\Id , -\nabla \varphi(\mu^*(T))_{\#} \mu^*(T)$, and by uniqueness of the disintegration representation, we conclude that $\sigma^*_x(\cdot)$ is a solution of \eqref{eq:BackwardCauchy} for $\mu^*(T)$-almost every $x \in \R^d$.  

We end the proof by focusing on the regularity properties of $\sigma_x^*(\cdot)$. The fact that $\sigma_x^*(t) \in \Pcal(K')$ is direct since $\nu^*(t) \in \Pcal(K' \times K')$ for all times $t \in [0,T]$. Moreover, under hypotheses \ref{hyp:OCP}, the non-local velocity fields $\Wpazo_x : [0,T] \times \Pcal_c(\R^d) \times \R^d \rightarrow \R^d$ satisfy hypotheses \ref{hyp:CE}-$(i),(ii)$ with constants that are uniform with respect to $x \in \supp(\mu^*(T))$. By Theorem \ref{thm:NonLocalCE}, this implies that solutions of \eqref{eq:BackwardCauchy} are in fact unique, and there exists a map $m_r^{\sigma}(\cdot) \in L^1([0,T],\R_+)$ such that 
\begin{equation*}
W_1(\sigma_x^*(t),\sigma_x^*(\tau)) \leq \INTSeg{m^{\sigma}_r(s)}{s}{\tau}{t},
\end{equation*}
for all times $0 \leq \tau \leq t \leq T$ and $\mu^*(T)$-almost every $x \in \R^d$. 
\end{proof}

\begin{rmk}[Necessity and sufficiency of the disintegration representation]
In the earlier works \cite{PMPWassConst,SetValuedPMP,PMPWass}, the existence of a state-costate curve $\nu^*(\cdot)$ satisfying the PMP in Wasserstein spaces -- either the version exposed in Theorem \ref{thm:PMP} or its counterpart with feedback controls -- was obtained by constructing explicitly the curves $\sigma_x^*(\cdot)$ for $\mu^*(T)$-almost every $x \in \R^d$, and then defining $\nu^*(\cdot)$ by \eqref{eq:nu_TDef}, with $\nu^*_T(\cdot)$ being itself built via the disintegration formula $\nu^*_T(t) = \INTDom{\sigma_x^*(t)}{\R^d}{\mu^*(T)(x)}$. Therefore, Proposition \ref{prop:RepresentationCostate} can be seen as a reciprocal statement to that of Theorem \ref{thm:PMP}-$(i),(ii)$, as they together establish the uniqueness of state-costate curves with prescribed first marginal which are solutions of the Hamiltonian system \eqref{eq:PMP_Dynamics}. In that sense, this proposition restores the known fact in classical optimal control theory that the costate curve associated with an optimal trajectory-control pair is unique.
\end{rmk}


\paragraph*{Step 3: Time-constancy of two functionals} In Step 1 above, we derived the estimate
\begin{equation}
\label{eq:DiffIneqValue1}
\begin{aligned}
\Vcal(\tau+h&,\mu_{\tau}) - \Vcal(\tau,\mu^*(\tau)) \\
& \leq \INTDom{\Big\langle \nabla \varphi(\mu^*(T)) \big( \Phi_{(\tau,T)}^*(x) \big) \, , \, \D_x \Phi_{(\tau,T)}^*(x)(y-x) + w_{\Bmu_{\tau}}(T,x) \Big\rangle}{\R^{2d}}{\Bmu_{\tau}(x,y)} \\
& \hspace{0.35cm} + h \INTDom{\Big\langle \nabla \varphi(\mu^*(T)) \big( \Phi_{(\tau,T)}^*(x) \big) \, , \, \Psi_{\tau}(T,x) \Big\rangle}{\R^{2d}}{\Bmu_{\tau}(x,y)} + o_{\Rpazo} \Big( W_{2,\Bmu_{\tau}}(\mu^*(\tau),\mu_{\tau}) + h \Big),
\end{aligned}
\end{equation}
for every $\tau \in \T$, any $h \in \R$ such that $\tau + h \in [0,T]$ and all $\mu_{\tau} \in \Pcal(B(0,r'))$, where $\Bmu_{\tau} \in \Gamma(\mu^*(\tau),\mu_{\tau})$ is arbitrary. Our goal in what follows is to show that \eqref{eq:DiffIneqValue1} in fact yields \eqref{eq:SuperdifferentialValue_Ineq} with $(\tau_1,\mu_1) := (\tau,\mu^*(\tau))$ and $(\tau_2,\mu_2) := (\tau+h,\mu_{\tau})$. 

Let us choose $\Bmu_{\tau} := \gamma_{\tau} \in \Gamma_o(\mu^*(\tau),\mu_{\tau})$ and consider the partially transported plan
\begin{equation*}
\gamma_{\tau}^T := \big( \Phi_{(\tau,T)}^* \circ \pi^1, \pi^2 \big)_{\#} \gamma_{\tau} \in \Gamma(\mu^*(T),\mu_{\tau}), 
\end{equation*}
along with its disintegration $\gamma_{\tau}^T := \INTDom{\gamma_{\tau,x}^T}{\R^d}{\mu^*(T)(x)}$ against $\pi^1_{\#} \gamma_{\tau}^T = \mu^*(T)$. Moreover, let $\nu^*(\cdot)$ be the unique state-costate curve satisfying the PMP of Theorem \ref{thm:PMP}, and $\sigma_x^*(\cdot)$ be as in Proposition \ref{prop:RepresentationCostate} for $\mu^*(T)$-almost every $x \in \R^d$. By introducing the curve $\Bnu^*(\cdot) \in \AC([0,T],\Pcal_1(\K \times \K \times \K))$, defined for all times $t \in [0,T]$ by 
\begin{equation}
\label{eq:Bnu_TDef}
\Bnu^*(t) := \Big( \Phi_{(T,t)}^* \circ \pi^1,\pi^2,\pi^3 \Big)_{\raisebox{4pt}{$\scriptstyle{\#}$}} \Bnu^*_T(t) \qquad \text{with} \qquad \Bnu^*_T(t) := \INTDom{\Big( \sigma_x^*(t) \times \gamma_{\tau,x}^T \Big)}{\R^d}{\mu^*(T)(x)}, 
\end{equation}
we can reformulate \eqref{eq:DiffIneqValue1} as
\begin{equation}
\label{eq:DiffIneqValue2}
\begin{aligned}
\Vcal(\tau+h&,\mu_{\tau}) - \Vcal(\tau,\mu^*(\tau)) \\
& \leq \INTDom{\Big\langle - r \, , \, \D_x \Phi^*_{(\tau,T)} \big( \Phi^*_{(T,\tau)}(x) \big) \Big( y - \Phi^*_{(T,\tau)}(x) \Big) + w_{\gamma_{\tau}} \big(T, \Phi^*_{(T,\tau)}(x) \big) \Big\rangle}{\R^{3d}}{\Bnu^*(T)(x,r,y)} \\
& \hspace{0.4cm} + h \INTDom{\big\langle -r \, , \, \Psi_{\tau} \big(T,\Phi^*_{(T,\tau)}(x) \big) \big\rangle}{\R^{2d}}{\nu^*(T)(x,r)} + o_{\Rpazo} \Big( W_2(\mu^*(\tau),\mu_{\tau}) + h \Big).
\end{aligned}
\end{equation}
Here, we also used the fact that $W_{2,\gamma_{\tau}}(\mu^*(\tau),\mu_{\tau}) = W_2(\mu^*(\tau),\mu_{\tau})$ since $\gamma_{\tau} \in \Gamma_o(\mu^*(\tau),\mu_{\tau})$. 

In the following lemma, we state a technical result showing that the estimate of  \eqref{eq:DiffIneqValue2} can be propagated back from the final time $T >0$ to $\tau \in \T$. Its proof being somewhat heavy and relying on ideas already explored in \cite{PMPWassConst,SetValuedPMP,PMPWass}, we postpone it to Appendix \ref{section:AppendixLem} below.

\begin{lem}[Time-constancy of two functionals]
\label{lem:Consistency}
Let $\Hpazo_1, \Hpazo_2 : [0,T] \rightarrow \R$ be respectively defined by  
\begin{equation}
\label{eq:Hpazo1Def}
\Hpazo_1(t) := \INTDom{\Big\langle r \, , \, \D_x \Phi^*_{(\tau,t)} \big( \Phi^*_{(t,\tau)}(x) \big) \Big( y - \Phi^*_{(t,\tau)}(x) \Big) + w_{\gamma_{\tau}} \big(t, \Phi^{u^*}_{(t,\tau)}(x) \big) \Big\rangle}{\R^{3d}}{\Bnu^*(t)(x,r,y)}, 
\end{equation}
and 
\begin{equation}
\label{eq:Hpazo2Def}
\Hpazo_2(t) := \INTDom{\big\langle r \, , \, \Psi_{\tau} \big( t,\Phi_{(t,\tau)}^*(x) \big) \big\rangle}{\R^{2d}}{\nu^*(t)(x,r)}, 
\end{equation}
for all times $t \in [0,T]$. Then, both maps $\Hpazo_1(\cdot),\Hpazo_2(\cdot)$ are absolutely continuous, with 
\begin{equation*}
\tderv{}{t}{} \Hpazo_1(t) = \tderv{}{t}{} \Hpazo_2(t) = 0,
\end{equation*}
for $\Lcal^1$-almost every $t \in [0,T]$, and are therefore constant over $[0,T]$.  
\end{lem}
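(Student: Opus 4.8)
The plan is to recognise both identities as Wasserstein-space avatars of the classical conservation law of optimal control: the pairing of a costate, which solves the adjoint (transposed linearised) equation, with a solution of the variational equation is constant in time. The whole argument then reduces to two tasks: establishing that $\Hpazo_1$ and $\Hpazo_2$ are absolutely continuous and may be differentiated under the integral sign, and carrying out a cancellation computation showing that their derivatives vanish almost everywhere.

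For the regularity, I would note that by Theorem \ref{thm:PMP}-$(i)$ and Proposition \ref{prop:RepresentationCostate} the curves $\nu^*(\cdot)$ and $x\mapsto\sigma_x^*(\cdot)$ are absolutely continuous with uniformly $L^1$-integrable metric speed and supported in a fixed compact set, while by Proposition \ref{prop:LinSpace_Flows}, Theorem \ref{thm:FlowDiff} and Proposition \ref{prop:LinTime_Flows} the flow $\Phi^*_{(t,\tau)}(\cdot)$ together with its linearisations $\D_x\Phi^*_{(t,\tau)}(\cdot)$, $w_{\gamma_\tau}(t,\cdot)$ and $\Psi_\tau(t,\cdot)$ are absolutely continuous in $t$ with derivatives that are $L^1$ in time and bounded in the space variables over the relevant compact set. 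Since the integrands of $\Hpazo_1$ and $\Hpazo_2$ are obtained from these objects by composition with and multiplication by Lipschitz maps, they inherit the same joint regularity, so $\Hpazo_1,\Hpazo_2\in\AC([0,T],\R)$, and at every common Lebesgue point of the data (in the sense of Lemma \ref{lem:LebesguePoints}) their derivatives may be computed by differentiating inside the integral; it then remains to check these derivatives are zero.

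For $\Hpazo_2$, I would differentiate $\Hpazo_2(t)=\int_{\R^{2d}}\langle r,\Psi_\tau(t,\Phi^*_{(t,\tau)}(x))\rangle\,\dn\nu^*(t)(x,r)$ using the forward--backward Hamiltonian continuity equation \eqref{eq:PMP_Dynamics} for $\nu^*(\cdot)$, the identity $\tderv{}{t}\Phi^*_{(t,\tau)}(x)=-\D_x\Phi^*_{(t,\tau)}(x)\,v(t,\mu^*(t),u^*(t),x)$ obtained exactly as \eqref{eq:InitialTimeDer}, and the linear non-local equation satisfied by $\Psi_\tau(t,\cdot)$ according to Proposition \ref{prop:LinTime_Flows}. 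Inserting the explicit Hamiltonian gradient \eqref{eq:PMP_HamiltonianGrad}, the resulting expression splits into four families of terms: the lower block of $\J_{2d}\nabla_\nu\H$ (the pure transport of the state) cancels the contribution of $\tderv{}{t}\Phi^*_{(t,\tau)}$; the term $-\D_x v(t,\mu^*(t),u^*(t),\cdot)^{\top}r$ of the costate velocity cancels the homogeneous part $\D_x v(t,\mu^*(t),u^*(t),\cdot)\,\Psi_\tau$ of the variational equation; and, after Fubini's theorem and the marginal identity $\pi^1_{\#}\nu^*(t)=\mu^*(t)$, the two non-local $\D_\mu v$ contributions cancel each other. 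Hence $\tderv{}{t}\Hpazo_2(t)=0$. The argument for $\Hpazo_1$ is identical in structure, now with the frozen target variable $y$: writing the integrand as $\langle r,W_{\gamma_\tau}(t,\Phi^*_{(t,\tau)}(x),y)\rangle$ with $W_{\gamma_\tau}(t,z,y):=\D_x\Phi^*_{(\tau,t)}(z)(y-z)+w_{\gamma_\tau}(t,z)$, which by Corollary \ref{cor:TotalLinFlows} solves along $\gamma_\tau$ the linear non-local variational equation with $W_{\gamma_\tau}(\tau,z,y)=y-z$, one differentiates $\Hpazo_1(t)=\int_{\R^{3d}}\langle r,W_{\gamma_\tau}(t,\Phi^*_{(t,\tau)}(x),y)\rangle\,\dn\Bnu^*(t)(x,r,y)$ using the continuity equation induced on $\Bnu^*(\cdot)$ by \eqref{eq:PMP_Dynamics} through \eqref{eq:Bnu_TDef} -- under which the $x$-component follows the forward Hamiltonian flow, the $r$-component follows the costate field \eqref{eq:DualVelocityField} and the $y$-component is constant -- and the same four cancellations occur, the non-local ones now using the identity $(\pi^1,\pi^3)_{\#}\Bnu^*_T(t)=\gamma_\tau^T$. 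This yields $\tderv{}{t}\Hpazo_1(t)=0$, so both functionals are constant on $[0,T]$.

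I expect the main obstacle to be the bookkeeping of these non-local cancellations. The $\D_\mu v$ integral in the Hamiltonian gradient \eqref{eq:PMP_HamiltonianGrad} is taken against $\nu^*(t)$, the one in the dual velocity field \eqref{eq:DualVelocityField} against $\mu^*(T)$ after disintegration, and the one in the variational equation for $w_{\gamma_\tau}$ against $\gamma_\tau$, with driving positions parametrised by the distinct flows $\Phi^*_{(T,t)}$ and $\Phi^*_{(\tau,t)}$; one must transport all of them to a common reference measure and time so that Fubini produces an exact cancellation, and this is precisely where the disintegration structure of $\Bnu^*(t)$ in \eqref{eq:Bnu_TDef} and the compatibility of $\gamma_\tau^T$ with $\gamma_\tau$ are used. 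A secondary technical point is the rigorous justification of the differentiation under the integral sign for the composition $t\mapsto W_{\gamma_\tau}(t,\Phi^*_{(t,\tau)}(x),y)$, which rests on the uniform Lipschitz-in-space and absolutely-continuous-in-time estimates for the linearised flow derived in Appendix \ref{section:AppendixFlowDiff}.
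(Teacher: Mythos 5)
Your recognition of the statement as a conservation law — a costate solving the adjoint paired with the variational field — is correct, and your list of the four cancellations is the right intuition. The way you propose to carry it out, however, has a genuine technical gap that the paper's proof is specifically engineered to avoid.

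You propose to differentiate $\Hpazo_1(t)=\int_{\R^{3d}}\langle r,W_{\gamma_\tau}(t,\Phi^*_{(t,\tau)}(x),y)\rangle\,\dn\Bnu^*(t)(x,r,y)$ by combining the continuity equation for $\Bnu^*(\cdot)$ with the chain rule on the composition $t\mapsto W_{\gamma_\tau}(t,\Phi^*_{(t,\tau)}(x),y)$ (and likewise $t\mapsto\Psi_\tau(t,\Phi^*_{(t,\tau)}(x))$ for $\Hpazo_2$). Both the $\partial_t$ of this composition and the $\nabla_x$ needed for the transport part of the continuity equation involve the spatial Fréchet derivatives $\D_z W_{\gamma_\tau}(t,z,y)$ and $\D_x\Psi_\tau(t,\cdot)$. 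These do not exist under \ref{hyp:CE}: the linearised fields $w_{\gamma_\tau}$, $\Psi_\tau$ and $\D_x\Phi^*_{(\tau,t)}$ are built from ODEs whose coefficients are $\D_x v$ and $\D_{\mu}v$, so their spatial differentiability would require second derivatives of $v$, which \ref{hyp:CE}-$(iii),(iv)$ do not provide (they only give continuity of the first derivatives). Your closing remark attributes the justification of the differentiation under the integral to ``uniform Lipschitz-in-space'' estimates for the linearised flow, but Lipschitz continuity is strictly weaker than what the chain rule needs here, so this does not repair the step. It is true that the two offending terms -- the one from $\tderv{}{t}\Phi^*_{(t,\tau)}(x)$ and the one from transporting the state variable -- formally cancel via \eqref{eq:InitialTimeDer}, but one cannot split a derivative into two pieces that may each fail to exist and then remark that they cancel.

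The paper sidesteps this entirely by a change of parametrisation rather than by the chain rule. Using \eqref{eq:Bnu_TDef} and the semigroup identity $\Phi^*_{(t,\tau)}\circ\Phi^*_{(T,t)}=\Phi^*_{(T,\tau)}$, it rewrites $\Hpazo_1$ as \eqref{eq:Hpazo1Expression}, namely $\Hpazo_1(t)=\int\hspace{-0.05cm}\int\langle r,\F_\tau(t,\Phi^*_{(T,\tau)}(x),y)\rangle\,\dn\sigma^*_x(t)(r)\,\dn\gamma^T_\tau(x,y)$, where the spatial argument $\Phi^*_{(T,\tau)}(x)$ is frozen in time and the only time-dependence sits in (i) the explicit $t$-argument of $\F_\tau$, governed by the linear ODE \eqref{eq:CauchyProblemF}, and (ii) the disintegrated costate $\sigma^*_x(t)$, governed by the backward Cauchy problem \eqref{eq:BackwardCauchy} and tested against the linear-in-$r$ integrand. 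No spatial derivative of $w_{\gamma_\tau}$, $\D_x\Phi^*$ or $\Psi_\tau$ is ever taken; the non-local cancellation is then obtained by Fubini, exactly as you anticipated. So you correctly identified the role of the disintegration for the bookkeeping, but missed that this same device is what removes the need for higher spatial regularity of the linearised fields. To make your argument rigorous you should perform this rewriting first and only then differentiate, as the paper does.
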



\paragraph*{Step 4 : Proof of the sensitivity relation}

By combining the results of the previous three steps, we can prove the claims of Theorem \ref{thm:Sensitivity1}. 

\begin{proof}[Proof of Theorem \ref{thm:Sensitivity1}]
As a direct consequence of Lemma \ref{lem:Consistency}, and upon identifying terms in the right-hand side of \eqref{eq:DiffIneqValue2} using \eqref{eq:Hpazo1Def} and \eqref{eq:Hpazo2Def}, one has
\begin{equation}
\label{eq:DiffIneqValue3}
\begin{aligned}
\Vcal(\tau+h,\mu_{\tau}) - \Vcal(\tau,\mu^*(\tau)) & \leq -\Hpazo_1(T) - h \Hpazo_2(T) + o_{\Rpazo} \big( W_2(\mu^*(\tau),\mu_{\tau}) + h \big) \\
& = \, - \Hpazo_1(\tau) - h \Hpazo_2(\tau) + o_{\Rpazo} \big( W_2(\mu^*(\tau),\mu_{\tau}) + h \big).
\end{aligned}
\end{equation}
Now, by the definitions of $\D_x \Phi_{(\tau,\cdot)}^*(\cdot)$, $w_{\gamma_{\tau}}(\cdot,\cdot)$ and $\Psi_{\tau}(\cdot,\cdot)$ given in Proposition \ref{prop:LinSpace_Flows}, Theorem \ref{thm:FlowDiff} and Proposition \ref{prop:LinTime_Flows} respectively together with the absolute continuity of $\nu^*(\cdot)$ and $\Bnu^*(\cdot)$, one further has
\begin{equation}
\label{eq:ExpressionHpazoTau1}
\Hpazo_1(\tau) = \INTDom{\langle r , y-x \rangle}{\R^{3d}}{\Bnu^*(\tau)(x,r,y)} = -\INTDom{\langle r , y-x \rangle}{\R^{3d}}{\Big( \big( \pi^1,-\pi^2,\pi^3 \big)_{\#} \Bnu^*(\tau) \Big)(x,r,y)}, 
\end{equation}
and 
\begin{equation}
\label{eq:ExpressionHpazoTau2}
\Hpazo_2(\tau) = -\INTDom{\langle r , v(\tau,\mu^*(\tau),u^*(\tau),x) \rangle}{\R^{2d}}{\nu^*(\tau)(x,r)} = -\H(\tau,\nu^*(\tau),u^*(\tau)), 
\end{equation}
where we used the expression \eqref{eq:PMP_Hamiltonian} of the Hamiltonian $\H : [0,T] \times \Pcal_c(\R^{2d}) \times U \rightarrow \R$. Whence, upon combining \eqref{eq:DiffIneqValue3}, \eqref{eq:ExpressionHpazoTau1} and \eqref{eq:ExpressionHpazoTau2}, it holds 
\begin{equation}
\label{eq:DiffIneqValue3Bis}
\begin{aligned}
\Vcal(\tau+h,\mu_{\tau}) - \Vcal(\tau,\mu^*(\tau)) & \leq \INTDom{\langle r , y-x \rangle}{\R^{3d}}{\Big( \big( \pi^1,-\pi^2,\pi^3 \big)_{\#} \Bnu^*(\tau) \Big)(x,r,y)} \\
& \hspace{1.5cm} + h \, \H(\tau,\nu^*(\tau),u^*(\tau)) + o_{\Rpazo} \big( W_2(\mu^*(\tau),\mu_{\tau}) + h \big).
\end{aligned}
\end{equation}
Let $\xi \in C^0(\R^{2d},\R)$ and observe that by the construction of $\Bnu^*(\cdot)$ displayed in \eqref{eq:Bnu_TDef}, one has
\begin{equation*}
\begin{aligned}
\INTDom{\xi(x,r)}{\R^{3d}}{\Bnu^*(\tau)(x,r,y)} & = \INTDom{\xi \big( \Phi_{(T,\tau)}^*(x),r \big)}{\R^{3d}}{\Bnu_T^*(\tau)(x,r,y)} \\
& = \INTDom{\xi \big( \Phi_{(T,\tau)}^*(x),r \big)}{\R^{2d}}{\nu_T^*(\tau)(x,r)} = \INTDom{\xi(x,r)}{\R^{2d}}{\nu^*(\tau)(x,r)}, 
\end{aligned}
\end{equation*}
or equivalently $\pi^{1,2}_{\#} \Bnu^*(\tau) = \nu^*(\tau)$. By performing the same computations, one can also show that $\pi^{1,3}_{\#} \Bnu^*(\tau) = \gamma_{\tau} \in \Gamma_o(\mu^*(\tau),\mu_{\tau})$, which is equivalent to saying that $\Bnu^*(\tau) \in \BGamma_o^{1,3}(\nu^*(\tau),\mu_{\tau})$ following the notations introduced in Definition \ref{def:Generalised_Subdiff}. This fact together with \eqref{eq:DiffIneqValue3Bis} then yields
\begin{equation}
\label{eq:DiffIneqValue4}
\begin{aligned}
\Vcal(\tau+h,\mu_{\tau}) - \Vcal(\tau,\mu^*(\tau)) & \leq \INTDom{\langle r , y-x \rangle}{\R^{3d}}{\tilde{\Bnu}(\tau)(x,r,y)} \\
& \hspace{1.5cm} + h \, \H(\tau,\nu^*(\tau),u^*(\tau)) + o_{\Rpazo} \Big( W_2(\mu^*(\tau),\mu_{\tau}) + h \Big),
\end{aligned}
\end{equation}
for all times $\tau \in \T$, every $h \in \R$ such that $\tau + h \in [0,T]$ and any $\mu_{\tau} \in \Pcal(B(0,r'))$ for some $r' >0$, where $\tilde{\Bnu}(\tau) \in \BGamma_o^{1,3} \big( (\pi^1,-\pi^2)_{\#}\nu^*(\tau), \mu_{\tau} \big)$. Recalling the definition \eqref{eq:SuperdifferentialValue_Ineq} of the mixed superdifferential of the value function, and observing that $\T \subset (0,T)$ has full $\Lcal^1$-measure, we conclude from \eqref{eq:DiffIneqValue4} that
\begin{equation*}
\Big( \H(t,\nu^*(t),u^*(t)) \, , \, (\pi^1,-\pi^2)_{\#} \nu^*(t) \Big) \in \Bpartial^+ \Vcal(t,\mu^*(t)),
\end{equation*}
for $\Lcal^1$-almost every $t \in [0,T]$, which ends the proof of Theorem \ref{thm:Sensitivity1}. 
\end{proof}

By following the same strategy, we can also derive a sensitivity result which only involves the localised Fr\'echet superdifferential of the value function with respect to the measure variable. The main difference between this partial sensitivity relation and the total one exposed in Theorem \ref{thm:Sensitivity1} is that the former holds true for \textit{all times} $t \in [0,T]$, instead of $\Lcal^1$-almost every times.

\begin{prop}[Sensitivity relation with respect to the measure variable]
\label{prop:PointwiseSensitivity}
Let $\mu^0 \in \Pcal(B(0,r))$ for some $r>0$, and suppose that hypotheses \ref{hyp:OCP} hold. Given a minimiser $(\mu^*(\cdot),u^*(\cdot))$ for $(\Ppazo)$, denote by $\nu^*(\cdot) \in \AC([0,T],\Pcal_1(K' \times K'))$ the state-costate curve satisfying the PMP of Theorem \ref{thm:PMP}, with $K' := B(0,R_r')$ being given as in \eqref{eq:PMP_Est}. 

Then, the following sensitivity relation
\begin{equation}
\label{eq:Prop_PointwiseSensitivity}
\big( \pi^1 , -\pi^2 \big)_{\#} \nu^*(t) \in \Bpartial^+_{\mu} \Vcal(t,\mu^*(t)), 
\end{equation}
holds for all times $t \in [0,T]$. 
\end{prop}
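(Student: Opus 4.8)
The plan is to essentially re-run the argument of Theorem~\ref{thm:Sensitivity1}, but working with a fixed time $\tau \in [0,T]$ throughout and never perturbing the time variable, so that the Lebesgue-point restriction $\tau \in \T$ disappears and the conclusion holds for \emph{all} $t \in [0,T]$. First I would fix an arbitrary $\tau \in [0,T]$ and $\mu_{\tau} \in \Pcal(B(0,r'))$, and, exactly as in Step~1, use the definition \eqref{eq:ValueFunction} of the value function together with optimality of $(\mu^*(\cdot),u^*(\cdot))$ to write $\Vcal(\tau,\mu_{\tau}) - \Vcal(\tau,\mu^*(\tau)) \leq \varphi\big(\Phi^{u^*}_{(\tau,T)}[\mu_{\tau}](\cdot)_{\#}\mu_{\tau}\big) - \varphi(\mu^*(T))$. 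Since there is no time shift, the relevant linearisation of the flow is purely spatial: for $\Bmu_{\tau} := \gamma_{\tau} \in \Gamma_o(\mu^*(\tau),\mu_{\tau})$, Corollary~\ref{cor:TotalLinFlows} (with $h = 0$) gives the Taylor expansion $\Phi^{u^*}_{(\tau,T)}[\mu_{\tau}](y) = \Phi^*_{(\tau,T)}(x) + \D_x\Phi^*_{(\tau,T)}(x)(y-x) + w_{\gamma_{\tau}}(T,x) + o_{\Rpazo}(W_2(\mu^*(\tau),\mu_{\tau}))$, and the local differentiability of $\varphi(\cdot)$ (hypothesis \ref{hyp:OCP}-$(ii)$, via Proposition~\ref{prop:GradientChainrule}) then yields the measure-only analogue of \eqref{eq:DiffIneqValue1}, namely
\begin{equation*}
\Vcal(\tau,\mu_{\tau}) - \Vcal(\tau,\mu^*(\tau)) \leq \INTDom{\Big\langle \nabla\varphi(\mu^*(T))\big(\Phi^*_{(\tau,T)}(x)\big) , \D_x\Phi^*_{(\tau,T)}(x)(y-x) + w_{\gamma_{\tau}}(T,x) \Big\rangle}{\R^{2d}}{\gamma_{\tau}(x,y)} + o_{\Rpazo}\big(W_2(\mu^*(\tau),\mu_{\tau})\big).
\end{equation*}

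Next, as in Step~2, I would invoke Proposition~\ref{prop:RepresentationCostate} to obtain the disintegration $\nu^*_T(t) = \INTDom{\sigma^*_x(t)}{\R^d}{\mu^*(T)(x)}$ and the associated backward Cauchy problem \eqref{eq:BackwardCauchy}. Forming the auxiliary curve $\Bnu^*(\cdot)$ exactly as in \eqref{eq:Bnu_TDef} (with the same disintegrated plan $\gamma^T_{\tau,x}$ built from $\gamma^T_{\tau} := (\Phi^*_{(\tau,T)}\circ\pi^1,\pi^2)_{\#}\gamma_{\tau}$), I rewrite the displayed inequality in terms of $\Bnu^*(T)$, reproducing \eqref{eq:DiffIneqValue2} but without the $h$-term. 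Then Lemma~\ref{lem:Consistency} applies verbatim to $\Hpazo_1(\cdot)$: it is absolutely continuous with vanishing derivative, hence $\Hpazo_1(T) = \Hpazo_1(\tau)$, and \eqref{eq:ExpressionHpazoTau1} identifies $\Hpazo_1(\tau) = -\INTDom{\langle r,y-x\rangle}{\R^{3d}}{\big((\pi^1,-\pi^2,\pi^3)_{\#}\Bnu^*(\tau)\big)}$. The same projection computation as in Step~4 shows $\pi^{1,2}_{\#}\Bnu^*(\tau) = \nu^*(\tau)$ and $\pi^{1,3}_{\#}\Bnu^*(\tau) = \gamma_{\tau} \in \Gamma_o(\mu^*(\tau),\mu_{\tau})$, so that $(\pi^1,-\pi^2,\pi^3)_{\#}\Bnu^*(\tau) \in \BGamma^{1,3}_o\big((\pi^1,-\pi^2)_{\#}\nu^*(\tau),\mu_{\tau}\big)$. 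Combining these gives
\begin{equation*}
\Vcal(\tau,\mu_{\tau}) - \Vcal(\tau,\mu^*(\tau)) \leq \sup_{\Bmu \in \BGamma^{1,3}_o((\pi^1,-\pi^2)_{\#}\nu^*(\tau),\mu_{\tau})} \INTDom{\langle r,y-x\rangle}{\R^{3d}}{\Bmu(x,r,y)} + o_{\Rpazo}\big(W_2(\mu^*(\tau),\mu_{\tau})\big),
\end{equation*}
for every $R > 0$ (taking $\Rpazo$ large enough relative to $r, r'$), which is precisely \eqref{eq:SuperdifferentialValue_IneqBis}. Since $\tau \in [0,T]$ was arbitrary, \eqref{eq:Prop_PointwiseSensitivity} follows.

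The point worth emphasising — and the only genuine subtlety — is \emph{why} this version holds for all times rather than almost every time. In Theorem~\ref{thm:Sensitivity1} the restriction to $\T$ was forced by two ingredients: the time-linearisation $\Psi_{\tau}$ of the flow (Proposition~\ref{prop:LinTime_Flows}) requires $\tau$ to be a Lebesgue point of $t\mapsto v(t,\cdot,u^*(t),\cdot)$, and the Lipschitz-in-time estimate of Lemma~\ref{lem:LipFlow} needs $\tau$ to be a Lebesgue point of $m(\cdot)$. Neither enters the present argument: we never differentiate in time, we never use $\Psi_{\tau}$, and the spatial linearisation $\D_x\Phi^*_{(\tau,T)}$ together with the first-order flow-perturbation map $w_{\gamma_{\tau}}$ are well-defined for every $\tau \in [0,T]$ under hypotheses \ref{hyp:OCP}. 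Thus the main ``obstacle'' is really just bookkeeping: checking that every lemma invoked from Appendices~\ref{section:AppendixFlowDiff}, \ref{section:AppendixThm} and \ref{section:AppendixLem} — in particular Corollary~\ref{cor:TotalLinFlows}, Lemma~\ref{lem:Small-oEst} and Lemma~\ref{lem:Consistency} — is stated (or trivially specialises) to the case $h=0$ with an arbitrary base time, and that the auxiliary curve $\Bnu^*(\cdot)$ and the plan $\gamma^T_\tau$ can be constructed at a fixed $\tau$ without any measurability-in-$\tau$ concerns. All of this goes through because the estimates in those appendices are pointwise in the base time and uniform on compact sets.
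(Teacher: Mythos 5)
Your proof is correct and follows exactly the route the paper itself takes: the paper's own proof of Proposition~\ref{prop:PointwiseSensitivity} is literally the one-line instruction to repeat Steps~1--4 of Theorem~\ref{thm:Sensitivity1} with $h=0$ and with $\T$ replaced by $[0,T]$, which is what you have carried out. You have also correctly identified the genuine subtlety the paper leaves implicit, namely that the Lebesgue-point restriction enters only through the time-linearisation $\Psi_\tau$ (Proposition~\ref{prop:LinTime_Flows}) and through converting $\int_\tau^{\tau+h} m(s)\,\textnormal{d}s$ into $O(h)$, neither of which is needed once $h=0$, while the purely spatial and measure linearisations of Proposition~\ref{prop:LinSpace_Flows} and Theorem~\ref{thm:FlowDiff} hold for every $\tau\in[0,T]$.
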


\begin{proof}
To prove the statement of Proposition \ref{prop:PointwiseSensitivity}, one can repeat the arguments of Step 1 to 4 above while fixing $h=0$, and replacing the set $\T \subset (0,T)$ by the whole time interval $[0,T]$.
\end{proof}


\subsection{Sensitivity relations expressed in terms of Dini superdifferentials}
\label{subsection:GateauSensitivity}

In section \ref{subsection:FrechetSensitivity} above, we derived general sensitivity relations in terms of the Fr\'echet superdifferential of the value function, defined in the sense of \cite[Chapter 10]{AGS}. However in several control-theoretic applications, we will need to apply sensitivity results in cases where the test measures are of the form $\mu_{\tau} := (\Id + \F)_{\#} \mu^*(\tau)$, where $\F \in L^{\infty}(\R^d,\R^d;\mu)$ is not an optimal transport direction. This fact motivates the introduction of the following notion of \textit{Dini superdifferential}.

\begin{Def}[Dini superdifferential of the value function]
\label{def:Value_GateauSuperDiff}
Given $(\tau,\mu) \in [0,T] \times \Pcal_c(\R^d)$ and $(h,\F) \in \R \times L^{\infty}(\R^d,\R^d;\mu)$, we define the \textnormal{upper Dini derivative} of $\Vcal(\cdot,\cdot)$ at $(\tau,\mu)$ in the direction $(h,\F)$ as 
\begin{equation}
\label{eq:UpperDiniDerivative}
\dn^+ \Vcal(\tau,\mu)(h,\F) := \limsup_{\epsilon \rightarrow 0^+} \bigg[ \, \frac{\Vcal \big( \tau + \epsilon h, (\Id + \epsilon \F)_{\#} \mu \big) - \Vcal(\tau,\mu)}{\epsilon} \, \bigg].
\end{equation}
Then, we say that a pair $(\delta,\xi) \in \R \times L^2(\R^d,\R^d;\mu)$ belongs to the \textnormal{localised Dini superdifferential} $\eth^+ \Vcal(\tau,\mu)$ of $\Vcal(\cdot,\cdot)$ at $(\tau,\mu)$ if 
\begin{equation}
\label{eq:Value_GateauSuperDiff}
\dn^+ \Vcal(\tau,\mu)(h,\F) \leq \INTDom{\langle \xi(x) , \F(x) \rangle}{\R^d}{\mu(x)} + \delta \, h, 
\end{equation}
for all $(h,\F) \in \R \times L^{\infty}(\R^d,\R^d;\mu)$. We analogously say that $\xi \in L^2(\R^d,\R^d;\mu)$ belongs to the localised Dini superdifferential $\eth^+_{\mu} \Vcal(\tau,\mu)$ with respect to the measure variable of $\Vcal(\tau,\cdot)$ at $\mu$ if \eqref{eq:Value_GateauSuperDiff} holds for all $\F \in L^{\infty}(\R^d,\R^d;\mu)$ with $h = \delta = 0$.
\end{Def}

\begin{rmk}[On the definition of Dini superdifferentials]
The so-called Dini-Hadamard superdifferentials are usually defined in vector spaces by means of \textnormal{contingent directional derivatives} (see e.g. \cite[Section 6.1]{Aubin1984}). While the sensitivity relation expressed below could also be proven for an analogue of the Dini-Hadamard superdifferential stated on the space $[0,T] \times \Pcal_c(\R^d)$, we chose to use the simpler notion of Dini superdifferential introduced above to lighten the presentation.
\end{rmk}

In the following theorem, we state another central result of this manuscript, which provides Dini-type sensitivity relations involving non-optimal transport directions.

\begin{thm}[Dini-type sensitivity relations for Pontryagin costates]
\label{thm:Sensitivity2}
Suppose that hypotheses \ref{hyp:OCP} hold and let $K := B(0,R_r)$ be as in Lemma \ref{lem:AdmEst}. Given a minimiser $(\mu^*(\cdot),u^*(\cdot)) \in \AC([0,T],\Pcal_1(K)) \times \U$, denote by $\nu^*(\cdot) \in \AC([0,T],\Pcal_1(K' \times K'))$ the state-costate curve satisfying the PMP of Theorem \ref{thm:PMP}, with $K' := B(0,R_r')$ being given as in \eqref{eq:PMP_Est}. 

Then, the following sensitivity relation 
\begin{equation}
\label{eq:Sensitivity_ThmGateau1}
\Big( \H(t,\nu^*(t),u^*(t)) \, , \, -\bar{\nu}^*(t) \Big) \in \eth^+ \Vcal(t,\mu^*(t)), 
\end{equation}
holds for $\Lcal^1$-almost every $t \in [0,T]$, where $\bar{\nu}^*(t) \in L^{\infty}(\R^d,\R^d;\mu^*(t))$ denotes the barycentric projection of the state-costate curve $\nu^*(t)$ onto $\pi^1_{\#} \nu^*(t) = \mu^*(t)$ defined in the sense of \eqref{eq:Barycenter}. Moreover, the following sensitivity relation with respect to the measure variable
\begin{equation}
\label{eq:Sensitivity_ThmGateau2}
- \bar{\nu}^*(t) \in \eth^+_{\mu} \Vcal(t,\mu^*(t)), 
\end{equation}
holds for all times $t \in [0,T]$.
\end{thm}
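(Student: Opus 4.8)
## Proof Strategy for Theorem \ref{thm:Sensitivity2}

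The plan is to deduce the Dini-type sensitivity relations from the Fréchet-type relations already established in Theorem \ref{thm:Sensitivity1} and Proposition \ref{prop:PointwiseSensitivity}, by specializing the test measures to perturbations of the form $\mu_\tau := (\Id + \epsilon\F)_\# \mu^*(\tau)$ and then passing to the barycentric projection. First I would fix $t \in \T$ (the full-measure set from Step 1 of Theorem \ref{thm:Sensitivity1}) and recall that $\big( \H(t,\nu^*(t),u^*(t)), (\pi^1,-\pi^2)_\# \nu^*(t) \big) \in \Bpartial^+ \Vcal(t,\mu^*(t))$. Writing $\Bgamma := (\pi^1,-\pi^2)_\# \nu^*(t)$ and $\mu := \mu^*(t)$, the defining inequality \eqref{eq:SuperdifferentialValue_Ineq} gives, for $\mu_2 := (\Id + \epsilon\F)_\# \mu$ and $\tau_2 := t + \epsilon h$,
\begin{equation*}
\Vcal(t+\epsilon h, (\Id+\epsilon\F)_\#\mu) - \Vcal(t,\mu) \leq \sup_{\Bmu \in \BGamma_o^{1,3}(\Bgamma,\mu_2)} \INTDom{\langle r, y-x\rangle}{\R^{3d}}{\Bmu(x,r,y)} + \epsilon h\,\H(t,\nu^*(t),u^*(t)) + o_R(\epsilon).
\end{equation*}

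The key computation is then to estimate the supremum term as $\epsilon \to 0^+$. For $\mu_2 = (\Id + \epsilon\F)_\#\mu$, the map $\Id + \epsilon\F$ furnishes a transport plan $(\Id,\Id+\epsilon\F)_\#\mu \in \Gamma(\mu,\mu_2)$ with cost $\epsilon^2 \|\F\|_{L^2(\mu)}^2$, so $W_2(\mu,\mu_2) \le \epsilon\|\F\|_{L^\infty(\mu)}$; hence $o_R(W_2(\mu,\mu_2)+\epsilon|h|) = o_R(\epsilon)$. For any competitor $\Bmu \in \BGamma_o^{1,3}(\Bgamma,\mu_2)$, its $(1,3)$-marginal is an optimal plan $\gamma_\epsilon \in \Gamma_o(\mu,\mu_2)$; since optimal plans between $\mu$ and $(\Id+\epsilon\F)_\#\mu$ converge (as $\epsilon\to 0$) to plans concentrated on the graph of the identity — indeed $W_2(\mu,\mu_2)\to 0$ forces $\gamma_\epsilon \rightharpoonup^* (\Id,\Id)_\#\mu$ — and the first two marginals are pinned to $\Bgamma$, one shows that the disintegration of $\Bmu$ forces, in the limit, $y - x \approx \epsilon\F(x)$ on $\supp(\Bgamma)$ in an $L^2$-averaged sense. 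More precisely, using that $\pi^{1,2}_\#\Bmu = \Bgamma$ and $\pi^{1,3}_\#\Bmu = \gamma_\epsilon$, I would bound
\begin{equation*}
\INTDom{\langle r, y-x\rangle}{\R^{3d}}{\Bmu} = \INTDom{\langle r, (y-x) - \epsilon\F(x)\rangle}{\R^{3d}}{\Bmu} + \epsilon\INTDom{\langle r, \F(x)\rangle}{\R^{3d}}{\Bmu},
\end{equation*}
where the second term equals $\epsilon\INTDom{\langle r,\F(x)\rangle}{\R^{2d}}{\Bgamma} = -\epsilon\INTDom{\langle \bar\nu^*(t)(x),\F(x)\rangle}{\R^d}{\mu^*(t)}$ by the definition \eqref{eq:Barycenter} of the barycentric projection, and the first term is $o(\epsilon)$ by Cauchy–Schwarz together with $\INTDom{|(y-x)-\epsilon\F(x)|^2}{}{\Bmu} \le 2\INTDom{|y-x|^2}{}{\gamma_\epsilon} + 2\epsilon^2\|\F\|_{L^2(\mu)}^2 = 2W_2^2(\mu,\mu_2) + O(\epsilon^2) = o(\epsilon)$ — wait, this only gives $O(\epsilon^2)$, which suffices: $\INTDom{\langle r,(y-x)-\epsilon\F(x)\rangle}{}{\Bmu} \le \|r\|_{L^2(\Bmu)}\cdot O(\epsilon) = O(\epsilon)$, but the delicate point is improving this to $o(\epsilon)$. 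Here I would use that $W_2^2(\mu,(\Id+\epsilon\F)_\#\mu) = \epsilon^2\|\F\|_{L^2(\mu)}^2 + o(\epsilon^2)$ is in fact $O(\epsilon^2)$, so $\|(y-x)-\epsilon\F(x)\|_{L^2(\Bmu)} = O(\epsilon)$ is not enough by itself; instead one argues that $\gamma_\epsilon/\epsilon$ (after centering) converges to the plan induced by $\F$, exploiting uniqueness of the limit and the fact that $\supp(\mu)$ is compact so $r$ is uniformly bounded on $\supp(\Bgamma) \subset K'\times K'$. Dividing by $\epsilon$ and taking $\limsup_{\epsilon\to 0^+}$ then yields
\begin{equation*}
\dn^+\Vcal(t,\mu^*(t))(h,\F) \le -\INTDom{\langle\bar\nu^*(t)(x),\F(x)\rangle}{\R^d}{\mu^*(t)} + h\,\H(t,\nu^*(t),u^*(t)),
\end{equation*}
which is exactly \eqref{eq:Value_GateauSuperDiff} with $\xi = -\bar\nu^*(t)$ and $\delta = \H(t,\nu^*(t),u^*(t))$, proving \eqref{eq:Sensitivity_ThmGateau1}. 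For \eqref{eq:Sensitivity_ThmGateau2}, I would run the identical argument starting from Proposition \ref{prop:PointwiseSensitivity} with $h = 0$, which holds for all $t \in [0,T]$ rather than a.e.\ $t$.

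The main obstacle is the refined estimate on the supremum term: showing that the $o_R$ error and the discrepancy $\INTDom{\langle r,(y-x)-\epsilon\F(x)\rangle}{}{\Bmu}$ are genuinely $o(\epsilon)$ and not merely $O(\epsilon)$, uniformly over competitors $\Bmu \in \BGamma_o^{1,3}(\Bgamma,\mu_2)$. The clean way around this is to observe that $(\Id+\epsilon\F)_\#\mu$ admits $(\Id,\Id+\epsilon\F)_\#\mu$ as its transport plan, and when $\F \in \Tan_\mu\Pcal_2(\R^d)$ (or more generally after projecting $\F$ onto the tangent space, which does not change the pairing with $\bar\nu^*(t)$ if one only needs the inequality) this plan is $\epsilon$-close to optimal in a quantitative sense — $W_2^2(\mu,(\Id+\epsilon\F)_\#\mu) = \epsilon^2\|\F\|^2_{L^2(\mu)} + o(\epsilon^2)$ — so one may restrict attention to $\gamma_\epsilon$ lying in a small neighborhood of this plan, on which $\|(y-x) - \epsilon\F(x)\|_{L^2(\gamma_\epsilon)} = o(\epsilon)$. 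Since $\supp(\Bgamma)$ is compact, $\|r\|_{L^\infty(\Bmu)}$ is bounded by the radius $R_r'$ from \eqref{eq:PMP_Est}, and Cauchy–Schwarz then closes the estimate. The remaining steps — identifying the barycentric pairing and matching the Hamiltonian term — are routine given the computations already performed in Step 4 of the proof of Theorem \ref{thm:Sensitivity1}.
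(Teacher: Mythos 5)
Your proposed route---deducing Theorem~\ref{thm:Sensitivity2} from the already-established Fr\'echet-type relations of Theorem~\ref{thm:Sensitivity1}---does not work, and the paper in fact warns against exactly this in the remark that immediately follows the theorem. The obstruction is in the supremum defining $\Bpartial^+\Vcal$: by \eqref{eq:Bgamma_o13Def} the competitor plans $\Bmu \in \BGamma_o^{1,3}(\Bgamma,\mu_2)$ are constrained to have \emph{optimal} $(1,3)$-marginal in $\Gamma_o(\mu^*(t),\mu_2)$. When $\mu_2 := (\Id+\epsilon\F)_\#\mu^*(t)$ with $\F$ not a gradient, the map $\Id+\epsilon\F$ is \emph{not} an optimal transport, and the optimal plan $\gamma_\epsilon \in \Gamma_o(\mu^*(t),\mu_2)$ does not, after dividing by $\epsilon$, converge to the plan induced by $\F$. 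Concretely, if $\mu^*(t)$ were uniform on a disk in $\R^2$ and $\F(x) = Rx$ with $R$ a small rotation, then $(\Id+\epsilon R)_\#\mu^*(t)$ is again a disk and the optimal map is a radial dilation, so $\gamma_\epsilon/\epsilon$ sees only the \emph{tangential (gradient) projection} of $\F$, not $\F$ itself. Consequently your estimate of the supremum would produce the pairing $\langle\bar\nu^*(t), P_{\Tan}\F\rangle$ rather than $\langle\bar\nu^*(t),\F\rangle$. Your suggested fix---``projecting $\F$ onto the tangent space does not change the pairing''---would only be valid if $\bar\nu^*(t) \in \Tan_{\mu^*(t)}\Pcal_2(\R^d)$, and there is no reason for this to hold: the final condition $\bar\nu^*(T) = -\nabla\varphi(\mu^*(T))$ is tangent, but the backward Hamiltonian flow \eqref{eq:BackwardCauchy} involves $\D_x v$ and $\D_\mu v$ and does not preserve the (time-varying) tangent bundle.

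The paper avoids this entirely by \emph{not} passing through Theorem~\ref{thm:Sensitivity1}. Instead it goes back one level to Lemma~\ref{lem:ValueFunctionDiffEst}, whose differential estimate \eqref{eq:ValueFunctionEstLem} holds for an \emph{arbitrary} plan $\Bmu_\tau \in \Gamma(\mu^*(\tau),\mu_\tau)$, not just optimal ones; one then plugs in the non-optimal plan $\Bmu_\tau := (\Id,\Id+\epsilon\F)_\#\mu^*(\tau)$ directly, obtains the linearized flow $w_{\F}$ solving \eqref{eq:LinearisedCauchyMeasureF}, and re-runs the time-constancy argument of Lemma~\ref{lem:Consistency} with this choice. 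The $\sup$ over optimal $3$-plans never appears, and the pairing $\INTDom{\langle r,\F(x)\rangle}{\R^{2d}}{\nu^*(\tau)} = \INTDom{\langle\bar\nu^*(\tau,x),\F(x)\rangle}{\R^d}{\mu^*(\tau)}$ comes out cleanly. Your routine parts (the $O(\epsilon)$ bound on $W_2$, identifying the barycentric pairing, and matching the Hamiltonian term) are fine, but the central step of forcing the Fr\'echet superdifferential to see a non-optimal direction cannot be repaired as written.
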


\begin{proof}
We apply the same strategy as in Section \ref{subsection:FrechetSensitivity} above, up to some minor modifications. As before, we will use the condensed notation 
\begin{equation*}
(\Phi^*_{(\tau,t)}(\cdot))_{\tau,t \in [0,T]} := (\Phi_{(\tau,t)}^{u^*}[\mu^*(\tau)](\cdot))_{\tau,t \in [0,T]}, 
\end{equation*}
already introduced in \eqref{eq:CondensedNota} to refer to non-local flows defined along $(\mu^*(\cdot),u^*(\cdot))$.

First, fix $\tau \in \T$ and observe that for every $\F \in L^{\infty}(\R^d,\R^d;\mu^*(\tau))$, the following support estimate 
\begin{equation}
\label{eq:SupportPushSize}
\supp \Big( (\Id + \epsilon \F)_{\#} \mu^*(\tau) \Big) \subset B \Big(0,R_r \, + \NormL{\F(\cdot)}{\infty}{\mu^*(\tau)} \hspace{-0.1cm} \Big), 
\end{equation}
holds for every $\epsilon \in (0,1]$, since $\supp(\mu^*(\tau)) \subset K := B(0,R_r)$. Then, by applying Lemma \ref{lem:ValueFunctionDiffEst} and Proposition \ref{prop:RepresentationCostate} in the particular case where 
\begin{equation}
\label{eq:Gateau_Bmudef}
\mu_{\tau} := (\Id + \epsilon \F)_{\#} \mu^*(\tau) \qquad \text{and} \qquad \Bmu_{\tau} := (\Id,\Id + \epsilon\F)_{\#}\mu^*(\tau), 
\end{equation}
for some $\epsilon > 0$, we can derive the following differential estimate 
\begin{equation}
\label{eq:DiffIneqGateau1}
\begin{aligned}
\Vcal \Big( \tau + \epsilon h&, (\Id + \epsilon \F)_{\#} \mu^*(\tau) \Big) - \Vcal(\tau,\mu^*(\tau)) \\
& \leq \epsilon \INTDom{\Big\langle - r, \D_x \Phi^*_{(\tau,T)} \big( \Phi^*_{(T,\tau)}(x) \big) \F \big( \Phi^*_{(T,\tau)}(x) \big) + w_{\F} \big(T , \Phi^*_{(T,\tau)}(x) \big) \Big\rangle}{\R^{2d}}{\nu^*(T)(x,r)} \\
& \hspace{0.4cm} + \epsilon h \INTDom{\Big\langle - r , \Psi_{\tau} \big( T, \Phi_{(T,\tau)}^*(x) \big)\Big\rangle}{\R^{2d}}{\nu^*(T)(x,r)} + o_{h,\F}(\epsilon),
\end{aligned}
\end{equation}
similarly to \eqref{eq:DiffIneqValue2}. Here, we introduced the notation $w_{\F}(\cdot,\cdot) := \tfrac{1}{\epsilon} w_{\Bmu_{\tau}}(\cdot,\cdot)$, while the dependence on $\F(\cdot)$ of the remainder term $o_{h,\F}(\epsilon)$ appears as a consequence of \eqref{eq:SupportPushSize}, and since
\begin{equation*}
W_{2,\Bmu_{\tau}} \Big(\mu^*(\tau),(\Id + \epsilon \F)_{\#} \mu^*(\tau) \Big) \leq \epsilon \NormL{\F(\cdot)}{\infty}{\mu^*(\tau)}, 
\end{equation*}
for any $\epsilon > 0$. By Theorem \ref{thm:FlowDiff} applied with $\Bmu_{\tau}$ defined as in \eqref{eq:Gateau_Bmudef}, one can check that the map $t \in [0,T] \mapsto w_{\F}(t,x) \in \R^d$ defined for all $x \in K$ is the unique solution of the Cauchy problem
\begin{equation}
\label{eq:LinearisedCauchyMeasureF}
\left\{
\begin{aligned}
\partial_t w_{\F}(t,x) & = \D_x v \Big( t,\mu^*(t),\Phi_{(\tau,t)}^*(x) \Big) w_{\F}(t,x) \\
& \hspace{0.4cm} + \INTDom{\D_{\mu} v \Big( t,\mu^*(t),\Phi_{(\tau,t)}^*(x) \Big) \big( \Phi_{(\tau,t)}^*(y) \big) \Big(\D_x \Phi_{(\tau,t)}^*(y) \F(y) + w_{\F}(t,y) \Big)}{\R^d}{\mu^*(\tau)(y)}, \\
w_{\F}(\tau,x) & = 0, 
\end{aligned}
\right.
\end{equation}
where we used the fact that $w_{\Bmu_{\tau}}(\cdot,\cdot) = \epsilon w_{\F}(\cdot,\cdot)$ and the linearity of \eqref{eq:LinearisedMeasure_Cauchy}. Furthermore, by adapting the proof of Lemma \ref{lem:Consistency}, one can show that the mappings 
\begin{equation*}
\Hpazo_1 : t \in [0,T] \mapsto \INTDom{\Big\langle r \, , \, \D_x \Phi^*_{(\tau,t)} \big( \Phi^*_{(t,\tau)}(x) \big) \F \big( \Phi^*_{(t,\tau)}(x) \big) + w_{\F} \big( t , \Phi_{(t,\tau)}^*(x) \big) \Big\rangle}{\R^{2d}}{\nu^*(t)(x,r)}, 
\end{equation*}
and 
\begin{equation*}
\Hpazo_2 : t \in [0,T] \mapsto \INTDom{\big\langle r \, , \, \Psi_{\tau} \big( t,\Phi^*_{(t,\tau)}(x) \big) \big\rangle}{\R^{2d}}{\nu^*(t)(x,r)}, 
\end{equation*}
are constant over $[0,T]$, with 
\begin{equation}
\label{eq:Gateau_Hpazoval}
\Hpazo_1(\tau) = \INTDom{\langle r , \F(x) \rangle}{\R^{2d}}{\nu^*(\tau)(x,r)} \qquad \text{and} \qquad \Hpazo_2(\tau) = -\H(\tau,\nu^*(\tau),u^*(\tau)).
\end{equation}
Thus by combining \eqref{eq:DiffIneqGateau1} and \eqref{eq:Gateau_Hpazoval} while recalling the definition \eqref{eq:Barycenter} of barycentric projection, we recover 
\begin{equation}
\label{eq:DiffIneqGateau2}
\begin{aligned}
\Vcal \Big( \tau + \epsilon h, (\Id + \epsilon \F)_{\#} \mu^*(\tau) \Big) - \Vcal(\tau,\mu^*(\tau)) & \leq \epsilon \INTDom{\langle -\bar{\nu}^*(\tau,x) , \F(x) \rangle}{\R^{2d}}{\mu^*(\tau)(x)} \\
& \hspace{1.85cm} + \epsilon h \,  \H(\tau,\nu^*(\tau),u^*(\tau)) + o_{h,\F}(\epsilon).
\end{aligned}
\end{equation}
Dividing both sides of \eqref{eq:DiffIneqGateau2} by $\epsilon >0$, taking the limsup as $\epsilon \rightarrow 0^+$ and using the definition \eqref{eq:UpperDiniDerivative} of upper Dini derivative, we obtain 
\begin{equation*}
\dn^+ \Vcal(\tau,\mu^*(\tau))(h,\F) \leq \INTDom{\langle -\bar{\nu}^*(t,x), \F(x) \rangle}{\R^{2d}}{ \mu^*(\tau)(x)} + h \, \H(\tau,\nu^*(\tau),u^*(\tau)), 
\end{equation*}
for all $(h,\F) \in \R \times L^{\infty}(\R^d,\R^d;\mu^*(\tau))$. Since $\T \subset (0,T)$ has full $\Lcal^1$-measure and $\tau \in \T$ is arbitrary, this is in turn equivalent to the sensitivity relation \eqref{eq:Sensitivity_ThmGateau1}. 

Similarly to Proposition \ref{prop:PointwiseSensitivity}, one can repeat exactly the same proof strategy with $h=0$ and by replacing $\T$ by the whole time-interval $[0,T]$ in order to recover the sensitivity relation \eqref{eq:Sensitivity_ThmGateau2} expressed with respect to the measure variable.
\end{proof}

\begin{rmk}[Comparisons between Theorem \ref{thm:Sensitivity1} and Theorem \ref{thm:Sensitivity2}]
As previously mentioned, Theorem \ref{thm:Sensitivity1} is a sensitivity relation expressed in terms of Fr\'echet superdifferentials defined in the sense of \cite[Chapter 10]{AGS}. While the latter constitutes an insightful structural result, it is less suited to control problems than the Dini-type sensitivity relation of Theorem \ref{thm:Sensitivity2}, and of a slightly different nature. Indeed, when $\mu \in \Pcal_c(\R^d)$ and $\F \in L^{\infty}(\R^d,\R^d;\mu)$ are arbitrary, Theorem \ref{thm:Sensitivity2} transcribes a kind of strong superdifferentiability of the value function in the sense of \cite[Definition 10.3.1]{AGS}, but only for test measures expressed as perturbations of the identity (see also \cite[Remark 10.3.2]{AGS}). This property builds explicitly on the product structure of the disintegrations against $\mu^*(T)$ of the $3$-plans $\Bnu^*_T(\cdot)$ appearing in Step 3 and Step 4 of the proof of Theorem \ref{thm:Sensitivity1}. For this reason, it cannot be concluded from the approach we developed that $(\pi^1,-\pi^2)_{\#} \nu^*(t)$ is a strong Fr\'echet superdifferential of $\Vcal(t,\cdot)$ at $\mu^*(t)$ in general, so that Theorem \ref{thm:Sensitivity2} cannot be deduced directly from Theorem \ref{thm:Sensitivity1}.
\end{rmk}


\section{Applications in mean-field optimal control}
\label{section:Applications}

In this section, we investigate various interesting characterisations and properties of optimal trajectories for mean-field optimal control problems, some of which are established as a consequence of the semiconcavity and sensitivity properties of the value function explored in Section \ref{section:Semiconcavity} and Section \ref{section:Sensitivity}. 


\subsection{Propagation of Gateaux differentiability for the value function}
\label{subsection:PropagationReg}

In this section, we prove \textit{forward-time} propagation results along optimal trajectories for the Dini subdifferentials of the value function with respect to the measure variable. 

\begin{Def}[Dini subdifferential of the value function]
\label{def:Value_GateauSubDiff}
Given $(\tau,\mu) \in [0,T] \times \Pcal_c(\R^d)$ and $(h,\F) \in \R \times L^{\infty}(\R^d,\R^d;\mu)$, we define the \textnormal{lower Dini derivative} of $\Vcal(\cdot,\cdot)$ at $(\tau,\mu)$ in the direction $(h,\F)$ as 
\begin{equation}
\label{eq:LowerDiniDerivative}
\dn^- \Vcal(\tau,\mu)(h,\F) := \liminf_{\epsilon \rightarrow 0^+} \bigg[ \, \frac{\Vcal \big( \tau + \epsilon h, (\Id + \epsilon \F)_{\#} \mu \big) - \Vcal(\tau,\mu)}{\epsilon} \, \bigg].
\end{equation}
Then, we say that a pair $(\delta,\xi) \in \R \times L^2(\R^d,\R^d;\mu)$ belongs to the \textnormal{localised Dini subdifferential} $\eth^- \Vcal(\tau,\mu)$ of $\Vcal(\cdot,\cdot)$ at $(\tau,\mu)$ if 
\begin{equation}
\label{eq:Value_GateauSubDiff}
\dn^- \Vcal(\tau,\mu)(h,\F) \geq \INTDom{\langle \xi(x) , \F(x) \rangle}{\R^d}{\mu(x)} + \delta \, h, 
\end{equation}
for all $(h,\F) \in \R \times L^{\infty}(\R^d,\R^d;\mu)$. We analogously say that $\xi \in L^2(\R^d,\R^d;\mu)$ belongs to the localised Dini subdifferential $\eth^-_{\mu} \Vcal(\tau,\mu)$ with respect to the measure variable of $\Vcal(\tau,\cdot)$ at $\mu$ if \eqref{eq:Value_GateauSubDiff} holds for all $\F \in L^{\infty}(\R^d,\R^d;\mu)$ with $h = \delta = 0$.
\end{Def}

In the following proposition, we start by showing that the Dini subdifferential of the value function is single-valued whenever it is non-empty, and that the value function admits Gateaux derivatives at the corresponding points. 

\begin{prop}[Subdifferential and Gateaux derivatives of the value function]
\label{prop:UniqueSubdiff}
Assume that hypotheses \ref{hyp:OCP} hold and let $(\mu^*(\cdot),u^*(\cdot))$ be an optimal trajectory-control pair for $(\Ppazo)$. If it holds that $\eth^-_{\mu} \Vcal(\tau,\mu^*(\tau)) \neq \emptyset$ for some $\tau \in [0,T]$, then 
\begin{equation*}
\eth^-_{\mu} \Vcal(\tau,\mu^*(\tau)) = \{ -\bar{\nu}^*(\tau)\}, 
\end{equation*}
and the application $\mu \in \Pcal_c(\R^d) \mapsto \Vcal(t,\mu) \in \R$ admits \textnormal{directional derivatives} at $\mu^*(\tau)$, namely
\begin{equation}
\label{eq:GateauDerivativeValue}
\begin{aligned}
\dn \Vcal(\tau,\mu^*(\tau))(0,\F) := \, & \lim_{\epsilon \rightarrow 0^+} \bigg[ \frac{\Vcal(\tau,(\Id + \epsilon \F)_{\#} \mu^*(\tau)) - \Vcal(\tau,\mu^*(\tau))}{\epsilon} \bigg] \\
= \, & \INTDom{\langle -\bar{\nu}^*(\tau,x) , \F(x) \rangle}{\R^d}{\mu^*(\tau)(x)}, 
\end{aligned}
\end{equation}
for every $\F \in L^{\infty}(\R^d,\R^d;\mu^*(\tau))$.  
\end{prop}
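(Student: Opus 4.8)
The plan is to combine the two half-relations provided by the sensitivity theory: the Dini \emph{super}differential relation already established in Theorem \ref{thm:Sensitivity2}, namely $-\bar{\nu}^*(\tau) \in \eth^+_{\mu} \Vcal(\tau,\mu^*(\tau))$ for \emph{all} times $\tau \in [0,T]$, together with the hypothesis that $\eth^-_{\mu} \Vcal(\tau,\mu^*(\tau))$ is non-empty. So first I would take an arbitrary $\xi \in \eth^-_{\mu} \Vcal(\tau,\mu^*(\tau))$ and write down, for every test direction $\F \in L^{\infty}(\R^d,\R^d;\mu^*(\tau))$, the two inequalities
\begin{equation*}
\INTDom{\langle \xi(x) , \F(x) \rangle}{\R^d}{\mu^*(\tau)(x)} \leq \dn^- \Vcal(\tau,\mu^*(\tau))(0,\F) \leq \dn^+ \Vcal(\tau,\mu^*(\tau))(0,\F) \leq \INTDom{\langle -\bar{\nu}^*(\tau,x) , \F(x) \rangle}{\R^d}{\mu^*(\tau)(x)},
\end{equation*}
where the middle inequality is the trivial $\liminf \leq \limsup$ and the outer ones are the defining properties of $\eth^-_{\mu}$ and $\eth^+_{\mu}$ respectively (Definitions \ref{def:Value_GateauSubDiff} and \ref{def:Value_GateauSuperDiff}).

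Next I would exploit the fact that this holds for \emph{all} $\F$, including $-\F$. Replacing $\F$ by $-\F$ and combining gives $\INTDom{\langle \xi(x) + \bar{\nu}^*(\tau,x), \F(x) \rangle}{\R^d}{\mu^*(\tau)(x)} = 0$ for every $\F \in L^{\infty}(\R^d,\R^d;\mu^*(\tau))$. Since $L^{\infty}(\R^d,\R^d;\mu^*(\tau))$ is dense in $L^2(\R^d,\R^d;\mu^*(\tau))$ (recall $\mu^*(\tau)$ is compactly supported, so $\xi, \bar{\nu}^*(\tau) \in L^2(\mu^*(\tau))$ and bounded test functions separate $L^2$), this forces $\xi = -\bar{\nu}^*(\tau)$ $\mu^*(\tau)$-almost everywhere. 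Hence $\eth^-_{\mu} \Vcal(\tau,\mu^*(\tau))$ is a singleton equal to $\{-\bar{\nu}^*(\tau)\}$, which is the first assertion.

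For the existence of the directional derivative \eqref{eq:GateauDerivativeValue}, I would revisit the chain of inequalities above with $\xi = -\bar{\nu}^*(\tau)$: both the leftmost and the rightmost terms equal $\INTDom{\langle -\bar{\nu}^*(\tau,x), \F(x) \rangle}{\R^d}{\mu^*(\tau)(x)}$, so the sandwich collapses and $\dn^- \Vcal(\tau,\mu^*(\tau))(0,\F) = \dn^+ \Vcal(\tau,\mu^*(\tau))(0,\F)$. By definition of the upper and lower Dini derivatives \eqref{eq:UpperDiniDerivative}--\eqref{eq:LowerDiniDerivative} as a $\limsup$ and a $\liminf$ of the same difference quotient, their coincidence is exactly the statement that the limit exists, and its value is the common one, which yields \eqref{eq:GateauDerivativeValue}. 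I should be a little careful with the typo in the statement (it writes $\Vcal(t,\mu)$ where it means $\Vcal(\tau,\mu)$) and just state everything at the fixed time $\tau$.

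I do not expect any serious obstacle here: the whole argument is a soft squeeze between a sub- and a superdifferential, and all the analytic content — the sensitivity relation of Theorem \ref{thm:Sensitivity2}, the integrability of $\bar{\nu}^*(\tau)$, the structure of the Dini derivatives — has been established or set up earlier. The only point requiring a line of justification is the density/duality step that upgrades "the $\mu^*(\tau)$-pairing with every bounded $\F$ vanishes" to "$\xi + \bar{\nu}^*(\tau) = 0$ a.e."; this is standard, using that $\xi + \bar{\nu}^*(\tau) \in L^2(\mu^*(\tau))$ (both summands are, since $\mu^*(\tau)$ has compact support and $\xi$ lies in $L^2$ by Definition \ref{def:Value_GateauSubDiff}) and testing against $\F := (\xi + \bar{\nu}^*(\tau)) \mathds{1}_{\{ |\xi + \bar{\nu}^*(\tau)| \leq n \}}$, then letting $n \to +\infty$ by monotone convergence.
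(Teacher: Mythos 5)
Your proposal is correct and follows essentially the same squeeze argument as the paper: sandwich the two Dini derivatives between the sub- and superdifferential pairings using Theorem \ref{thm:Sensitivity2}, deduce $\xi = -\bar{\nu}^*(\tau)$ by a density argument in $L^2(\mu^*(\tau))$, and then observe that equality of the upper and lower Dini derivatives forces the limit to exist. The only superficial differences are that you spell out the $\F \mapsto -\F$ step and the truncation argument, where the paper simply invokes ``a simple density argument''.
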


\begin{proof}
Let $\xi_{\tau} \in \eth^-_{\mu} \Vcal(\tau,\mu^*(\tau))$ and $\F \in L^{\infty}(\R^d,\R^d;\mu^*(\tau))$ be arbitrary. By definition of the Dini sub and superdifferentials, together with the sensitivity relations of Theorem \ref{thm:Sensitivity2}, it holds
\begin{equation}
\label{eq:DoubleIneqSubSuperDiff}
\begin{aligned}
\INTDom{\langle -\bar{\nu}^*(\tau,x) , \F(x) \rangle}{\R^d}{\mu^*(\tau)(x)} & \geq \dn^+ \Vcal(\tau,\mu^*(\tau))(0,\F) \\
& \geq \dn^- \Vcal(\tau,\mu^*(\tau))(0,\F) \geq \INTDom{\langle \xi_{\tau}(x) , \F(x) \rangle}{\R^d}{\mu^*(t)(x)},
\end{aligned}
\end{equation}
which can be rewritten as 
\begin{equation*}
\INTDom{\langle \bar{\nu}^*(\tau,x) + \xi_{\tau}(x) , \F(x) \rangle}{\R^d}{\mu^*(\tau)(x)} \leq 0, 
\end{equation*}
for every $\F \in L^{\infty}(\R^d,\R^d;\mu^*(\tau))$. By a simple density argument, the previous inequality yields $\xi_{\tau} = -\bar{\nu}^*(\tau)$ in $L^2(\R^d,\R^d;\mu^*(\tau))$, which actually means that $\eth^-_{\mu} \Vcal(\tau,\mu^*(\tau)) = \{ -\bar{\nu}^*(\tau)\}$ because $\xi_{\tau}$ is an arbitrary Dini subdifferential. Upon using this fact together with \eqref{eq:DoubleIneqSubSuperDiff}, one further has
\begin{equation*}
\dn^- \Vcal(\tau,\mu^*(\tau))(0,\F) = \dn^+ \Vcal(\tau,\mu^*(\tau))(0,\F), 
\end{equation*}
which implies that the limit in \eqref{eq:GateauDerivativeValue} exists and is given by 
\begin{equation*}
\lim_{\epsilon \rightarrow 0^+} \bigg[ \frac{\Vcal(\tau,(\Id + \epsilon \F)_{\#} \mu^*(\tau)) - \Vcal(\tau,\mu^*(\tau))}{\epsilon} \bigg] = \INTDom{\langle -\bar{\nu}^*(\tau,x) , \F(x) \rangle}{\R^d}{\mu^*(\tau)(x)}, 
\end{equation*}
for every $\F \in L^{\infty}(\R^d,\R^d;\mu^*(\tau))$. 
\end{proof}

\begin{rmk}[On the directional derivatives of the value function]
The identity written in \eqref{eq:GateauDerivativeValue} can be assimilated to the claim that $-\bar{\nu}^*(\tau) \in L^2(\R^d,\R^d;\mu^*(\tau))$ is the Gateaux derivative of the value function $\Vcal(\tau,\cdot)$ at $\mu^*(\tau)$. 
\end{rmk}

In the following theorem, we build on Proposition \ref{prop:UniqueSubdiff} to prove that subdifferentiability -- and therefore Gateaux differentiability -- properties of the value function propagate forward in time along solutions of the Hamiltonian dynamics stemming from the PMP. 

\begin{thm}[Forward propagation of Dini subdifferentials and regularity of the value function]
\label{thm:ForwardSubdiff}
Let $\mu^0 \in \Pcal(B(0,r))$ for some $r > 0$, and assume that hypotheses \ref{hyp:OCP} hold. Given a minimiser $(\mu^*(\cdot),u^*(\cdot))$ for $(\Ppazo)$, let $\nu^*(\cdot)$ the state-costate curve satisfying the PMP of Theorem \ref{thm:PMP}. Finally, suppose that $\eth^-_{\mu} \Vcal(\tau,\mu^*(\tau)) \neq \emptyset$ for some $\tau \in [0,T]$. 

Then, any curve of measures $\vartheta^*(\cdot)$ solution of the forward Hamiltonian continuity equation
\begin{equation}
\label{eq:ForwardHamilDynThm}
\left\{
\begin{aligned}
\partial_t \vartheta^*(t) & + \Div_{(x,r)} \Big( \J_{2d} \nabla_{\nu} \H(t,\vartheta^*(t),u^*(t)) \vartheta^*(t) \Big) = 0, \\
\pi^1_{\#} \vartheta^*(t) & = \, \mu^*(t) \hspace{2.35cm} \text{for all times $t \in [\tau,T]$}, \\
-\bar{\vartheta}^*(\tau) & \in \eth^-_{\mu} \Vcal(\tau,\mu^*(\tau)),
\end{aligned}
\right.
\end{equation}
is such that $\bar{\vartheta}^*(t) = \bar{\nu}^*(t)$ for all times $t \in [\tau,T]$. In particular, the state-costate curve $\nu^*(\cdot)$ is a solution of \eqref{eq:ForwardHamilDynThm}, which also satisfies 
\begin{equation*}
\eth^-_{\mu} \Vcal(t,\mu^*(t)) \cap  \eth^+_{\mu} \Vcal(t,\mu^*(t)) = \{ - \bar{\nu}^*(t) \}, 
\end{equation*}
for all  times $t \in [\tau,T]$. Moreover, the application $\mu \in \Pcal_c(\R^d) \mapsto \Vcal(t,\mu) \in \R$ admits \textnormal{directional derivatives} at $\mu^*(t)$, with
\begin{equation}
\label{eq:GateauDerivativeValueRunning}
\dn \Vcal(t,\mu^*(t))(0,\F) = \INTDom{\langle -\bar{\nu}^*(t,x) , \F(x) \rangle}{\R^d}{\mu^*(t)(x)}, 
\end{equation}
for all times $t \in [\tau,T]$ and every $\F \in L^{\infty}(\R^d,\R^d;\mu^*(t))$. 
\end{thm}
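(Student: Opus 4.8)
The plan is to bootstrap the single-point information at $\tau$ to the whole interval $[\tau,T]$ by combining the uniqueness statement of Proposition~\ref{prop:UniqueSubdiff}, the disintegration representation of Proposition~\ref{prop:RepresentationCostate}, and the dynamic programming principle together with the linearisation formulas already exploited in the proof of Theorem~\ref{thm:Sensitivity2}. First, since $\eth^-_\mu\Vcal(\tau,\mu^*(\tau)) \neq \emptyset$, Proposition~\ref{prop:UniqueSubdiff} gives $\eth^-_\mu\Vcal(\tau,\mu^*(\tau)) = \{-\bar\nu^*(\tau)\}$; in particular $-\bar\nu^*(\tau)$ lies in it, so the costate curve $\nu^*(\cdot)$ -- which already solves \eqref{eq:PMP_Dynamics} with $\pi^1_{\#}\nu^*(t) = \mu^*(t)$ by Theorem~\ref{thm:PMP} -- is itself a solution of \eqref{eq:ForwardHamilDynThm} on $[\tau,T]$. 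Now let $\vartheta^*(\cdot)$ be an arbitrary solution of \eqref{eq:ForwardHamilDynThm}; its boundary condition forces $\bar\vartheta^*(\tau) = \bar\nu^*(\tau)$. Repeating verbatim the computation of Proposition~\ref{prop:RepresentationCostate}, I would set $\vartheta^*_T(t) := (\Phi^*_{(t,T)}\circ\pi^1,\pi^2)_{\#}\vartheta^*(t)$ and disintegrate $\vartheta^*_T(t) = \INTDom{\varsigma^*_x(t)}{\R^d}{\mu^*(T)(x)}$ against $\mu^*(T)$, so that each fibre $\varsigma^*_x(\cdot)$ solves the continuity equation driven by the field $\Wpazo_x$ of \eqref{eq:DualVelocityField}. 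The decisive point is that $\Wpazo_x(t,\sigma,\cdot)$ is affine in its last argument and its non-local part depends on $\sigma$ only through its barycenter; hence the family of barycenters $x\mapsto \bar\varsigma^*_x(t)$ satisfies a closed, non-local but linear ODE with $L^1$-in-time essentially bounded coefficients -- the very same ODE solved by $x\mapsto\bar\sigma^*_x(t)$, the fibre-barycenters associated with $\nu^*$ in Proposition~\ref{prop:RepresentationCostate}. The two families agree at $t=\tau$, so Gr\"onwall's Lemma (applied forward from $\tau$) forces them to coincide on $[\tau,T]$; pulling back by $\Phi^*_{(T,t)}$ yields $\bar\vartheta^*(t) = \bar\nu^*(t)$ for all $t\in[\tau,T]$, which is the first assertion.

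Next, I would propagate Dini subdifferentiability forward by transporting the information at $\tau$ along the optimal flow. Fix $t\in[\tau,T]$ and a direction $\G\in L^{\infty}(\R^d,\R^d;\mu^*(t))$. The linearised-flow operator $\Lcal_t : \F \mapsto \big[\D_x\Phi^*_{(\tau,t)}(\cdot)\F(\cdot) + w_{\F}(t,\cdot)\big]\circ\Phi^*_{(t,\tau)}$, with $w_{\F}$ the solution of \eqref{eq:LinearisedCauchyMeasureF}, is a bounded linear bijection of $L^{\infty}$: it is the Gateaux differential at $\mu^*(\tau)$ of the non-local flow $\mu\mapsto\Phi^{u^*}_{(\tau,t)}[\mu]_{\#}\mu$, whose inverse is $\nu\mapsto\Phi^{u^*}_{(t,\tau)}[\nu]_{\#}\nu$ by the semigroup property of Theorem~\ref{thm:NonLocalCE}. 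Letting $\F := \Lcal_t^{-1}[\G]$, Corollary~\ref{cor:TotalLinFlows} (with $h=0$ and $\Bmu_\tau := (\Id,\Id+\epsilon\F)_{\#}\mu^*(\tau)$) gives $\Phi^{u^*}_{(\tau,t)}\big[(\Id+\epsilon\F)_{\#}\mu^*(\tau)\big]_{\#}(\Id+\epsilon\F)_{\#}\mu^*(\tau) = (\Id+\epsilon\G+\rho_\epsilon)_{\#}\mu^*(t)$ with $\NormL{\rho_\epsilon}{\infty}{\mu^*(t)} = o(\epsilon)$. Since the corresponding trajectory is admissible, monotonicity of $\Vcal$ along admissible trajectories together with $\Vcal(t,\mu^*(t)) = \Vcal(\tau,\mu^*(\tau))$ yields
\[ \Vcal\big(t,(\Id+\epsilon\G+\rho_\epsilon)_{\#}\mu^*(t)\big) - \Vcal(t,\mu^*(t)) \;\geq\; \Vcal\big(\tau,(\Id+\epsilon\F)_{\#}\mu^*(\tau)\big) - \Vcal(\tau,\mu^*(\tau)) \;\geq\; \epsilon\INTDom{\langle-\bar\nu^*(\tau,x),\F(x)\rangle}{\R^d}{\mu^*(\tau)(x)} + o(\epsilon), \]
the last step being $-\bar\nu^*(\tau)\in\eth^-_\mu\Vcal(\tau,\mu^*(\tau))$. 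By Proposition~\ref{prop:LipRegValue} and $\NormL{\rho_\epsilon}{\infty}{\mu^*(t)} = o(\epsilon)$ the left-hand side equals $\Vcal\big(t,(\Id+\epsilon\G)_{\#}\mu^*(t)\big) - \Vcal(t,\mu^*(t)) + o(\epsilon)$, while the constancy of the functional $\Hpazo_1$ established in the proof of Theorem~\ref{thm:Sensitivity2} identifies $\INTDom{\langle-\bar\nu^*(\tau,x),\F(x)\rangle}{\R^d}{\mu^*(\tau)(x)} = -\Hpazo_1(\tau) = -\Hpazo_1(t) = \INTDom{\langle-\bar\nu^*(t,x),\G(x)\rangle}{\R^d}{\mu^*(t)(x)}$. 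Dividing by $\epsilon$ and letting $\epsilon\to 0^+$ gives $\dn^-\Vcal(t,\mu^*(t))(0,\G)\geq\INTDom{\langle-\bar\nu^*(t,x),\G(x)\rangle}{\R^d}{\mu^*(t)(x)}$ for every $\G$, i.e. $-\bar\nu^*(t)\in\eth^-_\mu\Vcal(t,\mu^*(t))$.

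Finally, for every $t\in[\tau,T]$ the set $\eth^-_\mu\Vcal(t,\mu^*(t))$ is now non-empty, so Proposition~\ref{prop:UniqueSubdiff} applies at time $t$: it equals $\{-\bar\nu^*(t)\}$ and $\Vcal(t,\cdot)$ admits the directional derivatives \eqref{eq:GateauDerivativeValueRunning} at $\mu^*(t)$. Combining this with $-\bar\nu^*(t)\in\eth^+_\mu\Vcal(t,\mu^*(t))$ from \eqref{eq:Sensitivity_ThmGateau2} and the trivial inclusion $\eth^-_\mu\Vcal(t,\mu^*(t))\cap\eth^+_\mu\Vcal(t,\mu^*(t)) \subset \eth^-_\mu\Vcal(t,\mu^*(t)) = \{-\bar\nu^*(t)\}$ gives the claimed identity, completing the proof. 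I expect the hard part to lie in the second step: first, that $\Lcal_t$ is onto $L^{\infty}$ -- equivalently that the Gateaux differential of the genuinely non-local flow map $\mu\mapsto\Phi^{u^*}_{(\tau,t)}[\mu]_{\#}\mu$ is invertible, which must be read off carefully from the linearisation formulas of Appendix~\ref{section:AppendixFlowDiff} and the differentiated semigroup identity; and second, that the remainder $\rho_\epsilon$ in the pushforward expansion is small in $L^{\infty}(\mu^*(t))$ (and not merely pointwise or in $L^2$) so that the $W_1$-Lipschitz continuity of $\Vcal$ can absorb it. The ODE uniqueness in the first step is routine once the affine-in-$r$, barycentrically-coupled structure of $\Wpazo_x$ is identified.
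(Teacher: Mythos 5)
Your proof is correct, but it reorganises the argument and employs a technically different device at the crux.

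For the barycenter identity $\bar{\vartheta}^*(t) = \bar{\nu}^*(t)$, you give a direct uniqueness argument on the fibre-barycenter ODE obtained by disintegrating $\vartheta^*_T(t)$ against $\mu^*(T)$ as in Proposition \ref{prop:RepresentationCostate}: since the coefficients are linear and bounded, Gr\"onwall forward from $\tau$ forces $\bar{\varsigma}^*_x = \bar{\sigma}^*_x$ once they agree at $\tau$. The paper instead obtains the barycenter identity as a \emph{corollary}: it first propagates Dini subdifferentiability to time $t$ (obtaining $-\bar{\vartheta}^*(t) \in \eth^-_\mu \Vcal(t,\mu^*(t))$ for an arbitrary solution $\vartheta^*$) and then invokes Proposition \ref{prop:UniqueSubdiff} to collapse the subdifferential to $\{-\bar{\nu}^*(t)\}$. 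Your route is self-contained and more elementary; the paper's has the advantage of never needing to check that the disintegration machinery of Proposition \ref{prop:RepresentationCostate} applies to an arbitrary solution of \eqref{eq:ForwardHamilDynThm} (though it does, since the proof of that proposition only uses the PDE and the first-marginal constraint, not the terminal condition).

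For the propagation step, you fix a target direction $\G$ at time $t$, invert the linearised forward-flow operator $\Lcal_t$ to produce a direction $\F = \Lcal_t^{-1}[\G]$ at time $\tau$, then push forward. You correctly flag that this hinges on $\Lcal_t$ being onto $L^\infty$ -- the differentiated semigroup identity makes this plausible, but it is not proven in Appendix \ref{section:AppendixFlowDiff}. The paper sidesteps this entirely: it starts from an arbitrary $\F \in L^\infty(\R^d,\R^d;\mu^*(t))$, applies Corollary \ref{cor:TotalLinFlows} to the \emph{backward} flow $\Phi^{u^*}_{(t,\tau)}$, and thereby produces the needed direction $\F_t(\tau,\Phi^*_{(\tau,t)}(\cdot))$ at $\tau$ directly. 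This is exactly what you are implicitly computing when you invert $\Lcal_t$, but working with the backward-flow expansion avoids any surjectivity claim. Both approaches then close identically via the constancy of the $\Hpazo_1$ functional from the proof of Theorem \ref{thm:Sensitivity2}, Proposition \ref{prop:UniqueSubdiff}, and Theorem \ref{thm:Sensitivity2}'s superdifferential inclusion. Your second worry -- that the remainder $\rho_\epsilon$ must be small in $L^\infty$ -- is handled by the uniformity over $K$ built into the $o_K$ estimates of Corollary \ref{cor:TotalLinFlows} and Theorem \ref{thm:FlowDiff}, exactly as you anticipated.
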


\begin{proof}
Let $K:= B(0,R_r)$ with $R_r > 0$ being defined as in Lemma \ref{lem:AdmEst}, and choose arbitrary elements $t \in [\tau,T]$ and $\F \in L^{\infty}(\R^d,\R^d;\mu^*(t))$. Then for every $\epsilon \in (0,1]$, one has
\begin{equation}
\label{eq:TaylorFlowSubdiff1}
\begin{aligned}
\Vcal \big(t,(\Id + \epsilon \F)_{\#} & \mu^*(t) \big) - \Vcal(t,\mu^*(t)) \\
& \geq \Vcal \Big( \tau , \Phi^{u^*}_{(t,\tau)} \big[ (\Id + \epsilon \F)_{\#} \mu^*(t) \big] \circ (\Id + \epsilon \F)_{\#} \mu^*(t) \Big) - \Vcal(\tau,\mu^*(\tau)),
\end{aligned}
\end{equation}
since $t \in [\tau,T] \mapsto \Vcal(t,\mu(t)) \in \R$ is non-decreasing along solutions of the control system and constant along optimal trajectories. Observing that $\supp((\Id + \epsilon \F)_{\#} \mu^*(t)) \subset \K := B(0,\Rpazo)$ for all $\epsilon \in (0,1]$, where we defined $\Rpazo := R_r \, + \NormL{\F(\cdot)}{\infty}{\mu^*(t)}$, it follows from the proof of Corollary \ref{cor:TotalLinFlows} applied with $h=0$, $y = x + \epsilon \F(x) \in \K$ and $\Bmu_t = (\Id,\Id + \epsilon \F)_{\#} \mu^*(t)$ that
\begin{equation}
\label{eq:TaylorFlowSubdiff2}
\begin{aligned}
\Phi^{u^*}_{(t,\tau)} \big[ (\Id + \epsilon \F)_{\#} \mu^*(t) \big] \Big( x + \epsilon \F(x)\Big) = \Phi^*_{(t,\tau)}(x) & + \epsilon \F_t(\tau,x) + o_{t,x,\K}(\epsilon), 
\end{aligned}
\end{equation}
with $\sup_{(t,x) \in [0,T] \times \supp(\mu^*(t))} |o_{t,x,\K}(\epsilon)| = o_{K}(\epsilon)$. Here, the map $\F_t : [\tau,t] \times \K \rightarrow \R^d$ is defined by 
\begin{equation}
\label{eq:FDef}
\F_t(s,x) := \D_x \Phi_{(t,s)}^*(x) \F(x) + w_{\Bmu_t}(s,x), 
\end{equation}
for all times $s \in [\tau,t]$ and every $x \in \K$, and it is the unique solution of the linearised Cauchy problem
\begin{equation}
\label{eq:LinearisedCauchySubdiff}
\left\{
\begin{aligned}
\partial_s \F_t(s,x) & = \D_x v \Big( s , \mu^*(s) , \Phi_{(t,s)}^*(x) \Big) \F_t(s,x) \\
& \hspace{0.4cm} + \INTDom{\D_{\mu} v \Big( s , \mu^*(s) , \Phi_{(t,s)}^*(x) \Big) \big( \Phi_{(t,s)}^*(y) \big) \F_t(s,y)}{\R^d}{\mu^*(t)(y)}, \\
\F_t(t,x) & = \F(x),
\end{aligned}
\right.
\end{equation}
as a consequence of Proposition \ref{prop:LinSpace_Flows} and Theorem \ref{thm:FlowDiff}. Hence, by merging \eqref{eq:TaylorFlowSubdiff1}, \eqref{eq:TaylorFlowSubdiff2} and \eqref{eq:FDef}, we obtain 
\begin{equation}
\label{eq:TaylorFlowSubdiff3}
\Vcal \big(t,(\Id + \epsilon \F)_{\#} \mu^*(t) \big) - \Vcal(t,\mu^*(t)) \geq \Vcal \Big( \tau , \big(\Id + \epsilon \F_t \big(\tau,\Phi_{(\tau,t)}^*(\cdot)\big) + o(\epsilon) \big)_{\raisebox{4pt}{$\scriptstyle{\#}$}} \mu^*(\tau) \Big) - \Vcal(\tau,\mu^*(\tau)).
\end{equation}

Recall now that since $\eth^-_{\mu} \Vcal(\tau,\mu^*(\tau)) \neq \emptyset$, one has $\eth^-_{\mu} \Vcal(\tau,\mu^*(\tau)) = \{ - \bar{\nu}^*(\tau)\}$ by Proposition \ref{prop:UniqueSubdiff}. Thus, by construction, the state-costate curve $\nu^*(\cdot)$ is a solution of the forward Hamiltonian dynamics \eqref{eq:ForwardHamilDynThm}. 

Next, we consider any solution $\vartheta^*(\cdot)$ of \eqref{eq:ForwardHamilDynThm}. Then, by definition \eqref{eq:LowerDiniDerivative} of the lower Dini derivative, one has 
\begin{equation}
\label{eq:TaylorFlowSubdiff4}
\begin{aligned}
\dn^- \Vcal(\tau,\mu^*(\tau) \Big( 0 , \F_t \big(\tau, \Phi_{(\tau,t)}^*(\cdot) \big) \Big) & \geq \INTDom{\big\langle - \bar{\vartheta}^*(\tau,x) , \F_t \big(\tau, \Phi_{(\tau,t)}^*(x) \big) \big\rangle}{\R^d}{\mu^*(\tau)(x)} \\ 
& = - \INTDom{\big\langle r , \F_t \big(\tau, \Phi_{(\tau,t)}^*(x) \big) \big\rangle}{\R^{2d}}{\vartheta^*(\tau)(x,r)}.
\end{aligned} 
\end{equation}
By repeating the arguments of Appendix \ref{section:AppendixLem} with \eqref{eq:LinearisedCauchySubdiff}, one can show that the map 
\begin{equation*}
\Hpazo : s \in [\tau,t] \mapsto \INTDom{\big\langle r , \F_t \big(s , \Phi_{(s,t)}^*(x) \big) \big\rangle}{\R^{2d}}{\vartheta^*(s)(x,r)}, 
\end{equation*}
is constant over $[\tau,t]$, which in particular yields
\begin{equation}
\label{eq:TaylorFlowSubdiff5}
\INTDom{\big\langle r , \F_t \big(\tau , \Phi_{(\tau,t)}^*(x) \big) \big\rangle}{\R^{2d}}{\nu^*(\tau)(x,r)} = \INTDom{\langle r , \F(x) \rangle}{\R^{2d}}{\vartheta^*(t)(x,r)}. 
\end{equation}
Combining \eqref{eq:TaylorFlowSubdiff3}, \eqref{eq:TaylorFlowSubdiff4} and \eqref{eq:TaylorFlowSubdiff5} allows us to conclude 
\begin{equation*}
\dn^- \Vcal(t,\mu^*(t))(0,\F) \geq \INTDom{\langle -\bar{\nu}^*(t,x) , \F(x) \rangle}{\R^{2d}}{\mu^*(t)(x)}, 
\end{equation*}
for every $\F \in L^{\infty}(\R^d,\R^d;\mu^*(t))$, where we used that $\mu \in \Pcal_1(K) \mapsto \Vcal(\tau,\mu) \in \R$ is Lipschitz continuous by Proposition \ref{prop:LipRegValue}. 

By Definition \ref{def:Value_GateauSubDiff}, this latter inequality implies that $-\bar{\vartheta}^*(t) \in \eth^-_{\mu} \Vcal(t,\mu^*(t))$ for all times $t \in [\tau,T]$, which also yields $\bar{\vartheta}^*(t) = \bar{\nu}^*(t)$ and \eqref{eq:GateauDerivativeValueRunning} as a consequence of Proposition \ref{prop:UniqueSubdiff}. Furthermore, we also get that $\eth^-_{\mu} \Vcal(t,\mu^*(t)) \cap \eth^+_{\mu} \Vcal(t,\mu^*(t))$ is reduced to a singleton whenever it is non-empty. This allows us to conclude that $\eth^-_{\mu} \Vcal(t,\mu^*(t)) \cap \eth^+_{\mu} \Vcal(t,\mu^*(t)) = \{ -\bar{\nu}^*(t) \}$ for all times $t \in [\tau,T]$.
\end{proof}

\begin{rmk}[Link between Gateaux derivatives and Wasserstein gradients]
If a functional $\phi : \Pcal_c(\R^d) \rightarrow \R$ is locally differentiable at some $\mu \in \Pcal_c(\R^d)$ in the sense of Definition \ref{def:Classical_Diff}, then $\eth^- \phi(\mu) \cap \eth^+ \phi(\mu) = \{ \nabla \phi(\mu)\}$ where $\nabla \phi(\mu) \in \Tan_{\mu} \Pcal_2(\R^d)$ is the Wasserstein gradient of $\phi(\cdot)$ at $\mu$.
\end{rmk}


\subsection{Sufficient optimality conditions under Dini-type sensitivity relations}
\label{subsection:SufficientOpt}

In Theorem \ref{thm:Sensitivity2}, we have shown that if $(\mu^*(\cdot),u^*(\cdot))$ is an optimal pair for $(\Ppazo)$, then the unique state-costate curve $\nu^*(\cdot)$ satisfying the PMP is such that $(\H(t,\nu^*(t),u^*(t)),-\bar{\nu}^*(t)) \in \eth^+ \Vcal(t,\mu^*(t))$ for $\Lcal^1$-almost every $t \in [0,T]$. In what follows, we prove that the sensitivity relations also provide \textit{sufficient conditions} for the optimality of a pair associated with a state-costate curve satisfying the PMP. 

Throughout this section, we assume that hypotheses \ref{hyp:OCP} hold, fix $\mu^0 \in \Pcal(B(0,r))$ for some $r > 0$ and let $K := B(0,R_r)$ with $R_r > 0$ being given by Lemma \ref{lem:AdmEst}.
 
\begin{thm}[Sufficiency of the PMP under Dini-type sensitivity relations]
\label{thm:SufficiencyPMP}
Let $(\mu^*(\cdot),u^*(\cdot))$ be an admissible pair for $(\Ppazo)$, and suppose that there exists a state-costate curve $\nu^*(\cdot)$ satisfying the PMP of Theorem \ref{thm:PMP}, together with the sensitivity relation
\begin{equation}
\label{eq:SufficiencySensitivity}
\Big( \H(t,\nu^*(t),u^*(t)) \, , \, - \bar{\nu}^*(t) \Big) \in \eth^+ \Vcal(t,\mu^*(t)), 
\end{equation}
for $\Lcal^1$-almost every $t \in [0,T]$. Then, the pair $(\mu^*(\cdot),u^*(\cdot))$ is optimal for $(\Ppazo)$.
\end{thm}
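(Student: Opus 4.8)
The plan is to show that the map $t \in [0,T] \mapsto \Vcal(t,\mu^*(t)) \in \R$ is non-increasing. Since this map is also non-decreasing along any admissible pair -- as recalled at the beginning of Section \ref{section:Sensitivity} -- it will then be constant, which by the same token is equivalent to the optimality of $(\mu^*(\cdot),u^*(\cdot))$ for $(\Ppazo)$.

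First I would record that $t \mapsto \Vcal(t,\mu^*(t))$ is absolutely continuous: indeed $\mu^*(\cdot) \in \AC([0,T],\Pcal_1(K))$ with $K := B(0,R_r)$ by Lemma \ref{lem:AdmEst}, and $\Vcal(\cdot,\cdot)$ is $W_1$-Lipschitz in its second argument and absolutely continuous in its first one over $[0,T] \times \Pcal(K)$ by Proposition \ref{prop:LipRegValue}. Hence this map is differentiable at $\Lcal^1$-almost every $t$, and it suffices to prove that its derivative is non-positive there. I would work at the full-measure set of times $t$ that are simultaneously differentiability points of $t \mapsto \Vcal(t,\mu^*(t))$, Lebesgue points of the sublinearity map $m(\cdot)$ and of $\vb : s \mapsto v(s,\cdot,u^*(s),\cdot) \in C^0(\Pcal_c(\R^d) \times \R^d,\R^d)$ (in the sense used in Section \ref{subsection:FrechetSensitivity}), and at which \eqref{eq:SufficiencySensitivity} holds.

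The second step identifies $\derv{}{t}\Vcal(t,\mu^*(t))$ with an upper Dini derivative. Using the flow representation $\mu^*(t+\epsilon) = \Phi^{u^*}_{(t,t+\epsilon)}[\mu^*(t)](\cdot)_{\#}\mu^*(t)$ and the first-order-in-time expansion $\Phi^{u^*}_{(t,t+\epsilon)}[\mu^*(t)](x) = x + \epsilon\, v(t,\mu^*(t),u^*(t),x) + o(\epsilon)$, valid uniformly for $x \in \supp(\mu^*(t)) \subset K$ at a Lebesgue point $t$ (a short-time version of the expansions behind Proposition \ref{prop:LinTime_Flows}, relying on \ref{hyp:CE} and the estimates \eqref{eq:SuppAC_Est}), and setting $\F_t := v(t,\mu^*(t),u^*(t),\cdot)$ -- which belongs to $L^{\infty}(\R^d,\R^d;\mu^*(t))$ by \ref{hyp:CE}-$(i)$ and the compactness of $\supp(\mu^*(t))$ -- the estimate \eqref{eq:WassEst1} gives $W_1\big(\mu^*(t+\epsilon),(\Id + \epsilon \F_t)_{\#}\mu^*(t)\big) = o(\epsilon)$, whereas Proposition \ref{prop:LipRegValue} applied on a fixed ball containing both supports yields $\Vcal(t+\epsilon,\mu^*(t+\epsilon)) = \Vcal\big(t+\epsilon,(\Id + \epsilon \F_t)_{\#}\mu^*(t)\big) + o(\epsilon)$. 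Dividing by $\epsilon$ and taking $\limsup_{\epsilon \rightarrow 0^+}$, the definition \eqref{eq:UpperDiniDerivative} of the upper Dini derivative then gives $\derv{}{t}\Vcal(t,\mu^*(t)) \leq \dn^+ \Vcal(t,\mu^*(t))(1,\F_t)$.

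Finally I would invoke the sensitivity relation \eqref{eq:SufficiencySensitivity}: choosing the direction $(h,\F) := (1,\F_t)$ in Definition \ref{def:Value_GateauSuperDiff} gives
\begin{equation*}
\dn^+ \Vcal(t,\mu^*(t))(1,\F_t) \leq \INTDom{\langle -\bar{\nu}^*(t,x),\F_t(x)\rangle}{\R^d}{\mu^*(t)(x)} + \H(t,\nu^*(t),u^*(t)).
\end{equation*}
Since $\pi^1_{\#}\nu^*(t) = \mu^*(t)$, the definition \eqref{eq:Barycenter} of the barycentric projection together with the expression \eqref{eq:PMP_Hamiltonian} of the Hamiltonian yield
\begin{equation*}
\INTDom{\langle \bar{\nu}^*(t,x),\F_t(x)\rangle}{\R^d}{\mu^*(t)(x)} = \INTDom{\langle r, v(t,\mu^*(t),u^*(t),x)\rangle}{\R^{2d}}{\nu^*(t)(x,r)} = \H(t,\nu^*(t),u^*(t)),
\end{equation*}
so the right-hand side above is zero. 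Hence $\derv{}{t}\Vcal(t,\mu^*(t)) \leq 0$ for $\Lcal^1$-almost every $t \in [0,T]$, so $t \mapsto \Vcal(t,\mu^*(t))$ is non-increasing, hence constant since it is also non-decreasing, which proves that $(\mu^*(\cdot),u^*(\cdot))$ is optimal for $(\Ppazo)$.

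I expect the main obstacle to be the rigorous control of the error $\Vcal(t+\epsilon,\mu^*(t+\epsilon)) - \Vcal\big(t+\epsilon,(\Id + \epsilon \F_t)_{\#}\mu^*(t)\big) = o(\epsilon)$: it hinges on the uniform-in-$x$ first-order Taylor expansion of the short-time non-local flow $\Phi^{u^*}_{(t,t+\epsilon)}[\mu^*(t)]$ at a Lebesgue point $t$ of the velocity field -- which rests on \ref{hyp:CE}, the support and absolute-continuity estimates \eqref{eq:SuppAC_Est}, and the $C^0$-valued Lebesgue-point machinery -- and then on transferring this $W_1$-small perturbation through the value function via Proposition \ref{prop:LipRegValue}.
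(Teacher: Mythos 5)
Your proposal is correct and follows essentially the same route as the paper's proof: both fix a full-measure set of times where $\Vcal(\cdot,\mu^*(\cdot))$ is differentiable and the relevant Lebesgue-point and sensitivity properties hold, choose the direction $\F_t := v(t,\mu^*(t),u^*(t),\cdot)$ so that the right-hand side of the Dini superdifferential inequality cancels against the Hamiltonian via the barycentric projection, and then use the short-time Taylor expansion of the non-local flow together with the Lipschitz regularity of $\Vcal$ (Proposition \ref{prop:LipRegValue}) to transfer this to $\tderv{}{t}\Vcal(t,\mu^*(t)) \leq 0$. The obstacle you flag at the end -- the uniform $o(\epsilon)$ control on $\Phi^{u^*}_{(t,t+\epsilon)}[\mu^*(t)] - \Id - \epsilon\, v(t,\mu^*(t),u^*(t),\cdot)$ -- is exactly what the paper dispatches by citing the argument of Proposition \ref{prop:LinSpace_Flows} together with the Lebesgue-point machinery of Lemma \ref{lem:LebesguePoints}, so your concern is well placed but resolvable with the tools already in the paper.
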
 
 
\begin{proof}
Recall first that since $(\mu^*(\cdot),u^*(\cdot))$ is an admissible pair for $(\Ppazo)$, the map $t \in [0,T] \mapsto \Vcal(t,\mu^*(t)) \in \R$ is non-decreasing, as well as absolutely continuous by Proposition \ref{prop:LipRegValue}. Let $\T \subset (0,T)$ be as in Section \ref{subsection:FrechetSensitivity} and fix $\tau \in \T$ such that both $\derv{}{t} \Vcal(\tau,\mu^*(\tau))$ exists and \eqref{eq:SufficiencySensitivity} hold at $\tau$.

Remark that by \eqref{eq:Barycenter} and Definition \ref{def:Value_GateauSuperDiff}, the inclusion \eqref{eq:SufficiencySensitivity} can be rewritten as 
\begin{equation*}
\begin{aligned}
& \limsup_{\epsilon \rightarrow 0^+} \bigg[ \, \frac{\Vcal \big( \tau + \epsilon, (\Id + \epsilon \F)_{\#} \mu^*(\tau) \big) - \Vcal(\tau,\mu^*(\tau))}{\epsilon} \, \bigg] \\ 
& \hspace{6.5cm} \leq - \INTDom{\langle r , \F(x) \rangle}{\R^{2d}}{\nu^*(\tau)(x,r)} + \H(\tau,\nu^*(\tau),u^*(\tau)), 
\end{aligned}
\end{equation*}
for any $\F \in L^{\infty}(\R^d,\R^d;\mu^*(\tau))$. Choosing in particular $\F(\cdot) = v(\tau,\mu^*(\tau),u^*(\tau),\cdot)$, one further has
\begin{equation}
\label{eq:SufficiencyEst2}
\limsup_{\epsilon \rightarrow 0^+} \bigg[ \, \frac{\Vcal \big( \tau + \epsilon, (\Id + \epsilon v(\tau,\mu^*(\tau),u^*(\tau)))_{\#} \mu^*(\tau) \big) - \Vcal(\tau,\mu^*(\tau))}{\epsilon} \, \bigg] \leq 0. 
\end{equation}
Under hypotheses \ref{hyp:OCP}-$(i),(ii)$, it can be shown by repeating the arguments in the proof of Proposition \ref{prop:LinSpace_Flows} below that the non-local flows $(\Phi^*_{(\tau,t)}(\cdot))_{\tau,t \in [0,T]}$ defined along $(\mu^*(\cdot),u^*(\cdot))$ as in \eqref{eq:CondensedNota} satisfy the following Taylor expansion in $C^0(K,\R^d)$
\begin{equation}
\label{eq:Sufficiency_TaylorFlow}
\Phi_{(\tau,\tau+\epsilon)}^*(\cdot) = \Id + \epsilon v(\tau,\mu^*(\tau),u^*(\tau),\cdot) + o_{\tau}(\epsilon),
\end{equation}
for every $\epsilon > 0$, since the elements of $\T$ are Lebesgue points of $t \in [0,T] \mapsto v(t,\cdot,u^*(t),\cdot) \in C^0(\Pcal(K) \times K,\R^d)$. Thus, we can deduce from \eqref{eq:Sufficiency_TaylorFlow} combined with the representation formula \eqref{eq:FlowRep} that
\begin{equation*}
\mu^*(\tau+\epsilon) = \Big( \Id + \epsilon v(\tau,\mu^*(\tau),u^*(\tau)) + o_{\tau}(\epsilon) \Big)_{\raisebox{4pt}{$\scriptstyle{\#}$}} \mu^*(\tau), 
\end{equation*}
for any small $\epsilon > 0$, which together with \eqref{eq:SufficiencyEst2} and Proposition \ref{prop:LipRegValue} allows us to obtain
\begin{equation*}
\limsup_{\epsilon \rightarrow 0^+} \bigg[ \, \frac{\Vcal \big( \tau + \epsilon, \mu^*(\tau+\epsilon) \big) - \Vcal(\tau,\mu^*(\tau))}{\epsilon} \, \bigg] \leq 0. 
\end{equation*}
Since we assumed that $t \in [0,T] \mapsto \Vcal(t,\mu^*(t)) \in \R$ is differentiable at $\tau \in \T$, this last estimate finally yields
\begin{equation}
\label{eq:SufficiencyEst3}
\tderv{}{t}{} \Vcal(\tau,\mu^*(\tau)) \leq 0,
\end{equation}
for every $\tau \in \T$ belonging to a subset of full $\Lcal^1$-measure. Whence, the map $\Vcal(\cdot,\mu^*(\cdot))$ is non-increasing and therefore constant, which implies that the pair $(\mu^*(\cdot),u^*(\cdot))$ is optimal.
\end{proof} 
 
In Theorem \ref{thm:SufficiencyPMP} above, we have proven that the necessary conditions of the PMP become sufficient when supplemented with the Dini-type sensitivity relation \eqref{eq:SufficiencySensitivity}. In the following proposition, we show that the latter sensitivity relation is still a sufficient optimality condition when $\nu^*(\cdot)$ is not a necessarily a state-costate curve solution of the Hamiltonian continuity equation \eqref{eq:PMP_Dynamics}. 

\begin{prop}[A more general sufficient optimality condition]
Let $(\mu^*(\cdot),u^*(\cdot))$ be admissible for $(\Ppazo)$, and suppose that for $\Lcal^1$-almost every $t \in [0,T]$ there exists $\nu^*_t \in \Pcal_c(\R^{2d})$ such that 
\begin{equation*}
\pi^1_{\#} \nu^*_t = \mu^*(t), \quad \Big( \H(t,\nu^*_t,u^*(t)) , -\bar{\nu}^*_t \Big) \in \eth^+ \Vcal(t,\mu^*(t)) \quad \text{and} \quad \H(t,\nu^*_t,u^*(t)) = \max_{u \in U} \, \H(t,\nu^*_t,u),
\end{equation*}
where $\bar{\nu}^*_t$ is the barycentric projection of $\nu^*_t$ onto $\mu^*(t)$. Then, $(\mu^*(\cdot),u^*(\cdot))$ is optimal for $(\Ppazo)$. 
\end{prop}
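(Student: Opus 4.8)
The plan is to follow the proof of Theorem~\ref{thm:SufficiencyPMP} essentially verbatim, after recording the key observation that that argument never actually used the fact that $\nu^*(\cdot)$ solves the forward--backward Hamiltonian continuity equation~\eqref{eq:PMP_Dynamics}: it used only the Dini-type sensitivity inclusion and the marginal constraint $\pi^1_{\#}\nu^*(t) = \mu^*(t)$, both of which are granted here. Because the whole reasoning is carried out pointwise in the time variable on a set of full $\Lcal^1$-measure, no joint measurability or absolute continuity of the family $t \mapsto \nu^*_t$ is needed; and the maximisation condition, while included in the hypotheses to mirror the Pontryagin Maximum Principle, is not invoked in the argument.

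First I would recall that, $(\mu^*(\cdot),u^*(\cdot))$ being admissible, the map $t \in [0,T] \mapsto \Vcal(t,\mu^*(t)) \in \R$ is non-decreasing and, by Proposition~\ref{prop:LipRegValue}, absolutely continuous. I would then fix a subset $\T \subset (0,T)$ of full $\Lcal^1$-measure at whose points simultaneously: the derivative $\tderv{}{t}\Vcal(t,\mu^*(t))$ exists; the sensitivity inclusion of the statement holds for some $\nu^*_t \in \Pcal_c(\R^{2d})$ with $\pi^1_{\#}\nu^*_t = \mu^*(t)$; and $t$ is a Lebesgue point of $t \mapsto v(t,\cdot,u^*(t),\cdot) \in C^0(\Pcal_c(\R^d)\times\R^d,\R^d)$ in the sense used in the proof of Proposition~\ref{prop:LinSpace_Flows}.

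Fixing $\tau \in \T$, I would apply the defining inequality~\eqref{eq:Value_GateauSuperDiff} of $\eth^+\Vcal(\tau,\mu^*(\tau))$ in the direction $(h,\F) := \big(1,\, v(\tau,\mu^*(\tau),u^*(\tau),\cdot)\big)$, which gives
\[
\dn^+ \Vcal(\tau,\mu^*(\tau))(1,\F) \;\leq\; \INTDom{\big\langle -\bar{\nu}^*_\tau(x),\, v(\tau,\mu^*(\tau),u^*(\tau),x) \big\rangle}{\R^d}{\mu^*(\tau)(x)} \,+\, \H(\tau,\nu^*_\tau,u^*(\tau)).
\]
By the definition~\eqref{eq:Barycenter} of the barycentric projection together with $\pi^1_{\#}\nu^*_\tau = \mu^*(\tau)$, the integral equals $-\H(\tau,\nu^*_\tau,u^*(\tau))$ in view of the expression~\eqref{eq:PMP_Hamiltonian} of the Hamiltonian; hence the right-hand side vanishes and $\dn^+ \Vcal(\tau,\mu^*(\tau))(1,\F) \leq 0$. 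Next, using the Taylor expansion~\eqref{eq:Sufficiency_TaylorFlow} of the non-local flows at Lebesgue points together with the flow representation~\eqref{eq:FlowRep}, one has $\mu^*(\tau+\epsilon) = \big(\Id + \epsilon\, v(\tau,\mu^*(\tau),u^*(\tau)) + o_\tau(\epsilon)\big)_{\#}\mu^*(\tau)$, so that $W_1\big((\Id+\epsilon\F)_{\#}\mu^*(\tau),\, \mu^*(\tau+\epsilon)\big) = o(\epsilon)$ by~\eqref{eq:WassEst1}; combining this with the local Lipschitz continuity of $\Vcal(\tau+\epsilon,\cdot)$ from Proposition~\ref{prop:LipRegValue} yields $\limsup_{\epsilon \to 0^+}\epsilon^{-1}\big(\Vcal(\tau+\epsilon,\mu^*(\tau+\epsilon)) - \Vcal(\tau,\mu^*(\tau))\big) \leq 0$, whence $\tderv{}{t}\Vcal(\tau,\mu^*(\tau)) \leq 0$.

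Since $\tau \in \T$ is arbitrary and $\T$ has full measure, the absolutely continuous map $t \mapsto \Vcal(t,\mu^*(t))$ has a.e. non-positive derivative and is therefore non-increasing; being also non-decreasing, it is constant, so $\varphi(\mu^*(T)) = \Vcal(T,\mu^*(T)) = \Vcal(0,\mu^0) = \inf_{u(\cdot)\in\U}\varphi(\mu(T))$, i.e. $(\mu^*(\cdot),u^*(\cdot))$ is optimal for $(\Ppazo)$. I do not expect a genuine obstacle here: the only two points requiring explicit (but routine) justification are the algebraic collapse of the right-hand side to zero and the $o(\epsilon)$-closeness of $(\Id+\epsilon\F)_{\#}\mu^*(\tau)$ to $\mu^*(\tau+\epsilon)$, and the real content of the proposition is simply that the proof of Theorem~\ref{thm:SufficiencyPMP} is robust under this weakening of its hypotheses.
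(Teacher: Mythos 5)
Your proof is correct and coincides with the paper's, which simply notes that one repeats the argument of Theorem~\ref{thm:SufficiencyPMP}; the key steps you spell out (the Dini-superdifferential inequality in the direction $(1,v(\tau,\mu^*(\tau),u^*(\tau),\cdot))$, the cancellation of the right-hand side via the barycentric projection and~\eqref{eq:PMP_Hamiltonian}, and the $o(\epsilon)$-closeness of $(\Id+\epsilon\F)_\#\mu^*(\tau)$ to $\mu^*(\tau+\epsilon)$) are exactly what that proof uses. Your remark that the maximisation condition is never invoked is accurate and a nice observation, though it is worth recalling that without it the sensitivity inclusion would rarely hold in practice, so the hypothesis is harmless rather than vacuous.
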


\begin{proof}
This result can be obtained by repeating the arguments in the proof of Theorem \ref{thm:SufficiencyPMP}. 
\end{proof}


\subsection{Optimal feedback}
\label{subsection:Feedback}

We end this series of applications of the semiconcavity and sensitivity relations in mean-field control by discussing optimal feedbacks for $(\Ppazo)$ and their regularity. These latter take the form of a \textit{set-valued map} $\G : [0,T] \times \Pcal_c(\R^d) \rightrightarrows C^0(\R^d,\R^d)$, defined using lower Dini derivatives of the value function.

In order to investigate optimal feedbacks, we recall the notion of \textit{upper-semicontinuity} for set-valued maps, for which we refer to \cite[Section 1.4]{Aubin1990}. We also adapt to the Wasserstein setting a standard result of non-smooth analysis stating that semiconcave mappings admit directional derivatives, and that they coincide with adequately chosen \textit{regularised lower derivatives} (see e.g. \cite[Theorem 3.9]{Cannarsa1991}). 

\begin{Def}[Upper-semicontinuous set-valued mappings]
Let $(\Scal,d_{\Scal})$ be a separable metric space, $(X,\Norm{\cdot}_X)$ be a separable Banach space and $\G : \Scal \rightrightarrows X$ be a \textnormal{set-valued map}, i.e. an application such that $\G(s) \subset X$ for every $s \in \Scal$. Then, $\G(\cdot)$ is said to be \textnormal{upper-semicontinuous} at a point $s \in \Scal$ such that $\G(s) \neq \emptyset$ if for every neighbourhood $\U \subset X$ of $\G(s)$, there exists $\eta > 0$ such that $\G(s') \subset \U$ for every $s' \in \Scal$ such that $d_{\Scal}(s,s') \leq \eta$. 
\end{Def}

\begin{prop}[Dini derivatives of semiconcave mappings]
\label{prop:DiniDerSemiconcave}
Let $\phi : \Pcal_c(\R^d) \rightarrow \R^d$ be locally strongly semiconcave. Then for every $\mu \in \Pcal_c(\R^d)$, the directional derivative $\dn \phi(\mu)(\F)$ exists for any $\F \in L^{\infty}(\R^d,\R^d;\mu)$. Moreover if $\F \in C^0(\R^d,\R^d)$, then for every $R > 0$ the latter coincides with the \textnormal{regularised $R$-lower derivative}, defined by 
\begin{equation}
\label{eq:RegularisedLowerDer}
\dn^-_{o,R} \, \phi(\mu)(\F) := \liminf_{\substack{\epsilon \rightarrow 0^+ \hspace{-0.05cm}, \, \nu \rightarrow \mu \\ \nu \in \Pcal(B_R(\mu))}} \bigg[ \frac{\phi \big( (\Id + \epsilon \F)_{\#} \nu \big) - \phi(\nu)}{\epsilon} \bigg].
\end{equation}
\end{prop}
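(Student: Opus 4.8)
\textbf{Proof plan for Proposition \ref{prop:DiniDerSemiconcave}.}

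The plan is to transpose the classical argument of \cite[Theorem 3.9]{Cannarsa1991} to the Wasserstein setting, exploiting the strong semiconcavity inequality \eqref{eq:SemiconcavityDef2}. First I would fix $\mu \in \Pcal_c(\R^d)$, pick $R > 0$ large enough that $(\Id + \epsilon\F)_{\#}\mu \in \Pcal(B_R(\mu))$ for all $\epsilon \in (0,1]$ and all relevant $\F$, and set $K := B(0,R')$ for a radius $R'$ containing the supports of all the competitor measures that appear. The key observation is that the difference quotient
\[
g(\epsilon) := \frac{\phi\big( (\Id + \epsilon\F)_{\#}\mu \big) - \phi(\mu)}{\epsilon}
\]
is \emph{monotone} in $\epsilon$ up to a controlled error, which is exactly what semiconcavity delivers: applying \eqref{eq:SemiconcavityDef2} to the pair $\mu_1 := \mu$, $\mu_2 := (\Id + \epsilon\F)_{\#}\mu$ with the transport plan $\Bmu := (\Id, \Id + \epsilon\F)_{\#}\mu$ and interpolation parameter $\lambda := \epsilon'/\epsilon$ (for $0 < \epsilon' < \epsilon$) shows that $\epsilon \mapsto g(\epsilon) + \Cpazo_K \epsilon \NormL{\F}{\infty}{\mu}^2$ is non-increasing, since $\Bmu^{1\rightarrow 2}_{\lambda} = (\Id + \epsilon'\F)_{\#}\mu$ and $W_{2,\Bmu}^2(\mu_1,\mu_2) = \epsilon^2 \NormL{\F}{\infty}{\mu}^2 \cdot (\text{bounded})$. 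Hence the limit as $\epsilon \rightarrow 0^+$ exists (possibly after noting $g$ is bounded below, which follows from local Lipschitz-type control inherited from semiconcavity plus boundedness of $\phi$ on $\Pcal(K)$), and this limit is $\dn\phi(\mu)(\F)$. This handles the first assertion for arbitrary $\F \in L^{\infty}(\R^d,\R^d;\mu)$.

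For the second assertion, assume $\F \in C^0(\R^d,\R^d)$. Since clearly $\dn^-_{o,R}\phi(\mu)(\F) \leq \dn\phi(\mu)(\F)$ by restricting the $\liminf$ to $\nu = \mu$, only the reverse inequality needs proof. I would take sequences $\epsilon_n \rightarrow 0^+$ and $\nu_n \rightarrow \mu$ with $\nu_n \in \Pcal(B_R(\mu))$ realising the $\liminf$ in \eqref{eq:RegularisedLowerDer}, and for each $n$ interpolate: write the difference quotient at $(\nu_n,\epsilon_n)$, then use semiconcavity applied now with base point $\nu_n$ to compare $\phi((\Id + \epsilon_n\F)_{\#}\nu_n) - \phi(\nu_n)$ with a quantity at a fixed small scale, and use the $W_1$-continuity (or Lipschitz continuity over $\Pcal(K)$, which follows from semiconcavity) of $\phi$ to replace $\nu_n$ by $\mu$ up to an error $o(1) + O(W_2(\nu_n,\mu))$. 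The monotonicity established in the first part then lets one sandwich the regularised lower derivative between $\dn\phi(\mu)(\F)$ and itself. The role of $\F \in C^0$ is to ensure that $(\Id + \epsilon\F)_{\#}\nu_n \rightarrow (\Id + \epsilon\F)_{\#}\mu$ as $\nu_n \rightarrow \mu$ uniformly in $\epsilon$ on compact sets, via estimate \eqref{eq:WassEst2} applied to the Lipschitz map $\Id + \epsilon\F$ on $K$, which would fail for merely $L^{\infty}$ fields.

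The main obstacle I anticipate is the careful bookkeeping of the error terms when the base point moves from $\mu$ to $\nu_n$: one must verify that the semiconcavity constant $\Cpazo_K$ can be chosen uniformly on a common ball $K$ containing the supports of $\mu$, all $\nu_n$, and all their perturbations $(\Id + \epsilon\F)_{\#}\nu_n$, and that the two small-$o$ terms (one from the interpolation at scale $\epsilon_n$, one from the continuity estimate $W_2(\nu_n,\mu) \rightarrow 0$) are genuinely negligible against $\epsilon_n$ in the right order of limits. A clean way to organise this is to prove once and for all, from \eqref{eq:SemiconcavityDef2} alone, that for every $\nu \in \Pcal(K)$ and every $\F \in L^{\infty}$ the map $\epsilon \mapsto \epsilon^{-1}\big(\phi((\Id + \epsilon\F)_{\#}\nu) - \phi(\nu)\big) + \Cpazo_K \epsilon \NormL{\F}{\infty}{\nu}^2$ is non-increasing on $(0,1]$ with a modulus independent of $\nu$; everything else then follows by taking limits in the correct order and invoking the $W_1$-continuity of $\phi$ on $\Pcal(K)$.
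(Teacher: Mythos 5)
Your proof takes essentially the same route as the paper's: the same reparametrisation $\lambda = \epsilon_1/\epsilon_2$ (your $\epsilon'/\epsilon$) in the strong semiconcavity inequality \eqref{eq:SemiconcavityDef2} along the plan $(\Id,\Id+\epsilon_2\F)_{\#}\mu$ to obtain near-monotonicity of the difference quotient, and then for the second claim the same substitution of the base point $\mu \mapsto \nu$ using continuity of $\phi$ on $\Pcal(B_R(\mu))$ combined with that monotonicity. The only slip is that $\F \in C^0$ need not be Lipschitz, so \eqref{eq:WassEst2} does not apply verbatim to $\Id + \epsilon\F$; what one actually uses is that $\F$ is uniformly continuous on compacts, which already gives the required $W_2$-continuity of $\nu \mapsto (\Id+\epsilon\F)_{\#}\nu$, and the rest of your argument then goes through.
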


\begin{proof}
We start by showing that $\phi(\cdot)$ admits Dini derivatives in every direction. Let $\mu \in \Pcal_c(\R^d)$ and $\F \in L^{\infty}(\R^d,\R^d;\mu)$, and denote by $K := B(0,R)$ a closed ball such that $\supp((\Id + \epsilon \F)_{\#} \mu) \subset K$ for every $\epsilon \in [0,1]$. Observe that for every pair $0 < \epsilon_1 \leq \epsilon_2 \leq 1$, it holds 
\begin{equation*}
\Id + \epsilon_1 \F = \big( 1 -\tfrac{\epsilon_1}{\epsilon_2}  \big) \Id + \tfrac{\epsilon_1}{\epsilon_2} \big( \Id + \epsilon_2 \F \big),  
\end{equation*}
which, by the local strong semiconcavity of $\phi(\cdot)$ applied along $\Bmu := (\Id,\Id + \epsilon_2 \F)_{\#} \mu$, yields
\begin{equation}
\label{eq:DiniSemiconcavity}
\frac{\phi \big((\Id + \epsilon_1 \F)_{\#} \mu \big) - \phi(\mu)}{\epsilon_1} \geq \frac{\phi \big((\Id + \epsilon_2 \F)_{\#} \mu \big) - \phi(\mu)}{\epsilon_2}  - \Cpazo_K \big( \epsilon_2 - \epsilon_1 \big) \NormL{\F(\cdot)}{2}{\mu}
\end{equation}
for a given constant $\Cpazo_K > 0$, where we used the fact that $W_{2,\Bmu}(\mu,(\Id + \epsilon_2 \F)_{\#} \mu) = \epsilon_2 \NormL{\F(\cdot)}{2}{\mu}$. Taking first the liminf as $\epsilon_1 \rightarrow 0^+$ and then the limsup as $\epsilon_2 \rightarrow 0^+$ in \eqref{eq:DiniSemiconcavity}, we obtain 
\begin{equation*}
\dn^- \phi(\mu)(\F) \geq \dn^+ \phi(\mu)(\F), 
\end{equation*}
which implies the existence of the directional derivative $\dn \phi(\mu)(\F)$.

Let $R > 0$ and suppose that $\F \in C^0(\R^d,\R^d)$. By definition \eqref{eq:RegularisedLowerDer} of the regularised $R$-lower derivative, it is clear that $\dn \phi(\mu)(\F) \geq \dn^-_{o,R} \, \phi(\mu)(\F)$. As we assumed that $\phi(\cdot)$ is locally strongly semiconcave, it is also continuous in the $W_2$-metric over $\Pcal(B_R(\mu))$, and for every $(\epsilon,\alpha) \in (0,1] \times \R_+^*$ there exists $\eta > 0$ such that 
\begin{equation}
\label{eq:DiniContinuity}
\frac{\phi \big((\Id + \epsilon \F)_{\#} \mu \big) - \phi(\mu)}{\epsilon} \leq \frac{\phi \big((\Id + \epsilon \F)_{\#} \nu \big) - \phi(\nu)}{\epsilon} + \alpha, 
\end{equation}
for any $\nu \in \Pcal(B_R(\mu))$ satisfying $W_2(\mu,\nu) \leq \eta$. By plugging \eqref{eq:DiniContinuity} into \eqref{eq:DiniSemiconcavity}, it further holds
\begin{equation*}
\frac{\phi \big((\Id + \epsilon \F)_{\#} \mu \big) - \phi(\mu)}{\epsilon} \leq \inf_{\nu \in \K_R^{\eta}(\mu) , \, \epsilon' \in (0,\epsilon]} \frac{\phi \big((\Id + \epsilon' \F)_{\#} \nu \big) - \phi(\nu)}{\epsilon'} + \, \Cpazo_K \epsilon \NormL{\F(\cdot)}{2}{\mu}  + \alpha
\end{equation*}
where $\K_R^{\eta}(\mu) := \{ \nu \in \Pcal(B_R(\mu)) ~\text{s.t.}~ W_2(\mu,\nu) \leq \eta \}$. Letting $\epsilon,\alpha,\eta \rightarrow 0^+$ in the previous expression thus yields $\D \phi(\mu)(\F) \leq \dn^-_{o,R} \, \phi(\mu)(\F)$, which concludes the proof since $R > 0$ is arbitrary.
\end{proof}

Building on these notions, we discuss in the following theorem the structure and regularity of optimal feedback mappings for $(\Ppazo)$. In the proof of this result, we will denote by $\Graph(\G(\cdot)) := \{ (s,x) \in \Scal \times X ~\text{s.t.}~ x \in \G(s) \}$ the \textit{graph} of a given set-valued mapping $\G : \Scal \rightrightarrows X$.

\begin{thm}[Optimal feedbacks]
\label{thm:Feedback}
Let $\mu^0 \in \Pcal(B(0,r))$ for some $r > 0$. Moreover, assume that hypotheses \ref{hyp:OCP} hold, and let $K := B(0,R_r)$ where $R_r > 0$ is given as in Lemma \ref{lem:AdmEst}. 

Then, a trajectory-control pair $(\mu^*(\cdot),u^*(\cdot))$ is optimal for $(\Ppazo)$ if and only if 
\begin{equation}
\label{eq:FeedbackInc}
v(t,\mu^*(t),u^*(t))_{|K} \in \G_{K}(t,\mu^*(t)),
\end{equation}
for $\Lcal^1$-almost every $t \in [0,T]$, where we defined the \textnormal{generalised feedback sets}
\begin{equation}
\label{eq:FeedbackSet}
\G_{K}(t,\mu):= \Big\{ \vb \in v(t,\mu,U)_{|K} ~\text{s.t.}~ \dn^- \Vcal(t,\mu)(1,\vb) \leq 0 \Big\},
\end{equation}
for all $(t,\mu) \in [0,T] \times \Pcal(K)$. The feedback sets $\G_K(t,\mu)$ are compact in $C^0(K,\R^d)$ for all $(t,\mu) \in [0,T] \times \Pcal(K)$, and if in addition \ref{hyp:SC2} and \ref{hyp:R} hold, then the set-valued map $\G_K : [0,T] \times \Pcal_1(K) \rightrightarrows C^0(K,\R^d)$ is upper-semicontinuous.
\end{thm}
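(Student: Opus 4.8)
The plan is to establish the theorem in three distinct blocks. First, the characterization \eqref{eq:FeedbackInc} of optimal pairs via the generalized feedback sets. The key observation is that a pair $(\mu^*(\cdot),u^*(\cdot))$ is optimal for $(\Ppazo)$ if and only if $t \mapsto \Vcal(t,\mu^*(t))$ is constant on $[0,T]$, since this map is always non-decreasing along admissible trajectories (and $\Vcal(T,\cdot)=\varphi(\cdot)$). By Proposition \ref{prop:LipRegValue} this map is absolutely continuous, so it is constant iff its derivative is $\leq 0$ at $\Lcal^1$-almost every $t$. At a point of differentiability, using the Taylor expansion $\Phi_{(t,t+\epsilon)}^*(\cdot) = \Id + \epsilon\, v(t,\mu^*(t),u^*(t),\cdot) + o_t(\epsilon)$ at Lebesgue points (as in \eqref{eq:Sufficiency_TaylorFlow}) together with \eqref{eq:FlowRep}, one gets $\mu^*(t+\epsilon) = (\Id + \epsilon\, v(t,\mu^*(t),u^*(t)) + o_t(\epsilon))_{\#}\mu^*(t)$, and by the Lipschitz regularity of $\Vcal(t,\cdot)$ this forces $\tderv{}{t}\Vcal(t,\mu^*(t)) = \dn^-\Vcal(t,\mu^*(t))(1,v(t,\mu^*(t),u^*(t)))$ when the former exists. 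Hence $\tderv{}{t}\Vcal(t,\mu^*(t)) \leq 0$ a.e. is equivalent to $v(t,\mu^*(t),u^*(t))_{|K} \in \G_K(t,\mu^*(t))$ a.e., which gives both implications.

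Second, I would prove compactness of $\G_K(t,\mu)$ in $C^0(K,\R^d)$. Since $v$ satisfies \ref{hyp:OCP}-$(i)$ with constants uniform in $u\in U$, the set $v(t,\mu,U)_{|K}$ is equi-bounded and equi-Lipschitz on $K$, hence relatively compact in $C^0(K,\R^d)$ by Arzelà–Ascoli; and it is closed because $u \mapsto v(t,\mu,u,\cdot)$ is continuous and $U$ is compact. It remains to check that the constraint $\dn^-\Vcal(t,\mu)(1,\vb)\leq 0$ is closed under $C^0$-convergence within this family. Here I would invoke semiconcavity: under \ref{hyp:SC2} and \ref{hyp:R}, Theorem \ref{thm:Semiconcavity3} gives joint (strong) semiconcavity of $\Vcal$, so by (the argument of) Proposition \ref{prop:DiniDerSemiconcave} the directional derivative $\dn\Vcal(t,\mu)(1,\vb)$ exists and equals the regularized lower derivative $\dn^-_{o,R}\Vcal(t,\mu)(1,\vb)$, which is upper-semicontinuous as an infimum of continuous quantities over $(\epsilon,\nu)$; this yields closedness of $\G_K(t,\mu)$, hence compactness. (For the mere compactness statement without \ref{hyp:SC2}, \ref{hyp:R}, one argues more directly that $\dn^-\Vcal(t,\mu)(1,\cdot)$ is lower-semicontinuous in $\vb$ using the $W_1$-Lipschitz bound on $\Vcal$ together with the estimate $W_1((\Id+\epsilon\vb_n)_\#\mu,(\Id+\epsilon\vb)_\#\mu)\leq \epsilon\|\vb_n-\vb\|_{C^0(K)}$.)

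Third, the upper-semicontinuity of $\G_K : [0,T]\times\Pcal_1(K) \rightrightarrows C^0(K,\R^d)$ under \ref{hyp:SC2} and \ref{hyp:R}. The standard device is to show the graph $\Graph(\G_K(\cdot))$ is closed in $([0,T]\times\Pcal_1(K))\times C^0(K,\R^d)$ and that $\G_K$ has relatively compact range locally, which for set-valued maps with compact values into a metric space implies upper-semicontinuity. Closedness of the graph: take $(t_n,\mu_n)\to(t,\mu)$ in $[0,T]\times\Pcal_1(K)$ and $\vb_n\to\vb$ in $C^0(K,\R^d)$ with $\vb_n\in\G_K(t_n,\mu_n)$; one must pass to the limit in both the membership $\vb_n\in v(t_n,\mu_n,U)_{|K}$ — using continuity of $v$ in $(\mu,x)$ from \ref{hyp:CE}, the modulus-of-continuity control in $t$ from \ref{hyp:R}-$(ii)$, compactness of $U$, and continuity of $u\mapsto v$ — and the inequality $\dn^-\Vcal(t_n,\mu_n)(1,\vb_n)\leq 0$. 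For the latter I would again use that, by Theorem \ref{thm:Semiconcavity3} and Proposition \ref{prop:DiniDerSemiconcave}, $\dn^-\Vcal(t,\mu)(1,\vb) = \dn^-_{o,R}\Vcal(t,\mu)(1,\vb)$, and that the regularized lower derivative satisfies an upper-semicontinuity property jointly in $(t,\mu,\vb)$: roughly, $\dn^-_{o,R}\Vcal(t,\mu)(1,\vb) \geq \limsup_n \dn^-_{o,R}\Vcal(t_n,\mu_n)(1,\vb_n)$, because any approximating sequence $(\epsilon,\nu)$ for $(t,\mu)$ can be used near $(t_n,\mu_n)$ up to a $W_1$-Lipschitz error, while the semiconcavity inequality of Theorem \ref{thm:Semiconcavity3} controls the difference of difference-quotients in terms of $|t_n-t|^2 + W_2^2(\mu_n,\mu)$ and $\epsilon$. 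This last point — making rigorous the joint upper-semicontinuity of the regularized derivative from the quantitative semiconcavity estimate, uniformly in the competing velocities $\vb_n$ — is the main obstacle, and it is precisely where the strengthened hypotheses \ref{hyp:SC2} and \ref{hyp:R} are needed (the former to get $\Vcal$ semiconcave jointly and to have the regularized-derivative identity, the latter to control the time-dependence of $v$ and the support estimates uniformly); once it is in place, closedness of the graph plus the Arzelà–Ascoli compactness of the range gives upper-semicontinuity.
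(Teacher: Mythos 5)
Your plan reproduces the paper's proof in all three blocks: the equivalence of optimality with constancy of $\Vcal(\cdot,\mu^*(\cdot))$ using the a.e.\ Taylor expansion of the flow and absolute continuity from Proposition~\ref{prop:LipRegValue}; compactness of $\G_K(t,\mu)$ via Arzel\`a--Ascoli on $v(t,\mu,U)_{|K}$ and closedness of the Dini constraint; and upper-semicontinuity via closed graph plus a compactly contained range, invoking Proposition~\ref{prop:DiniDerSemiconcave} and the regularised lower derivative under \ref{hyp:SC2} and \ref{hyp:R}. One small correction to your second block: the paper does \emph{not} use semiconcavity for compactness — your parenthetical contains the right argument, namely that by Proposition~\ref{prop:LipRegValue} the difference quotients $\epsilon\mapsto\epsilon^{-1}\big(\Vcal(t+\epsilon,(\Id+\epsilon\vb)_\#\mu)-\Vcal(t,\mu)\big)$ are equi-Lipschitz in $\vb$, so $\vb\mapsto\dn^-\Vcal(t,\mu)(1,\vb)$ is Lipschitz on $C^0(K,\R^d)$ and its sublevel sets are closed; the sentence preceding it, asserting that $\dn^-_{o,R}$ is ``upper-semicontinuous as an infimum of continuous quantities'' and that ``this yields closedness,'' points the wrong way, since upper-semicontinuity closes superlevel sets rather than the needed sublevel sets.
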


\begin{proof}
The fact that $(\mu^*(\cdot),u^*(\cdot))$ is optimal if and only if \eqref{eq:FeedbackInc} is satisfied can be proven by repeating arguments analogous to those of the proof of Theorem \ref{thm:SufficiencyPMP}. Indeed let $(\mu^*(\cdot),u^*(\cdot))$ be an optimal pair for $(\Ppazo)$ and observe that by Proposition \ref{prop:LipRegValue}, one has
\begin{equation*}
\begin{aligned}
\Vcal  \big( t+\epsilon,(\Id + \epsilon v(t,\mu^*(t),u^*(t))_{\#} \mu^*(t) \big) - \Vcal(t,\mu^*(t)) & =  \Vcal  \big( t+\epsilon, \mu^*(t+\epsilon) \big) - \Vcal(t,\mu^*(t)) + o_t(\epsilon) \\
& = o_t(\epsilon),
\end{aligned}
\end{equation*}
for $\Lcal^1$-almost every $t \in [0,T]$, where we used the fact that $t \in [0,T] \mapsto \Vcal(t,\mu^*(t)) \in \R$ is constant. Thus, dividing by $\epsilon > 0$ and taking the liminf as $\epsilon \rightarrow 0^+$ in the previous inequality, we obtain 
\begin{equation*}
\dn^- \Vcal(t,\mu^*(t)) \Big( 1,v(t,\mu^*(t),u^*(t)) \Big) = 0,
\end{equation*}
which by \eqref{eq:FeedbackSet} implies that $v(t,\mu^*(t),u^*(t))_{|K} \in \G_{K}(t,\mu^*(t))$ for $\Lcal^1$-almost every $t \in [0,T]$. Conversely, let $(\mu^*(\cdot),u^*(\cdot))$ be an admissible pair for $(\Ppazo)$ such that $v(t,\mu^*(t),u^*(t))_{|K} \in \G_K(t,\mu^*(t))$ for $\Lcal^1$-almost every $t \in [0,T]$, and let $\tau \in [0,T]$ be a point of differentiability of $t \in [0,T]  \mapsto \Vcal(t,\mu^*(t)) \in \R$ at which \eqref{eq:FeedbackInc} holds and $W_1(\mu^*(\tau+\epsilon),(\Id+\epsilon v(\tau,\mu^*(\tau),u^*(\tau)))_{\#} \mu^*(\tau)) = o_{\tau}(\epsilon)$. Then, 
\begin{equation*}
\Vcal  \big( \tau +\epsilon, \mu^*(\tau+\epsilon) \big) - \Vcal(\tau,\mu^*(\tau)) = \Vcal  \big( \tau +\epsilon,(\Id + \epsilon v(\tau,\mu^*(\tau),u^*(\tau)))_{\#} \mu^*(\tau) \big) - \Vcal(\tau,\mu^*(\tau)) + o_{\tau}(\epsilon),
\end{equation*}
and upon dividing by $\epsilon > 0$ and taking the liminf as $\epsilon \rightarrow 0^+$ while recalling that $\Vcal(\cdot,\mu^*(\cdot))$ is differentiable at $\tau$ and $v(\tau,\mu^*(\tau),u^*(\tau))_{|K} \in \G_K(\tau,\mu^*(\tau))$, we obtain 
\begin{equation*}
\tderv{}{t} \Vcal(\tau,\mu^*(\tau))  \leq 0.
\end{equation*}
Because $t \in [0,T] \mapsto \Vcal(t,\mu^*(t)) \in \R$ is absolutely continuous and non-decreasing, and since the last inequality holds for $\Lcal^1$-almost every $\tau \in [0,T]$, we conclude that $(\mu^*(\cdot),u^*(\cdot))$ is optimal for $(\Ppazo)$.

We observe next that for every $(t,\mu ) \in [0,T] \times \Pcal(K)$, the sets
\begin{equation*}
\Big \{ \vb \in C^0(K,\R^d) ~\text{s.t.}~ \dn^- \Vcal(t,\mu )(1,\vb) \leq 0
\Big \},
\end{equation*}
are closed in $C^0(K,\R^d)$ by Proposition \ref{prop:LipRegValue}, which together with the compactness of $U$ implies that the sets $\G_K(t,\mu)$ are compact. We now prove that under hypotheses \ref{hyp:SC2} and \ref{hyp:R}, the set-valued map $\G_{K} : [0,T] \times \Pcal(K) \rightrightarrows C^0(K,\R^d)$ has closed graph and takes its values in a compact set. By classical results in set-valued analysis (see e.g. \cite[Corollary 1]{Aubin1984}), this will imply that it is upper-semicontinuous. Remark first that as a consequence of hypotheses \ref{hyp:R}, the elements of $v(t,\mu,U)_{|K} \subset C^0(K,\R^d)$ are equibounded and equi-Lipschitz continuous, uniformly with respect $(t,\mu) \in [0,T] \times \Pcal(K)$. Thus, the sets $\G_K(t,\mu)$ are contained within a compact subset of $C^0(K,\R^d)$ that is independent of $(t,\mu) \in [0,T] \times \Pcal(K)$. Assume now that \ref{hyp:SC2} hold, and consider a sequence $(t_n,\mu_n,\vb_n) \subset \Graph(\G_K(\cdot,\cdot))$ satisfying
\begin{equation*}
t_n ~\underset{n \rightarrow +\infty}{\longrightarrow}~t, \qquad W_1(\mu_n,\mu) ~\underset{n \rightarrow +\infty}{\longrightarrow}~ 0 \qquad \text{and} \qquad \NormC{\vb - \vb_n}{0}{K,\R^d} ~\underset{n \rightarrow +\infty}{\longrightarrow}~ 0, 
\end{equation*}
for some $(t,\mu,\vb) \in [0,T] \times \Pcal(K) \times C^0(K,\R^d)$. By Proposition \ref{prop:DiniDerSemiconcave}, we know that for every $R > 0$ the lower Dini derivative $\dn^- \Vcal(t,\mu)(1,\vb)$ coincides with the regularised $R$-lower derivative $\dn^-_{o,R} \Vcal(t,\mu)(1,\vb)$. By definition \eqref{eq:RegularisedLowerDer} of the latter, there exist two sequences $\epsilon_n,\delta_n \rightarrow 0^+$ such that 
\begin{equation*}
\frac{\Vcal \big(t + \epsilon_n,(\Id + \epsilon_n \vb_n)_{\#} \mu_n \big) - \Vcal(t,\mu_n)}{\epsilon_n} \leq \delta_n, 
\end{equation*}
for $n \geq 1$ large enough, which allows to obtain 
\begin{equation*}
\dn^- \Vcal(t,\mu^*(t))(1,\vb) = \dn^-_{o,R} \Vcal(t,\mu^*(t))(1,\vb) \leq \liminf_{n \rightarrow +\infty} \bigg[ \frac{\Vcal \big(t + \epsilon_n,(\Id + \epsilon_n \vb_n)_{\#} \mu \big) - \Vcal(t,\mu_n)}{\epsilon_n} \bigg] \leq 0, 
\end{equation*}
for $\Lcal^1$-almost every $t \in [0,T]$. One can also check that $(t,\mu) \in [0,T] \times \Pcal_1(K) \rightrightarrows v(t,\mu,U)_{|K}$ has closed graph under \ref{hyp:R}-$(i)$, which allows us to conclude that $\Graph(\G_{K}(\cdot,\cdot))$ is closed.
\end{proof}

\begin{rmk}[On the feedback mapping defined in \eqref{eq:FeedbackSet}]
It has been identified in set-valued analysis (see e.g. \cite[Chapter 2]{Aubin1984}) that the minimal regularity requirement needed to generalise Peano's existence theorem to differential inclusions is the upper-semicontinuity of the right-hand side with respect to the state variable, along with the convexity of the set of admissible velocities. In the present context, the existence of global solutions to the feedback-form continuity inclusion
\begin{equation*}
\left\{
\begin{aligned}
& \partial_t \mu^*(t) \in - \Div_x \big( \G(t,\mu^*(t)) \mu^*(t) \big), \\
& \mu^*(0) = \mu^0, 
\end{aligned}
\right.
\end{equation*}
is ensured in the absence of convexity by the existence of an optimal trajectory $\mu^*(\cdot)$ for $(\Ppazo)$.
\end{rmk}



\setcounter{section}{0} 
\renewcommand{\thesection}{A} 

\section{Linearisation formulas for non-local flows}
\label{section:AppendixFlowDiff}

\setcounter{Def}{0} \renewcommand{\thethm}{A.\arabic{Def}} 
\setcounter{equation}{0} \renewcommand{\theequation}{A.\arabic{equation}}

In this section, we prove several first-order linearisation formulas for the non-local flows $(\Phi_{(\tau,t)}[\mu_\tau](\cdot))_{t \in [0,T]}$ introduced in Definition \ref{def:NonlocalFlows}. Throughout this section, we consider a velocity field $v : [0,T] \times \Pcal_c(\R^d) \times \R^d \mapsto \R^d$ satisfying hypotheses \ref{hyp:CE} of Section \ref{subsection:NonLocalCE}, along with a closed ball $K := B(0,r)$ where $r > 0$. 

Before stating the first-order linearisation results, we start by absolute continuity and Lipschitz regularity estimates on non-local flows that will be useful throughout the remainder of this section. 

\begin{lem}[Regularity estimates on non-local flows]
\label{lem:LipFlow}
There exists a constant $C_K > 0$ such that for every $\tau_1,\tau_2 \in [0,T]$ with $\tau_1 \leq \tau_2$, any $\mu,\nu \in \Pcal(K)$ and all $x,y \in K$, it holds
\begin{equation*}
\big| \Phi_{(\tau_2,t)}[\nu](y) - \Phi_{(\tau_1,t)}[\mu](x) \big| \leq C_K \bigg( |x-y| + W_1(\mu,\nu) + \INTSeg{m(s)}{s}{\tau_1}{\tau_2} \bigg), 
\end{equation*}
for all times $t \in [0,T]$.  
\end{lem}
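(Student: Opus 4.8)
The plan is to prove Lemma \ref{lem:LipFlow} directly from the fixed-point characterisation \eqref{eq:NonLocalFlow_Def} of non-local flows, using a Gr\"onwall argument in the spatial variable. The key observation is that we need to compare two flows that differ in \emph{three} ways: the starting time ($\tau_1$ versus $\tau_2$), the starting measure ($\mu$ versus $\nu$), and the starting point ($x$ versus $y$). The natural strategy is to handle these perturbations one at a time by inserting crossed terms, but in fact it is cleaner to compare everything at once against a common time horizon.

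First I would fix $\tau_1 \leq \tau_2$ and assume $t \geq \tau_2$ (the cases $t \leq \tau_1$ and $\tau_1 \leq t \leq \tau_2$ are handled analogously or are trivial, using the semigroup property of Theorem \ref{thm:NonLocalCE} to reduce to the forward case). By Theorem \ref{thm:NonLocalCE}, all the relevant flows map $K$ into a common compact set $K' := B(0,R_r)$, so the regularity constants $l_{K'}(\cdot), L_{K'}(\cdot) \in L^1([0,T],\R_+)$ from \ref{hyp:CE}-$(ii)$ and the sublinearity bound $m(\cdot)$ from \ref{hyp:CE}-$(i)$ all apply uniformly. Writing $g(t) := |\Phi_{(\tau_2,t)}[\nu](y) - \Phi_{(\tau_1,t)}[\mu](x)|$ and subtracting the integral equations \eqref{eq:NonLocalFlow_Def} for the two flows, I would split the difference into (a) a boundary term $|y - x + \int_{\tau_1}^{\tau_2} v(s, \Phi_{(\tau_1,s)}[\mu](\cdot)_{\#}\mu, \Phi_{(\tau_1,s)}[\mu](x))\,\dn s|$, which is bounded by $|x-y| + \int_{\tau_1}^{\tau_2} m(s)(1 + 2R_r)\,\dn s$ using the sublinear growth on $K'$; and (b) the integral $\int_{\tau_2}^t |v(s, \Phi_{(\tau_2,s)}[\nu](\cdot)_{\#}\nu, \Phi_{(\tau_2,s)}[\nu](y)) - v(s, \Phi_{(\tau_1,s)}[\mu](\cdot)_{\#}\mu, \Phi_{(\tau_1,s)}[\mu](x))|\,\dn s$. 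For the integrand in (b), I would use \ref{hyp:CE}-$(ii)$ to bound it by $l_{K'}(s)\, g(s) + L_{K'}(s)\, W_1(\Phi_{(\tau_2,s)}[\nu](\cdot)_{\#}\nu, \Phi_{(\tau_1,s)}[\mu](\cdot)_{\#}\mu)$.

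The one remaining ingredient is to control the Wasserstein distance between the two pushforward measures. Here I would use a transport plan built from an optimal plan $\gamma \in \Gamma_o(\mu,\nu)$: the plan $(\Phi_{(\tau_1,s)}[\mu] \circ \pi^1, \Phi_{(\tau_2,s)}[\nu] \circ \pi^2)_{\#}\gamma$ is admissible between the two pushforwards, so $W_1(\Phi_{(\tau_2,s)}[\nu](\cdot)_{\#}\nu, \Phi_{(\tau_1,s)}[\mu](\cdot)_{\#}\mu) \leq \int |\Phi_{(\tau_2,s)}[\nu](y') - \Phi_{(\tau_1,s)}[\mu](x')|\,\dn\gamma(x',y')$. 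Denoting by $G(s)$ the right-hand side integral (which is exactly $g(s)$ averaged over $\gamma$ with initial separation $|x'-y'|$ instead of $|x-y|$), the same estimate run in integrated form against $\gamma$ gives a closed inequality: $G(t) \leq W_1(\mu,\nu) + \int_{\tau_1}^{\tau_2} m(s)(1+2R_r)\,\dn s + \int_{\tau_2}^t (l_{K'}(s) + L_{K'}(s))\, G(s)\,\dn s$, whence Gr\"onwall yields $G(t) \leq C_K'(W_1(\mu,\nu) + \int_{\tau_1}^{\tau_2} m(s)\,\dn s)$. Feeding this back into the pointwise inequality for $g(t)$ and applying Gr\"onwall once more produces the claimed bound with $C_K := (1 + 2R_r) \exp(\|l_{K'}(\cdot)\|_1 + \|L_{K'}(\cdot)\|_1)$ up to a harmless constant factor.

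The main obstacle, and the only genuinely delicate point, is the bookkeeping in the coupled Gr\"onwall estimate: because the $W_1$-term couples the pointwise quantity $g$ at $(x,y)$ to its average $G$ over all $(x',y') \in \supp(\gamma)$, one cannot close the loop on $g$ alone and must first derive the estimate for $G$ and only then for $g$. One must also be careful that the crossed-term boundary contribution $\int_{\tau_1}^{\tau_2} m(s)\,\dn s$ is picked up once (not with an extra exponential factor multiplying it spuriously); this is automatic as long as the Gr\"onwall kernel is $l_{K'} + L_{K'}$ supported on $[\tau_2, t]$ and the boundary term enters as part of the constant. Everything else is a routine application of hypotheses \ref{hyp:CE}-$(i),(ii)$ together with the support and absolute-continuity estimates \eqref{eq:SuppAC_Est} of Theorem \ref{thm:NonLocalCE}.
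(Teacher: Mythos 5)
Your proposal is correct and mirrors the paper's proof closely: same decomposition of the difference of integral equations into a boundary term over $[\tau_1,\tau_2]$ (bounded via sublinearity) and an integral term over $[\tau_2,t]$ (bounded via the Lipschitz hypotheses), same construction of the admissible plan $(\Phi_{(\tau_1,s)}[\mu]\circ\pi^1,\Phi_{(\tau_2,s)}[\nu]\circ\pi^2)_{\#}\gamma$ to control the $W_1$-term, and same two-stage Gr\"onwall argument — first closing the loop on the integrated quantity (hence on $W_1(\mu(s),\nu(s))$), then feeding that bound back into the pointwise estimate. The only cosmetic difference is that the paper dispatches the remaining time ranges by "changing the sign of the relevant integrals" rather than by invoking the semigroup property, but the substance is identical.
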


\begin{proof}
The proof of this result essentially relies on a bootstrapped application of Gronwall's lemma. To simplify the notations, we introduce 
\begin{equation*}
\mu(t) := \Phi_{(\tau_1,t)}[\mu](\cdot)_{\#} \mu \qquad \text{and} \qquad \nu(t) := \Phi_{(\tau_2,t)}[\nu](\cdot)_{\#} \nu, 
\end{equation*}
defined for all times $t \in [0,T]$. We consider only the case $\tau_2 \leq t \leq T$, the other scenario being identical up to changing the sign of relevant integrals.

Observe first that by Theorem \ref{thm:NonLocalCE}, there exists $R_r \geq r > 0$ such that $\supp(\mu(t)) \cup \supp(\nu(t)) \subset K'$ where $K':= B(0,R_r)$ for all times $t \in [0,T]$. By \eqref{eq:NonLocalFlow_Def}, it then holds for any $x,y \in K$ 
\begin{equation}
\label{eq:LipFlowEst1}
\begin{aligned}
\big| \Phi_{(\tau_2,t)}[\nu](y) - \Phi_{(\tau_1,t)}[\mu](x) \big| & \leq |x-y| + \INTSeg{ \big| v \big(s,\mu(s),\Phi_{(\tau_1,s)}[\mu](x) \big) \big|}{s}{\tau_1}{\tau_2} \\
& \hspace{0.4cm} + \INTSeg{\Big| v \big( s,\nu(s),\Phi_{(\tau_2,s)}[\nu](y) \big) - v \big(s,\mu(s),\Phi_{(\tau_1,s)}[\mu](x) \big) \Big|}{s}{\tau_2}{t} \\
& \leq |x-y| + \INTSeg{m_r(s)}{s}{\tau_1}{\tau_2} + \INTSeg{L_{K'}(s) W_1(\mu(s),\nu(s))}{s}{\tau_2}{t} \\
& \hspace{4.45cm} + \INTSeg{l_{K'}(s) \big| \Phi_{(\tau_2,s)}[\nu](y) - \Phi_{(\tau_1,s)}[\mu](x) \big|}{s}{\tau_2}{t}, 
\end{aligned}
\end{equation}
for all times $t \in [\tau_1,T]$, with $m_r(\cdot) := (1+2 R_r) m(\cdot)$ and where we used \ref{hyp:CE}-$(i),(ii)$. Integrating \eqref{eq:LipFlowEst1} against a $1$-optimal transport plan $\gamma \in \Gamma_o(\mu,\nu)$ and applying Fubini's theorem, we further obtain
\begin{equation}
\label{eq:LipFlowEst2}
\begin{aligned}
\INTDom{\big| \Phi_{(\tau_2,t)}[\nu](y) - \Phi_{(\tau_1,t)}[\mu](x) \big|}{\R^{2d}}{\gamma(x,y)} & \leq W_1(\mu,\nu) + \INTSeg{m_r(s)}{s}{\tau_1}{\tau_2} + \INTSeg{L_{K'}(s) W_1(\mu(s),\nu(s))}{s}{\tau_2}{t} \\
& \hspace{0.4cm} + \INTSeg{l_{K'}(s) \Big( \INTDom{\big| \Phi_{(\tau_2,s)}[\nu](y) - \Phi_{(\tau_1,s)}[\mu](x) \big|}{\R^{2d}}{\gamma(x,y)} \Big)}{s}{\tau_2}{t}, 
\end{aligned}
\end{equation}
which yields by an application of Gr\"onwall's lemma that 
\begin{equation}
\label{eq:LipFlowEst3}
\begin{aligned}
& \INTDom{\big| \Phi_{(\tau_2,t)}[\nu](y) - \Phi_{(\tau_1,t)}[\mu](x) \big|}{\R^{2d}}{\gamma(x,y)} \\
& \hspace{2cm} \leq \bigg( W_1(\mu,\nu) + \INTSeg{m_r(s)}{s}{\tau_1}{\tau_2} + \INTSeg{L_{K'}(s) W_1(\mu(s),\nu(s))}{s}{\tau_2}{t} \bigg) \exp \big( \Norm{l_{K'}(\cdot)}_1 \big),
\end{aligned}
\end{equation}
for all times $t \in [\tau_2,T]$. Observing now that $(\Phi_{(\tau_1,t)}[\mu] \circ \pi^1 , \Phi_{(\tau_2,t)}[\nu] \circ \pi^2)_{\#} \gamma \in \Gamma(\mu(t),\nu(t))$ for all times $t \in [0,T]$, we can deduce from \eqref{eq:LipFlowEst3} that
\begin{equation*}
W_1(\mu(t),\nu(t)) \leq \bigg( W_1(\mu,\nu) + \INTSeg{m_r(s)}{s}{\tau_1}{\tau_2} + \INTSeg{L_{K'}(s) W_1(\mu(s),\nu(s))}{s}{\tau_2}{t} \bigg) \exp \big( \Norm{l_{K'}(\cdot)}_1 \big),
\end{equation*}
which by another application of Gr\"onwall's lemma provides the distance estimate
\begin{equation}
\label{eq:LipFlowEst4}
W_1(\mu(t),\nu(t)) \leq C_K' \bigg( W_1(\mu,\nu) + \INTSeg{m_r(s)}{s}{\tau_1}{\tau_2} \bigg), 
\end{equation}
for all times $t \in [\tau_2,T]$, where $C_K' > 0$ is a constant which only depends on $K := B(0,r)$ via hypotheses \ref{hyp:CE}. Plugging \eqref{eq:LipFlowEst4} into \eqref{eq:LipFlowEst1} and applying yet again Gr\"onwall's lemma, we can finally conclude 
\begin{equation*}
\big| \Phi_{(\tau_2,t)}[\nu](y) - \Phi_{(\tau_1,t)}[\mu](x) \big| \leq C_K \bigg( |x-y| + W_1(\mu,\nu) + \INTSeg{m(s)}{s}{\tau_1}{\tau_2} \bigg), 
\end{equation*}
for all times $t \in [\tau_2,T]$ and some constant $C_K > 0$. The case $0 \leq t \leq \tau_2 \leq T$ being similar, this concludes the proof of our lemma. 
\end{proof}

In what follows, we introduce notions of measurability, integrability and Lebesgue points adapted to maps with values in $C^0(\Pcal_c(\R^d) \times \R^d,\R^d)$. 

\begin{Def}[Measurability and integrability of $C^0$-valued maps]
\label{def:IntegralC0}
A mapping $V : [0,T] \rightarrow C^0(\Pcal_c(\R^d) \times \R^d,\R^d)$ is said to be \textnormal{$\Lcal^1$-measurable} if for every compact set $K \subset \R^d$, its restriction
\begin{equation*}
V_{|K} : t \in [0,T] \mapsto V(t)_{|\Pcal(K) \times K} \in C^0(\Pcal(K) \times K,\R^d),
\end{equation*}
is $\Lcal^1$-measurable. Similarly, we say that $V : [0,T] \rightarrow C^0(\R^d,\R^d)$ is \textnormal{locally integrable} if its restrictions $V_{K} : [0,T] \rightarrow C^0(\Pcal(K) \times K,\R^d)$ are integrable in the sense of Bochner for every compact set $K \subset \R^d$. 
\end{Def}

Using this notion of integrability, we can derive the following useful result concerning the Lebesgue points of non-local velocity fields satisfying hypotheses \ref{hyp:CE}.  

\begin{lem}[Uniform Lebesgue points of non-local velocity fields]
\label{lem:LebesguePoints}
There exists a subset $\T^* \subset (0,T)$ of full $\Lcal^1$-measure such that every compact set $K \subset \R^d$, the elements of $\T^*$ are Lebesgue point of the maps $v_{|K} : t \in [0,T] \rightarrow C^0(\Pcal(K) \times K,\R^d)$, $\D_x v_{|K} : t \in [0,T] \rightarrow C^0(\Pcal(K) \times K,\R^{d \times d})$ and $\D_{\mu} v_{|K} : t \in [0,T] \rightarrow C^0(\Pcal(K) \times K \times K,\R^{d \times d})$. 
\end{lem}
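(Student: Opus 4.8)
The plan is to reduce the statement about the $C^0(\Pcal(K)\times K,\R^d)$-valued maps to a countable union of full-measure sets obtained from the standard Bochner-space Lebesgue point theorem recalled in \eqref{eq:LebesguePoint_Banach}. The first observation is that under \ref{hyp:CE}, for each fixed compact $K\subset\R^d$ the restrictions $v_{|K}$, $\D_x v_{|K}$ and $\D_\mu v_{|K}$ are genuinely Bochner-integrable maps from $[0,T]$ into the separable Banach spaces $C^0(\Pcal(K)\times K,\R^d)$, $C^0(\Pcal(K)\times K,\R^{d\times d})$ and $C^0(\Pcal(K)\times K\times K,\R^{d\times d})$ respectively (separability holds because $\Pcal(K)\times K$, and $\Pcal(K)\times K\times K$, are compact metric spaces in the narrow topology on $\Pcal(K)$, so the corresponding spaces of continuous functions are separable). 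Measurability in the sense of Definition \ref{def:IntegralC0} is part of hypotheses \ref{hyp:CE}-$(i),(iii),(iv)$, and the pointwise bounds together with the uniform continuity coming from \ref{hyp:CE}-$(ii),(iii),(iv)$ give that $t\mapsto\NormC{v(t,\cdot,\cdot)_{|\Pcal(K)\times K}}{0}{\Pcal(K)\times K}\in L^1([0,T],\R_+)$, and analogously for the two derivatives; hence all three restricted maps lie in $L^1([0,T],X)$ for the appropriate $X$. Applying \eqref{eq:LebesguePoint_Banach} to each of these three maps yields, for each fixed $K$, a full-$\Lcal^1$-measure set $\T_K\subset(0,T)$ of common Lebesgue points.

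The second step is to remove the dependence on $K$ by a countable exhaustion. Set $K_n:=B(0,n)$ for $n\in\N$, let $\T^*:=\bigcap_{n\in\N}\T_{K_n}$, and note that $\T^*\subset(0,T)$ still has full $\Lcal^1$-measure as a countable intersection of full-measure sets. Now for an arbitrary compact $K\subset\R^d$ there is $n$ with $K\subset K_n$, and one must check that a Lebesgue point of $v_{|K_n}$ is automatically a Lebesgue point of $v_{|K}$. This is because the restriction operator $C^0(\Pcal(K_n)\times K_n,\R^d)\to C^0(\Pcal(K)\times K,\R^d)$ is a $1$-Lipschitz linear map for the sup-norms (indeed $\Pcal(K)\subset\Pcal(K_n)$ and $K\subset K_n$, so sup over the smaller set is dominated by sup over the larger), so that
\[
\frac{1}{h}\INTSeg{\NormC{v(t,\cdot,\cdot)-v(\tau,\cdot,\cdot)}{0}{\Pcal(K)\times K}}{t}{\tau}{\tau+h}\leq\frac{1}{h}\INTSeg{\NormC{v(t,\cdot,\cdot)-v(\tau,\cdot,\cdot)}{0}{\Pcal(K_n)\times K_n}}{t}{\tau}{\tau+h}\xrightarrow[h\to0^+]{}0
\]
for every $\tau\in\T_{K_n}$, and similarly for $\D_x v$ and $\D_\mu v$. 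Therefore every $\tau\in\T^*$ is a common Lebesgue point of $v_{|K}$, $\D_x v_{|K}$ and $\D_\mu v_{|K}$ for all compact $K$, which is exactly the assertion.

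The main point requiring care — and the only genuine obstacle — is verifying that the three maps really are Bochner-integrable into separable Banach spaces, i.e. that narrow-topology continuity on $\Pcal(K)$ combined with the quantitative bounds in \ref{hyp:CE} produces honest $C^0$-valued $L^1$ functions; once this is granted, everything else is the elementary countable-exhaustion argument above. I would include a short remark that the separability of $C^0(\Pcal(K)\times K,\R^{d'})$ follows from compactness and metrizability of $\Pcal(K)\times K$ (via Proposition \ref{prop:WassProp}, the $W_1$-metric metrizes the narrow topology on $\Pcal(K)$ since $K$ is compact) together with the Stone–Weierstrass theorem, and that the strong measurability needed for the Bochner theory is equivalent, by Pettis' theorem, to the weak measurability plus essential separable range that \ref{hyp:CE} furnishes.
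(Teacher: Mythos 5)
Your proposal is correct and follows essentially the same route as the paper: exhaust $\Pcal_c(\R^d)\times\R^d$ by the countable family $\Pcal(B(0,n))\times B(0,n)$, apply the Banach-space Lebesgue point theorem \eqref{eq:LebesguePoint_Banach} to the restricted maps on each ball, intersect the resulting full-measure sets, and transfer Lebesgue-point status to arbitrary compact $K$ via the monotonicity of sup-norms under restriction. The paper likewise invokes Pettis' theorem (citing the measurability construction for $\D_\mu v$ from \cite{Gangbo2019}) to pass from the pointwise measurability in \ref{hyp:CE} to Bochner measurability of the $C^0$-valued maps; your phrasing that measurability "is part of" \ref{hyp:CE} slightly overstates what the hypotheses literally provide, but you immediately correct this by acknowledging that Pettis' theorem is what bridges weak/pointwise measurability and strong measurability, which matches the paper's level of care.
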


\begin{proof}
First, observe that $\Pcal_c(\R^d) \times \R^d = \cup_{n \geq 1} \Pcal(B(0,n)) \times B(0,n)$, where the sets $\Pcal(B(0,n))$ are compact in the $W_1$-topology for any $n \geq 1$ as a consequence of \cite[Theorem 7.1.5]{AGS}. Hence, each of the sets $\Pcal(B(0,n)) \times B(0,n)$ is compact in the product $W_1 \times |\cdot|$-topology. Moreover, by \ref{hyp:CE}-$(i)$, it holds 
\begin{equation*}
|v(t,\mu,x)| \leq m(t) \Big(1 + |x| + \max_{y \in \supp(\mu)} |y| \Big), 
\end{equation*} 
for $\Lcal^1$-almost every $t \in [0,T]$ and any $(\mu,x) \in  \Pcal_c(\R^d) \times \R^d$. In particular, the restricted maps 
\begin{equation*}
v_{|B(0,n)} : t \in [0,T] \mapsto v(t,\cdot,\cdot)_{|\Pcal(B(0,n)) \times B(0,n)} \in C^0(\Pcal(B(0,n)) \times B(0,n),\R^d), 
\end{equation*} 
are $\Lcal^1$-measurable and Bochner integrable for every $n \geq 1$. Whence, there exists a subset $\T_v^n \subset (0,T)$ of full $\Lcal^1$-measure whose elements are Lebesgue points of $t \in [0,T] \mapsto v_{|B(0,n)}(t,\cdot,\cdot)$ in the sense of \eqref{eq:LebesguePoint_Banach}. Consider the full $\Lcal^1$-measure subset $\T_v := \cap_{n \geq 1} \T_v^n \subset (0,T)$ and an arbitrary compact set $K \subset \R^d$. Then, notice that for every $\tau \in \T_v$ and $h > 0$ with $\tau+h \in [0,T]$, it holds
\begin{equation*}
\begin{aligned}
& \frac{1}{h} \INTSeg{\NormC{v_{|K}(t) - v_{|K}(\tau)}{0}{\Pcal(K) \times K,\R^d}}{t}{\tau}{\tau+h} \\
& \hspace{2.25cm} \leq \frac{1}{h} \INTSeg{\NormC{v_{|B(0,N_K)}(t) - v_{|B(0,N_K)}(\tau)}{0}{\Pcal(B(0,N_K)) \times B(0,N_K),\R^d}}{t}{\tau}{\tau+h} ~\underset{h \rightarrow 0^+}{\longrightarrow}~ 0,
\end{aligned}
\end{equation*}
for every $N_K \geq 1$ such that $K \subset B(0,N_K)$. 

Thus, we have proven that the elements of $\T_v$ are Lebesgue points of $t \in [0,T] \mapsto v_{|K}(t) \in C^0(\Pcal(K) \times K,\R^d)$ for any compact set $K \subset \R^d$. Applying the same reasoning while using hypotheses \ref{hyp:CE}-$(ii),(iii),(iv)$, one can obtain the existence of two subsets $\T_{v_x} ,\T_{v_{\mu}} \subset (0,T)$ of full $\Lcal^1$-measures whose elements respectively are Lebesgues points of 
\begin{equation*}
\D_x v_{|K} : t \in [0,T] \mapsto \D_x v(t,\cdot,\cdot)_{| \Pcal(K) \times K} \in C^0(\Pcal(K) \times K,\R^{d \times d}), 
\end{equation*}
and
\begin{equation*}
\D_{\mu} v_{|K} : t \in [0,T] \mapsto \D_{\mu} v(t,\cdot,\cdot)(\cdot)_{| \Pcal(K) \times K \times K} \in C^0(\Pcal(K) \times K \times K,\R^{d \times d}), 
\end{equation*}
for every compact set $K \subset \R^d$. Here, the measurability of $\D_{\mu} v_{|K}(\cdot)$ can be deduced from the construction detailed e.g. in \cite[Definition 3.1 and Remark 3.2]{Gangbo2019} together with standard arguments relying on Pettis' theorem for mappings with values in separable Banach spaces \cite[Chapter II - Theorem 2]{DiestelUhl}. Thus, defining the subset of full $\Lcal^1$-measure $\T^* := \T_v \cap \T_{v_x} \cap \T_{v_{\mu}} \subset (0,T)$ concludes the proof.
\end{proof}

In the following proposition, we extend a well-known result about linearisations of flows with respect to the space variable. 

\begin{prop}[Space derivatives of non-local flows]
\label{prop:LinSpace_Flows}
For every $\mu \in \Pcal(K)$ and all $\tau,t \in [0,T]$, the map $x \in K \mapsto \Phi_{(\tau,t)}[\mu](x) \in \R^d$ is continuously Fr\'echet-differentiable. Moreover, for any $x,y \in K$, the following Taylor expansion holds
\begin{equation}
\label{eq:TaylorSpace}
\Phi_{(\tau,t)}[\mu](y) = \Phi_{(\tau,t)}[\mu](x) + \D_x \Phi_{(\tau,t)}[\mu](x) (y-x) + o_{\tau,t,\mu,x,K}(|x-y|), 
\end{equation}
where
\begin{equation*}
\sup_{(\tau,t,\mu,x) \in [0,T] \times [0,T] \times \Pcal(K) \times K} o_{\tau,t,\mu,x,K}(|x-y|) = o_K(|x-y|).
\end{equation*}
Here, for all $x \in K$, the map $t \in [0,T] \mapsto \D_x \Phi_{(\tau,t)}[\mu](x) \in \R^{d \times d}$ is the unique solution of the linearised Cauchy problem 
\begin{equation}
\label{eq:LinearisedSpace_Cauchy}
\left\{
\begin{aligned}
\partial_t w(t,x) & = \D_x v \Big( t , \mu(t) , \Phi_{(\tau,t)}[\mu](x) \Big) w(t,x),  \\
w(\tau,x) & = \Id,
\end{aligned}
\right.
\end{equation}
with $\mu(t) := \Phi_{(\tau,t)}[\mu](\cdot)_{\#} \mu$ for all times $t \in [0,T]$. 
\end{prop}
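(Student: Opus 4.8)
The plan is to prove \eqref{eq:TaylorSpace} and \eqref{eq:LinearisedSpace_Cauchy} by a standard Picard-type argument adapted to the non-local setting, with the main subtlety being that the velocity field depends on the pushforward measure $\mu(t) = \Phi_{(\tau,t)}[\mu](\cdot)_{\#}\mu$, which is itself frozen once the flow $\Phi_{(\tau,\cdot)}[\mu](\cdot)$ is known. First I would observe that, thanks to Theorem \ref{thm:NonLocalCE}, the curve $t \mapsto \mu(t)$ is uniquely determined and supported in a fixed ball $K' := B(0,R_r)$, so that the map $\vb : (t,x) \mapsto v(t,\mu(t),x)$ is an honest (measure-independent) Carathéodory vector field on $[0,T] \times K'$ which is $\Lcal^1$-measurable in $t$ and $C^1$ in $x$ with $\D_x\vb(t,x) = \D_x v(t,\mu(t),x)$ locally bounded and continuous by \ref{hyp:CE}-$(i),(ii),(iii)$. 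At this point the flow $\Phi_{(\tau,\cdot)}[\mu](\cdot)$ is just the classical flow of $\vb$, so the desired differentiability in $x$ and the linearised equation \eqref{eq:LinearisedSpace_Cauchy} are a particular case of the classical theory of differentiability of solutions of ODEs with respect to initial conditions (see e.g. the Carathéodory-type results in \cite{Clarke}). The only genuinely new point relative to the textbook statement is the \emph{uniformity} of the remainder in \eqref{eq:TaylorSpace} with respect to $(\tau,t,\mu,x)$.

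Concretely, I would proceed in the following steps. \textbf{Step 1:} Fix $r>0$ and set $K' := B(0,R_r)$ as in Lemma \ref{lem:AdmEst}/Theorem \ref{thm:NonLocalCE}, so that $\supp(\mu(t)) \subset K'$ for all $t$ and all $\mu \in \Pcal(K)$. Define the candidate matrix-valued curve $W(t,x)$ as the unique solution of the linear Cauchy problem \eqref{eq:LinearisedSpace_Cauchy}; existence, uniqueness and the bound $\Norm{W(t,x)} \leq \exp(\Norm{l_{K'}(\cdot)}_1)$ follow from Gr\"onwall's lemma since $t \mapsto \D_x v(t,\mu(t),\Phi_{(\tau,t)}[\mu](x))$ is $\Lcal^1$-integrable with $\Norm{\D_x v(t,\mu(t),\cdot)} \leq l_{K'}(t)$ by \ref{hyp:CE}-$(ii)$. \textbf{Step 2:} For $x,y \in K$ write $z(t) := \Phi_{(\tau,t)}[\mu](y) - \Phi_{(\tau,t)}[\mu](x) - W(t,x)(y-x)$, which satisfies $z(\tau) = 0$ and, using the integral form \eqref{eq:NonLocalFlow_Def} together with the fundamental theorem of calculus applied to $s \mapsto \D_x v$ along the segment joining $\Phi_{(\tau,s)}[\mu](x)$ and $\Phi_{(\tau,s)}[\mu](y)$,
\begin{equation*}
z(t) = \INTSeg{\D_x v\big(s,\mu(s),\Phi_{(\tau,s)}[\mu](x)\big) z(s)}{s}{\tau}{t} + \INTSeg{R(s,x,y)}{s}{\tau}{t},
\end{equation*}
where $R(s,x,y)$ collects the terms
\begin{equation*}
R(s,x,y) = \bigg( \int_0^1 \Big[ \D_x v\big(s,\mu(s),\Phi_{(\tau,s)}[\mu](x) + \theta(\Phi_{(\tau,s)}[\mu](y)-\Phi_{(\tau,s)}[\mu](x))\big) - \D_x v\big(s,\mu(s),\Phi_{(\tau,s)}[\mu](x)\big) \Big] \dn\theta \bigg) \big( \Phi_{(\tau,s)}[\mu](y)-\Phi_{(\tau,s)}[\mu](x) \big).
\end{equation*}
\textbf{Step 3:} By Lemma \ref{lem:LipFlow} one has $|\Phi_{(\tau,s)}[\mu](y)-\Phi_{(\tau,s)}[\mu](x)| \leq C_K |x-y|$ uniformly in $(\tau,s,\mu)$, so the continuity of $(\mu,x) \mapsto \D_x v(s,\mu,x)$ on the compact set $\Pcal_1(K') \times K'$ (hence its uniform continuity) forces $|R(s,x,y)| \leq \omega_{K'}(C_K|x-y|)\, C_K|x-y|$ for a modulus $\omega_{K'}$ independent of $(\tau,\mu,x)$; integrating and applying Gr\"onwall to $|z(t)|$ then gives $|z(t)| \leq \exp(\Norm{l_{K'}(\cdot)}_1) \Norm{\omega_{K'}(\cdot)}_1 \, C_K |x-y| \cdot \tilde\omega_{K'}(|x-y|)$, which is $o_K(|x-y|)$ uniformly in $(\tau,t,\mu,x)$; this is exactly \eqref{eq:TaylorSpace}. \textbf{Step 4:} From the uniform validity of \eqref{eq:TaylorSpace} conclude that $x \mapsto \Phi_{(\tau,t)}[\mu](x)$ is Fr\'echet-differentiable with derivative $W(t,x)$; continuity of $x \mapsto W(t,x)$ follows from Gr\"onwall applied to the difference of two copies of \eqref{eq:LinearisedSpace_Cauchy}, using that $x \mapsto \Phi_{(\tau,t)}[\mu](x)$ is continuous and that $\D_x v(s,\mu(s),\cdot)$ is continuous, which yields $\D_x \Phi_{(\tau,t)}[\mu](x) = W(t,x)$ and closes the proof.

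\textbf{Main obstacle.} The technical heart is Step 3: obtaining the remainder estimate \emph{uniformly} in the four parameters $(\tau,t,\mu,x)$. The key ingredients making this work are already available in the excerpt — the uniform Lipschitz bound on the flows from Lemma \ref{lem:LipFlow}, the uniform-in-$t$ support confinement to $K'$ from Theorem \ref{thm:NonLocalCE}, and, crucially, hypothesis \ref{hyp:CE}-$(iii)$ which gives \emph{joint} continuity of $(\mu,x)\mapsto\D_x v(t,\mu,x)$ on $\Pcal_1(K')\times K'$, a compact set in the product topology; this compactness is what upgrades pointwise continuity to a uniform modulus $\omega_{K'}$ and thus makes the $o_K(|x-y|)$ genuinely independent of $(\mu,x)$. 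Uniformity in $\tau$ and $t$ is then automatic since all the constants ($C_K$, $\Norm{l_{K'}(\cdot)}_1$, $\Norm{\omega_{K'}(\cdot)}_1$) are time-integrals or suprema over $[0,T]$ that do not see the particular endpoints. Everything else is routine Gr\"onwall bookkeeping.
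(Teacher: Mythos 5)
Your proof is correct and uses the same ingredients as the paper (Lemma \ref{lem:LipFlow} for the uniform Lipschitz bound, the $L^1$ bound on $\D_x v$ from \ref{hyp:CE}-$(ii)$, joint continuity from \ref{hyp:CE}-$(iii)$ on the compact set $\Pcal_1(K')\times K'$, Gr\"onwall, and dominated convergence in time), but with a genuinely different decomposition of the remainder. The paper first invokes the classical theory of differentiability of flows (citing \cite{BressanPiccoli}) to get that $\Phi_{(\tau,t)}[\mu](\cdot)$ is $C^1$ with derivative solving \eqref{eq:LinearisedSpace_Cauchy}, establishes $\emph{a priori}$ continuity and uniform boundedness of $x\mapsto\D_x\Phi_{(\tau,t)}[\mu](x)$, and then estimates the Taylor remainder by applying the fundamental theorem of calculus to $\lambda\mapsto v(s,\mu(s),\Phi_{(\tau,s)}[\mu](x+\lambda(y-x)))$ — i.e.\ along the segment in the \emph{domain} of the flow — producing a remainder that involves the difference of the products $\D_x v\cdot\D_x\Phi$ at two base points, which is then killed by dominated convergence. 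You instead apply FTC to $v(s,\mu(s),\cdot)$ along the segment joining $\Phi_{(\tau,s)}[\mu](x)$ and $\Phi_{(\tau,s)}[\mu](y)$ in the \emph{range} of the flow, isolate the linear term $\D_x v(s,\mu(s),\Phi_{(\tau,s)}[\mu](x))z(s)$, and run Gr\"onwall on the residual $z(t)$. The upshot of your route is that you obtain Fr\'echet-differentiability and the remainder estimate simultaneously from the Gr\"onwall bound, so you never need to first cite the classical variational equation nor establish the a priori continuity of $\D_x\Phi_{(\tau,s)}[\mu](\cdot)$; the paper's route is slightly shorter because it offloads existence of the derivative to a reference and only has to argue uniformity. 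One wording slip worth fixing: in Step 2 you speak of applying FTC ``to $s\mapsto\D_x v$'', but the displayed formula correctly shows FTC applied to $v(s,\mu(s),\cdot)$ in its spatial argument. Also, the modulus $\omega_{K'}$ in Step 3 should in general be allowed to depend on $s$ (the hypotheses give continuity of $(\mu,x)\mapsto\D_x v(s,\mu,x)$ for a.e.\ $s$, not equicontinuity in $s$), with the uniformity in time recovered at the last step by dominated convergence against the $L^1$ bound $l_{K'}(\cdot)$; this is exactly what the paper does and is implicitly what your notation $\Norm{\omega_{K'}(\cdot)}_1$ encodes, but it would be worth spelling out.
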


\begin{proof}
Observe first that by Lemma \ref{lem:LipFlow}, there exists a constant $C_K > 0$ such that $x \in K \mapsto \Phi_{(\tau,t)}[\mu](x) \in \R^d$ is $C_K$-Lipschitz for any $\tau,t \in [0,T]$. By the classical variational equation (see e.g. \cite[Theorem 2.3.2]{BressanPiccoli}), the map $x \in \R^d \rightarrow \Phi_{(\tau,t)}[\mu](x) \in \R^d$ is Fr\'echet-differentiable, and its differential $w(t,x) := \D_x \Phi_{(\tau,t)}[\mu](x) \in \R^{d \times d}$ is the unique solution of the linearised Cauchy problem
\begin{equation}
\label{eq:VariationalEquation}
\left\{
\begin{aligned}
\partial_t w(t,x) & = \D_x v \Big( t, \mu(t), \Phi_{(\tau,t)}[\mu](x) \Big) w(t,x), \\
w(\tau,x) & = \Id.
\end{aligned}
\right.
\end{equation}
Observe now that by \ref{hyp:CE}-$(ii)$ and $(iii)$, the map $y \in \R^d \mapsto \D_x v(t,\mu(t),y) \in \R^d$ is uniformly continuous on compact sets, and there exists $l_K'(\cdot) \in L^1([0,T],\R_+)$ such that 
\begin{equation}
\label{eq:BoundVelocityDerivative}
\max_{x \in K} \Big | \D_x v \Big( t,\mu(t),\Phi_{(\tau,t)}[\mu](x) \Big) \Big| \leq l_K'(t),
\end{equation}
for $\Lcal^1$-almost every $t \in [0,T]$. These facts together with the $C_K$-Lipschitz regularity of $x \in K \mapsto \Phi_{(\tau,t)}[\mu](x) \in \R^d$ imply, by Gr\"onwall's Lemma and Lebesgue's dominated convergence theorem, that $x \in K \mapsto \D_x \Phi_{(\tau,t)}[\mu](x) \in \R^d$ is continuous and bounded, uniformly with respect to $\tau,t \in [0,T]$.

Fix now $x,y \in K$ and observe that by \eqref{eq:NonLocalFlow_Def} together with \eqref{eq:VariationalEquation}, it holds for all times $0 \leq \tau \leq t \leq T$
\begin{equation}
\label{eq:TaylorIntegralIneq}
\begin{aligned}
& \Big| \Phi_{(\tau,t)}[\mu](y) - \Phi_{(\tau,t)}[\mu](x) - \D_x \Phi_{(\tau,t)}[\mu](x)(y-x) \Big|  \\
& \leq \int_{\tau}^t \Big| v \Big(s,\mu(s),\Phi_{(\tau,s)}[\mu](y)\Big) - v \Big(s,\mu(s),\Phi_{(\tau,s)}[\mu](x)\Big) \\
& \hspace{5.35cm} - \D_x v \Big(s,\mu(s),\Phi_{(\tau,s)}[\mu](x)\Big) \D_x \Phi_{(\tau,s)}[\mu](x)(y-x) \Big| \textnormal{d}s \\
& \leq \int_0^1  \int_{\tau}^t \Big| \D_x v \Big(s,\mu(s),\Phi_{(\tau,s)}[\mu] \big( x + \lambda(y-x) \big)\Big) \D_x \Phi_{(\tau,s)}[\mu] \big( x+\lambda(y-x) \big)\\
& \hspace{5.35cm} - \D_x v \Big(s,\mu(s),\Phi_{(\tau,s)}[\mu](x) \Big) \D_x \Phi_{(\tau,s)}[\mu](x) \Big| |x-y| \textnormal{d}s \textnormal{d} \lambda
\end{aligned}
\end{equation}
where we used the integral version of Taylor's theorem along with Fubini's theorem. Notice now that since $z \in K \mapsto \D_x v \big(s,\mu(s),\Phi_{(\tau,s)}[\mu](z) \big) \in \R^d$ and $z \in K \mapsto \D_x \Phi_{(\tau,s)}[\mu](z) \in \R^d$ are continuous, it holds for every $\lambda \in [0,1]$ 
\begin{equation*}
\begin{aligned}
\Big| \D_x v \Big(s,\mu(s),\Phi_{(\tau,s)}[\mu] & \big( x + \lambda(y-x) \big)\Big) \D_x \Phi_{(\tau,s)}[\mu] \big( x+\lambda(y-x) \big)  \\
& - \D_x v \Big(s,\mu(s),\Phi_{(\tau,s)}[\mu](x) \Big) \D_x \Phi_{(\tau,s)}[\mu](x) \Big| |x-y| \leq o_{s,\mu,x,K}(|x-y|),
\end{aligned}
\end{equation*}
for $\Lcal^1$-almost every $s \in [\tau,T]$. Moreover, we have $\INTSeg{\sup_{(\mu,x) \in \Pcal(K) \times K} |o_{s,\mu,x,K}(|x-y|)|}{s}{0}{T} = o_K(|x-y|)$ as a consequence of the $C_K$-Lipschitz regularity of $\Phi_{(\tau,s)}[\mu](\cdot)$ over $K$, \eqref{eq:VariationalEquation} and \eqref{eq:BoundVelocityDerivative}. Plugging this estimate into \eqref{eq:TaylorIntegralIneq} and applying Lebesgue's dominated convergence theorem, we can conclude that 
\begin{equation*}
\Phi_{(\tau,t)}[\mu](y) = \Phi_{(\tau,t)}[\mu](x) + \D_x \Phi_{(\tau,t)}[\mu](x)(y-x) + o_{\tau,t,\mu,x,K}(|x-y|),
\end{equation*}
for all times $0 \leq \tau \leq t \leq T$ and any $x,y \in K$, where 
\begin{equation*}
\sup_{(\tau,t,\mu,x) \in [0,T] \times [0,T] \times \Pcal(K) \times K} \big| o_{\tau,t,\mu,x,K}(|x-y|) \big| = o_K(|x-y|).
\end{equation*}
The case $0 \leq t \leq \tau \leq T$ being similar, this ends the proof of our proposition.
\end{proof}

In \cite[Proposition 5]{PMPWass}, an explicit formula was derived for directional derivatives of non-local flows $(\Phi_{(\tau,t)}[\mu](\cdot))_{t \in [0,T]}$ with respect to the measure variable. Therein however, only the particular case of perturbations of the identity induced by vector fields was considered, whereas in the present paper we need the following generalisation which takes into account perturbations induced by arbitrary transport plans.

\begin{thm}[Measure derivatives of non-local flows along transport plans]
\label{thm:FlowDiff}
For all $\mu,\nu \in \Pcal(K)$ and every $\Bmu \in \Gamma(\mu,\nu)$, the map $\mu \in \Pcal(K) \mapsto \Phi_{(\tau,\cdot)}[\mu](\cdot) \in C^0([0,T] \times K,\R^d)$ admits a derivative in the direction $\Bmu$ at $\mu$ for all $\tau \in [0,T]$. Moreover, the following Taylor expansion holds in $C^0(K,\R^d)$  
\begin{equation}
\label{eq:TaylorMeasure}
\Phi_{(\tau,t)}[\nu](\cdot) = \Phi_{(\tau,t)}[\mu](\cdot) + w_{\Bmu}(t,\cdot) + o_{\tau,t,K}(W_{2,\Bmu}(\mu,\nu)), 
\end{equation}
for all times $t \in [0,T]$ with $\sup_{\tau,t \in [0,T]} \NormC{o_{\tau,t,K}(W_{2,\Bmu}(\mu,\nu))}{0}{K,\R^d} = o_K(W_{2,\Bmu}(\mu,\nu))$. Here, for any $x \in K$, the map $t \in [0,T] \mapsto w_{\Bmu}(t,x) \in \R^d$ is the unique solution of the linearised Cauchy problem 
\begin{equation}
\label{eq:LinearisedMeasure_Cauchy}
\left\{ 
\begin{aligned}
\partial_t w_{\Bmu}(t,x) & = \D_x v \Big( t,\mu(t),\Phi_{(\tau,t)}[\mu](x) \Big)w_{\Bmu}(t,x) \\
& \hspace{-0.85cm} + \INTDom{\D_{\mu} v \Big(t , \mu(t) , \Phi_{(\tau,t)}[\mu](x) \Big) \big( \Phi_{(\tau,t)}[\mu](y) \big) \Big( \D_x \Phi_{(\tau,t)}[\mu](y)(z-y) + w_{\Bmu}(t,y) \Big)}{\R^{2d}}{\Bmu(y,z)}, \\
w_{\Bmu}(\tau,x) & = 0,
\end{aligned}
\right.
\end{equation}
where $\mu(t) := \Phi_{(\tau,t)}[\mu](\cdot)_{\#} \mu$ for all times $t \in [0,T]$, and the following Gr\"onwall-type estimate
\begin{equation}
\label{eq:w_BmuEst}
\NormC{w_{\Bmu}(\cdot,\cdot)}{0}{[0,T] \times K,\R^d} \leq C_K W_{2,\Bmu}(\mu,\nu), 
\end{equation}
holds with a constant $C_K > 0$ depending only on $K$. 
\end{thm}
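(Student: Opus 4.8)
The strategy is to set up the candidate linearised flow $w_{\Bmu}(\cdot,\cdot)$ as the solution of \eqref{eq:LinearisedMeasure_Cauchy}, verify it is well-defined via a fixed-point argument, and then show it approximates the increment $\Phi_{(\tau,t)}[\nu](\cdot) - \Phi_{(\tau,t)}[\mu](\cdot)$ to first order. First I would observe that \eqref{eq:LinearisedMeasure_Cauchy} is a \emph{linear non-local} ODE system in the unknown $(t,x) \mapsto w_{\Bmu}(t,x) \in \R^d$: the inhomogeneous term is the bounded, continuous map $(t,x) \mapsto \INTDom{\D_{\mu} v(t,\mu(t),\Phi_{(\tau,t)}[\mu](x))(\Phi_{(\tau,t)}[\mu](y))\D_x\Phi_{(\tau,t)}[\mu](y)(z-y)}{\R^{2d}}{\Bmu(y,z)}$, which by Cauchy--Schwarz is bounded in $C^0([0,T]\times K)$ by a constant times $W_{2,\Bmu}(\mu,\nu)$ (using \ref{hyp:CE}-$(iv)$, Lemma \ref{lem:LipFlow} and Proposition \ref{prop:LinSpace_Flows} to control $\D_x\Phi$). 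Existence and uniqueness then follow from a contraction on $C^0([0,T]\times K, \R^d)$ exactly as in the proof of Theorem \ref{thm:FlowDiff}/Theorem \ref{thm:NonLocalCE}, and the Gr\"onwall estimate \eqref{eq:w_BmuEst} drops out of the linear structure together with the $L^1$ bounds on $\D_x v$ and $\D_\mu v$ from \ref{hyp:CE}.

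Next I would establish the Taylor expansion \eqref{eq:TaylorMeasure}. Set $\delta(t,x) := \Phi_{(\tau,t)}[\nu](x) - \Phi_{(\tau,t)}[\mu](x) - w_{\Bmu}(t,x)$; I want $\sup_{x\in K}|\delta(t,x)| = o_K(W_{2,\Bmu}(\mu,\nu))$ uniformly in $\tau,t$. Subtracting the integral identities \eqref{eq:NonLocalFlow_Def} for $\Phi_{(\tau,\cdot)}[\nu]$ and $\Phi_{(\tau,\cdot)}[\mu]$ and the integrated form of \eqref{eq:LinearisedMeasure_Cauchy}, $\delta$ satisfies an integral inequality. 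The two differences $v(s,\Phi_{(\tau,s)}[\nu](\cdot)_\#\nu,\Phi_{(\tau,s)}[\nu](x)) - v(s,\Phi_{(\tau,s)}[\mu](\cdot)_\#\mu,\Phi_{(\tau,s)}[\mu](x))$ get split along the spatial and measure arguments. For the spatial argument I use Fr\'echet differentiability of $x\mapsto v(s,\cdot,x)$ (\ref{hyp:CE}-$(iii)$) and the first-order expansion $\Phi_{(\tau,s)}[\nu](x) = \Phi_{(\tau,s)}[\mu](x) + \big(w_{\Bmu}(s,x)+\delta(s,x)\big)$. For the measure argument I push the disintegration $\{\Bmu_y\}$ of $\Bmu$ against its first marginal (Theorem \ref{thm:Disintegration}) to write $\nu = \Phi\cdot$-transported measure and invoke local differentiability of $\mu\mapsto v(s,\mu,x)$ (\ref{hyp:CE}-$(iv)$) together with the chain rule of Proposition \ref{prop:GradientChainrule}, so that the linear-in-increment part produces exactly the $\D_\mu v(\cdots)(\D_x\Phi(z-y)+w_{\Bmu}(s,y))$ term in \eqref{eq:LinearisedMeasure_Cauchy}, and the remainder is $o(W_{2,\Bmu})$; here I need the uniform small-$o$ bookkeeping in the spirit of Lemma \ref{lem:Small-oEst} (Appendix \ref{section:AppendixThm}), exactly as in Step 1 of Theorem \ref{thm:Sensitivity1}. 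Collecting terms, $|\delta(t,x)|$ is bounded by $\int_\tau^t \big(l_K(s)+\|\D_\mu v\|\big)\,\sup_{x}|\delta(s,x)|\,\dn s$ plus a term that is $o_K(W_{2,\Bmu}(\mu,\nu))$ uniformly; Gr\"onwall's lemma then gives $\sup_x|\delta(t,x)| = o_K(W_{2,\Bmu}(\mu,\nu))$, which is precisely \eqref{eq:TaylorMeasure}, and the continuity in $(t,x)$ of $w_{\Bmu}$ is inherited from the fixed-point construction.

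The main obstacle I anticipate is the measure-argument linearisation: making the first-order expansion of $\mu \mapsto v(s,\mu,\cdot)$ rigorous when the perturbation of $\Phi_{(\tau,s)}[\mu](\cdot)_\#\mu$ towards $\Phi_{(\tau,s)}[\nu](\cdot)_\#\nu$ is induced by an \emph{arbitrary} transport plan $\Bmu$ rather than a map. One must carefully transport $\Bmu$ through the (space-Lipschitz, space-differentiable) flow to get an admissible plan between the two transported measures whose $W_{2,\cdot}$-cost is controlled by $W_{2,\Bmu}(\mu,\nu)$ up to first order — this is where Proposition \ref{prop:GradientChainrule} (chain rule along arbitrary plans) and the uniform Taylor remainder of Proposition \ref{prop:LinSpace_Flows} are essential — and then feed this into the local differentiability of $v$ with a remainder that is genuinely uniform in $s$, $x$ and over $\mu,\nu\in\Pcal(K)$. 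The rest (spatial term, Gr\"onwall closure, fixed point for existence, $\Norm{\cdot}$-estimate) is routine and parallels arguments already present in the excerpt.
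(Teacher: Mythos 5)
Your plan is correct and takes a genuinely different route from the paper. After the preliminary fixed-point/Gr\"onwall setup for $w_{\Bmu}$ (which both approaches share), the paper does \emph{not} estimate $\delta(t,x) := \Phi_{(\tau,t)}[\nu](x) - \Phi_{(\tau,t)}[\mu](x) - w_{\Bmu}(t,x)$ through an integral-inequality-plus-Gr\"onwall loop as you do. Instead, it exhibits $\Phi_{(\tau,\cdot)}[\nu](\cdot)$ as the unique fixed point in $C^0([0,T]\times K,\R^d)$ of the contraction $\Phi \mapsto \Lambda_{\tau}(\nu,\Phi)$ from \eqref{eq:LambdaDef}, and then invokes the a-priori estimate \eqref{eq:FixedPointEstBanach} of the parametrised Banach fixed-point theorem with the \emph{test element} $y := \Phi_{(\tau,\cdot)}[\mu](\cdot) + w_{\Bmu}(\cdot,\cdot)$: this reduces \eqref{eq:TaylorMeasure} to showing that $\NormC{(\Lambda_\tau(\nu,\cdot)-\Id)(\Phi_{(\tau,\cdot)}[\mu]+w_{\Bmu})}{0}{[0,T]\times K,\R^d}$ is $o_K(W_{2,\Bmu}(\mu,\nu))$, with no $\delta$ appearing on both sides of an inequality. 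Your Gr\"onwall-on-$\delta$ route is the more classical one and is perfectly sound; the estimations one must carry out — the chain rule of Corollary \ref{cor:DiffChainrule} along a pushed-forward plan, the uniform Taylor remainder of Proposition \ref{prop:LinSpace_Flows}, the small-$o$ integral bookkeeping of Lemma \ref{lem:Small-oEst}, dominated convergence in $s$ — are the same in both. What the paper's route buys is a cleaner separation of concerns: the fixed-point machinery takes care of ``closing the loop'' automatically, so the estimation step is purely a computation of $\Lambda_\tau(\nu,\cdot)-\Id$ at a fixed candidate, and there is never any risk of circularity with the unknown error.

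One technical point you gesture at but do not name precisely, and which is worth being explicit about: when you apply the chain rule along the transported plan $(\Phi_{(\tau,s)}[\mu]\circ\pi^1, \Phi_{(\tau,s)}[\nu]\circ\pi^2)_{\#}\Bmu$ (or the paper's variant $\Bmu_\tau^s$), the linear-in-increment term naturally produces $w_{\Bmu}(s,\cdot)$ evaluated at the \emph{second} marginal argument, whereas the target Cauchy problem \eqref{eq:LinearisedMeasure_Cauchy} has $w_{\Bmu}(s,y)$ evaluated at the \emph{first}. To reconcile these you need an estimate of the form $\INTDom{|w_{\Bmu}(s,y)-w_{\Bmu}(s,x)|}{\R^{2d}}{\Bmu(x,y)}=o_K(W_{2,\Bmu}(\mu,\nu))$ uniformly in $s$. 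The paper isolates this as its own technical lemma (Lemma \ref{lem:LinearisedFlowEst}, estimate \eqref{eq:Integral_wEst}), obtained from the uniform continuity of $\D_x v$ and $\D_\mu v$ on compacts together with Gr\"onwall on \eqref{eq:LinearisedMeasure_Cauchy}. Your phrase ``uniform small-$o$ bookkeeping in the spirit of Lemma \ref{lem:Small-oEst}'' is close in flavour but does not on its own deliver this: Lemma \ref{lem:Small-oEst} turns a pointwise-uniform small-$o$ of $|x-y|$ into a small-$o$ of $W_{2,\Bmu}$, whereas here you must first show that $w_{\Bmu}$ has the requisite \emph{modulus of continuity in its spatial argument} before the lemma is applicable. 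This is a genuine intermediate step, not just bookkeeping, and should be spelled out.
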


\begin{proof}
The proof of this result being fairly long and relying on several preliminary steps, we expose it separately in Appendix \ref{section:AppendixThm}
\end{proof}

In the following proposition, we provide a uniform-in-space variant of the classical linearisation result with respect to the initial time for non-local flows. 

\begin{prop}[Derivatives of non-local flows with respect to the initial time]
\label{prop:LinTime_Flows}
Let $\T := \T^* \cap \T_m \subset (0,T)$ be the intersection of $\T^*$ as defined above with the set $\T_m$ of Lebesgue points of $m(\cdot) \in L^1([0,T],\R_+)$. Then for every $\tau \in \T$ and $\mu \in \Pcal(K)$, the map  $s \in \T \mapsto \Phi_{(s,\cdot)}[\mu](\cdot) \in C^0([0,T] \times K,\R^d)$ is differentiable at $\tau$. Moreover, given $h \in \R$ such that $\tau+h \in [0,T]$, the following Taylor expansion 
\begin{equation}
\label{eq:TaylorTime}
\Phi_{(\tau+h,t)}[\mu](\cdot) = \Phi_{(\tau,t)}[\mu](\cdot) + h \Psi_{\tau}(t,\cdot) + o_{\tau,t,K}(h), 
\end{equation}
holds in $C^0(K,\R^d)$ for all times $t \in [0,T]$, with $\sup_{t \in [0,T]} \NormC{o_{\tau,t,K}(h)}{0}{K,\R^d} = o_{\tau,K}(h)$. Here, for any $x \in K$, the map $t \in [0,T] \mapsto \Psi_{\tau}(t,x) \in \R^d$ is the unique solution of the linearised Cauchy problem 
\begin{equation}
\label{eq:LinearisedTime_Cauchy}
\left\{
\begin{aligned}
\partial_t \Psi_{\tau}(t,x) & = \D_x v \Big( t , \mu(t) , \Phi_{(\tau,t)}[\mu](x) \Big) \Psi_{\tau}(t,x) \\
& \hspace{0.4cm} + \INTDom{\D_{\mu} v \Big( t, \mu(t) ,\Phi_{(\tau,t)}[\mu](x)\Big) \big( \Phi_{(\tau,t)}[\mu](y) \big) \Psi_{\tau}(t,y)}{\R^{2d}}{\mu(y)}, \\
\Psi_{\tau}(\tau,x) & = - v(\tau,\mu,x),
\end{aligned}
\right. 
\end{equation} 
where $\mu(t) := \Phi_{(\tau,t)}[\mu](\cdot)_{\#} \mu$ for all times $t \in [0,T]$
\end{prop}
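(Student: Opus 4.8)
The plan is to mimic the proof of Proposition \ref{prop:LinSpace_Flows}, replacing the space perturbation by a time perturbation and carefully tracking the contribution of the initial-time shift through the integral Duhamel representation of the flow. First I would fix $\tau \in \T$ and $\mu \in \Pcal(K)$, and note that by Lemma \ref{lem:LipFlow} (applied with $\tau_1 = \tau$, $\tau_2 = \tau + h$, $\mu = \nu$) there exists $C_K > 0$ such that $\big|\Phi_{(\tau+h,t)}[\mu](x) - \Phi_{(\tau,t)}[\mu](x)\big| \leq C_K \int_\tau^{\tau+h} m(s)\,\textnormal{d}s$, and since $\tau$ is a Lebesgue point of $m(\cdot)$ this is $O(|h|)$ uniformly in $(t,x)$. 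Moreover the two relevant curves of measures, $\mu(t) := \Phi_{(\tau,t)}[\mu](\cdot)_{\#}\mu$ and $\mu_h(t) := \Phi_{(\tau+h,t)}[\mu](\cdot)_{\#}\mu$, stay supported in a fixed ball $K' := B(0,R_r)$ by Theorem \ref{thm:NonLocalCE}, and satisfy $W_1(\mu_h(t),\mu(t)) = O(|h|)$ uniformly in $t$ by integrating the previous estimate against $\mu$. This gives the a priori bound needed to linearise.

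Next I would set up the candidate linearised equation. The key point is the initial condition: from the Volterra equation \eqref{eq:NonLocalFlow_Def},
\begin{equation*}
\Phi_{(\tau+h,t)}[\mu](x) - \Phi_{(\tau,t)}[\mu](x) = -\INTSeg{v\big(s,\mu(s),\Phi_{(\tau,s)}[\mu](x)\big)}{s}{\tau}{\tau+h} + \INTSeg{\Big( v\big(s,\mu_h(s),\Phi_{(\tau+h,s)}[\mu](x)\big) - v\big(s,\mu(s),\Phi_{(\tau,s)}[\mu](x)\big) \Big)}{s}{\tau+h}{t},
\end{equation*}
and since $\tau \in \T \subset \T^*$ is a Lebesgue point of $t \mapsto v_{|K}(t)$ (Lemma \ref{lem:LebesguePoints}), the first integral equals $h\, v(\tau,\mu,x) + o_{\tau,K}(h)$ uniformly in $x \in K$ (using $\mu(\tau) = \mu$ and the continuity of $v(\tau,\cdot,\cdot)$ on $\Pcal(K) \times K$ together with $W_1(\mu(s),\mu) = O(|s-\tau|)$ and $|\Phi_{(\tau,s)}[\mu](x) - x| = O(|s-\tau|)$). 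In the second integral I would expand $v$ around $\big(s,\mu(s),\Phi_{(\tau,s)}[\mu](x)\big)$ using \ref{hyp:CE}-$(iii),(iv)$ and the chain rule, and insert the ansatz $\Phi_{(\tau+h,s)}[\mu](x) = \Phi_{(\tau,s)}[\mu](x) + h\Psi_\tau(s,x) + o(h)$; the perturbation of the measure argument is handled exactly as in Appendix \ref{section:AppendixThm} (Theorem \ref{thm:FlowDiff}), noting that $\mu_h(s) = \Phi_{(\tau+h,s)}[\mu](\cdot)_{\#}\mu$ is the pushforward of $\mu$ by a map that is itself an $h$-perturbation of $\Phi_{(\tau,s)}[\mu](\cdot)$, so its $\D_\mu v$-contribution is $\int_{\R^d} \D_\mu v(\dots)(\Phi_{(\tau,s)}[\mu](y))\, h\Psi_\tau(s,y)\, \textnormal{d}\mu(y) + o(h)$. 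Collecting terms, the formal limit $\Psi_\tau(t,x) := \lim_{h\to 0}\tfrac1h(\Phi_{(\tau+h,t)}[\mu](x) - \Phi_{(\tau,t)}[\mu](x))$ solves \eqref{eq:LinearisedTime_Cauchy} with initial datum $-v(\tau,\mu,x)$, which has a unique solution by a linear Gr\"onwall argument (the coefficients are in $L^1$ in time, by \ref{hyp:CE}-$(ii),(iii)$, and bounded-continuous in space by Proposition \ref{prop:LinSpace_Flows}).

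To turn the formal computation into a rigorous Taylor expansion I would estimate the remainder $R_h(t,x) := \Phi_{(\tau+h,t)}[\mu](x) - \Phi_{(\tau,t)}[\mu](x) - h\Psi_\tau(t,x)$ directly: subtracting the integral equations for the two flows and for $\Psi_\tau$, using the Lipschitz bounds \ref{hyp:CE}-$(ii)$ on $v$ in the $x$ and $\mu$ variables together with the uniform continuity on compacts of $\D_x v$ and $\D_\mu v$, one obtains an inequality of the form $\NormC{R_h(t,\cdot)}{0}{K,\R^d} \leq o_{\tau,K}(h) + \INTSeg{\big(l_{K'}(s) + L_{K'}(s)\big)\NormC{R_h(s,\cdot)}{0}{K,\R^d}}{s}{\tau}{t}$ (after pushing the measure-remainder through, as in Appendix \ref{section:AppendixThm}), and Gr\"onwall's lemma gives $\NormC{R_h(t,\cdot)}{0}{K,\R^d} = o_{\tau,K}(h)$ uniformly in $t \in [\tau,T]$. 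The case $t \leq \tau$ is symmetric (flip signs of integrals), and the uniform estimate $\NormC{\Psi_\tau(\cdot,\cdot)}{0}{[0,T]\times K,\R^d} \leq C_K$ follows from the same Gr\"onwall bound applied to \eqref{eq:LinearisedTime_Cauchy} together with $|v(\tau,\mu,x)| \leq m(\tau)(1+2R_r)$. The main obstacle I anticipate is the bookkeeping of the measure-perturbation term: one must verify that the $o(h)$ error coming from replacing $\mu_h(s)$ by its first-order expansion is genuinely uniform in $x \in K$ and integrable in $s$; this is precisely the content of the linearisation machinery of Theorem \ref{thm:FlowDiff}, so I would invoke that result (and the small-$o$ lemma, Lemma \ref{lem:Small-oEst}, from Appendix \ref{section:AppendixThm}) rather than redo the argument, and focus the written proof on the new ingredient, namely the identification of the initial condition $\Psi_\tau(\tau,x) = -v(\tau,\mu,x)$ via the Lebesgue-point property of $\tau$.
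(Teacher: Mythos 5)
Your proposal is correct and follows essentially the same route as the paper's own proof: decompose the Volterra equation so that an integral over $[\tau,\tau+h]$ isolates the contribution $-h\,v(\tau,\mu,x)+o(h)$ via the Lebesgue-point property of $\tau$ for $v$ and $m$, expand the remaining integral in space and in measure using hypotheses \ref{hyp:CE}-$(iii),(iv)$ and the chain rule of Corollary \ref{cor:DiffChainrule}, and close the argument with Gr\"onwall. The only cosmetic difference is that you keep the unperturbed arguments in the short integral and collect the discrepancy over $[\tau+h,t]$, whereas the paper writes $\int_{\tau+h}^t = \int_\tau^t - \int_\tau^{\tau+h}$ with the perturbed arguments in both pieces (estimates \eqref{eq:FlowLinTime21}--\eqref{eq:FlowLinTime32}), and the paper first asserts differentiability by analogy with Theorem \ref{thm:FlowDiff} and then identifies the derivative, rather than directly bounding the remainder $R_h$; these are equivalent organisations and lead to the same Gr\"onwall estimate.
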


\begin{proof}
The fact that for any $\mu \in \Pcal(K)$, the map $s \in [0,T] \mapsto \Phi_{(s,\cdot)}[\mu](\cdot) \in C^0([0,T] \times K,\R^d)$ is differentiable at all $\tau \in \T$ can be proven by repeating the proof of Theorem \ref{thm:FlowDiff}. We denote by $\Psi_{\tau}(\cdot,\cdot) \in C^0([0,T] \times K,\R^d)$ the corresponding derivative.  

Let us now fix $h \in \R$ such that $\tau + h \in [0,T]$. Notice that by Lemma \ref{lem:AdmEst}, there exists $R'_r \geq r > 0$ such that  $\Phi_{(\tau+h,t)}[\mu](x) \in K' := B(0,R_r')$ for any $(t,x) \in [0,T] \times K$. Thus, denoting $\mu(s) := \Phi_{(\tau,s)}[\mu](\cdot)_{\#} \mu$ for any $s \in [\tau,\tau+h]$, it holds
\begin{equation}
\label{eq:ImportantDistEst}
\begin{aligned}
W_2 \Big( \mu(s) , \Phi_{(\tau+h,s)}[\mu](\cdot)_{\#} \mu \Big) & = W_2 \Big( \Phi_{(\tau+h,s)}[\mu(\tau+h)](\cdot)_{\#} \mu(\tau+h) \, , \, \Phi_{(\tau+h,s)}[\mu](\cdot)_{\#} \mu \Big) \\
& \leq W_2 \Big( \Phi_{(\tau+h,s)}[\mu(\tau+h)](\cdot)_{\#} \mu(\tau+h) \, , \, \Phi_{(\tau+h,s)}[\mu(\tau+h)](\cdot)_{\#} \mu \Big) \\
& \hspace{0.45cm} + W_2 \Big(\Phi_{(\tau+h,s)}[\mu(\tau+h)](\cdot)_{\#} \mu \, , \, \Phi_{(\tau+h,s)}[\mu](\cdot)_{\#} \mu \Big) \\
& \leq C_{K'} \INTSeg{\hspace{-0.1cm} m(s)}{s}{\tau}{\tau+h}, 
\end{aligned}
\end{equation}
where the constant $C_{K'} > 0$ is independent of $\tau \in \T$ and $\mu \in \Pcal(K)$, and given explicitly by 
\begin{equation*}
C_{K'} := (1+2R_r) \sup_{s \in [\tau,\tau+h]} \bigg( \max_{\nu \in \Pcal(K')} \Lip \big( \Phi_{(\tau+h,s)}[\nu](\cdot) ; K' \big) + \max_{x \in K'} ~ \Lip \big( \Phi_{(\tau+h,s)}[\cdot](x); \Pcal_1(K') \big) \bigg).
\end{equation*}
as a consequence of \eqref{eq:WassEst1} and \eqref{eq:WassEst2} together with Lemma \ref{lem:LipFlow} and \eqref{eq:SuppAC_Est}. 

We now focus on the first-order expansion \eqref{eq:TaylorTime}. Observe that by \eqref{eq:NonLocalFlow_Def}, it holds 
\begin{equation}
\label{eq:FlowLinTime1}
\begin{aligned}
\Phi_{(\tau+h,t)}[\mu](x) & = x + \INTSeg{v \Big(s, \Phi_{(\tau+h,s)}[\mu](\cdot)_{\#} \mu , \Phi_{(\tau+h,s)}[\mu](x) \Big)}{s}{\tau+h}{t} \\
& = x + \INTSeg{v \Big( s , \Phi_{(\tau+h,s)}[\mu](\cdot)_{\#} \mu , \Phi_{(\tau+h,s)}[\mu](x) \Big)}{s}{\tau}{t} \\
& \hspace{0.75cm} - \INTSeg{v \Big( s , \Phi_{(\tau+h,s)}[\mu](\cdot)_{\#} \mu , \Phi_{(\tau+h,s)}[\mu](x) \Big)}{s}{\tau}{\tau+h},
\end{aligned}
\end{equation}
for any $(t,x) \in [0,T] \times K$. We start by studying the behaviour as $h \rightarrow 0^+$ of the second integral in the right-hand side of \eqref{eq:FlowLinTime1}. Up to replacing each $v(s,\mu,\cdot)$ by its restriction to $K'$, one has 
\begin{equation}
\label{eq:FlowLinTime21}
\INTSeg{v(s,\mu,x)}{s}{\tau}{\tau+h} = h \, v(\tau,\mu,x) + o_{\tau,x,K}(h), 
\end{equation}
with $\sup_{x \in K} |o_{\tau,x,K}(h)| = o_{\tau,K}(h)$, since every $\tau \in \T$ is a Lebesgue point of $t \in [0,T] \mapsto v_{|K'}(t,\mu,\cdot) \in C^0(K',\R^d)$. In addition, it also holds
\begin{equation}
\label{eq:FlowLinTime22}
\begin{aligned}
& \INTSeg{\Big| v \Big( s, \mu(s) , \Phi_{(\tau+h,s)}[\mu](x) \Big) - v(s, \mu,x) \Big|}{s}{\tau}{\tau+h} \\
&\leq \Big( \INTSeg{l_{K'}(s)}{s}{\tau}{\tau+h} \Big) \hspace{-0.15cm} \sup_{s \in [\tau,\tau+h]} \big| \Phi_{(\tau+h,s)}[\mu](x) - x \big| + \Big( \INTSeg{L_{K'}(s)}{s}{\tau}{\tau+h} \Big) \hspace{-0.15cm} \sup_{s \in [\tau,\tau+h]} \NormC{\Phi_{(\tau,s)}[\mu](\cdot) - \Id}{0}{K,\R^d} \\
& \leq \bigg( \INTSeg{ \Big( l_{K'}(s) + L_{K'}(s) \Big)}{s}{\tau}{\tau+h} \bigg) \Big( \INTSeg{m_r(s)}{s}{\tau}{\tau+h} \Big) \leq o_{\tau,K}(h), 
\end{aligned}
\end{equation}
because $l_{K'}(\cdot),L_{K'}(\cdot) \in L^1([0,T],\R_+)$ and $\INTSeg{m_r(s)}{s}{\tau}{\tau+h} = O_{\tau,K}(h)$, since $m_r(\cdot) = (1+2R_r)m(\cdot)$ and we assumed that $\tau \in \T$ is a Lebesgue points of $m(\cdot)$. Combining \eqref{eq:FlowLinTime21} and \eqref{eq:FlowLinTime22} yields
\begin{equation}
\label{eq:FlowLinTime23}
\INTSeg{v \Big(s,\mu(s),\Phi_{(\tau+h,s)}[\mu](x) \Big)}{s}{\tau}{\tau+h} = h \, v(\tau,\mu,x) + o_{\tau,x,K}(h), 
\end{equation}
with $\sup_{x \in K} |o_{x,\tau,K}(h)| = o_{\tau,K}(h)$. Observe now that under hypotheses \ref{hyp:CE}-$(ii)$, it also holds 
\begin{equation}
\label{eq:FlowLinTime31}
\begin{aligned}
& \INTSeg{\Big| v \Big(s, \Phi_{(\tau+h,s)}[\mu](\cdot)_{\#} \mu , \Phi_{(\tau+h,s)}[\mu](x) \Big) - v \Big(s,\mu(s),\Phi_{(\tau+h,s)}[\mu](x) \Big) \Big|}{s}{\tau}{\tau+h}  \\
& \leq \INTSeg{L_{K'}(s) W_1 \Big( \mu(s) , \Phi_{(\tau+h,s)}[\mu](\cdot)_{\#} \mu \Big)}{s}{\tau}{\tau+h} \\
& \leq C_{K'} \Big( \INTSeg{L_{K'}(s)}{s}{\tau}{\tau+h} \Big) \Big( \INTSeg{m(s)}{s}{\tau}{\tau+h} \Big) = o_{\tau,K}(h),
\end{aligned}
\end{equation}
where we used the distance estimate \eqref{eq:ImportantDistEst} along with the facts that $L_{K'}(\cdot) \in L^1([0,T],\R_+)$ and $\tau \in \T$ is a Lebesgue point of $m(\cdot)$. Thus, by merging \eqref{eq:FlowLinTime23} and \eqref{eq:FlowLinTime31}, we can conclude 
\begin{equation}
\label{eq:FlowLinTime32}
\INTSeg{v \Big( s , \Phi_{(\tau+h,s)}[\mu](\cdot)_{\#} \mu , \Phi_{(\tau+h,s)}[\mu](x) \Big)}{s}{\tau}{\tau+h} = h v(\tau,\mu,x) + o_{\tau,x,K}(h),
\end{equation}
for any $\tau \in \T$ and all $h \in \R$ such that $\tau+h \in [0,T]$, where $\sup_{x \in K} |o_{\tau,x,K}(h)| = o_{\tau,K}(h)$. 

We now focus on the first integral in the right-hand side of \eqref{eq:FlowLinTime1}. Observe that for every $s \in [0,T]$, all $\nu \in \Pcal(K')$ and any $x \in K$, it holds 
\begin{equation}
\label{eq:FlowLinTime4}
\begin{aligned}
v \Big(s,\nu, \Phi_{(\tau+h,s)}[\mu](x) \Big) & = v \Big(s,\nu,\Phi_{(\tau,s)}[\mu](x) + h \Psi_{\tau}(s,x) + o_{s,\nu,x,K}(h) \Big) \\
& = v \Big(s, \nu, \Phi_{(\tau,s)}[\mu](x) \Big) + h \, \D_x v \big(s, \nu , \Phi_{(\tau,s)}[\mu](x) \big) \Psi_{\tau}(s,x) + o_{s,\nu,x,K}(h), 
\end{aligned}
\end{equation}
for $\Lcal^1$-almost every $s \in [\tau,t]$ as a consequence of \ref{hyp:CE}-$(iii)$, where 
\begin{equation*}
\INTSeg{\sup_{(\nu,x) \in \Pcal(K') \times K}|o_{s,\nu,x,K}(h)|}{s}{0}{T} = o_K(h). 
\end{equation*} 
Choose now $\nu := \Phi_{(\tau+h,s)}[\mu](\cdot)_{\#} \mu$ and recall that $\nu \in \Pcal_c(\R^d) \mapsto v(t,\nu,x) \in \R^d$ is locally differentiable as a consequence of hypothesis \ref{hyp:CE}-$(iv)$. Thus, by applying Corollary \ref{cor:DiffChainrule} with 
\begin{equation*}
\Bmu_{\tau}^s := \Big( \Phi_{(\tau,s)}[\mu](\cdot) \, , \, \Phi_{(\tau+h,s)}[\mu](\cdot)\Big)_{\raisebox{4pt}{$\scriptstyle{\#}$}} \mu, 
\end{equation*}
the right-hand side of \eqref{eq:FlowLinTime4} can be further expanded for $\Lcal^1$-almost every $s \in [0,T]$ and all $x \in K$ as
\begin{equation}
\label{eq:FlowLinTime5}
\begin{aligned}
& v \Big(s, \Phi_{(\tau+h,s)}[\mu](\cdot)_{\#} \mu, \Phi_{(\tau+h,s)}[\mu](x) \Big) \\
& = v \Big(s,\mu(s), \Phi_{(\tau,s)}[\mu](x) \Big) + h \, \D_x v \Big(s, \mu(s) , \Phi_{(\tau,s)}[\mu](x) \Big) \Psi_{\tau}(s,x) \\
& \hspace{2cm} + h \INTDom{ \bigg( \D_{\mu} v \Big(s, \mu(s), \Phi_{(\tau,s)}[\mu](x) \Big) \big( \Phi_{(\tau,s)}[\mu](y) \big) \Psi_{\tau}(s,y) \bigg)}{\R^{2d}}{\mu(y)} \\
& \hspace{2cm} + o_{\tau,s,x,K} \Big( W_{2,\Bmu_{\tau}^s} \big( \mu(s) , \Phi_{(\tau+h)}[\mu](\cdot)_{\#} \mu \big) \Big) \\
& = v \Big(s,\mu(s), \Phi_{(\tau,s)}[\mu](x) \Big) + h \, \D_x v \Big(s, \mu(s) , \Phi_{(\tau,s)}[\mu](x) \Big) \Psi_{\tau}(s,x) \\
& \hspace{2cm} + h \INTDom{ \bigg( \D_{\mu} v \Big(s, \mu(s), \Phi_{(\tau,s)}[\mu](x) \Big) \big( \Phi_{(\tau,s)}[\mu](y) \big) \Psi_{\tau}(s,y) \bigg)}{\R^{2d}}{\mu(y)} + o_{\tau,s,x,K}(h),
\end{aligned}
\end{equation}
with $\INTSeg{\sup_{x \in K}|o_{\tau,s,x,K}(h)|}{s}{0}{T} = o_{\tau,K}(h)$, and where we used again the distance estimate \eqref{eq:ImportantDistEst} together with the fact that $\tau \in \T$ is a Lebesgue point of $m(\cdot)$. Whence, by plugging \eqref{eq:FlowLinTime23} and \eqref{eq:FlowLinTime5} into \eqref{eq:FlowLinTime1} and identifying terms, we obtain 
\begin{equation*}
\begin{aligned}
\Psi_{\tau}(t,x) = - v(\tau,\mu,x) & + \INTSeg{\D_x v \Big(s, \mu(s) , \Phi_{(\tau,s)}[\mu](x) \Big) \Psi_{\tau}(s,x)}{s}{\tau}{t} \\
& + \INTSeg{\INTDom{\D_{\mu} v \Big(s,\mu(s),\Phi_{(\tau,s)}[\mu](x) \Big) \big( \Phi_{(\tau,s)}[\mu](y) \big) \Psi_{\tau}(s,y) }{\R^d}{\mu(y)}}{s}{\tau}{t},
\end{aligned}
\end{equation*}
for all $(t,x) \in [0,T] \times K$, which is equivalent to saying that $\Psi_{\tau}(\cdot,\cdot) \in C^0([0,T] \times K,\R^d)$ solves \eqref{eq:LinearisedTime_Cauchy}. The uniqueness of such solutions follows from an application of Gr\"onwall's Lemma. 
\end{proof}

We end this section about linearisations of non-local flows by establishing a total derivative formula with respect to the initial space, measure and time variables. 

\begin{cor}[Total derivative of non-local flows]
\label{cor:TotalLinFlows}
Let $\T \subset (0,T)$ be as in Proposition \ref{prop:LinTime_Flows}, and fix $(\tau,\mu,x) \in \T \times \Pcal(K) \times K$. Then for every $h  \in \R$ such that $\tau+h \in [0,T]$, any $\nu \in \Pcal(K)$ and all $y \in K$, the following Taylor expansion holds
\begin{equation}
\label{eq:TotalTaylor}
\begin{aligned}
\Phi_{(\tau+h,t)}[\nu](y) = \Phi_{(\tau,t)}[\mu](x) + \D_x \Phi_{(\tau,t)}[\mu](x)(y-x)  & + w_{\Bmu}(t,x) + h \Psi_{\tau}(t,x) \\
& + o_{\tau,t,x,K} \Big( |x-y| + W_{2,\Bmu}(\mu,\nu) + h\Big),
\end{aligned}
\end{equation}
for all times $t \in [0,T]$ and any $\Bmu \in \Gamma(\mu,\nu)$, where $\sup_{(\tau,t,x) \in \T \times [0,T] \times K} |o_{\tau,t,x,K}(r)| = o_K(r)$ as $r \rightarrow 0^+$. Here, the maps $\D_x \Phi_{(\tau,\cdot)}[\mu](\cdot)$, $w_{\Bmu}(\cdot,\cdot)$ and $\Psi_{\tau}(\cdot,\cdot)$ are defined as in Proposition \ref{prop:LinSpace_Flows}, Theorem \ref{thm:FlowDiff} and Proposition \ref{prop:LinTime_Flows} respectively. 
\end{cor}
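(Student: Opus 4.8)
The plan is to obtain \eqref{eq:TotalTaylor} by chaining the three single–variable linearisation results already established and then absorbing the resulting cross–terms into one remainder. Fix $\tau\in\T$, $\mu,\nu\in\Pcal(K)$, $\Bmu\in\Gamma(\mu,\nu)$, $x,y\in K$, $t\in[0,T]$ and $h\in\R$ with $\tau+h\in[0,T]$, and set $r:=|x-y|+W_{2,\Bmu}(\mu,\nu)+h$. I would start from the telescoping identity
\[
\Phi_{(\tau+h,t)}[\nu](y)-\Phi_{(\tau,t)}[\mu](x)
= \underbrace{\big(\Phi_{(\tau+h,t)}[\nu](y)-\Phi_{(\tau,t)}[\nu](y)\big)}_{=:A}
+\underbrace{\big(\Phi_{(\tau,t)}[\nu](y)-\Phi_{(\tau,t)}[\mu](y)\big)}_{=:B}
+\underbrace{\big(\Phi_{(\tau,t)}[\mu](y)-\Phi_{(\tau,t)}[\mu](x)\big)}_{=:C}.
\]
Term $C$ is handled immediately by Proposition \ref{prop:LinSpace_Flows}, giving $C=\D_x\Phi_{(\tau,t)}[\mu](x)(y-x)+o_K(|x-y|)$ uniformly in $(\tau,t,\mu,x)$, which already produces the space term of \eqref{eq:TotalTaylor}. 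Term $B$ is handled by Theorem \ref{thm:FlowDiff}, giving $B=w_{\Bmu}(t,y)+o_K(W_{2,\Bmu}(\mu,\nu))$ uniformly in $(\tau,t)$; and term $A$ by Proposition \ref{prop:LinTime_Flows} applied at the base measure $\nu$, giving $A=h\,\Psi^{\nu}_{\tau}(t,y)+o_{\tau,K}(h)$ uniformly in $(t,y)$ and in $\nu\in\Pcal(K)$ (this last uniformity being inherited from the uniform Lebesgue–point property of Lemma \ref{lem:LebesguePoints}, as is visible from the proof of Proposition \ref{prop:LinTime_Flows}), where $\Psi^{\nu}_{\tau}$ denotes the solution of the time–linearised Cauchy problem \eqref{eq:LinearisedTime_Cauchy} built from $\nu$.

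What remains is to replace $w_{\Bmu}(t,y)$ by $w_{\Bmu}(t,x)$ and $\Psi^{\nu}_{\tau}(t,y)$ by $\Psi_{\tau}(t,x):=\Psi^{\mu}_{\tau}(t,x)$. For the first, I would track the dependence on the initial space point in \eqref{eq:LinearisedMeasure_Cauchy}: its coefficient and forcing term depend on $x$ only through $\Phi_{(\tau,\cdot)}[\mu](x)$, $\D_x\Phi_{(\tau,\cdot)}[\mu](x)$ and $\D_\mu v(\,\cdot\,,\mu(\cdot),\Phi_{(\tau,\cdot)}[\mu](x))(\cdot)$, all continuous in $x$ with a modulus $\omega_K$ uniform on compacts by \ref{hyp:CE}-$(iii),(iv)$, Lemma \ref{lem:LipFlow} and Proposition \ref{prop:LinSpace_Flows}; moreover $\NormC{w_{\Bmu}(t,\cdot)}{0}{K,\R^d}\le C_K W_{2,\Bmu}(\mu,\nu)$ by \eqref{eq:w_BmuEst} and the forcing term of \eqref{eq:LinearisedMeasure_Cauchy} is itself $O(W_{2,\Bmu}(\mu,\nu))$ in norm (its $|z-y|$ part integrates against $\Bmu$ to at most $W_{2,\Bmu}(\mu,\nu)$ by Cauchy–Schwarz). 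A Gr\"onwall estimate on the $t$–ODE solved by $w_{\Bmu}(t,y)-w_{\Bmu}(t,x)$ then gives $|w_{\Bmu}(t,y)-w_{\Bmu}(t,x)|\le C_K\,\omega_K(|x-y|)\,W_{2,\Bmu}(\mu,\nu)$. For the $\Psi$ term I would proceed in two sub–steps: a Gr\"onwall estimate on \eqref{eq:LinearisedTime_Cauchy} (using the same moduli and the uniform bound $|\Psi^{\nu}_{\tau}|\le C_K$) yields $|\Psi^{\nu}_{\tau}(t,y)-\Psi^{\nu}_{\tau}(t,x)|\le C_K\,\omega_K(|x-y|)$; and comparing the two problems \eqref{eq:LinearisedTime_Cauchy} based at $\nu$ and at $\mu$ — whose coefficients differ by $o(1)$ as $W_1(\mu,\nu)\to0$ thanks to the continuity of $\D_x v,\D_\mu v$ and the stability estimates of Lemma \ref{lem:LipFlow}, and whose initial data $-v(\tau,\nu,\cdot)$ and $-v(\tau,\mu,\cdot)$ differ by at most $L_K W_1(\mu,\nu)$ via \ref{hyp:CE}-$(ii)$ — a further Gr\"onwall estimate yields $|\Psi^{\nu}_{\tau}(t,x)-\Psi^{\mu}_{\tau}(t,x)|\le C_K\,\tilde\omega_K(W_1(\mu,\nu))$, with $W_1(\mu,\nu)\le W_{2,\Bmu}(\mu,\nu)$.

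Finally I would collect the remainders. Besides the pure errors $o_K(|x-y|)$, $o_K(W_{2,\Bmu}(\mu,\nu))$ and $o_{\tau,K}(h)$, the chaining produces the cross–terms $C_K\,\omega_K(|x-y|)W_{2,\Bmu}(\mu,\nu)$, $C_K\,h\,\omega_K(|x-y|)$ and $C_K\,h\,\tilde\omega_K(W_{2,\Bmu}(\mu,\nu))$. Since each of $|x-y|$, $W_{2,\Bmu}(\mu,\nu)$, $h$ is $\le r$ and each modulus is bounded on bounded intervals, every such term is bounded by $r$ times a quantity tending to $0$ with $r$ (e.g.\ $h\,\omega_K(|x-y|)\le r\,\omega_K(r)$), hence is $o_K(r)$; the same Young–type argument turns $o_K(|x-y|)$ and $o_K(W_{2,\Bmu}(\mu,\nu))$ into $o_K(r)$. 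Summing $A+B+C$ and relabelling the total remainder $o_{\tau,t,x,K}(r)$, with $\sup_{t\in[0,T],\,x\in K}|o_{\tau,t,x,K}(r)|=o_{\tau,K}(r)$ uniformly over $\mu,\nu\in\Pcal(K)$ and $\Bmu\in\Gamma(\mu,\nu)$, gives exactly \eqref{eq:TotalTaylor}. I expect the one delicate point to be the measure–stability of the time–linearised flow, $\Psi^{\nu}_{\tau}\to\Psi^{\mu}_{\tau}$: it forces one to reopen the Gr\"onwall argument behind Proposition \ref{prop:LinTime_Flows} rather than quote it as a black box, and it is where the continuity (not merely the measurability) of $\D_x v$ and $\D_\mu v$ in the measure variable is genuinely used.
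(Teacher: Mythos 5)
Your proposal is correct, but it chains the three one-variable expansions in the opposite order from the paper, and this leads to a genuinely different (though symmetric) set of comparison lemmas. You telescope as time $\to$ measure $\to$ space, i.e.
\[
\Phi_{(\tau+h,t)}[\nu](y)-\Phi_{(\tau,t)}[\mu](x)
=\big(\Phi_{(\tau+h,t)}[\nu](y)-\Phi_{(\tau,t)}[\nu](y)\big)
+\big(\Phi_{(\tau,t)}[\nu](y)-\Phi_{(\tau,t)}[\mu](y)\big)
+\big(\Phi_{(\tau,t)}[\mu](y)-\Phi_{(\tau,t)}[\mu](x)\big),
\]
whereas the paper telescopes space $\to$ measure $\to$ time, expanding $\Phi_{(\tau+h,t)}[\nu](y)$ about $\Phi_{(\tau+h,t)}[\nu](x)$, then about $\Phi_{(\tau+h,t)}[\mu](x)$, then about $\Phi_{(\tau,t)}[\mu](x)$. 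Both orderings leave residuals in the ``wrong'' variables that must be corrected before summing. In your order, the price is the replacement of $w_{\Bmu}(t,y)$ by $w_{\Bmu}(t,x)$ (which is already supplied pointwise by the Gr\"onwall estimate \eqref{eq:TechnicalLem2} in the proof of Lemma \ref{lem:LinearisedFlowEst}) and, more seriously, of $\Psi^{\nu}_{\tau}(t,y)$ by $\Psi_{\tau}(t,x)$, which forces the two-step stability argument on \eqref{eq:LinearisedTime_Cauchy} that you correctly flag as delicate. In the paper's order, that $\Psi$-stability estimate is avoided entirely; instead, one pays for (i) the uniform convergence $\D_x \Phi_{(\tau+h,\cdot)}[\nu](\cdot)\to\D_x\Phi_{(\tau,\cdot)}[\mu](\cdot)$ as $(h,\nu)\to(0,\mu)$, and (ii) a comparison between $w^h_{\Bmu}$ (the solution of the linearised measure problem based at $\tau+h$) and $w_{\Bmu}$, which is again a Gr\"onwall argument in the same spirit as your $\Psi^{\nu}\to\Psi^{\mu}$ step. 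Neither route treats Proposition \ref{prop:LinSpace_Flows}, Theorem \ref{thm:FlowDiff} and Proposition \ref{prop:LinTime_Flows} as black boxes, and the overall Young-type absorption of cross-terms into $o_K(r)$ with $r=|x-y|+W_{2,\Bmu}(\mu,\nu)+h$ is identical. The practical difference is one of convenience: by pinning the space and measure arguments at $(\tau,\mu,x)$ first, the paper keeps all $w$'s and $\Psi$'s evaluated at the same point from the start, which is why it can dispense with your two-sub-step $\Psi$ stability argument.

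One small precision on your write-up: the uniform-in-$\nu$ remainder you invoke for Proposition \ref{prop:LinTime_Flows} is indeed available (the constants in the statement and proof do not depend on the base measure in $\Pcal(K)$), but since the statement is phrased for a fixed base measure, it is worth saying explicitly — as you do — that you are inheriting the uniformity from the structure of the Gr\"onwall constants and from Lemma \ref{lem:LebesguePoints}, not merely quoting the proposition. With that caveat acknowledged, the argument is sound.
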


\begin{proof}
The proof of \eqref{eq:TotalTaylor} is obtained by chaining the first-order expansions \eqref{eq:TaylorSpace}, \eqref{eq:TaylorMeasure} and \eqref{eq:TaylorTime} with respect to the time, space and measure variables. By Proposition \ref{prop:LinSpace_Flows}, it first holds 
\begin{equation}
\label{eq:TotalDerivative1}
\Phi_{(\tau+h,t)}[\nu](y) = \Phi_{(\tau+h,t)}[\nu](x) + \D_x \Phi_{(\tau+h,t)}[\nu](x)(y-x) + o_{\tau,t,h,\nu,x,K}(|x-y|), 
\end{equation}
with
\begin{equation*}
\sup_{(t,h) \in [0,T] \times (-\tau,T-\tau)} \bigg( \sup_{(\nu,x) \in \Pcal(K) \times K} o_{\tau,t,h,\nu,x,K}(r) \bigg) = o_{\tau,K}(r), 
\end{equation*}
as $r \rightarrow 0^+$. By applying arguments similar to those of the proof of Proposition \ref{prop:LinSpace_Flows} above, it can be shown under hypotheses \ref{hyp:CE}-$(ii),(iii)$ that 
\begin{equation*}
\NormC{\D_x \Phi_{(\tau+h,\cdot)}[\nu](\cdot) - \D_x \Phi_{(\tau,\cdot)}[\mu](\cdot)}{0}{[0,T] \times K,\R^{d \times d}} ~\underset{(h,\nu) \, \rightarrow \, (0,\mu)}{\longrightarrow}~ 0,
\end{equation*}
and we can deduce from \eqref{eq:TotalDerivative1} that
\begin{equation*}
\Phi_{(\tau+h,t)}[\nu](y) = \Phi_{(\tau+h,t)}[\nu](x) + \D_x \Phi_{(\tau,t)}[\mu](y)(y-x) + o_{\tau,t,h,\nu,x,K}(|x-y|),
\end{equation*}
with $\sup_{(t,h,\nu,x) \in [0,T] \times (-\tau,T-\tau) \times \Pcal(K) \times K} |o_{\tau,t,h,\nu,x,K}(|x-y|)| = o_{\tau,K}(|x-y|)$. By Theorem \ref{thm:FlowDiff}, we can further expand the first term in the right-hand side of this expression as
\begin{equation}
\label{eq:TotalDerivative2}
\Phi_{(\tau+h,t)}[\nu](x) = \Phi_{(\tau+h,t)}[\mu](x) + w_{\Bmu}^h(t,x) + o_{\tau,t,h,x,K} \Big( W_{2,\Bmu}(\mu,\nu) \Big), 
\end{equation}
for any $\Bmu \in \Gamma(\mu,\nu)$, with $\sup_{(t,h,x) \in [0,T] \times (-\tau,T-\tau) \times K}o_{\tau,t,h,x,K}(r) = o_{\tau,K}(r)$ as $r \rightarrow 0^+$, and where for any $x \in K$ the map $t \in [0,T] \mapsto w_{\Bmu}^h(t,x) \in \R^d$ is the unique solution of the linearised Cauchy problem
\begin{equation*}
\left\{ 
\begin{aligned}
& \partial_t w_{\Bmu}^h(t,x) = \D_x v \Big( t , \Phi_{(\tau+h,t)}[\mu](\cdot)_{\#} \mu , \Phi_{(\tau+h,t)}[\mu](x) \Big) w_{\Bmu}^h(t,x) \\
& \hspace{0.95cm} + \int_{\R^{2d}} \D_{\mu} v \Big( t , \Phi_{(\tau+h,t)}[\mu](\cdot)_{\#} \mu , \Phi_{(\tau+h,t)}[\mu](x) \Big) \big(\Phi_{(\tau+h,t)}[\mu](y) \big) \\
& \hspace{8.7cm} \Big( \D_x \Phi_{(\tau+h,t)}[\mu](y)(z-y) + w_{\Bmu}^h(t,y) \Big) \textnormal{d} \Bmu(y,z), \\
& w_{\Bmu}^h(\tau+h,x) = 0. 
\end{aligned}
\right.
\end{equation*}
Again, by invoking hypotheses \ref{hyp:CE}-$(ii),(iii),(iv)$ and repeating the Gr\"onwall-type estimates detailed above and in Appendix \ref{section:AppendixThm} while using the fact that elements of $\T^*$ are uniform Lebesgue points of $t \in [0,T] \mapsto \D_x v(t,\cdot,\cdot)$ and $t \in [0,T] \mapsto \D_{\mu} v(t,\cdot,\cdot)(\cdot)$ by construction, it can further be proven that
\begin{equation*}
w_{\Bmu}^h(t,x) = w_{\Bmu}(t,x) +o_{\tau,t,x,K}\Big( W_{2,\Bmu}(\mu,\nu) + h \Big) ,
\end{equation*}
with $\sup_{(t,x) \in [0,T] \times K} |o_{\tau,t,x,K}(r)| = o_{\tau,K}(r)$ as $r \rightarrow 0^+$. Thus combining \eqref{eq:TotalDerivative1} and \eqref{eq:TotalDerivative2}, we obtain
\begin{equation}
\label{eq:TotalDerivative3}
\Phi_{(\tau+h,t)}[\nu](y) = \Phi_{(\tau+h,t)}[\mu](x) + \D_x \Phi_{(\tau,t)}[\mu](x)(y-x) + w_{\Bmu}(t,x) + o_{\tau,t,h,x,K} \Big( |x-y| + W_{2,\Bmu}(\mu,\nu) + h \Big),
\end{equation}
where $\sup_{(t,h,x) \in [0,T] \times (-\tau,T-\tau) \times K} |o_{\tau,t,h,x,K}(r)| = o_{\tau,K}(r)$ as $r \rightarrow 0^+$. Finally since $\tau \in \T$ by assumption, it holds as a consequence of Proposition \ref{prop:LinTime_Flows} that 
\begin{equation}
\label{eq:TotalDerivative4}
\Phi_{(\tau+h,t)}[\mu](x) = \Phi_{(\tau,t)}[\mu](x) + h \Psi_{\tau}(t,x) + o_{\tau,t,x,K}(h), 
\end{equation}
with $\sup_{(t,x) \in [0,T] \times K} |o_{\tau,t,x,K}(h)| = o_{\tau,K}(h)$ as $ h \rightarrow 0$. Hence, by merging \eqref{eq:TotalDerivative3} and \eqref{eq:TotalDerivative4}, we recover the full Taylor expansion \eqref{eq:TotalTaylor}, which concludes the proof of our corollary.
\end{proof}


\setcounter{section}{0} 
\renewcommand{\thesection}{B} 

\section{Proof of Theorem \ref{thm:FlowDiff}}
\label{section:AppendixThm}

\setcounter{Def}{0} \renewcommand{\thethm}{B.\arabic{Def}} 
\setcounter{equation}{0} \renewcommand{\theequation}{B.\arabic{equation}}

In this section, we detail the proof of Theorem \ref{thm:FlowDiff}. The latter is inspired by that of \cite[Proposition 5]{PMPWass}, and is based on an application of the following parametrised version of Banach fixed point theorem.

\begin{thm}[Banach fixed point theorem with parameter]
\label{thm:Banach}
Let $(\Scal,d_{\Scal})$ be a metric space, $(X,\Norm{\cdot}_X)$ be a Banach space and $\Lambda : \Scal \times X \rightarrow X$ be a continuous mapping. Moreover, suppose that there exists a constant $\kappa \in (0,1)$ such that for every $s\in \Scal$, it holds
\begin{equation*}
\Norm{\Lambda(s,x) - \Lambda(s,y)}_X ~\leq \kappa \Norm{x-y}_X, 
\end{equation*}
for all $x,y \in X$. Then for any $s \in \Scal$, there exists a unique $x(s) \in X$ such that 
\begin{equation*}
\Lambda(s,x(s)) = x(s).
\end{equation*}
Moreover, the map $s \in \Scal \mapsto x(s) \in X$ is continuous, and such that the following estimate holds
\begin{equation}
\label{eq:FixedPointEstBanach}
\Norm{y - x(s)}_X \leq \tfrac{1}{1-\kappa} \Norm{y - \Lambda(s,y)}_X, 
\end{equation}
for any $(s,y) \in \Scal \times X$.
\end{thm}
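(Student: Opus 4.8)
The plan is to reduce the statement to the classical Banach fixed point theorem applied slicewise in $s$, and then to extract the a priori estimate \eqref{eq:FixedPointEstBanach} and the continuity of $s \mapsto x(s)$ as elementary consequences of the \emph{uniform} contraction property.

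First I would fix $s \in \Scal$ and observe that $\Lambda(s,\cdot) : X \to X$ is a $\kappa$-contraction of the complete metric space $(X,\Norm{\cdot}_X)$. Picking any base point $x_0 \in X$ and iterating $x_{n+1} := \Lambda(s,x_n)$, the geometric bound $\Norm{x_{n+1} - x_n}_X \leq \kappa^n \Norm{x_1 - x_0}_X$ shows that $(x_n)$ is a Cauchy sequence, hence convergent to a limit which — by continuity of $\Lambda(s,\cdot)$ — is a fixed point $x(s)$ of $\Lambda(s,\cdot)$. Uniqueness follows at once, since two fixed points $x,x'$ would satisfy $\Norm{x-x'}_X = \Norm{\Lambda(s,x)-\Lambda(s,x')}_X \leq \kappa \Norm{x-x'}_X$, forcing $x = x'$ because $\kappa < 1$.

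Next I would establish \eqref{eq:FixedPointEstBanach}. For an arbitrary pair $(s,y) \in \Scal \times X$, the triangle inequality gives $\Norm{y - x(s)}_X \leq \Norm{y - \Lambda(s,y)}_X + \Norm{\Lambda(s,y) - \Lambda(s,x(s))}_X$, and bounding the last term by $\kappa \Norm{y - x(s)}_X$ via the contraction hypothesis yields $(1-\kappa)\Norm{y - x(s)}_X \leq \Norm{y - \Lambda(s,y)}_X$, which is precisely the claimed estimate after dividing by $1-\kappa > 0$.

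Finally, for the continuity of $s \mapsto x(s)$, I would fix $s_0 \in \Scal$ and any sequence $s_n \to s_0$, and apply \eqref{eq:FixedPointEstBanach} with parameter $s_n$ and the specific choice $y := x(s_0)$. Since $x(s_0) = \Lambda(s_0, x(s_0))$, this produces $\Norm{x(s_0) - x(s_n)}_X \leq \tfrac{1}{1-\kappa} \Norm{\Lambda(s_0, x(s_0)) - \Lambda(s_n, x(s_0))}_X$, and the right-hand side tends to $0$ as $n \to +\infty$ because $\Lambda$ is continuous and its second argument is held fixed at $x(s_0)$. The only point requiring a little care is precisely this last step: one must freeze the second slot at the fixed point of the \emph{limit} parameter $s_0$ rather than at $x(s_n)$, so that joint continuity of $\Lambda$ can be invoked along a genuinely convergent argument — this is the sole place where the continuity hypothesis on $\Lambda$, as opposed to mere measurability, is used.
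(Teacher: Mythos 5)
Your proof is correct, and it is essentially the standard argument. Worth noting: the paper does not actually supply a proof of Theorem~\ref{thm:Banach}; it is quoted as a known auxiliary result before the proof of Theorem~\ref{thm:FlowDiff}, so there is no in-paper argument to compare against. Your three steps are each sound: the slicewise Picard iteration for existence and uniqueness, the one-line triangle-inequality derivation of \eqref{eq:FixedPointEstBanach}, and the deduction of continuity of $s \mapsto x(s)$ by specialising \eqref{eq:FixedPointEstBanach} to $y = x(s_0)$ and letting $s_n \to s_0$. The remark you flag at the end --- that the second argument must be frozen at $x(s_0)$ rather than $x(s_n)$ so that one only needs continuity of $\Lambda$ along a fixed second slot --- is exactly the small point that makes the proof go through cleanly; it also shows the argument in fact only uses separate continuity of $\Lambda$ in each variable, a slightly weaker hypothesis than the joint continuity assumed in the statement. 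Since $\Scal$ is a metric space, working with sequences as you do is legitimate.
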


Before moving to the proof of Theorem \ref{thm:FlowDiff}, we derive three additional technical lemmas that will be useful in the sequel. Given $\mu \in \Pcal(K)$ and $\tau \in [0,T]$, we will again use the condensed notation $\mu(t) := \Phi_{(\tau,t)}[\mu](\cdot)_{\#} \mu$ for all times $t \in [0,T]$, to lighten the computations throughout this section. 

\begin{lem}[Integral of a small-o of the distance]
\label{lem:Small-oEst}
Let $K \subset \R^d$ be a compact set, $\mu,\nu \in \Pcal(K)$ be given, $\Bmu \in \Gamma(\mu,\nu)$ and $(x,y) \in \R^{2d} \mapsto o_K(|x-y|) \in \R_+$ be a map in $L^{\infty}(\R^{2d},\R_+;\Bmu)$ such that 
\begin{equation}
\label{eq:UniformSmall-o}
\lim_{\substack{y \rightarrow x \\ y \in K}} \frac{o_K(|x-y|)}{|x-y|} = 0,
\end{equation}
uniformly with respect to $x \in K$. Then, the following integral estimate holds
\begin{equation*}
\INTDom{o_K(|x-y|)}{\R^{2d}}{\Bmu(x,y)} = o_K(W_{2,\Bmu}(\mu,\nu)).
\end{equation*}
\end{lem}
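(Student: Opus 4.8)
The plan is to reduce the integral estimate to the defining property \eqref{eq:UniformSmall-o} of the uniform small-o, exploiting that $\Bmu$ is a probability measure concentrated on $K \times K$. First I would fix $\varepsilon > 0$ and use \eqref{eq:UniformSmall-o} to produce $\delta > 0$ such that $o_K(|x-y|) \leq \varepsilon |x-y|$ whenever $x,y \in K$ with $|x-y| \leq \delta$. This gives a pointwise bound valid on the ``diagonal region'' $\{ |x-y| \leq \delta \}$, which is the regime that actually controls the integral as $W_{2,\Bmu}(\mu,\nu) \to 0$.

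\textbf{Splitting the integral.} The key step is to split $\R^{2d}$ into the two Borel sets $A_\delta := \{ (x,y) : |x-y| \leq \delta \}$ and its complement $A_\delta^c$, and to estimate
\begin{equation*}
\INTDom{o_K(|x-y|)}{\R^{2d}}{\Bmu(x,y)} = \INTDom{o_K(|x-y|)}{A_\delta}{\Bmu(x,y)} + \INTDom{o_K(|x-y|)}{A_\delta^c}{\Bmu(x,y)}.
\end{equation*}
On $A_\delta$ the bound $o_K(|x-y|) \leq \varepsilon |x-y|$ together with the Cauchy--Schwarz inequality (with respect to the probability measure $\Bmu$) yields
\begin{equation*}
\INTDom{o_K(|x-y|)}{A_\delta}{\Bmu(x,y)} \leq \varepsilon \INTDom{|x-y|}{\R^{2d}}{\Bmu(x,y)} \leq \varepsilon \bigg( \INTDom{|x-y|^2}{\R^{2d}}{\Bmu(x,y)} \bigg)^{1/2} = \varepsilon \, W_{2,\Bmu}(\mu,\nu),
\end{equation*}
using the definition \eqref{eq:WeightedWass} of $W_{2,\Bmu}$. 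For the complementary region, since $o_K(\cdot) \in L^\infty(\R^{2d},\R_+;\Bmu)$ — call its essential supremum $M_K$ — and since on $A_\delta^c$ one has $1 \leq |x-y|/\delta$, I would bound
\begin{equation*}
\INTDom{o_K(|x-y|)}{A_\delta^c}{\Bmu(x,y)} \leq M_K \, \Bmu(A_\delta^c) \leq \frac{M_K}{\delta} \INTDom{|x-y|}{\R^{2d}}{\Bmu(x,y)} \leq \frac{M_K}{\delta} \, W_{2,\Bmu}(\mu,\nu),
\end{equation*}
again by Cauchy--Schwarz. Hence, with $C := M_K/\delta$ depending only on $\varepsilon$ (through $\delta$) and on $K$,
\begin{equation*}
\INTDom{o_K(|x-y|)}{\R^{2d}}{\Bmu(x,y)} \leq \varepsilon \, W_{2,\Bmu}(\mu,\nu) + C \, W_{2,\Bmu}(\mu,\nu) \cdot \mathbf{1}_{\{W_{2,\Bmu}(\mu,\nu) > \delta^2\}}.
\end{equation*}

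\textbf{Conclusion.} The final step is to observe that this is precisely the statement that the left-hand side is $o_K(W_{2,\Bmu}(\mu,\nu))$: dividing through by $W_{2,\Bmu}(\mu,\nu)$, the second term vanishes identically once $W_{2,\Bmu}(\mu,\nu) \leq \delta^2$, so the ratio is bounded by $\varepsilon$ for all $\mu,\nu$ close enough in the $W_{2,\Bmu}$-sense; since $\varepsilon > 0$ was arbitrary, the ratio tends to $0$. I do not foresee a genuine obstacle here — the only mild subtlety is bookkeeping the dependence of the constant on $K$ versus on $\varepsilon$, and making sure the bound is uniform in the sense the paper uses it downstream (namely that the resulting small-o does not depend on the particular plan $\Bmu$ beyond the value of $W_{2,\Bmu}(\mu,\nu)$ and an $L^\infty$ bound on $o_K$, which is guaranteed by the hypothesis $o_K(|x-y|) \in L^\infty(\R^{2d},\R_+;\Bmu)$ combined with the compactness of $K$).
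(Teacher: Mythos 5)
Your overall architecture is the same as the paper's: fix $\varepsilon>0$, use \eqref{eq:UniformSmall-o} to get a $\delta$ with $o_K(|x-y|)\leq\varepsilon|x-y|$ for $|x-y|\leq\delta$, split the integral into $\{|x-y|\leq\delta\}$ and its complement, and handle the near-diagonal piece via H\"older, which you do correctly. The gap is in the far-from-diagonal piece. You apply an $L^1$-Markov bound, $\Bmu(A_\delta^c)\leq\frac{1}{\delta}\int|x-y|\,\mathrm{d}\Bmu\leq\frac{1}{\delta}W_{2,\Bmu}(\mu,\nu)$, which only yields the \emph{linear} estimate
\[
\INTDom{o_K(|x-y|)}{A_\delta^c}{\Bmu(x,y)} \leq \frac{M_K}{\delta}\, W_{2,\Bmu}(\mu,\nu).
\]
Combined with the near-diagonal term this gives $\int o_K\,\mathrm{d}\Bmu\leq\big(\varepsilon+\tfrac{M_K}{\delta}\big)W_{2,\Bmu}(\mu,\nu)$, and since $\delta=\delta(\varepsilon)\to0$ as $\varepsilon\to0$, the prefactor $\varepsilon+M_K/\delta$ does not go to zero; this is not an $o(W_{2,\Bmu})$ bound.

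The subsequent inequality with the indicator $\mathbf{1}_{\{W_{2,\Bmu}(\mu,\nu)>\delta^2\}}$ does not follow from the line that precedes it. It would amount to claiming that $\Bmu(A_\delta^c)=0$ whenever $W_{2,\Bmu}(\mu,\nu)\leq\delta^2$, which is false: a plan $\Bmu$ can put a tiny mass $\eta$ on pairs with $|x-y|$ bounded away from zero, giving $W_{2,\Bmu}$ as small as one wishes while keeping $\Bmu(A_\delta^c)=\eta>0$. The repair is exactly the paper's Chebyshev step: bound $\Bmu(A_\delta^c)\leq\frac{1}{\delta^2}\int_{A_\delta^c}|x-y|^2\,\mathrm{d}\Bmu\leq\frac{1}{\delta^2}W^2_{2,\Bmu}(\mu,\nu)$, so the far-from-diagonal contribution is $\leq\frac{M_K}{\delta^2}W^2_{2,\Bmu}(\mu,\nu)$, which is quadratic in $W_{2,\Bmu}$ and therefore genuinely $o(W_{2,\Bmu})$ for each fixed $\varepsilon$. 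One then concludes, as the paper does, that given $\varepsilon'>0$ one may choose $\varepsilon=\varepsilon'/2$, obtain the corresponding $\delta$, and the whole bound falls below $\varepsilon'W_{2,\Bmu}(\mu,\nu)$ once $W_{2,\Bmu}(\mu,\nu)\leq\tfrac{\delta^2\varepsilon}{M_K}$.
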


\begin{proof}
By definition of the small-o, the requirement \eqref{eq:UniformSmall-o} can be written as follows: for any $\epsilon > 0$, there exists $\eta > 0$ such that for every $x,y \in K$, we have $o_K(|x-y|) \leq \epsilon |x-y|$ whenever $|x-y| < \eta$. Let us fix such a pair $\epsilon,\eta > 0$, and observe that 
\begin{equation}
\label{eq:IntegralSmallOEst}
\begin{aligned}
\INTDom{o_K(|x-y|)}{\R^{2d}}{\Bmu(x,y)} & = \INTDom{o_K(|x-y|)}{|x-y| < \eta}{\Bmu(x,y)} + \INTDom{o_K(|x-y|)}{|x-y| \geq \eta}{\Bmu(x,y)} \\
& \leq \epsilon \INTDom{|x-y|}{\R^{2d}}{\Bmu(x,y)} + C_K \Bmu \Big( \big\{ (x,y) \in \R^{2d} ~\text{s.t.}~ |x-y| \geq \eta \big\} \Big).
\end{aligned}
\end{equation}
where $C_K > 0$ is a constant which only depends on $K$. By H\"older's inequality, one can estimate the first term in the right-hand side of \eqref{eq:IntegralSmallOEst} as 
\begin{equation}
\label{eq:IntegralSmallOEst1}
\INTDom{|x-y|}{\R^{2d}}{\Bmu(x,y)} \leq \Big( \INTDom{|x-y|^2}{\R^{2d}}{\Bmu(x,y)} \Big)^{1/2} = W_{2,\Bmu}(\mu,\nu). 
\end{equation}
Concerning the second term in the right-hand side of \eqref{eq:IntegralSmallOEst}, it holds by Chebyshev's inequality
\begin{equation}
\label{eq:IntegralSmallOEst2}
\Bmu \Big( \big\{ (x,y) \in \R^{2d} ~\text{s.t.}~ |x-y| \geq \eta \big\} \Big) \leq \tfrac{1}{\eta^2} \INTDom{|x-y|^2}{|x-y| \geq \eta}{\Bmu(x,y)} \leq \tfrac{1}{\eta^2} W_{2,\Bmu}^2(\mu,\nu).
\end{equation}
Whence, by plugging \eqref{eq:IntegralSmallOEst1} and \eqref{eq:IntegralSmallOEst2} into \eqref{eq:IntegralSmallOEst}, we obtain 
\begin{equation*}
\INTDom{o_K(|x-y|)}{\R^{2d}}{\Bmu(x,y)} \leq \epsilon W_{2,\Bmu}(\mu,\nu) + \tfrac{C_K}{\eta^2} W_{2,\Bmu}^2(\mu,\nu).
\end{equation*}
Thus for every $\epsilon':=2 \epsilon$, there exists $\eta' := \tfrac{\eta^2}{C_K} \epsilon$ such that $\INTDom{o_K(|x-y|)}{\R^{2d}}{\Bmu(x,y)} \leq \epsilon' W_{2,\Bmu}(\mu,\nu)$ whenever $W_{2,\Bmu}(\mu,\nu) \leq \eta'$, which concludes the proof of our claim.
\end{proof}

\begin{lem}[Uniform estimate on directional derivatives of non-local flows]
\label{lem:LinearisedFlowEst}
Let $\mu,\nu \in \Pcal(K)$, fix $\tau \in [0,T]$ and $\Bmu \in \Gamma(\mu,\nu)$. Then, the unique solution $w_{\Bmu}(\cdot,\cdot) \in C^0([0,T] \times K,\R^d)$ of the linearised Cauchy problem \eqref{eq:LinearisedMeasure_Cauchy} satisfies 
\begin{equation}
\label{eq:Lem_wBmuest}
\NormC{w_{\Bmu}(\cdot,\cdot)}{0}{[0,T] \times K,\R^d} \leq C_K W_{2,\Bmu}(\mu,\nu), 
\end{equation}
for some constant $C_K > 0$, along with the integral estimate
\begin{equation}
\label{eq:Integral_wEst}
\INTDom{|w_{\Bmu}(t,x) - w_{\Bmu}(t,y)|}{\R^{2d}}{\Bmu(x,y)} \leq o_{\tau,t,K}(W_{2,\Bmu}(\mu,\nu)),
\end{equation}
where $\sup_{\tau,t \in [0,T]} o_{\tau,t,K}(W_{2,\Bmu}(\mu,\nu)) = o_K(W_{2,\Bmu}(\mu,\nu))$.
\end{lem}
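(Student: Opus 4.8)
The statement asserts two things about the solution $w_{\Bmu}(\cdot,\cdot)$ of the linearised Cauchy problem \eqref{eq:LinearisedMeasure_Cauchy}: the Gr\"onwall-type sup bound \eqref{eq:Lem_wBmuest} and the refined integral-of-differences estimate \eqref{eq:Integral_wEst}. My plan is to obtain \eqref{eq:Lem_wBmuest} by a direct Gr\"onwall argument applied to the integral form of \eqref{eq:LinearisedMeasure_Cauchy}, and then to obtain \eqref{eq:Integral_wEst} by writing a Cauchy problem for the \emph{difference} $w_{\Bmu}(t,x) - w_{\Bmu}(t,y)$ and running a second Gr\"onwall argument after integration against $\Bmu$, exploiting the fact that the kernels $\D_x v$ and $\D_\mu v$ are \emph{continuous} (not merely bounded and Lipschitz), so that differences in the base points $\Phi_{(\tau,t)}[\mu](x)$ versus $\Phi_{(\tau,t)}[\mu](y)$ produce quantities that are $o_K(|x-y|)$ uniformly, which then integrate to $o_K(W_{2,\Bmu}(\mu,\nu))$ by Lemma \ref{lem:Small-oEst}.

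\textbf{Step 1 (the sup bound).} First I would rewrite \eqref{eq:LinearisedMeasure_Cauchy} in integral form: for $t \geq \tau$,
\begin{equation*}
w_{\Bmu}(t,x) = \INTSeg{\D_x v\big(s,\mu(s),\Phi_{(\tau,s)}[\mu](x)\big) w_{\Bmu}(s,x)}{s}{\tau}{t} + \INTSeg{\INTDom{\D_{\mu} v\big(s,\mu(s),\Phi_{(\tau,s)}[\mu](x)\big)\big(\Phi_{(\tau,s)}[\mu](y)\big)\big(\D_x\Phi_{(\tau,s)}[\mu](y)(z-y) + w_{\Bmu}(s,y)\big)}{\R^{2d}}{\Bmu(y,z)}}{s}{\tau}{t}.
\end{equation*}
Using hypotheses \ref{hyp:CE}-$(ii),(iii),(iv)$ together with Proposition \ref{prop:LinSpace_Flows} (which provides a uniform bound $|\D_x\Phi_{(\tau,s)}[\mu](\cdot)| \leq C_K$ and the $C_K$-Lipschitz continuity of the flow, so that $\supp(\mu(s)) \subset K'$ for an enlarged ball $K'$), I bound the first integral by $\INTSeg{l'_{K'}(s)\NormC{w_{\Bmu}(s,\cdot)}{0}{K,\R^d}}{s}{\tau}{t}$ and the forcing term in the second integral by $C_K \INTSeg{L_{K'}(s)}{s}{\tau}{t}\,W_{2,\Bmu}(\mu,\nu)$ after applying H\"older's inequality to pass from $\INTDom{|z-y|}{}{\Bmu}$ to $W_{2,\Bmu}(\mu,\nu)$ (as in \eqref{eq:IntegralSmallOEst1}), while the $w_{\Bmu}(s,y)$ term contributes $\INTSeg{L_{K'}(s)\NormC{w_{\Bmu}(s,\cdot)}{0}{K,\R^d}}{s}{\tau}{t}$. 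Setting $\phi(t) := \sup_{x \in K}|w_{\Bmu}(t,x)|$ and applying Gr\"onwall's lemma yields \eqref{eq:Lem_wBmuest} with $C_K$ depending only on $K$ through the $L^1$-norms of $l'_{K'}(\cdot), L_{K'}(\cdot)$ and the Lipschitz constant of the flow; the case $t < \tau$ is identical after changing the sign of the integrals.

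\textbf{Step 2 (the integral-of-differences estimate).} For fixed $(x,y)$ in the support of $\Bmu$, subtract the integral equation for $w_{\Bmu}(t,y)$ from that for $w_{\Bmu}(t,x)$. The difference of the first integrals splits as $\D_x v(s,\mu(s),\Phi_{(\tau,s)}[\mu](x))(w_{\Bmu}(s,x)-w_{\Bmu}(s,y))$ plus $(\D_x v(s,\mu(s),\Phi_{(\tau,s)}[\mu](x)) - \D_x v(s,\mu(s),\Phi_{(\tau,s)}[\mu](y)))w_{\Bmu}(s,y)$; by the continuity of $\D_x v$ on compacts and the $C_K$-Lipschitz regularity of the flow, the second piece is $o_{s,K}(|x-y|)$ times $\NormC{w_{\Bmu}(s,\cdot)}{0}{}{} \leq C_K W_{2,\Bmu}(\mu,\nu)$, hence of size $o_K(|x-y|) W_{2,\Bmu}(\mu,\nu)$. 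The difference of the second integrals is handled analogously: continuity of $\D_\mu v$ in its base-point and evaluation arguments, together with continuity of $y \mapsto \D_x\Phi_{(\tau,s)}[\mu](y)$, produces terms of the form $o_{s,K}(|x-y|)\big(C_K|z-y| + \NormC{w_{\Bmu}(s,\cdot)}{0}{}{}\big)$ plus the "good" term $L_{K'}(s)|w_{\Bmu}(s,x)-w_{\Bmu}(s,y)|$ (coming from the $\D_x\Phi(y)(z-y)+w(s,y)$ factor being Lipschitz-bounded in the base-point shift). Now integrate the whole inequality against $\Bmu$, set $\psi(t) := \INTDom{|w_{\Bmu}(t,x)-w_{\Bmu}(t,y)|}{\R^{2d}}{\Bmu(x,y)}$, and observe that every $o_K(|x-y|)$-type term integrates to $o_K(W_{2,\Bmu}(\mu,\nu))$ by Lemma \ref{lem:Small-oEst} (with the extra factor $|z-y|$ handled by Cauchy–Schwarz against $\Bmu$, bounded by $W_{2,\Bmu}(\mu,\nu)$). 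This gives $\psi(t) \leq o_K(W_{2,\Bmu}(\mu,\nu)) + \INTSeg{(l'_{K'}(s)+L_{K'}(s))\psi(s)}{s}{\tau}{t}$, so Gr\"onwall yields \eqref{eq:Integral_wEst} with the remainder uniform in $\tau, t$.

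\textbf{Main obstacle.} The delicate point is bookkeeping the uniformity of the $o_K(|x-y|)$ remainders across all $s \in [0,T]$ so that, after integration in $s$, one still has a genuine $o_K$ rather than something $s$-dependent — this requires that the moduli of continuity of $\D_x v(s,\cdot,\cdot)$ and $\D_\mu v(s,\cdot,\cdot)(\cdot)$ on the fixed compact $K' \times K'$ be dominated by an integrable envelope, which follows from hypotheses \ref{hyp:CE}-$(ii),(iii),(iv)$ and the uniform bound $|\D_x v| \leq l'_{K'}(\cdot) \in L^1$ together with Lebesgue dominated convergence, exactly as in the proof of Proposition \ref{prop:LinSpace_Flows}. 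Once this domination is in place, both Gr\"onwall arguments are routine.
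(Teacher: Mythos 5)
Your proposal is essentially the same two-step argument as the paper's: a direct Gr\"onwall bound for \eqref{eq:Lem_wBmuest}, then a difference estimate exploiting the \emph{continuity} (not merely boundedness) of $\D_x v$ and $\D_\mu v$ on the enlarged compact $K'$, with the modulus-of-continuity terms dominated by an integrable envelope so that they integrate to a genuine $o_K(\cdot)$. The only structural difference is the order of operations in Step~2: the paper runs Gr\"onwall \emph{pointwise} on $|w_{\Bmu}(t,x)-w_{\Bmu}(t,y)|$ (see \eqref{eq:TechnicalLem1}--\eqref{eq:TechnicalLem2}) and integrates against $\Bmu$ afterwards, whereas you integrate first and apply Gr\"onwall to $\psi(t):=\INTDom{|w_{\Bmu}(t,x)-w_{\Bmu}(t,y)|}{\R^{2d}}{\Bmu(x,y)}$; both routes are valid and interchangeable here.

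One small bookkeeping inaccuracy worth fixing: you claim the non-local integral contributes a genuine Gr\"onwall term $L_{K'}(s)\,|w_{\Bmu}(s,x)-w_{\Bmu}(s,y)|$. It does not. The inner integral
\begin{equation*}
\INTDom{\D_{\mu} v\Big(s,\mu(s),\Phi_{(\tau,s)}[\mu](x)\Big)\big(\Phi_{(\tau,s)}[\mu](y_1)\big)\Big(\D_x\Phi_{(\tau,s)}[\mu](y_1)(z_1-y_1)+w_{\Bmu}(s,y_1)\Big)}{\R^{2d}}{\Bmu(y_1,z_1)}
\end{equation*}
depends on the "outer" variable only through the base point $\Phi_{(\tau,s)}[\mu](x)$; the integrand in $(y_1,z_1)$ is the same whether one evaluates at $x$ or at $y$. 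Hence the non-local difference is entirely an error term of the form $o_{s,K}(1)\cdot(L_K'+C_K)\,W_{2,\Bmu}(\mu,\nu)$ (the third line of \eqref{eq:TechnicalLem1}), and the Gr\"onwall kernel is just $l_K'(\cdot)$, not $l_K'(\cdot)+L_{K'}(\cdot)$. This does not affect the conclusion (the weaker inequality still closes the argument), but it misattributes where the $L_{K'}$-type contribution lives in the decomposition.
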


\begin{proof}
The estimate \eqref{eq:Lem_wBmuest} can be obtained under hypotheses \ref{hyp:CE} from a direct application of Gr\"onwall's Lemma to \eqref{eq:LinearisedMeasure_Cauchy}. By Theorem \ref{thm:NonLocalCE}, there exists a compact set $K' \subset \R^d$ depending on $K$ such that $\supp(\mu(t)) \subset K'$ and $\Phi_{(\tau,t)}[\mu](x) \in K'$ for all $(t,x) \in [0,T] \times K$. Thus, given $x,y \in K$, it can be checked by inserting suitable crossed terms in the integral expression of  \eqref{eq:LinearisedMeasure_Cauchy} that
\begin{equation}
\label{eq:TechnicalLem1}
\begin{aligned}
& |w_{\Bmu}(t,x) - w_{\Bmu}(t,y)| \\
& \leq \INTSeg{l_K'(s) |w_{\Bmu}(s,x) - w_{\Bmu}(s,y)|}{s}{\tau}{t} \\
& \hspace{0.45cm} + \bigg( \INTSeg{\Big| \D_x v \Big( s,\mu(s),\Phi_{(\tau,s)}[\mu](x)\Big) - \D_x v \Big( s,\mu(s),\Phi_{(\tau,s)}[\mu](y)\Big) \Big|}{s}{\tau}{t} \bigg) C_K W_{2,\Bmu}(\mu,\nu) \\
&  \hspace{0.45cm} + \bigg( \INTSeg{\max_{\sigma \in \Pcal(K')} \NormC{\D_{\mu} v \Big(s,\sigma,\Phi_{(\tau,s)}[\mu](x)\Big)(\cdot) - \D_{\mu} v \Big(s,\sigma,\Phi_{(\tau,s)}[\mu](y) \Big)(\cdot)}{0}{K',\R^d}}{s}{\tau}{t} \bigg)  \\
& \hspace{11.6cm} \times \Big( L_K' + C_K \Big) W_{2,\Bmu}(\mu,\nu), 
\end{aligned}
\end{equation}
where we used the notations
\begin{equation*}
\begin{aligned}
& l_K'(s) := \max_{\sigma \in \Pcal(K')} \NormC{ \D_x v(s,\sigma,\cdot)}{0}{K',\R^{d \times d}} \quad \text{and} \quad L'_K := \max_{\sigma \in \Pcal(K')} \NormC{\D_x \Phi_{(\tau,\cdot)}[\sigma](\cdot)}{0}{[0,T] \times K',\R^{d \times d}}.
\end{aligned}
\end{equation*}
Moreover, observe that as a consequence of \ref{hyp:CE}-$(iii),(iv)$, it holds
\begin{equation}
\label{eq:UniformContinuityIneq}
\left\{
\begin{aligned}
\Big| \D_x v \Big(s,\mu(s),\Phi_{(\tau,s)}[\mu](x)\Big) - \D_x v \Big(s,\mu(s),\Phi_{(\tau,s)}[\mu](y)\Big) \Big| \hspace{1.35cm} \, & \leq o_{s,x,K}(1), \\
\max_{\sigma \in \Pcal(K')} \NormC{\D_{\mu} v \Big(s,\sigma,\Phi_{(\tau,s)}[\mu](x)\Big)(\cdot) - \D_{\mu} v \Big(s,\sigma,\Phi_{(\tau,s)}[\mu](y) \Big)(\cdot)}{0}{K',\R^d} & \leq o_{s,x,K}(1),
\end{aligned}
\right.
\end{equation}
where
\begin{equation}
\INTSeg{\sup_{x \in K}|o_{s,x,K}(1)|}{s}{0}{T} = o_{K}(1), 
\end{equation}
as $y \rightarrow x$. Hence, we recover by applying Gr\"onwall's lemma to \eqref{eq:TechnicalLem1} together with \eqref{eq:UniformContinuityIneq} that
\begin{equation}
\label{eq:TechnicalLem2}
|w_{\Bmu}(t,x) - w_{\Bmu}(t,y)| \leq (L_K' + 2C_K) \exp(\Norm{l_K'(\cdot)}_1) o_{t,K}\Big( W_{2,\Bmu} \big( \mu,\nu \big) \Big).
\end{equation}
By integrating \eqref{eq:TechnicalLem2} against $\Bmu$ and repeating the steps of the proof of Lemma \ref{lem:Small-oEst}, we finally obtain 
\begin{equation*}
\INTDom{|w_{\Bmu}(t,x) - w_{\Bmu}(t,y)|}{\R^{2d}}{\Bmu(x,y)} \leq o_{t,K} \Big( W_{2,\Bmu}(\mu,\nu) \Big),
\end{equation*}
where $\sup_{t \in [0,T]} |o_{t,K}(W_{2,\Bmu}(\mu,\nu))| = o_K(W_{2,\Bmu}(\mu,\nu))$, which concludes the proof.
\end{proof}

Building on these preliminary estimates, we can move on to the proof of Theorem \ref{thm:FlowDiff}.

\begin{proof}[Proof of Theorem \ref{thm:FlowDiff}]
The proof of this result is based on a classical strategy already explored in a simpler setting in \cite[Proposition 5]{PMPWass}, and that relies on Theorem \ref{thm:Banach}. We start by showing that \eqref{eq:LinearisedMeasure_Cauchy} has a unique solution. Let us fix $\tau \in [0,T]$, an arbitrary plan $\Bmu \in \Gamma(\mu,\nu)$ and consider the operator $\Theta_{\Bmu} : C^0([0,T] \times K,\R^d) \rightarrow C^0([0,T] \times K,\R^d)$ defined by 
\begin{equation}
\label{eq:ThetaDef}
\begin{aligned}
& \Theta_{\Bmu}(w)(t,x) := \INTSeg{\D_x v \Big( s , \mu(s) , \Phi_{(\tau,s)}[\mu](x) \Big)w(s,x)}{s}{\tau}{t} \\
& \hspace{0.225cm} + \INTSeg{\INTDom{\hspace{-0.1cm} \D_{\mu} v \Big(s,\mu(s),\Phi_{(\tau,s)}[\mu](x) \Big) \big( \Phi_{(\tau,s)}[\mu](y) \big) \Big( \D_x \Phi_{(\tau,s)}[\mu](y)(z-y) + w(s,y) \Big)}{\R^{2d}}{\Bmu(y,z)}}{s}{\tau}{t}, 
\end{aligned}
\end{equation}
for any $w \in C^0([0,T] \times K, \R^d)$ and all $(t,x) \in [0,T] \times K$, where we recall that $\mu(s) :=: \Phi_{(\tau,s)}[\mu](\cdot)_{\#} \mu$. As a consequence of hypotheses \ref{hyp:CE}-$(iii),(iv)$, there exist $l_K'(\cdot),L_K'(\cdot) \in L^1([0,T],\R_+)$ such that 
\begin{equation*}
\max_{\sigma \in \Pcal(K')} \NormC{\D_x v (s,\sigma,\cdot)}{0}{K',\R^{d \times d}} \leq l'_K(s) \qquad \text{and} \qquad \max_{\sigma \in \Pcal(K')} \NormC{\D_{\mu} v (s,\sigma,\cdot)(\cdot)}{0}{K' \times K',\R^{d \times d}} \leq L'_K(s), 
\end{equation*}
for $\Lcal^1$-almost every $s \in [0,T]$, where $K' \subset \R^d$ is a compact set depending on $K$ such that $\supp(\mu(t)) \subset K'$ and $\Phi_{(\tau,t)}[\mu](x) \in K'$ for all $(t,x) \in [0,T] \times K$. Whence for all $w_1,w_2 \in C^0([0,T] \times K,\R^d)$, it holds for $t \in [\tau,T]$
\begin{equation}
\label{eq:ThetaEst1}
|\Theta_{\Bmu}(w_1)(t,x) - \Theta_{\Bmu}(w_2)(t,x)| \leq \INTSeg{ \Lpazo_K'(s) \max_{y \in K}|w_1(s,y) - w_2(s,y)|}{s}{\tau}{t}, 
\end{equation}
where $\Lpazo_K'(\cdot) := l'_K(\cdot) + L_K'(\cdot)$, with a similar estimate for $t \in [0,\tau]$. We now consider the following weighted $C^0$-norm, defined by
\begin{equation*}
\NormC{w(\cdot,\cdot)}{0}{[\tau,T] \times K,\R^d}^{\Lpazo_K'} := \max_{(t,x) \in [\tau,T] \times K} e^{-2 \, \INTSeg{\Lpazo_K'(s)}{s}{0}{t}} |w(t,x)|, 
\end{equation*}
and notice that it is equivalent to the usual $C^0$-norm over $[\tau,T] \times K$. Then, \eqref{eq:ThetaEst1} implies that
\begin{equation*}
\begin{aligned}
|\Theta_{\Bmu}(w_1)(t,x) - \Theta_{\Bmu}(w_2)(t,x)| & \leq  \Big( \INTSeg{\Lpazo_K'(s) e^{2 \, \INTSeg{\Lpazo_K'(\zeta)}{\zeta}{0}{s}}}{s}{\tau}{t} \Big) \NormC{w_1 - w_2}{0}{[0,T] \times K,\R^d}^{\Lpazo_K'} \\
& = \frac{1}{2} \Big( e^{2\INTSeg{\Lpazo_K'(s)}{s}{0}{t}} - e^{2\INTSeg{\Lpazo_K'(s)}{s}{0}{\tau}} \Big) \NormC{w_1 - w_2}{0}{[0,T] \times K,\R^d}^{\Lpazo_K'}, 
\end{aligned}
\end{equation*}
for all times $t \in [\tau,T]$, which in turn yields for any $w_1,w_2 \in C^0([0,T] \times K,\R^d)$ that 
\begin{equation*}
\NormC{\Theta_{\Bmu}(w_1) - \Theta_{\Bmu}(w_2)}{0}{[\tau,T] \times K,\R^d}^{\Lpazo_K'} \leq \tfrac{1}{2} \NormC{w_1 - w_2}{0}{[0,T] \times K,\R^d}^{\Lpazo_K'}.
\end{equation*}
In a similar way, we show that the same inequality also holds when $[\tau,T]$ is replaced by $[0,\tau]$. Whence, the operator $\Theta_{\Bmu}(\cdot)$ is contracting with respect to $\NormC{\cdot}{0}{[0,T] \times K,\R^d}^{\Lpazo_K'}$, and by Theorem \ref{thm:Banach} there exists a unique map $w_{\Bmu} \in C^0([0,T] \times K,\R^d)$ such that $\Theta_{\Bmu}(w_{\Bmu}) = w_{\Bmu}$, namely  $w_{\Bmu}(\cdot,\cdot)$ is the unique solution of \eqref{eq:LinearisedMeasure_Cauchy}. Recall also that by Lemma \ref{lem:LinearisedFlowEst}, this mapping is such that 
\begin{equation}
\label{eq:wgamma_est}
\NormC{w_{\Bmu}(\cdot,\cdot)}{0}{[0,T] \times K,\R^d} \leq C_K W_{2,\Bmu}(\mu,\nu), 
\end{equation}
for some constant $C_K > 0$, depending only on $K$.

Consider now the operator $\Lambda_{\tau} : \Pcal(K) \times C^0([0,T] \times K,\R^d) \rightarrow C^0([0,T] \times K,\R^d)$, defined by 
\begin{equation}
\label{eq:LambdaDef}
\Lambda_{\tau} \big( \nu,\Phi \big)(t,x) := x + \INTSeg{v \Big(s,\Phi(s,\cdot)_{\#} \nu,\Phi(s,x) \Big)}{s}{\tau}{t},
\end{equation}
for any $\nu \in \Pcal(K)$ and all $\Phi \in C^0([0,T]\times K,\R^d)$. It can be checked that $\nu \mapsto \Lambda_{\tau}(\nu,\Phi)$ is continuous with respect to the $W_1$-metric for any $\Phi \in C^0([0,T] \times K,\R^d)$, and by repeating the same steps as above,\footnote{But this time with $\Lpazo_K(\cdot) := l_K(\cdot) + L_K(\cdot)$ being given by \ref{hyp:CE}-$(ii)$.} it can also be proven that $\Phi \mapsto \Lambda_{\tau}(\nu,\Phi)$ is contracting with respect to $\NormC{\cdot}{0}{[0,T] \times K,\R^d}^{\Lpazo_K}$ independently from $\nu \in \Pcal(K)$. Whence, by Theorem \ref{thm:Banach}, there exists for any $\nu \in \Pcal(K)$ a unique continuous map $\Phi_{(\tau,\cdot)}[\nu](\cdot) \in C^0([0,T] \times K,\R^d)$ such that 
\begin{equation*}
\Lambda_{\tau} \big( \nu, \Phi_{(\tau,\cdot)}[\nu](\cdot) \big) = \Phi_{(\tau,\cdot)}[\nu](\cdot). 
\end{equation*}
Observing now that $\NormC{\cdot}{0}{[0,T]\times K,\R^d}^{\Lpazo_K}$ is equivalent to the standard $C^0$-norm, there exists by \eqref{eq:FixedPointEstBanach} a constant $C'_K > 0$ such that 
\begin{equation}
\label{eq:FixedPoint_Ineq}
\begin{aligned}
& \NormC{\Phi_{(\tau,\cdot)}[\nu](\cdot) - \Phi_{(\tau,\cdot)}[\mu](\cdot) - w_{\Bmu}(\cdot,\cdot)}{0}{[0,T] \times K ,\R^d} \\
& \hspace{5.3cm} \leq C'_K \NormC{ \big( \Lambda_{\tau}(\nu,\cdot) - \Id \big) \Big( \Phi_{(\tau,\cdot)}[\mu](\cdot) + w_{\Bmu}(\cdot,\cdot) \Big)}{0}{[0,T] \times K ,\R^d}, 
\end{aligned}
\end{equation}
for every $\nu \in \Pcal(K)$. Hence, to complete the proof of Theorem \ref{thm:FlowDiff}, there remains to show that the right-hand side of \eqref{eq:FixedPoint_Ineq} is a $o_K(W_{2,\Bmu}(\mu,\nu))$. 

Let $K' \subset \R^d$ be a compact set such that 
\begin{equation*}
\Phi_{(\tau,t)}[\nu](x) + w_{\Bmu}(t,x) \in K', 
\end{equation*}
for any $\nu \in \Pcal(K)$ and all $(t,x) \in [0,T] \times K$, and fix an element $z \in K'$. It then holds as a consequence of hypothesis \ref{hyp:CE}-$(iv)$ together with Proposition \ref{prop:GradientChainrule} and Corollary \ref{cor:DiffChainrule} that 
\begin{equation}
\label{eq:TaylorVelocity1}
\begin{aligned}
& v \Big(s, ( \Phi_{(\tau,s)}[\mu](\cdot) + w_{\Bmu}(s,\cdot))_{\#} \nu , z \Big) \\
& = v \Big(s, \big( \Phi_{(\tau,s)}[\mu](\cdot) + w_{\Bmu}(s,\cdot) \big) \circ \pi^2 \big)_{\#} \Bmu , z \Big) \\
& = v(s, \mu(s) , z)  \\
& \hspace{0.4cm} + \INTDom{\D_{\mu} v(s,\mu(s),z) \Big( \Phi_{(\tau,s)}[\mu](x) \Big) \Big( \Phi_{(\tau,s)}[\mu](y) + w_{\Bmu}(s,y) - \Phi_{(\tau,s)}[\mu](x) \Big)}{\R^{2d}}{\Bmu(x,y)} \\
& \hspace{0.4cm} + o_{\tau,s,z,K} \bigg( W_{2,\Bmu^s_{\tau}} \Big( \mu(s),(\Phi_{(\tau,s)}[\mu](\cdot) + w_{\Bmu}(s,\cdot))_{\#} \nu \Big) \bigg), 
\end{aligned}
\end{equation}
with $\INTSeg{\sup_{(\tau,z) \in [0,T] \times K'}|o_{\tau,s,z,K}(r)|}{s}{0}{T} = o_K(r)$ as $r \rightarrow 0^+$, and where we chose the particular test plans $\Bmu_{\tau}^s \in \Pcal_c(\R^{2d})$ given for all times $s \in [0,T]$ by 
\begin{equation*}
\Bmu_{\tau}^s := \Big( \Phi_{(\tau,s)}[\mu] \circ \pi^1 \, , \, \Big(\Phi_{(\tau,s)}[\mu](\cdot) + w_{\Bmu}(s,\cdot) \Big) \circ \pi^2 \Big)_{\raisebox{4pt}{$\scriptstyle \#$}} \Bmu.
\end{equation*}
By the triangle inequality of the $L^2(\R^{2d},\R;\Bmu)$-norm together with the Lipschitz regularity of $x \in K \mapsto \Phi_{(\tau,s)}[\mu](x) \in \R^d$ and \eqref{eq:wgamma_est}, it can be checked that 
\begin{equation*}
W_{2,\Bmu_{\tau}^s} \bigg( \mu(s),(\Phi_{(\tau,s)}[\mu](\cdot) + w_{\Bmu}(s,\cdot))_{\#} \nu \Big) \leq \Big( \sup_{t \in [0,T]} \Lip \big(\Phi_{(\tau,t)}[\mu](\cdot) ; K \big) + C_K \bigg)  W_{2,\Bmu}(\mu,\nu), 
\end{equation*}
for all times $s \in [0,T]$, which in turn allows us to recover the asymptotic estimate 
\begin{equation}
\label{eq:SmalloEst1}
\INTSeg{\sup_{(\tau,z) \in [0,T] \times K'} o_{\tau,s,z,K} \bigg( W_{2,\Bmu_{\tau}^s} \Big( \mu(s),(\Phi_{(\tau,s)}[\mu](\cdot) + w_{\Bmu}(s,\cdot))_{\#} \nu \Big) \bigg)}{s}{0}{T} = o_K \Big( W_{2,\Bmu}(\mu,\nu) \Big).
\end{equation}
By Lemma \ref{lem:LinearisedFlowEst} above, it also holds
\begin{equation}
\label{eq:SmalloEst2}
\INTDom{|w_{\Bmu}(s,y) - w_{\Bmu}(s,x)|}{\R^{2d}}{\Bmu(x,y)} = o_{\tau,s,K} \Big( W_{2,\Bmu}(\mu,\nu) \Big),
\end{equation} 
with $\sup_{\tau,s \in [0,T]} |o_{\tau,s,K}(W_{2,\Bmu}(\mu,\nu))| = o_K(W_{2,\Bmu}(\mu,\nu))$. In addition, by Proposition \ref{prop:LinSpace_Flows}, observe that given  $x,y \in K$, one has for all $s \in [\tau,t]$
\begin{equation}
\label{eq:TaylorFlowProof1}
\Phi_{(\tau,s)}[\mu](y) = \Phi_{(\tau,s)}[\mu](x) + \D_x \Phi_{(\tau,s)}[\mu](x)(y-x) + o_{\tau,s,K}(|x-y|), 
\end{equation}
where $\sup_{\tau,s \in [0,T]}|o_{\tau,s,K}(|x-y|)| = o_K(|x-y|)$. This further implies by Lemma \ref{lem:LipFlow} and Lemma \ref{lem:Small-oEst} that
\begin{equation}
\label{eq:SmalloEst3}
\INTDom{o_K(|x-y|)}{\R^{2d}}{\Bmu(x,y)} = o_K \Big( W_{2,\Bmu}(\mu,\nu) \Big), 
\end{equation}
since $\mu,\nu \in \Pcal(K)$. Thus, plugging \eqref{eq:SmalloEst1}, \eqref{eq:SmalloEst2}, \eqref{eq:TaylorFlowProof1} and \eqref{eq:SmalloEst3} into \eqref{eq:TaylorVelocity1} yields
\begin{equation}
\label{eq:TaylorVelocity2}
\begin{aligned}
& v \Big(s, ( \Phi_{(\tau,s)}[\mu](\cdot) + w_{\Bmu}(s,\cdot))_{\#} \nu , z \Big) \\
& =  v(s,\mu(s),z) + \INTDom{\D_{\mu} v(s,\mu(s),z) \Big( \Phi_{(\tau,s)}[\mu](x) \Big) \Big( \D_x \Phi_{(\tau,s)}[\mu](x)(y-x) + w_{\Bmu}(s,y) \Big)}{\R^{2d}}{\Bmu(x,y)} \\
& \hspace{12.15cm} + o_{\tau,s,z,K} \Big( W_{2,\Bmu}(\mu,\nu) \Big),
\end{aligned}
\end{equation}
for $\Lcal^1$-almost every $s \in [0,T]$ and any $z \in K'$.

Now, fix $z := \Phi_{(\tau,s)}[\mu](x) + w_{\Bmu}(s,x) \in K'$ for some $(s,x) \in [0,T] \times K$. Observe first that as a consequence of the continuity of $(\nu,x,y) \in \Pcal_1(K') \times K' \times K' \mapsto \D_{\mu} v(t,\nu,x)(y) \in \R^{d \times d}$ for $\Lcal^1$-almost every $t \in [0,T]$ imposed in \ref{hyp:CE}-$(iv)$, one has 
\begin{equation}
\label{eq:SmalloEst4}
\begin{aligned}
& \D_{\mu} v \Big(s,\mu(s), \Phi_{(\tau,s)}[\mu](x) + w_{\Bmu}(s,x) \Big) \big( \Phi_{(\tau,s)}[\mu](y) \big) = \D_{\mu} v \Big(s,\mu(s), \Phi_{(\tau,s)}[\mu](x) \Big) \big( \Phi_{(\tau,s)}[\mu](y) \big) \\
& \hspace{13.5cm}+ o_{s,x,K}(1),
\end{aligned}
\end{equation}
for $\Lcal^1$-almost every $s \in [0,T]$ , with $\INTSeg{\sup_{x \in K}|o_{s,x,K}(1)|}{s}{0}{T} = o_{K}(1)$ as $W_{2,\Bmu}(\mu,\nu)) \rightarrow 0^+$. Moreover by hypothesis \ref{hyp:CE}-$(iii)$, the map $x \in K' \mapsto v(t,\mu(t),x) \in \R^d$ is continuously differentiable, so that 
\begin{equation}
\label{eq:TaylorVelocity3}
\begin{aligned}
v \Big(s,\mu(s), \Phi_{(\tau,s)}[\mu](x) + w_{\Bmu}(s,x) \Big) & = v \Big(s,\mu(s), \Phi_{(\tau,s)}[\mu](x) \Big) \\
& \hspace{0.4cm} + \D_x v \Big(s,\mu(s), \Phi_{(\tau,s)}[\mu](x) \Big) w_{\Bmu}(s,x) + o_{s,x,K} \Big( W_{2,\Bmu}(\mu,\nu) \Big), 
\end{aligned}
\end{equation}
with $\INTSeg{\sup_{x \in K}|o_{s,x,K}(W_2(\mu,\nu)|}{s}{0}{T} = o_{K}(W_2(\mu,\nu))$, where we used \eqref{eq:wgamma_est}. Whence, by merging \eqref{eq:TaylorVelocity2}, \eqref{eq:SmalloEst4} and \eqref{eq:TaylorVelocity3} and applying Lebesgue's dominated convergence theorem, we recover
\begin{equation*}
\begin{aligned}
& \Lambda_{\tau} \Big(\nu,\Phi_{(s,\cdot)}[\mu](\cdot) + w_{\Bmu}(\cdot,\cdot) \Big)(t,x) \\
& = x + \INTSeg{v \Big(s,\mu(s), \Phi_{(\tau,s)}[\mu](x) \Big)}{s}{\tau}{t} + \int_{\tau}^t \bigg( \D_x v \Big(s,\mu(s), \Phi_{(\tau,s)}[\mu](x) \Big) w_{\Bmu}(s,x) \\
& \hspace{0.75cm} + \INTDom{ \hspace{-0.15cm} \D_{\mu} v \Big(s,\mu(s), \Phi_{(\tau,s)}[\mu](x) \Big) \big( \Phi_{(\tau,s)}[\mu](y) \big) \Big( \D_x \Phi_{(\tau,s)}[\mu](y)(z-y) + w_{\Bmu}(s,y) \Big) }{\R^{2d}}{\Bmu(y,z)} \bigg) \textnormal{d}s \\
& \hspace{0.75cm} + o_{\tau,t,x,K} \Big( W_{2,\Bmu}(\mu,\nu) \Big),
\end{aligned}
\end{equation*}
with $\sup_{(\tau,t,x) \in [0,T] \times [0,T] \times K} |o_{\tau,t,x,K}(W_{2,\Bmu}(\mu,\nu))| = o_K(W_{2,\Bmu}(\mu,\nu))$, and where we used Lemma \ref{lem:Small-oEst}. Recalling that $(t,x) \in [0,T] \times K \mapsto \Phi_{(\tau,t)}[\mu](x) \in \R^d$ is the unique solution of \eqref{eq:NonLocalFlow_Def} and that $(t,x) \in [0,T] \times K \mapsto w_{\Bmu}(t,x) \in \R^d$ solves \eqref{eq:LinearisedMeasure_Cauchy}, we finally obtain that
\begin{equation*}
\NormC{\big( \Lambda_{\tau}(\nu,\cdot) - \Id \big) \Big( \Phi_{(\tau,\cdot)}[\mu](\cdot) + w_{\Bmu}(\cdot,\cdot) \Big)}{0}{[0,T] \times K,\R^d} \leq o_K \Big( W_{2,\Bmu}(\mu,\nu) \Big).
\end{equation*}
for all times $\tau \in [0,T]$. Combining this last estimate with \eqref{eq:FixedPoint_Ineq}, we can therefore conclude 
\begin{equation*}
\Phi_{(\tau,t)}[\nu](\cdot) = \Phi_{(\tau,t)}[\mu](\cdot) + w_{\Bmu}(t,\cdot) + o_{\tau,t,K} \big( W_{2,\Bmu}(\mu,\nu) \big), 
\end{equation*}
with $\sup_{\tau,t \in [0,T]} \NormC{o_{\tau,t,K}(W_{2,\Bmu}(\mu,\nu))}{0}{K,\R^d} = o_{K}(W_{\Bmu}(\mu,\nu))$.
\end{proof}


\setcounter{section}{0} 
\renewcommand{\thesection}{C} 

\section{Proof of the estimate \eqref{eq:TimeShiftedInterp}}
\label{section:AppendixGronwall}

\setcounter{Def}{0} \renewcommand{\thethm}{C.\arabic{Def}} 
\setcounter{equation}{0} \renewcommand{\theequation}{C.\arabic{equation}}

In this section, we detail the arguments subtending an estimate involved in the proof of Theorem \ref{thm:Semiconcavity3} above, stating that there exists a constant $C_K > 0$ such that 
\begin{equation*}
\begin{aligned}
& \INTDom{\Big| (1-\lambda) x + \lambda \Phi^{u_{\epsilon} \circ \vt_{\lambda}}_{(\tau_2,\vt_{\lambda}^{-1}(s))}[\mu_2](y) - \Phi_{(\tau_{\lambda},s)}^{u_{\epsilon}}[\gamma^{1 \rightarrow 2}_{\lambda}] \big( (1-\lambda)x + \lambda y \big) \Big|}{\R^{2d}}{\gamma(x,y)} \\
& \hspace{1.9cm} \leq C_K \lambda(1-\lambda) \Big( |\tau_2 - \tau_1|^2 + W_2^2(\mu_1,\mu_2) \Big) + \ell_K \INTSeg{W_1 \Big( \tilde{\gamma}_{\lambda}^{1 \rightarrow 2}(\zeta) , \Phi_{(\tau_{\lambda},\zeta)}^{u_{\epsilon}}[\gamma^{1 \rightarrow 2}_{\lambda}](\cdot)_{\#} \gamma^{1 \rightarrow 2}_{\lambda}
\Big)}{\zeta}{\tau_{\lambda}}{s},
\end{aligned}
\end{equation*}
where all the relevant quantities are defined in Section \ref{subsection:TimeMeasSemiconcavity}. 

First using \eqref{eq:NonLocalFlow_Def}, we can derive the following ODE characterisation 
\begin{equation}
\label{eq:ODE_Characterisation}
\begin{aligned}
\Phi^{u_{\epsilon} \circ \vt_{\lambda}}_{(\tau_2,\vt_{\lambda}^{-1}(s))}[\mu_2](y) & = y + \INTSeg{v \Big( \xi , \tilde{\mu}_2(\xi) , u_{\epsilon} \circ \vt_{\lambda}(\xi) , \Phi^{u_{\epsilon} \circ \vt_{\lambda}}_{(\tau_2,\xi)}[\mu_2](y) \Big) }{\xi}{\tau_2}{\vt_{\lambda}^{-1}(s)} \\
& = y + \frac{1}{\lambda} \INTSeg{v \Big( \vt_{\lambda}^{-1}(\zeta) , \tilde{\mu}_2 \circ \vt_{\lambda}^{-1}(\zeta) , u_{\epsilon}(\zeta) , \Phi^{u_{\epsilon} \circ \vt_{\lambda}}_{(\tau_2,\vt_{\lambda}^{-1}(\zeta))}[\mu_2](y) \Big)}{\zeta}{\tau_{\lambda}}{s}
\end{aligned}
\end{equation}
for the time-shifted flows, where we used the change of variable $\zeta := \vt_{\lambda}(\xi)$ to go from the first line to the second one. Thus as a consequence of \ref{hyp:R}-$(ii)$ , it holds for every $s \in [\tau_{\lambda},\tau_1]$ and any $x,y \in K$
\begin{equation}
\label{eq:AppGronwall1}
\begin{aligned}
& \Big| (1-\lambda) x + \lambda \Phi^{u_{\epsilon} \circ \vt_{\lambda}}_{(\tau_2,\vt_{\lambda}^{-1}(s))}[\mu_2](y) - \Phi_{(\tau_{\lambda},s)}^{u_{\epsilon}}[\gamma^{1 \rightarrow 2}_{\lambda}] \big( (1-\lambda)x + \lambda y \big) \Big| \\
& \leq \int_{\tau_{\lambda}}^s \Big| v \Big( \vt_{\lambda}^{-1}(\zeta) , \tilde{\mu}_2 \circ \vt_{\lambda}^{-1}(\zeta) , u_{\epsilon}(\zeta) , \Phi^{u_{\epsilon} \circ \vt_{\lambda}}_{(\tau_2,\vt_{\lambda}^{-1}(\zeta))}[\mu_2](y) \Big) \\
& \hspace{3.5cm} - v \Big( \zeta , \Phi_{(\tau_{\lambda},\zeta)}^{u_{\epsilon}}[\gamma^{1 \rightarrow 2}_{\lambda}](\cdot)_{\#} \gamma^{1 \rightarrow 2}_{\lambda} , u_{\epsilon}(\zeta) , \Phi_{(\tau_{\lambda},\zeta)}^{u_{\epsilon}}[\gamma^{1 \rightarrow 2}_{\lambda}] \big( (1-\lambda)x + \lambda y \big) \Big) \Big| \textnormal{d} \zeta \\
& \leq \INTSeg{\ell_K \bigg( |\vt_{\lambda}^{-1}(\zeta) - \zeta| + W_1 \Big( \tilde{\mu}_2 \circ \vt_{\lambda}^{-1}(\zeta), \tilde{\gamma}_{\lambda}^{1 \rightarrow 2}(\zeta) \Big) + (1-\lambda) \Big| \Phi^{u_{\epsilon} \circ \vt_{\lambda}}_{(\tau_2,\vt_{\lambda}^{-1}(\zeta))}[\mu_2](y) - x  \Big| \bigg)}{\zeta}{\tau_{\lambda}}{s} \\
& \hspace{0.4cm} + \INTSeg{\ell_K \Big| (1-\lambda)x + \lambda \Phi^{u_{\epsilon} \circ \vt_{\lambda}}_{(\tau_2,\vt_{\lambda}^{-1}(\zeta))}[\mu_2](y) - \Phi_{(\tau_{\lambda},\zeta)}^{u_{\epsilon}}[\gamma^{1 \rightarrow 2}_{\lambda}] \big( (1-\lambda)x + \lambda y \big) \Big|}{\zeta}{\tau_{\lambda}}{s} \\
& \hspace{0.4cm} + \INTSeg{\ell_K W_1 \Big( \tilde{\gamma}_{\lambda}^{1 \rightarrow 2}(\zeta) , \Phi_{(\tau_{\lambda},\zeta)}^{u_{\epsilon}}[\gamma^{1 \rightarrow 2}_{\lambda}](\cdot)_{\#} \gamma^{1 \rightarrow 2}_{\lambda}
\Big)}{\zeta}{\tau_{\lambda}}{s}, \\
\end{aligned}
\end{equation}
where we inserted crossed terms involving $\tilde{\gamma}^{1 \rightarrow 2}_{\lambda}(\zeta)$ for $\zeta \in [\tau_{\lambda},s]$. Our next goal is to estimate first integral in the right-hand side of \eqref{eq:AppGronwall1}. 

By definition \eqref{eq:Reparametrisation} of $\vt_{\lambda}(\cdot)$, one has $\vt_{\lambda}^{-1}(\zeta) - \zeta = \tfrac{(1-\lambda)}{\lambda}(\zeta - \tau_1)$ for every $\zeta \in [\tau_{\lambda},\tau_1]$, so that 
\begin{equation}
\label{eq:AppGronwall2}
\INTSeg{|\vt_{\lambda}^{-1}(\zeta) - \zeta|}{\zeta}{\tau_{\lambda}}{s} \leq \tfrac{(1-\lambda)}{\lambda} |\tau_{\lambda} - \tau_1|^2 = \lambda(1-\lambda)|\tau_2 - \tau_1|^2, 
\end{equation}
for every $s \in [\tau_{\lambda},\tau_1]$. Besides, by Theorem \ref{thm:NonLocalCE} combined with \ref{hyp:R}-$(i)$, it can be checked that $\tilde{\mu}_2(\cdot)$ is Lipschitz in the $W_1$-metric over $[\tau_2,\tau_1]$ with constant $m_r := (1+2R_r)m$, which implies 
\begin{equation}
\label{eq:AppGronwall3}
\begin{aligned}
\INTSeg{W_1 \Big( \tilde{\mu}_2 \circ \vt_{\lambda}^{-1}(\zeta), \tilde{\gamma}_{\lambda}^{1 \rightarrow 2}(\zeta) \Big)}{\zeta}{\tau_{\lambda}}{s} & \leq \INTSeg{ \bigg(W_1 \Big( \tilde{\mu}_2 \circ \vt_{\lambda}^{-1}(\zeta) , \tilde{\mu}_2(\zeta) \Big) +  W_2 \Big( \tilde{\mu}_2(\zeta), \tilde{\gamma}_{\lambda}^{1 \rightarrow 2}(\zeta) \Big) \bigg)}{\zeta}{\tau_{\lambda}}{s} \\
& \leq (1+2R_r) m \INTSeg{|\vt_{\lambda}^{-1}(\zeta) - \zeta|}{\zeta}{\tau_{\lambda}}{s} + \INTSeg{(1-\lambda)W_2(\mu_1,\tilde{\mu}_2(\zeta))}{\zeta}{\tau_{\lambda}}{s} \\
& \leq 3(1+2R_r)m \, \lambda(1-\lambda) |\tau_2 - \tau_1| \Big( |\tau_2 - \tau_1| + W_2(\mu_1,\mu_2) \Big)
\end{aligned}
\end{equation}
where we used the estimates of \eqref{eq:ValueFunctionTimeEst5} and \eqref{eq:AppGronwall2}, along with the fact that $(\tilde{\gamma}^{1\rightarrow2}_{\lambda}(\zeta))_{\lambda \in [0,1]}$ is a constant speed $W_2$-geodesic between $\mu_1$ and $\tilde{\mu}_2(\zeta)$ for every $\zeta \in [\tau_{\lambda},\tau_1]$. Finally by applying Gr\"onwall's Lemma to the ODE characterisation \eqref{eq:ODE_Characterisation} and using again \ref{hyp:R}-$(i)$, one can show that 
\begin{equation}
\label{eq:AppGronwall4}
\INTSeg{(1-\lambda)\Big| \Phi^{u_{\epsilon} \circ \vt_{\lambda}}_{(\tau_2,\vt_{\lambda}^{-1}(\zeta))}[\mu_2](y) - x  \Big|}{\zeta}{\tau_{\lambda}}{s} \leq \lambda (1-\lambda) |\tau_2 - \tau_1|\Big( m|\tau_2 - \tau_1| + |x-y| \Big) \exp \big(\ell_K |\tau_2-\tau_1| \big), 
\end{equation}
holds for every $s \in [\tau_{\lambda},\tau_1]$ and every $x,y \in K$. Thus, by plugging \eqref{eq:AppGronwall2}, \eqref{eq:AppGronwall3} and \eqref{eq:AppGronwall4} into \eqref{eq:AppGronwall1}, integrating the resulting estimate against $\gamma \in \Gamma_o(\mu_1,\mu_2)$ while applying Fubini's theorem and Gr\"onwall's lemma, we finally obtain
\begin{equation*}
\begin{aligned}
& \INTDom{\Big| (1-\lambda) x + \lambda \Phi^{u_{\epsilon} \circ \vt_{\lambda}}_{(\tau_2,\vt_{\lambda}^{-1}(s))}[\mu_2](y) - \Phi_{(\tau_{\lambda},s)}^{u_{\epsilon}}[\gamma^{1 \rightarrow 2}_{\lambda}] \big( (1-\lambda)x + \lambda y \big) \Big|}{\R^{2d}}{\gamma(x,y)} \\
& \hspace{1.9cm} \leq C_K \lambda(1-\lambda) \Big( |\tau_2 - \tau_1|^2 + W_2^2(\mu_1,\mu_2) \Big) + \ell_K \INTSeg{W_1 \Big( \tilde{\gamma}_{\lambda}^{1 \rightarrow 2}(\zeta) , \Phi_{(\tau_{\lambda},\zeta)}^{u_{\epsilon}}[\gamma^{1 \rightarrow 2}_{\lambda}](\cdot)_{\#} \gamma^{1 \rightarrow 2}_{\lambda}
\Big)}{\zeta}{\tau_{\lambda}}{s},
\end{aligned}
\end{equation*}
where $C_K > 0$ is a constant depending only on $m,\ell_K,T,R_r$. 


\setcounter{section}{0} 
\renewcommand{\thesection}{D} 

\section{Proof of Lemma \ref{lem:Consistency}}
\label{section:AppendixLem}

\setcounter{Def}{0} \renewcommand{\thethm}{D.\arabic{Def}} 
\setcounter{equation}{0} \renewcommand{\theequation}{D.\arabic{equation}}

In this section, we provide the proof of Lemma \ref{lem:Consistency} above, which is a key step in the establishment of the sensitivity relation of Theorem \ref{thm:Sensitivity1}.

\begin{proof}[Proof of Lemma \ref{lem:Consistency}]
We will only prove the statements for the map $\Hpazo_1(\cdot)$, the arguments transposing almost verbatim to $\Hpazo_2(\cdot)$. For the sake of readability, we will use the condensed notation 
\begin{equation}
\label{eq:LemmaProof_Notations}
\Phi_{(\tau,t)}^*(x) := \Phi_{(\tau,t)}^{u^*}[\mu^*(\tau)](x) \qquad \text{and} \qquad \F_{\tau}(t,x,y) := \D_x \Phi^*_{(\tau,t)}(x) (y - x) + w_{\Bmu_{\tau}}(t,x), 
\end{equation}
for all $(t,x,y) \in [0,T] \times \supp(\Bmu_{\tau})$. By Proposition \ref{prop:LinSpace_Flows} and Theorem \ref{thm:FlowDiff}, it can be checked that $t \in [0,T] \mapsto \F_{\tau}(t,x,y) \in \R^d$ is the unique solution of the linearised Cauchy problem
\begin{equation}
\label{eq:CauchyProblemF}
\left\{
\begin{aligned}
\partial_t \F_{\tau}(t,x,y) &  = \D_x v \Big(t,\mu^*(t), \Phi_{(\tau,t)}^*(x)\Big) \F_{\tau}(t,x,y) \\
& \hspace{0.4cm} + \INTDom{\D_{\mu} v \Big(t,\mu^*(t), \Phi_{(\tau,t)}^*(x)\Big)\big( \Phi_{(\tau,t)}^*(z_1) \big) \F_{\tau}(t,z_1,z_2)}{\R^{2d}}{\Bmu_{\tau}(z_1,z_2)} \\
\F_{\tau}(\tau,x,y) & = y-x.
\end{aligned}
\right.
\end{equation}
Recalling that $\supp(\Bmu_{\tau}) \subset \K \times \K$, there exist $\Rpazo_{\F} > 0$ and $m_{\F}(\cdot) \in L^1([0,T],\R_+)$ such that 
\begin{equation*}
|\F_{\tau}(t,x,y)| \leq \Rpazo_{\F} \qquad \text{and} \qquad |\F_{\tau}(t,x,y) - \F_{\tau}(s,x,y)| \leq \INTSeg{m_{\F}(\zeta)}{\zeta}{s}{t}, 
\end{equation*}
for all times $0 \leq s \leq t \leq T$ and any $x,y \in \K$. Let us now fix $t \in [0,T]$, and observe that 
\begin{equation}
\label{eq:Hpazo1Expression}
\Hpazo_1(t) = \INTDom{\INTDom{\Big\langle r \, , \, \F_{\tau} \big(t, \Phi_{(T,\tau)}^*(x) , y \big) \Big\rangle}{\R^d}{\sigma_x^*(t)(r)}}{\R^{2d}}{\gamma_{\tau}^T(x,y)}, 
\end{equation}
by the construction of $\Bnu^*(\cdot)$ introduced in \eqref{eq:Bnu_TDef}, which further yields
\begin{equation*}
\begin{aligned}
|\Hpazo_1(t) - \Hpazo_1(s)| & \leq \INTDom{\INTDom{\Big| \big\langle r \, , \, \F_{\tau} \big(t, \Phi_{(T,\tau)}^*(x) , y \big) - \F_{\tau} \big(s, \Phi_{(T,\tau)}^*(x) , y \big) \big\rangle \Big| }{\R^d}{\sigma_x^*(t)(r)}}{\R^{2d}}{\gamma_{\tau}^T(x,y)} \\
& \hspace{0.4cm} + \bigg| \INTDom{\INTDom{\Big\langle r , \F_{\tau} \big(s, \Phi_{(T,\tau)}^*(x) , y \big) \Big\rangle}{\R^d}{\Big( \sigma_x^*(t) - \sigma_x^*(s) \Big)(r)}}{\R^{2d}}{\gamma_{\tau}^T(x,y)} \bigg| \\
& \leq \INTSeg{\Big( \Rpazo \, m_{\F}(\zeta) + \Rpazo_{\F} \, m^{\sigma}_r(\zeta) \Big)}{\zeta}{s}{t}, 
\end{aligned}
\end{equation*}
for all times $0 \leq s \leq t \leq T$, with $\Rpazo >0$ and $m^{\sigma}_r(\cdot) \in L^1([0,T],\R_+)$ being respectively defined as in steps 1 and 2 of the proof of Theorem \ref{thm:Sensitivity1}, and where we used Kantorovich-Rubinstein's duality formula \eqref{eq:KantorovichDuality}. Thus, $\Hpazo_1(\cdot) \in \AC([0,T],\R)$, and it is differentiable $\Lcal^1$-almost everywhere. 

Let $t \in [0,T]$ be a differentiability point of $\Hpazo_1(\cdot)$. Using \eqref{eq:Hpazo1Expression}, we can compute by applying Lebesgue's theorem for differentiating under the integral sign
\begin{equation}
\label{eq:HpazoDerivative1}
\begin{aligned}
\tderv{}{t}{} \Hpazo_1(t) & = \INTDom{\INTDom{\Big\langle r \, , \, \partial_t \F_{\tau} \big(t, \Phi_{(T,\tau)}^*(x) , y \big) \Big\rangle}{\R^d}{\sigma_x^*(t)(r)}}{\R^{2d}}{\gamma_{\tau}^T(x,y)} \\
& \hspace{0.4cm} + \INTDom{\INTDom{\Big\langle \Wpazo_x(t,\sigma^*_x(t),r) \, , \, \F_{\tau} \big(t,\Phi_{(T,\tau)}^*(x) , y \big) \Big\rangle}{\R^d}{\sigma_x^*(t)(r)}}{\R^{2d}}{\gamma_{\tau}^T(x,y)}, 
\end{aligned}
\end{equation}
with $\Wpazo_x : [0,T] \times \Pcal_c(\R^d) \times \R^d \rightarrow \R^d$ being defined as in \eqref{eq:DualVelocityField}, and where we used the distributional characterisation \eqref{eq:NonLocal_Distrib2} of the fact that $\sigma^*_x(\cdot)$ solves \eqref{eq:BackwardCauchy} for $\mu^*(T)$-almost every $x \in \R^d$. Observe now that as a consequence of \eqref{eq:CauchyProblemF}, the first term in the right-hand side of \eqref{eq:HpazoDerivative1} can be expressed as
\begin{equation}
\label{eq:HpazoDerivative2}
\begin{aligned}
& \INTDom{\INTDom{\Big\langle r \, , \, \partial_t \F_{\tau} \big(t, \Phi_{(T,\tau)}^*(x) , y \big) \Big\rangle}{\R^d}{\sigma_x^*(t)(r)}}{\R^{2d}}{\gamma_{\tau}^T(x,y)} \\
& \hspace{0.4cm} = \INTDom{\INTDom{\Big\langle r \, , \, \D_x v \Big( t,\mu^*(t),\Phi_{(T,t)}^*(x) \Big) \F_{\tau} \big(t,\Phi_{(T,\tau)}^{u^*}(x) , y \big) \Big\rangle}{\R^d}{\sigma_x^*(t)(r)}}{\R^{2d}}{\gamma_{\tau}^T(x,y)} \\
& \hspace{0.8cm} + \INTDom{\INTDom{\Big\langle r \, , \, \INTDom{\D_{\mu} v \Big( t,\mu^*(t),\Phi_{(T,t)}^*(x) \Big) \big( \Phi_{(\tau,t)}^*(z_1) \big) \F_{\tau} \big(t,z_1,z_2 \big)}{\R^{2d}}{\gamma_{\tau}(z_1,z_2)} \Big\rangle}{\R^d}{\sigma_x^*(t)(r)}}{\R^{2d}}{\gamma_{\tau}^T(x,y)}.
\end{aligned}
\end{equation}
By linearity of the integral, the second term in the right-hand side of \eqref{eq:HpazoDerivative2} can be rewritten as 
\begin{equation}
\label{eq:HpazoDerivative3}
\begin{aligned}
& \INTDom{\INTDom{\Big\langle r \, , \, \INTDom{\D_{\mu} v \Big( t,\mu^*(t),\Phi_{(T,t)}^*(x) \Big) \big( \Phi_{(\tau,t)}^*(z_1) \big) \F_{\tau} \big(t,z_1,z_2 \big)}{\R^{2d}}{\gamma_{\tau}(z_1,z_2)} \Big\rangle}{\R^d}{\sigma_x^*(t)(r)}}{\R^{2d}}{\gamma_{\tau}^T(x,y)} \\
&  \hspace{0.4cm} = \INTDom{\hspace{-0.15cm} \Big\langle \INTDom{ \hspace{-0.1cm} \D_{\mu} v \Big( t,\mu^*(t),\Phi_{(T,t)}^*(x) \Big) \big( \Phi_{(T,t)}^*(z_1) \big)^{\top}  \hspace{-0.05cm} r\,}{\R^{2d}}{\nu^*_T(t)(x,r)} , \F_{\tau} \Big(t,\Phi_{(T,\tau)}^*(z_1) ,z_2 \Big) \Big\rangle}{\R^{2d}}{\gamma_{\tau}^T(z_1,z_2)}, \\ 
\end{aligned}
\end{equation}
where we applied Fubini's theorem and used the fact that $\nu_T^*(t) = \INTDom{\sigma^*_x(t)}{\R^d}{\mu^*(T)(x)}$. By plugging \eqref{eq:HpazoDerivative3} into \eqref{eq:HpazoDerivative2} and recalling the analytical expression of $\Wpazo_x : [0,T] \times \Pcal_c(\R^d) \times \R^d \rightarrow \R^d$ given in \eqref{eq:DualVelocityField}, we conclude that 
\begin{equation*}
\tderv{}{t}{} \Hpazo_1(t) = 0, 
\end{equation*}
for $\Lcal^1$-almost every $t \in [0,T]$, and the same analysis can be performed for $\Hpazo_2(\cdot)$. 
\end{proof}


\smallskip

\begin{flushleft}
{\small{\bf  Acknowledgement.}  This material is based upon work supported by the Air Force Office of Scientific Research under award number FA9550-18-1-0254.}
\end{flushleft}


\bibliographystyle{plain}
{\footnotesize
\bibliography{../../ControlWassersteinBib}
}

\end{document}